\documentclass[11pt]{article}
\usepackage{mthesis}
\usepackage[utf8]{inputenc}
\usepackage{amsmath,amssymb,amsthm}
\usepackage{mathtools}
\usepackage{enumerate}
\usepackage{tikz}
\usepackage{lipsum}
\newcommand{\q}[1]{``#1''}

\newtheorem{theorem}{Theorem}[section]
\newtheorem{lemma}[theorem]{Lemma}
\newtheorem{definition}[theorem]{Definition}
\newtheorem{example}[theorem]{Beispiel}

\newtheorem{notation}[theorem]{Notation}
\newtheorem{remark}[theorem]{Remark}
\newtheorem{corollary}[theorem]{Corollary}
\newtheorem{theorem/definition}[theorem]{Satz/Definition}
\newtheorem{proposition}[theorem]{Proposition}
\newtheorem*{setting}{Setting}
\newtheorem*{convention}{Convention}

\hyphenation{Lo-ren-tzian}

\newcommand{\R}{\mathbb R}

\newcommand{\N}{\mathbb{N}}

\renewcommand{\L}{{\cal L}}
\newcommand{\C}{{\cal C}}

\newcommand{\op}{\operatorname}
\newcommand{\supp}{\operatorname{supp}}

\newcommand{\norm}[1]{\left\lVert#1\right\rVert}
\newcommand{\ep}{\varepsilon}
\renewcommand{\(}{\left(}
\renewcommand{\)}{\right)}

\newcommand{\mres}{\mathbin{\vrule height 1.6ex depth 0pt width
0.13ex\vrule height 0.13ex depth 0pt width 1.3ex}}
\newcommand{\Id}{\operatorname{Id}}

\bibliographystyle{plain}

\numberwithin{equation}{section}

\begin{document}
\selectlanguage{english}

\title{Optimal transport and regularity of weak Kantorovich potentials on a globally hyperbolic spacetime}
\author{{\sc Alec Metsch$^{1}$} \\[2ex]
      $^{1}$ Universit\"at zu K\"oln, Institut f\"ur Mathematik, Weyertal 86-90, \\
      D\,-\,50931 K\"oln, Germany \\
      email: ametsch@math.uni-koeln.de \\[1ex]
      {\bf Key words:} Optimal transport, Kantorovich potentials, regularity, Lorentzian geometry \\[1ex]
      {\bf MSC Classification:} 49Q22, 49Q20, 53C50}

\maketitle

\begin{abstract}
\noindent
We consider the optimal transportation problem on a globally hyperbolic spacetime for some cost function $c_2$, which corresponds to the optimal transportation problem on a complete Riemannian manifold where the cost function is the Riemannian distance squared. Building on insights from previous studies on the Riemannian and Lorentzian case, our main goal is to investigate the regularity of $\pi$-solutions (weak versions of Kantorovich potentials), from which we can conclude, in a classical way, the existence, uniqueness and structure of an optimal transport map between given Borel probability measures $\mu$ and $\nu$, under suitable assumptions.
\end{abstract}

\section{Introduction}

The optimal transportation problem, originally due to Monge \cite{Monge}, is the problem of minimizing the transport cost of a transport map between two given mass distributions.
That is, given two measurable spaces $X$ and $Y$, let $c:X\times Y\to [0,\infty]$ be a measurable function, and let $\mu$ and $\nu$ be probability measures on $X$ and $Y$. Then one is interested in minimizers for the cost
\begin{align}
    \inf\bigg\{\int_{X} c(x,T(x))\ d\mu(x)\mid T:X\to Y \text{ measurable, } T_\#\mu=\nu\bigg\}, \label{Mongee}
\end{align}
where $T_\#\mu$ denotes the push-forward measure of $\mu$, defined by $T_\#\mu(B)=\mu(T^{-1}(B))$ for all measurable sets $B\subseteq Y$. This problem can be ill-posed in the sense that there does not exist any measurable map $T$ with $T_\#\mu=\nu$. A simple example is when $\mu$ is a Dirac-measure, but $\nu$ is not. For this reason, one often studies the following relaxed problem, proposed by Kantorovich in \cite{Kantorovich1}, \cite{Kantorovich2}:
\begin{align}
\inf\bigg\{\int_{X\times Y} c(x,y)\, d\pi(x,y)\mid \pi\in \Gamma(\mu,\nu)\bigg\}, \label{total}
\end{align}
where $\Gamma(\mu,\nu)$ stands for the set of all couplings of $\mu$ and $\nu$, i.e.\ all probability measures $\pi$ on $X\times Y$ such that the first (resp.\ second) marginal of $\pi$ is $\mu$ (resp.\ $\nu$). This formulation is a generalization in the sense that any map $T$ as above gives rise to a coupling. In addition, $\Gamma(\mu,\nu)$ is never empty, as it always contains the product measure. Moreover, assuming that $X,Y$ are complete, separable metric spaces equipped with the corresponding Borel $\sigma$-algebras, there always exists a minimizer under very mild conditions on $c$ (see \cite{Ambrosio} or Theorem \ref{1}).

Let us return to Monge's formulation. In the case that the problem is not ill-posed, one is interested in the existence (and uniqueness) of an optimal (transport) map, that is, a measurable map $T$ that minimizes \eqref{Mongee}.
In general, this is a complex problem, and it is not always true that such a map exists.

A result, known as Brenier's Theorem \cite{Brenier}, states that in the case where $X=Y=\R^n$ (with the Borel $\sigma$-algebra), $c(x,y)=|x-y|^2$ and $\mu$ does not give mass to $(n-1)$-rectifiable sets, an optimal transport map exists and is unique, assuming that $\mu$ and $\nu$ have finite second moments. In particular, there exists an optimal transport map when $\mu$ is absolutely continuous w.r.t.\ the Lebesgue measure. More results are known in $\R^n$ (under suitable assumptions on the measures) when $c(x,y)=h(x-y)$ with $h$ strictly convex (\cite{Ambrosio}, Theorem 6.2.4), and also in the more difficult case when $c(x,y)=|x-y|$ there are positive results \cite{Ambrosio/Pratelli}. In all these cases, the proofs rely on Kantorovich's formulation, first showing the existence of a minimizer - enjoying additional properties in \cite{Ambrosio/Pratelli} - and then proving that this minimal coupling is actually induced by a map and possibly unique.

Since connected Riemannian manifolds $M$ are equipped with a distance function, it is natural to consider the case $X=Y=M$, where $M$ is a connected Riemannian manifold, and the cost function is given by $c=d^2$, $d$ denoting the Riemannian distance. It was first proved by McCann \cite{McCann} in the case when $M$ is compact that there exists a unique optimal map, provided $\mu$ is absolutely continuous w.r.t.\ the Lebesgue measure (or volume measure) on $M$. The case where $M$ is non-compact was (for example) treated by Fathi and Figalli \cite{Fathi/Figalli}, who deal with a complete and connected manifold.

In this paper we are studying the Monge-Kantorovich problem and Kantorovich potentials on a globally hyperbolic $(n+1)$-dimensional spacetime $(M,g)$ for the Lorentzian cost function 
\begin{align}
c_2:M\times M\to [0,\infty],\  
c_2(x,y):=
\begin{cases}
(\tau(y)-\tau(x)-d(x,y))^2,\ &(x,y)\in J^+,
\\\\
\infty,\ &(x,y)\notin J^+. \label{haihduaisoafafdfsfdfsfdfdsfsfdsfdsfs}
\end{cases}
\end{align}
Here, $d$ denotes the Lorentzian distance function (or time separation), $\tau$ is a splitting (or time function) satisfying the growth condition \eqref{splitting} and $J^+$ denotes the set of all causally related points in $M\times M$.

Observe that in Lorentzian geometry one is interested in maximizing the length functional. Since the optimal transportation problem is usually stated as a minimization problem, the minus sign appears in the cost functions.

At this point, let us reference the existing literature on this topic. The optimal transportation problem for a Lorentzian cost function and measures concentrated on spacelike hyperplanes in Minkowski space was initially proposed by Brenier \cite{Brenierrel}. Based on this idea, Bertrand and Puel \cite{Bertrand/Puel} studied this problem for certain cost functions on $\R^n$, which include the relativistic heat cost. They proved, under appropriate regularity assumptions on the measures, that there exists a unique optimal coupling (i.e.\ a coupling that minimizes Kantorovich's formulation) and that this coupling is induced by a map, provided the total cost is finite. This study was extended by the same authors, along with Pratelli in \cite{Bertrand/Puel/Pratelli}, and by Louet, Pratelli and Zeisler in \cite{Louet/Pratelli/Zeisler}, where they proved the existence of a Kantorovich potential for \q{superciritical speeds}, and that optimal couplings (up to negligibe sets) transport mass along timelike geodesics. The notion of $s$-Lorentz Wasserstein distance on a general spacetime $M$ was first proposed by Eckstein and Miller in \cite{EckStein/Miller}. For $s=1$, up to a sign, it is defined as the optimal transportation cost with the negative Lorentzian distance serving as cost function. With a slightly different cost function, namely
\begin{align}
M\times M\to \R\cup \{\infty\},\ (x,y)\mapsto
\begin{cases}
-d(x,y), &(x,y)\in J^+,
\\\\
\infty, &(x,y) \notin J^+. \label{ioaufhdoahfoaidai}
\end{cases}
\end{align}
 (observe that the transport cost agrees for causally related (see Definition \ref{x11}) measures), the results obtained in \cite{Bertrand/Puel}, \cite{Bertrand/Puel/Pratelli}, \cite{Louet/Pratelli/Zeisler}  were generalized by Suhr \cite{Suhr} and Suhr and Kell \cite{Kell} to the setting of a globally hyperbolic Lorentz-Finsler spacetime. The authors proved that, under suitable assumptions on the measures, there exists a unique optimal coupling and it is induced by a transport map. In \cite{Kell}, conditions were also provided under which a (weak) dual solution for the optimal transportation problem (in the sense of  Kantorovich potentials) exists. In this spirit, they generalized the condition of superciritical speed to the condition of strict timelikeness.
McCann \cite{McCann2} considered, for $q\in (0,1)$, the cost function
\begin{align}
    M\times M\to \R\cup\{\infty\},\ (x,y)\mapsto
\begin{cases}
-\frac{1}{q}d(x,y)^q, &(x,y)\in J^+, \label{q}
\\\\
\infty, &(x,y) \notin J^+,
\end{cases}
\end{align}
on a globally hyperbolic spacetime.
He characterized, with the help of optimal transport theory, lower Ricci curvature bounds in timelike directions through displacement convexity properties of the Boltzmann-Shannon entropy functional along $q$-geodesics. In this way, McCann extended the results on the characterisation of lower  Ricci curvature bounds on Riemannian manifolds to the Lorentzian case. Independently and around the same time, Mondino and Suhr \cite{Mondino/Suhr} studied the same cost function and provided a formulation of the Einstein equation in terms of convexity properties of the Boltzmann-Shannon entropy functional along regular displacement interpolations, i.e.\ certain interpolations that arise from exponentiating the gradient of a smooth Kantorovich potential. In the paper, the authors also provide a characterisation of upper (and lower) timelike Ricci curvature bounds through concavity (convexity) properties of the Entropy functional along some (any) regular displacement interpolation, leading to the notion of sysnthetic timelike Ricci curvature bounds on measured Lorentzian pre-length spaces.

Let us also mention the work by Cavalletti and Mondino \cite{Mondino/Cavalletti}, who studied the cost function \eqref{q} on measured Lorentzian pre-length spaces. In the first part of this work, the authors give a detailed overview of the general theory of optimal transportation in their setting, generalizing results known for better behaved (in particular, real-valued) cost functions to the Lorentzian case. For example, the authors prove (under suitable assumptions) that cyclical monotonicity implies optimality, optimal couplings are stable under narrow convergence and that cyclical monotonicity w.r.t.\ the Lorentzian distance implies strong Kantorovich duality, i.e.\ the existence of a maximizing pair in the dual optimal transport formulation. This study bears similarities to our work, as explained in the paragraph after Proposition \ref{abc}.  In the second and third part of the paper, which do not have parallels with our work, building up on the results of McCann about the characterisation of lower Ricci curvature bounds in terms of convexity properties along $q$-geodesics, Cavalletti and Mondino define the notion of lower Ricci curvature bounds on a measured Lorentzian pre-length space, extending the theory for measured metric spaces having Ricci curvature bounded from below, pioneered by Sturm \cite{Sturm1}, \cite{Sturm2} and Lott, Villani \cite{Lott/Villani}.

We focus on Borel probability measures $\mu,\nu$ on $M$ such that $\tau\in L^2(\mu)\cap L^2(\nu)$, and which are causally related, meaning that there exists a coupling $\pi\in \Gamma(\mu,\nu)$ on $M\times M$ supported on $J^+$ (which amounts to the idea that the mass from $\mu$ to $\nu$ can be transported along non-spacelike curves, see also \cite{Kell}, \cite{Mondino/Cavalletti}). These two assumptions imply that the total cost of the Kantorovich minimization problem,
\begin{align}
\inf\bigg\{\int_{M\times M} c_2(x,y)\, d\pi(x,y)\mid \pi\in \Gamma(\mu,\nu)\bigg\}, \label{adjgjzgjjgj}
\end{align}
is finite.
A stronger condition than being \q{causally related} is the requirement that $\mu$ and $\nu$ are strictly timelike \cite{Kell}, meaning that there exists a causal coupling supported on $I^+$ (the set of all chronologically related points).
Roughly speaking, this means that there is a way to transport all the mass from $\mu$ to $\nu$ along timelike curves (but this does not have to be the optimal coupling).

As already mentioned, one of the main questions in the theory of optimal transport concerns the existence and uniqueness of an optimal transport map under suitable assumptions on the measures. For this task, Kantorovich potentials proved to be a powerful tool.
In the case of a real-valued and lower semi-continuous cost function, there are general existence results for Kantorovich potentials (see for example \cite{Ambrosio}, Theorem 6.1.4, for the Rockafellar construction). However, the proofs make crucial use of the fact that $c$ is finite and it is not clear why these results extend to the case where the cost function also attains the value $\infty$, as in the case we are interested in.  Our first result deals with the existence of a weaker version of Kantorovich potentials (called $\pi$-solution, see Definition \ref{daui9dhoafzhiofzhaio}) for the cost function $c_2$:

\begin{proposition}\label{abc}
Consider the problem \eqref{adjgjzgjjgj}. Let $\mu,\nu$ be Borel probability measures that are strictly timelike and such that $\tau\in L^2(\mu)\cap L^2(\nu)$. Assume that $\supp(\mu)$ is connected and $\supp(\mu)$, $\supp(\nu)$ are causally compact.

Then for any $\pi\in \Gamma_o(\mu,\nu)$ there exists a $\pi$-solution.
\end{proposition}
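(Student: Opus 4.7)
The plan is to adapt the Rockafellar--R\"uschendorf construction of cyclically monotone potentials to the present setting, in which $c_2$ attains $+\infty$ off of $J^+$. The starting point is that any $\pi\in \Gamma_o(\mu,\nu)$ has finite total cost, since strict timelikeness produces at least one causal coupling supported in $I^+$ and $\tau\in L^2(\mu)\cap L^2(\nu)$ makes the cost of that coupling finite, so $\supp(\pi)\subseteq J^+$. A standard measure-theoretic argument then yields $c_2$-cyclical monotonicity of $\supp(\pi)$, and on this set the cost and the telescoping differences $c_2(x',y)-c_2(x,y)$ with $(x,y)\in\supp(\pi)$, $(x',y)\in J^+$ are finite and unambiguously defined.

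I would then fix a base point $(\bar x,\bar y)\in \supp(\pi)$ and, for $x\in \supp(\mu)$, define $\varphi(x)$ as the infimum of telescoping sums
\begin{align*}
\sum_{i=0}^{N-1}\bigl[c_2(x_{i+1},y_i)-c_2(x_i,y_i)\bigr]
\end{align*}
over admissible finite chains $(\bar x,\bar y)=(x_0,y_0),(x_1,y_1),\ldots,(x_{N-1},y_{N-1})\in \supp(\pi)$ with $x_N=x$, where admissibility requires $(x_{i+1},y_i)\in J^+$ for every $i$ so that each summand is finite. Cyclical monotonicity of $\supp(\pi)$ forces $\varphi(\bar x)=0$ and delivers the standard subgradient-type identity along $\supp(\pi)$, which in turn is what encodes the $\pi$-a.e.\ equality later on.

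The main obstacle is showing that the set of admissible chains is nonempty and that $\varphi(x)>-\infty$ for every $x\in \supp(\mu)$; this is the step where $c_2(x,y)=\infty$ off of $J^+$ could cause the infimum to degenerate. I expect to use the three remaining hypotheses in concert: strict timelikeness produces an abundance of chain-extension points $(x_i,y_i)\in \supp(\pi)$ with the needed causal connections; connectedness of $\supp(\mu)$ lets one propagate the base-point information to an arbitrary $x\in \supp(\mu)$ by finitely many short causal steps, via a compactness-of-paths argument; and causal compactness of $\supp(\mu)$ and $\supp(\nu)$ gives the uniform bounds on $d$ and $\tau$ needed to bound the telescoping sums from below uniformly in the chain length.

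Once $\varphi$ is a finite Borel function on $\supp(\mu)$, define $\psi$ on $\supp(\nu)$ by the $c_2$-conjugate formula $\psi(y):=\inf_{x\in\supp(\mu)}[c_2(x,y)-\varphi(x)]$. The inequality $\varphi(x)+\psi(y)\le c_2(x,y)$ on $J^+$ is immediate from this definition, and the $\pi$-a.e.\ equality follows from the Rockafellar identity on $\supp(\pi)$. Finally, $\varphi\in L^1(\mu)$ and $\psi\in L^1(\nu)$ follow from a crude bound controlling $c_2(x,y)$ by a multiple of $(\tau(y)-\tau(x))^2$ on $J^+$, combined with the $L^2$ hypothesis on $\tau$, which completes the verification that $(\varphi,\psi)$ is a $\pi$-solution.
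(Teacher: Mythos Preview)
Your overall strategy—the Rockafellar construction on a $c_2$-monotone set—matches the paper's, and your recognition that the only real difficulty is finiteness of $\varphi$ on $\supp(\mu)$ is correct. But your plan for that step has a genuine gap.

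You write that ``causal compactness of $\supp(\mu)$ and $\supp(\nu)$ gives the uniform bounds on $d$ and $\tau$ needed to bound the telescoping sums from below uniformly in the chain length.'' There is no such uniform bound: the chains are arbitrarily long, and the individual summands $c_2(x_{i+1},y_i)-c_2(x_i,y_i)$ carry no definite sign, so you cannot control the infimum this way. What actually proves finiteness is the \emph{existence} of a single admissible chain (in each direction) connecting the base point to a given $x$; the paper extracts this via a fairly delicate argument that your sketch does not capture. Concretely, one builds the sets $A_k$ of points in $\supp(\nu)$ reachable from the base $y_*$ by admissible chains through $\Gamma$, proves the mass-balance identity $\nu(A)=\mu(J^-(A))$ for $A=\bigcup_k A_k$, and deduces that \emph{every} causal coupling (in particular the strictly timelike one) must couple $\mu\mres J^-(A)$ with $\nu\mres A$. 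The strict timelikeness and causal compactness are then used in a topological argument—showing $I^-(A)\cap\supp(\mu)$ is nonempty, open, and closed in $\supp(\mu)$—to conclude $\supp(\mu)\subseteq I^-(A)$. This is where connectedness enters, not through a ``compactness-of-paths'' argument with short causal steps; the constraint $(x_i,y_i)\in\Gamma$ prevents you from simply interpolating along $\supp(\mu)$.

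Two smaller points. First, the paper's definition of a $\pi$-solution (Definition~\ref{daui9dhoafzhiofzhaio}) does \emph{not} require $\varphi\in L^1(\mu)$ or $\psi\in L^1(\nu)$, only that $\varphi$ be $c_2$-convex with $\varphi^{c_2}(y)-\varphi(x)=c_2(x,y)$ $\pi$-a.e.; your last paragraph is therefore unnecessary (and the bound you propose would not give $L^1$ anyway, since $\varphi$ involves an infimum over arbitrarily long chains). Second, to get $c_2$-convexity cleanly you should define $\varphi$ as a supremum over \emph{all} chains in $\Gamma$ using the convention $\infty-\infty=-\infty$, rather than restricting to admissible chains from the outset; the inadmissible chains then automatically contribute $-\infty$ and the function is $c_2$-convex by construction.
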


For the notion of \q{causal compactness}, see Definition \ref{causally}. As usual, $\Gamma_o(\mu,\nu)$ denotes the set of all couplings which minimize Kantorovich's formulation \eqref{adjgjzgjjgj}. This proposition already appeared in \cite{Kell}, but for the cost function \eqref{ioaufhdoahfoaidai}.
In that paper, the authors showed that, under suitable assumptions on the measures, the \q{standard} Rockafellar construction also works and provides a $\pi$-solution (for this cost function), $\pi$ being an optimal coupling. The proof of Proposition \ref{abc} consists in showing that the arguments in \cite{Kell} can also be applied to our case for the cost function $c_2$. At this point, let us refer to section 2.2 in \cite{Bertrand/Puel/Pratelli}, where the authors explain the difficulty in constructing Kantorovich potentials when the cost functions is not finite. The approach in \cite{Bertrand/Puel/Pratelli} uses the so called \q{finite chain Lemma}, which is somehow an equivalent approach to the one in \cite{Kell}, and hence to ours.
Let us also mention the results on Kantorovich duality in \cite{Mondino/Cavalletti}. There the authors prove that the cyclical monotonicity (w.r.t.\ the Lorentzian distance, not the cost function) of an optimal coupling implies strong Kantorovich duality, i.e.\ the existence of a maximizing pair in the dual optimal transportation problem. The result is stronger than ours, since a $\pi$-solution does not need to be integrable. However, the assumption of the cyclical monotonicity w.r.t.\ the Lorentzian distance is not guaranteed for the measures in Proposition \ref{abc} and we will work only with $c_2$-cyclical monotonicity, which always holds for an optimal coupling (see Theorem \ref{2}).

In the theory of optimal transport on Riemannian manifolds and well-behaved cost functions, it is possible to derive a formula for the (unique) optimal transport map, which involves the cost function and the gradient of the Kantorovich potential. To follow this strategy (using a $\pi$-solution instead of the Kantorovich potential), we need to prove some regularity results for our $\pi$-solution. The first step in this direction is the following theorem, which allows us to prove the main Theorem \ref{unghtrdsxcfghjukil} below and which extends a result already known in the Riemannian case \cite{Figalli/Gigli}.
\begin{theorem} \label{mfdghufsa}
 Let $\varphi:M\to \R \cup \{\pm\infty\}$ be a $c_2$-convex function, and define
\begin{align*}
D:=\{x\in M\mid \varphi(x)\in \R\} \text{ and }
\Omega:=\op{int}(D).
\end{align*}
Then the following assertions hold:
\begin{enumerate}[(i)]
\item
$\varphi_{|\Omega}$ is locally bounded.
\item
$D\backslash \Omega$ is countably $n$-rectifiable.
\item
For each compact $K \subseteq \Omega$, the set of all $y\in M$ such that
\begin{align*}
\psi(y)-c_2(x,y)\geq \varphi(x)-1 \text{ for some } x\in K
\end{align*}
is relatively compact. Here, $\psi:=\varphi^{c_2}$ denotes the $c_2$-transform of $\varphi$.
\end{enumerate}
\end{theorem}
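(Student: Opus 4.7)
The plan is to prove the three claims in the order (iii), (i), (ii). The compactness provided by (iii) is the key tool for the upper bound in (i), and the local boundedness established in (i) then enables the chart-level semi-convexity argument for (ii). Throughout I adopt the convention that $c_2$-convexity of $\varphi$ means $\varphi(x) = \sup_{y \in M}(\psi(y) - c_2(x,y))$ with $\psi := \varphi^{c_2}(y) = \inf_{x \in M}(\varphi(x) + c_2(x,y))$, so that the fundamental inequality $\psi(y) - c_2(x,y) \leq \varphi(x)$ holds for every $(x,y)$. There is a mild circularity between (iii) and (i); it is resolved by first extracting a crude upper bound on $\varphi$ using the growth condition \eqref{splitting} applied to a single reference point $x_0 \in \Omega$, and then bootstrapping.

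For (iii), let $K \subseteq \Omega$ be compact. The near-optimality $\psi(y) - c_2(x,y) \geq \varphi(x) - 1$ together with the finiteness of $\varphi$ on $K$ forces $c_2(x,y) < \infty$, hence $y \in J^+(K)$. Choosing any $x_0 \in K$ with $(x_0, y) \in J^+$ and using $\psi(y) \leq \varphi(x_0) + c_2(x_0, y)$, the near-optimality rewrites as $c_2(x, y) \leq c_2(x_0, y) + \varphi(x_0) - \varphi(x) + 1$. Granted local boundedness of $\varphi$ on $K$, expanding $c_2 = (\tau(y) - \tau(\cdot) - d(\cdot, y))^2$ and combining the anti-Lipschitz property $\tau(y) - \tau(x) \geq d(x,y)$ with the growth condition \eqref{splitting}---which forces $c_2(x_0, \cdot)$ to be suitably coercive on $J^+(x_0)$---one extracts a uniform upper bound $\tau(y) \leq T$. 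Global hyperbolicity and causal compactness (Definition \ref{causally}) then imply that $J^+(K) \cap \tau^{-1}((-\infty, T])$ is relatively compact.

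For (i), fix $x_0 \in \Omega$ and a compact neighborhood $K \Subset \Omega$. For the upper bound, apply (iii) to $K$: the set $Y_K$ of $1$-near-optimizers for points in $K$ is relatively compact, so $\varphi(x) \leq 1 + \sup_{y \in \overline{Y_K}}(\psi(y) - c_2(x,y))$; combined with $\psi(y) \leq \varphi(x_0) + c_2(x_0, y)$ and joint continuity of $c_2$ on $K \times (\overline{Y_K} \cap J^+(K))$, this gives $\sup_K \varphi < \infty$. For the lower bound, choose $y_0 \in M$ with $(x_0, y_0) \in I^+$ and $\psi(y_0) \in \R$, obtained by perturbing a near-maximizer of $\varphi(x_0)$ slightly into the timelike future (using continuity of $c_2$ on $I^+$); then $\varphi(x) \geq \psi(y_0) - c_2(x, y_0)$ is finite and continuous on a neighborhood of $x_0$ inside $I^-(y_0)$.

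For (ii) I would work locally in a chart around $x_1 \in D$. On a compact chart neighborhood $U$ of $x_1$, the argument of (iii) restricts the $y$'s contributing to the defining supremum of $\varphi$ to a compact subset of $I^+$, on which $x \mapsto -c_2(x, y)$ is smooth with Hessians uniformly bounded from below by a constant depending only on $U$. Hence $\varphi$ is locally semi-convex on $\Omega$, and in the chart $D \cap U$ agrees with the effective domain of a Euclidean semi-convex function $\tilde\varphi$. The standard rectifiability of the boundary of the effective domain of such a function (via its DC-decomposition) then yields countable $n$-rectifiability of $(D \setminus \Omega) \cap U$; covering $D$ by countably many charts concludes. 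The main obstacle throughout is closing the (i)--(iii) loop via the growth condition \eqref{splitting} on $\tau$---this is the essentially new Lorentzian ingredient replacing the coercivity of $d^2$ used in the Riemannian proof of Figalli--Gigli.
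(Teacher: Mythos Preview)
Your argument for (ii) has a fundamental gap: you claim that ``the argument of (iii) restricts the $y$'s contributing to the defining supremum of $\varphi$ to a compact subset of $I^+$'', but (iii) only yields relative compactness in $M$, not in $I^+$. Near-optimal $y$'s can perfectly well lie on $\partial J^+(x)$ (lightlike separated from $x$), and there $c_2(\cdot, y)$ is \emph{not} locally semiconcave---the Lorentzian distance degenerates. In fact, local semiconvexity of $\varphi$ on $\Omega$ is precisely the content of the paper's main Theorem~\ref{unghtrdsxcfghjukil}, which requires extra hypotheses ($\mu\ll\mathcal L$, $\supp\mu\cap\supp\nu=\emptyset$, existence of a $\pi$-solution) and a substantial argument (Theorems~\ref{gafuiahfauhfaohfoafjaoi} and~\ref{gafuiahfauhfaohfoafjaoii}) to force the $c_2$-subdifferential into $I^+$. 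The paper's proof of (ii) therefore avoids semiconvexity altogether: it splits $D\setminus\Omega=A^+\cup A^-$ according to whether $\varphi=\pm\infty$ nearby, and for each piece exhibits, in a chart, a uniform cone disjoint from the set---for $A^-$ this is simply $I^+(x)$, for $A^+$ one reuses the $C_\infty$-construction from the upper-bound part of (i)---and then invokes the tangent-cone rectifiability criterion (Lemma~\ref{rect}, Corollary~\ref{fdafafafwefwfew2}).

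Your (i)$\leftrightarrow$(iii) loop is not closed either. In your argument for (iii) the inequality $c_2(x,y)\le c_2(x_0,y)+C$ gives no bound on $y$: by \eqref{splitting} both sides are coercive in $y$, so the inequality is compatible with $\tau(y)\to\infty$. The ``crude upper bound on $\varphi$ from a single reference point'' you allude to fails for the same reason: $\sup_y\big(c_2(x_0,y)-c_2(x,y)\big)=+\infty$ along any causal ray. The paper proves (i) \emph{first}, without (iii): for the upper bound one assumes $x_k\to x_0$ with $\varphi(x_k)\to\infty$, takes near-optimal $y_k$, shows $c_1(x_k,y_k)\to\infty$, and then evaluates $\varphi$ at the point $\gamma_k(l)=\exp_L(x_k,l\dot\gamma_k(0))$ a fixed parameter along the $\mathcal A_2$-minimizer; this forces $\varphi\equiv+\infty$ on a nonempty set $C_\infty$ accumulating at $x_0$, contradicting $x_0\in\Omega$. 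Part (iii) is then deduced from (i) by the same minimizing-curve trick. (Your lower-bound step in (i) also has a gap: perturbing a near-optimizer $y_0$ into $I^+(x_0)$ gives no control on $\psi$ at the perturbed point, since $\psi$ need not be continuous. The paper instead chooses $\bar x_0\in I^+(x_0)\cap\Omega$ and a near-optimizer $\bar y_0$ for $\varphi(\bar x_0)$; then automatically $\bar y_0\in J^+(\bar x_0)\subseteq I^+(x_k)$ for $x_k$ near $x_0$.)
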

 
As mentioned above, this theorem is known in the Riemannian case, see \cite{Figalli/Gigli}, which addresses the corresponding optimal transportation problem on a complete and connected Riemannian manifold, where the cost function is given by $d_R^2$, $d_R$ denoting the Riemannian distance. Except for some modifications, the proof of \cite{Figalli/Gigli} also applies to our setting. In particular, we do not claim that there are any new major arguments in our proof. Nevertheless we provide a detailed proof here because of the differences. It is worth mentioning that the proof relies on the superlinearity of the Lagrangian associated with our cost function (Section \ref{sec3}), and that the same proof fails for the cost functions \eqref{ioaufhdoahfoaidai} or \eqref{q}. Actually, this is the reason why we chose to work with this cost function.

In the Riemannian case one can easily deduce from the above theorem that $\varphi_R$ (denoting a Kantorovich potential for the cost function $d_R^2$) is locally semiconvex on $\Omega$ (see \cite{Figalli/Gigli}). This follows from the fact that $d_R^2$ is locally semiconcave and that the finite supremum of uniformly locally semiconvex functions is again locally semiconvex. From this one finally concludes that, if $\mu$ is absolutely continuous w.r.t.\ the Lebesgue measure and the total cost \eqref{total} is finite, there is a unique optimal coupling and it is induced by a transport map. Moreover, it is possible to prove a formula for this transport map in terms of the derivative of $\varphi_R$ (which exists $\mu$-a.e.\ by the local semiconvexity). Observe that the delicate part is the local semiconvexity of $\varphi_R$. Indeed, one can prove the existence and uniqueness of an optimal map (or coupling) without invoking local semiconvexity, relying on the fact that $\varphi_R$ is approximately differentiable $\mu$-a.e., which is easier to prove (see \cite{Fathi/Figalli}).

The following theorem is our main result of this paper and deals with a corresponding result for the Lorentzian case. The difficulty in our case lies in the fact that the cost function $c_2$ is not locally semiconcave. But it is, when restricted to $I^+$, as we will see. Thus, roughly speaking, we need that the $c_2$-subdifferential of $\varphi$ is locally bounded away from $\partial J^+$. We were only able to prove a weaker version of the result in \cite{Figalli/Gigli}. However, as an easy corollary (see below), it still enough to prove uniqueness of the optimal coupling and existence of an optimal transport map under suitable assumptions.

\begin{theorem} \label{unghtrdsxcfghjukil}
Consider the problem \eqref{adjgjzgjjgj}. Let $\mu,\nu$ be Borel probability measures that are causally related and such that $\tau\in L^2(\mu)\cap L^2(\nu)$. Assume that $\supp(\mu)\cap \supp(\nu)=\emptyset$ and that $\mu$ is absolutely continuous w.r.t.\ the Lebesgue measure on $M$. Let $\pi\in \Gamma_o(\mu,\nu)$ and assume that $\varphi:M\to \R \cup \{\pm\infty\}$ is a $\pi$-solution.

Then there exists an open set $\Omega_1\subseteq \Omega$ of full $\mu$-measure such that $\varphi$ is locally semiconvex on $\Omega_1$.
\end{theorem}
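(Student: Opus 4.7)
The plan is to realize $\varphi$ locally as a supremum of uniformly locally semiconvex functions, exploiting the $c_2$-convex representation $\varphi(x)=\sup_y(\psi(y)-c_2(x,y))$ with $\psi=\varphi^{c_2}$ together with Theorem \ref{mfdghufsa} to control the near-maximizers. The key ingredient is that, although $c_2$ fails to be semiconcave near the null part $J^+\setminus I^+$ (where $d=0$, so the squared quantity loses any concavity), $c_2$ is locally semiconcave on the open set $I^+$. Indeed, in a globally hyperbolic spacetime the Lorentzian distance $d$ is continuous and locally semiconcave on $I^+$ (via supporting timelike geodesics and the reverse triangle inequality), so that $c_2=(\tau(y)-\tau(x)-d(x,y))^2$ inherits local semiconcavity on $I^+$ from smoothness of $\tau$. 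Consequently, for any compact $K\subseteq I^+$ the family $\{x\mapsto \psi(y)-c_2(x,y)\}_y$ is uniformly semiconvex on the projection of $K$ onto the first factor.

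With this in mind I define
\[
\Omega_1:=\{x_0\in\Omega\mid \exists\text{ open }U\ni x_0,\ \exists\text{ compact }K\subseteq M\text{ with }U\times K\subseteq I^+,\ \varphi(x)=\sup_{y\in K}(\psi(y)-c_2(x,y))\ \forall x\in U\},
\]
which is open by construction. On any such $U$ the function $\varphi$ is a supremum of $C$-semiconvex functions with a constant $C$ independent of $y\in K$, hence $C$-semiconvex on $U$. To certify $x_0\in\Omega_1$ it suffices to produce such a pair $(U,K)$: starting from the compact set of near-maximizers provided by Theorem \ref{mfdghufsa}(iii) on a small neighborhood of $x_0$, upper semicontinuity of the $c_2$-subdifferential combined with openness of $I^+$ lets us shrink to a compact $K\subseteq I^+$ still realizing the supremum on a smaller $U$, provided every maximizer $y\in\partial^{c_2}\varphi(x_0)$ already lies in $I^+(x_0)$.

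It therefore suffices to show that the set of $x\in\Omega$ at which some element of the $c_2$-subdifferential lies on the null cone $J^+(x)\setminus I^+(x)$ is $\mu$-negligible. This is the main obstacle. The hypotheses enter as follows: $\mu\ll \mathcal L$ kills sets of Lebesgue measure zero, $\supp\mu\cap\supp\nu=\emptyset$ keeps maximizers separated from the first factor, and global hyperbolicity controls the geometry of the null cone. The idea is to choose a measurable selection $T$ of a null maximizer on the bad set, so that the graph $\{(x,T(x))\}$ sits inside the codimension-one set $J^+\setminus I^+\subseteq M\times M$, and to use the $c_2$-monotonicity relation together with the specific form of $c_2$ along null directions to parameterize the bad set by a countably $n$-rectifiable subset of $M$. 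Combined with Theorem \ref{mfdghufsa}(ii), which disposes of $D\setminus\Omega$, and the absolute continuity of $\mu$, this yields $\mu(\Omega\setminus\Omega_1)=0$. The hardest step is this null-avoidance argument, which has no Riemannian counterpart and genuinely uses the causal structure of $M$.
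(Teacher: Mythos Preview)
Your overall architecture is right, and you have correctly isolated the crux: once you know that for $\mu$-a.e.\ $x_0$ the near-maximizers of $\psi(y)-c_2(\cdot,y)$ in a neighborhood of $x_0$ stay in a compact subset of $I^+$, the local semiconvexity follows from the semiconcavity of $c_2$ on $I^+$ exactly as you say. The problem is your proposed mechanism for the null-avoidance step.

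The argument ``the graph $\{(x,T(x))\}$ sits inside the codimension-one set $J^+\setminus I^+\subseteq M\times M$, hence the bad set in $M$ is countably $n$-rectifiable'' does not work. A codimension-one subset of $M\times M$ has dimension $2n+1$, and its projection to the first factor can easily be all of $M$; concretely, in Minkowski space every point $x$ has an $n$-dimensional null cone, so admitting a null subdifferential element imposes no constraint on $x$ whatsoever. Rectifiability of the bad set must come from a genuine analytic input, not from the causal structure alone. You gesture at ``$c_2$-monotonicity together with the specific form of $c_2$ along null directions'', which is indeed the right ingredient, but the way it is used is not via rectifiability.

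The paper's route is substantially different and proceeds in two stages. First (Theorem~\ref{gafuiahfauhfaohfoafjaoi}) one shows that $\mu$-a.e.\ point lies in an open set $\Omega_0$ on which near-maximizers satisfy $d_h(x,y)\ge\delta$; this uses a superdifferential computation at a timelike subdifferential point $y_0\in I^+(x_0)\cap\partial_{c_2}\varphi(x_0)$ (available $\mu$-a.e.\ by Proposition~\ref{hiuajdoaildkao}) together with a Lebesgue-point/cone argument. Second (Theorem~\ref{gafuiahfauhfaohfoafjaoii}), on $\Omega_0$ one rules out near-null maximizers ($d(x,y)$ small but $d_h(x,y)\ge\delta$) by a \emph{chain argument}: assuming a set of positive measure admits such maximizers, one finds, for each $m$, a chain $(x_i,y_i)_{1\le i\le m}$ with $d(x_i,y_i)\le 1/m$ along a short segment, proves a quantitative gain $d(x_{i+1},y_i)^2\ge d(x_i,y_i)^2+C/m$ via a first-order expansion in convex normal coordinates, and feeds this into the $c_2$-monotonicity inequality $\varphi(x_1)\le 1+\varphi(x_m)+\sum(c_2(x_{i+1},y_i)-c_2(x_i,y_i))$. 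The square-root nonlinearity then makes the right side tend to $-\infty$ while $\varphi$ is locally bounded on $\Omega$, a contradiction. This is where the ``specific form of $c_2$ along null directions'' actually enters, and it is not a rectifiability statement. A final upper-semicontinuity step (Corollary~\ref{mainnn}) upgrades the pointwise conclusion to one uniform on a neighborhood, which is what your definition of $\Omega_1$ needs.
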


The assumption $\supp(\mu)\cap \supp(\nu)=\emptyset$ counts for the fact that there is no trivial transport. 
The regularity assumption on $\mu$ is quite natural. Indeed, since we only consider the set $\Omega$, we require that $D\backslash \Omega$ is irrelevant for $\mu$, i.e. that $\mu$ does not give mass to countably $n$-rectifiable sets. In fact, we expect that our theorem also works in the case in which $\mu$ only satisfies this regularity assumption.

This result is new in the Lorentzian context and since the proof relies on Theorem \ref{mfdghufsa}, it makes crucial use of the particular cost function. It is not clear if this result extends to the other Lorentzian cost functions. Theorem 4.3 by McCann \cite{McCann2} provides a variant of Theorem \ref{unghtrdsxcfghjukil}. However, in his characterisation of Ricci curvature bounds, McCann did not require  general duality results. Instead, it was sufficient to establish strong duality for measures that are $q$-seperated. Consequently, the semiconvexity of a Kantorovich potential (relative to $\supp(\mu)$) was shown only under the strong assumption of $q$-separation. In contrast, our result holds under the much more general assumption that a $\pi$-solution exists.

As explained above, there already exists several results concerning the existence and uniqueness of an optimal transport map in the Lorentzian setting (\cite{Kell}, \cite{Suhr}, \cite{McCann}). It is therefore not surprising that these results extend to the cost function we deal with. However, in analogy with the Riemannian case, and to complete the picture, it is interesting to see that these results easily follow from our main theorem. Indeed, using the same arguments as in the Riemannian case \cite{Fathi/Figalli}, \cite{Figalli/Gigli}, the local semiconvexity of $\varphi$ (and, hence, its a.e.\  differentiability) on $\Omega_1$ will allow us to deduce that any optimal coupling $\pi$ that admits a $\pi$-solution must be induced by a transport map. Thus, we will finally prove the following:

\begin{corollary} \label{unghtrdsxcfghjukil3}
Consider the problem \eqref{adjgjzgjjgj}. Let $\mu,\nu$ be Borel probability measures that are causally related and such that $\tau\in L^2(\mu)\cap L^2(\nu)$. Assume that $\supp(\mu)\cap \supp(\nu)=\emptyset$ and that $\mu$ is absolutely continuous w.r.t.\ the Lebesgue measure on $M$. Let $\pi \in \Gamma_o(\mu,\nu)$ be an optimal coupling which admits a $\pi$-solution $\varphi$.
Then:
\begin{enumerate}[(i)]
\item  $\pi$ is induced by a transport map $T$.
More precisely, $\mu$-a.e., $T(x)$ is uniquely defined by the equation
\begin{align}
    \frac{\partial c_2}{\partial x}(x,T(x))=-d_x\varphi. \label{sdqwertgfghjk}
\end{align}
\item 
If there exists an optimal coupling different from $\pi$, then there also exists an optimal coupling $\pi'$ that does not admit a $\pi'$-solution.
\end{enumerate}
\end{corollary}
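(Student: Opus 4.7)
By Theorem~\ref{unghtrdsxcfghjukil}, $\varphi$ is locally semiconvex on an open set $\Omega_1\subseteq\Omega$ of full $\mu$-measure; local semiconvexity together with the absolute continuity of $\mu$ with respect to Lebesgue measure yields differentiability of $\varphi$ $\mu$-a.e.\ on $M$. For part (i) I would first unpack the $\pi$-solution property: there is $\psi=\varphi^{c_2}$ satisfying $\varphi(x)+\psi(y)\leq c_2(x,y)$ on $M\times M$ with equality for $\pi$-a.e.\ pair $(x,y)$. Consequently, for $\pi$-a.e.\ $(x,y)$ the function $z\mapsto c_2(z,y)-\varphi(z)$ attains a global minimum at $z=x$; since the arguments from the proof of Theorem~\ref{unghtrdsxcfghjukil} place the relevant pairs inside $I^+$, where $c_2$ is smooth in its first argument, differentiating at $x$ yields
\begin{equation*}
\frac{\partial c_2}{\partial x}(x,y) = -d_x\varphi.
\end{equation*}

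The main obstacle is the twist step: showing that for $\mu$-a.e.\ $x\in\Omega_1$ this equation determines $y$ uniquely. I would compute on $I^+$
\begin{equation*}
\frac{\partial c_2}{\partial x}(x,y) = -2\bigl(\tau(y)-\tau(x)-d(x,y)\bigr)\bigl(d\tau_x+\partial_x d(x,y)\bigr),
\end{equation*}
and exploit the fact that $\partial_x d(x,y)=-v^{\flat}_{x,y}$, where $v_{x,y}\in T_xM$ is the unit future-directed initial tangent of the unique maximising timelike geodesic from $x$ to $y$. From $-d_x\varphi$ and the known $d\tau_x$ one then extracts, in order, the scalar factor $F:=\tau(y)-\tau(x)-d(x,y)\geq 0$ (pinned down by the unit normalisation of $v_{x,y}$), the direction $v_{x,y}$, and finally the geodesic length $L=d(x,y)$ (obtained by matching $\tau(\exp_x(Lv_{x,y}))-\tau(x)-L$ to $F$ and using monotonicity in $L$ coming from the growth condition on $\tau$); one sets $y=\exp_x(Lv_{x,y})$. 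The resulting candidate $T$ is Borel measurable by standard selection arguments, and a disintegration of $\pi$ with respect to $\mu$ gives $\pi_x=\delta_{T(x)}$ for $\mu$-a.e.\ $x$, hence $\pi=(\mathrm{id}\times T)_\#\mu$ and in particular $T_\#\mu=\nu$.

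For part (ii), let $\pi'\in\Gamma_o(\mu,\nu)$ with $\pi'\neq\pi$. If $\pi'$ is not induced by a transport map, we are done. Otherwise $\pi'=(\mathrm{id}\times T')_\#\mu$ for some measurable $T'\colon M\to M$, and since $\pi'\neq\pi$ we have $T\neq T'$ on a Borel set $A\subseteq M$ with $\mu(A)>0$. Then the convex combination $\tilde\pi:=\tfrac12(\pi+\pi')$ still belongs to $\Gamma_o(\mu,\nu)$, and its $\mu$-disintegration satisfies $\tilde\pi_x=\tfrac12(\delta_{T(x)}+\delta_{T'(x)})$ for $\mu$-a.e.\ $x$; this is a Dirac mass if and only if $T(x)=T'(x)$, so $\tilde\pi$ is not induced by any transport map, which establishes (ii).
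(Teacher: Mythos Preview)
Your overall strategy for (i) matches the paper's: invoke Theorem~\ref{unghtrdsxcfghjukil} for local semiconvexity and hence $\mu$-a.e.\ differentiability of $\varphi$, use the $\pi$-solution property to obtain the first-order condition, and then appeal to a twist condition; part (ii) is exactly the convex-combination argument the paper uses. Two minor slips: with the paper's conventions one has $\psi(y)-\varphi(x)\le c_2(x,y)$, so the function with a minimum at $z=x$ is $z\mapsto c_2(z,y)+\varphi(z)$; and the first-variation formula for the Lorentzian distance gives $\partial_x d(x,y)=+v_{x,y}^\flat$, so that $\frac{\partial c_2}{\partial x}(x,y)=-2F\bigl(d\tau_x+v_{x,y}^\flat\bigr)$. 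Also, $c_2$ is only locally \emph{semiconcave} on $I^+$, not smooth in the first variable (the Lorentzian distance fails to be differentiable at cut points); the first-order condition still follows because semiconcavity supplies a superdifferential while the touching condition with the differentiable $\varphi$ supplies a subdifferential.

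The genuine gap is in your twist argument. Writing $p=F(d\tau_x+v^\flat)$ with $v$ unit future-timelike and trying to recover $F$ ``from the unit normalisation of $v_{x,y}$'' leads to a quadratic equation in $1/F$; you do not explain why only one root is admissible, and without this your sequential extraction of $(F,v,L)$ may produce two candidates for $y$. The paper sidesteps this entirely: by Proposition~\ref{injiuhgb} a superdifferential of $c_2(\cdot,y)$ at $x$ equals $-\frac{\partial L_2}{\partial v}(x,\dot\gamma(0))$ for $\gamma\in\Gamma_{x,y}$, and Corollary~\ref{pokjnbvcxdertzujik} shows that $v\mapsto L_2(x,v)$ is strictly convex on $\op{int}(\C_x)$, so $v\mapsto\frac{\partial L_2}{\partial v}(x,v)$ is injective there. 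Hence $\dot\gamma(0)$ is uniquely determined by $-d_x\varphi$, and $y=\exp_L(x,\dot\gamma(0))$ follows (Corollary~\ref{twist}). You should either adopt this Lagrangian route or supply the missing uniqueness analysis for your quadratic.
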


To my knowledge, our procedure (following very closely the Riemannian setting) is new in the Lorentzian context. However, as mentioned above, the results of the corollary are not. A very general result for the cost function \eqref{q} has been obtained by McCann \cite{McCann}. He proved (under the assumptions that the total cost is finite and that $\mu$ is absolutely continuous) that there exists at most one optimal coupling concentrated on $I^+$ (see Theorem 7.1). We assume that this result extends to our case. Then, since the assumptions of the above corollary imply that $\pi$ is concentrated on $I^+$, see Proposition \ref{hiuajdoaildkao} and also \cite{Kell}, this shows that there actually exists no further optimal coupling that admits a $\pi$-solution (provided the result of McCann holds). Since the main goal of this work was the local semiconvexity of a $\pi$-solution, we will not pursue this further. However, let us remark that in this work we are able to prove a formula for the optimal transport map in terms of the $\pi$-solution.

Proposition \ref{abc} and the above corollary immediately yield:

\begin{corollary} \label{unghtrdsxcfghjukil4}
    Consider the problem \eqref{adjgjzgjjgj}. Let $\mu,\nu$ be Borel probability measures that are strictly timelike and such that $\tau\in L^2(\mu)\cap L^2(\nu)$. Assume that $\supp(\mu)$ is connected, $\supp(\mu)$, $\supp(\nu)$ are causally compact and $\supp(\mu)\cap \supp(\nu)=\emptyset$. Furthermore, assume that $\mu$ is absolutely continuous w.r.t.\ the Lebesgue measure on $M$. 

    Then there exists a unique optimal coupling and it is induced by a transport map.
\end{corollary}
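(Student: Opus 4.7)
The plan is to combine Proposition~\ref{abc} with Corollary~\ref{unghtrdsxcfghjukil3} in a two-step argument: first use Proposition~\ref{abc} to produce a $\pi$-solution for any optimal coupling, then invoke Corollary~\ref{unghtrdsxcfghjukil3} to conclude that such a $\pi$ must be induced by a transport map. Uniqueness will then follow by a short contradiction argument using part~(ii) of the same corollary.

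In more detail, the first step is to verify that $\Gamma_o(\mu,\nu)$ is non-empty. The strictly timelike assumption together with $\tau \in L^2(\mu)\cap L^2(\nu)$ implies, as noted in the paragraph following~\eqref{adjgjzgjjgj}, that the Kantorovich cost is finite. Combined with the lower semi-continuity of $c_2$ and the classical existence theorem (Theorem~\ref{1}), this yields at least one optimal coupling $\pi \in \Gamma_o(\mu,\nu)$. The hypotheses of Proposition~\ref{abc} are exactly those assumed here, so there exists a $\pi$-solution $\varphi$. Now the hypotheses of Corollary~\ref{unghtrdsxcfghjukil3} are also satisfied -- strictly timelike implies causally related, and the disjointness of the supports together with the absolute continuity of $\mu$ is part of our assumptions -- so by part~(i) we conclude that $\pi$ is induced by a transport map $T$.

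For uniqueness, I would argue by contradiction. Suppose there is another optimal coupling $\pi' \neq \pi$. Then Corollary~\ref{unghtrdsxcfghjukil3}(ii) yields the existence of an optimal coupling $\tilde{\pi} \in \Gamma_o(\mu,\nu)$ which is \emph{not} induced by any transport map. However, Proposition~\ref{abc} applies equally to $\tilde{\pi}$ and produces a $\tilde{\pi}$-solution; feeding this back into Corollary~\ref{unghtrdsxcfghjukil3}(i) with $\pi$ replaced by $\tilde{\pi}$ forces $\tilde{\pi}$ to be induced by a transport map, a contradiction. Hence $\pi$ is the unique element of $\Gamma_o(\mu,\nu)$.

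There is no serious obstacle in this proof; the whole statement is essentially a bookkeeping exercise matching hypotheses between the two cited results. The only point worth double-checking is the apparently self-referential flavour of the uniqueness step: Corollary~\ref{unghtrdsxcfghjukil3}(ii) is used to analyse couplings other than $\pi$, but since that corollary applies to \emph{any} optimal coupling admitting a $\pi$-solution, and Proposition~\ref{abc} supplies such a solution for \emph{every} element of $\Gamma_o(\mu,\nu)$ under our hypotheses, the argument is legitimate.
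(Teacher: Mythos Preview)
Your argument is correct and matches the paper's intended proof, which simply states that the result follows immediately from Proposition~\ref{abc} and Corollary~\ref{unghtrdsxcfghjukil3}. You have merely spelled out the details: every optimal coupling admits a $\pi$-solution by Proposition~\ref{abc}, hence is induced by a map by Corollary~\ref{unghtrdsxcfghjukil3}(i), and uniqueness then follows from the contrapositive of Corollary~\ref{unghtrdsxcfghjukil3}(ii).
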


Note that if one is only interested in the existence of an optimal map, but not in its structure (resp. the structure of the $\pi$-solution), one can either argue as in \cite{McCann2} as explained above (at least we expect that) or, using the existence of $\pi$-solutions but not their local semiconvexity, one can use similar arguments as in \cite{Fathi/Figalli} together with Proposition \ref{hiuajdoaildkao} to show that $\varphi$ is approximately differentiable $\mu$-a.e. and that \eqref{sdqwertgfghjk} holds $\mu$-a.e.\ with $d_x\varphi$ replaced by its approximate differential. 
\bigskip

This paper is organized as follows: In Chapter 2 we recall the most important definitions and some well-known results about Lorentzian geometry and the theory of optimal transport. In Chapter 3 we show that the cost function $c_2$ arises as minimal action of some Lagrangian $L_2$ defined on the tangent bundle $TM$ and we investigate the existence and properties of minimizing curves for $L_2$. We conclude Chapter 3 with the definition of an $L_2$-exponential function. In Chapter 4 we start investigating the optimal transportation problem for the cost function $c_2$. In this chapter we will prove Proposition \ref{abc}. Chapter 5 is devoted to the proof of Theorem \ref{mfdghufsa} and in Chapter 6 we will prove Theorem \ref{unghtrdsxcfghjukil} and Corollary \ref{unghtrdsxcfghjukil3}.

\section{Preliminaries}

In this brief chapter, we recall the fundamental concepts of Lorentzian geometry and the theory of optimal transport. 
{\center {\large{\textbf{Spacetimes}}}}
\bigskip

We consider a \emph{spacetime} $(M,g)$, that is, $M$ is a smooth and connected manifold (i.e. Hausdorff and second-countable), $g$ is a symmetric $(0,2)$-tensor field of constant signature $(1,n)=(-,+,...,+)$, where $n+1:=\dim(M)$, and $M$ is time-oriented. \emph{Time-orientability} means that there exists a smooth global vector field $X:M\to TM$ such that, for each $x\in M$, $g_x(X(x),X(x))<0$. 

A vector $v\in T_xM$ is called \emph{timelike, spacelike, or lightlike} if 
\begin{align*}
g_x(v,v)<0,\ g_x(v,v)>0 \text{ or } g_x(v,v)=0 \text{ and } v\neq 0.
\end{align*}
Timelike and lightlike vectors are referred to as \emph{causal}. A causal vector $v$ is said to be \emph{future-directed} (resp.\ \emph{past-directed}) if $g_x(v,X(x))<0$ (resp.\ $>0$).
For $x\in M$, we denote by $\C_x\subseteq T_xM$ the set of all future-directed causal vectors. Then, $\operatorname{int}(\C_x)$ consists of all future-directed timelike vectors, while $\partial \C_x$ consists of the future-directed lightlike vectors and $0$. We also set $\C:=\{(x,v)\in TM\mid v\in \C_x\}$.

According to these definitions, we say that a locally absolutely continuous curve $\gamma:I\to M$, with $I\subseteq \R$ an interval, is \emph{future-directed causal/timelike} (or \emph{future pointing causal/timelike}) if $\dot \gamma(t)$ is future-directed causal/timelike for almost every $t$.

Two points $x,y\in M$ are said to be \emph{causally related} (resp.\ \emph{chronologically related}) if there exists a future-directed (absolutely continuous) causal (resp. timelike) curve connecting them. In this case, we write $x<y$ (resp. $x\ll y$, respectively). We write $x\leq y$ if $x=y$ or $x<y$.
The relations $>,\gg$ and $\geq$ are defined analogously.

 We define the \emph{chronological future/past} and \emph{causal future/past} of a point $x\in M$ as follows:
\begin{align*}
&I^+(x):=\{y\in M\mid y\gg x\},
\\
&I^-(x):=\{y\in M\mid y\ll x\},
\\
&J^+(x):=\{y\in M\mid y\geq x\},
\\
&J^-(x):=\{y\in M\mid y \leq x\}.
\end{align*}
We also set 
\begin{align*}
J^+:=\{(x,y)\in M\mid y\in J^+(x)\} \text{ and } I^+:=\{(x,y)\in M\mid y\in I^+(x)\}.
\end{align*}

In this paper we focus on globally hyperbolic spacetimes, which are defined as follows:

\begin{definition}\rm
A spacetime $M$ is said to be \emph{globally hyperbolic} if there is no causal loop (i.e. no closed future pointing causal curve), and if for any $x,y\in M$, the intersection $J^+(x)\cap J^-(y)$ is compact.
\end{definition}
For the remainder of this chapter, we assume that $M$ is globally hyperbolic. We now recall the definition of the Lorentzian length functional and its key properties. 

\begin{definition}\rm
The \emph{length} of an absolutely continuous, future pointing causal curve $\gamma:[a,b]\to M$ is defined by
\begin{align*}
L(\gamma):=\int_a^b |\dot \gamma(t)|_g \in [0,\infty).
\end{align*}
The \emph{(Lorentzian) distance function} or \emph{time seperation} is the function $d:M\times M\to \R$, defined such that for $x<y$, the distance $d(x,y)$ is the supremum of $L(\gamma)$ over all (a.c. future pointing causal) curves connecting $x$ with $y$, and such that $d(x,y)=0$ if $x\nless y$.

A curve $\gamma$ connecting $x$ with $y>x$ is said to be \emph{(length) maximizing} if $L(\gamma)=d(x,y)$.
\end{definition} 

Alternative (and more common) definitions of the causal future/past/... and the distance function are based on considering only piecewise smooth curves.
The equivalence of these definitions is established in \cite{Minguzzi}, Theorem 2.9.

\begin{proposition}
    The distance function is continuous. Moreover, for any points $x<y$, there exists a length maximizing geodesic connecting $x$ and $y$.
Additionally, the set $J^+$ is closed.
\end{proposition}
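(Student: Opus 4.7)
The plan is to prove the three assertions---closedness of $J^+$, existence of a length-maximizing geodesic, and continuity of $d$---by a single recurring compactness argument. The key observation is that in a globally hyperbolic spacetime, any future-directed causal curve from a compact set $K$ to a compact set $K'$ is contained in the compact causal diamond $J^+(K)\cap J^-(K')$. Fixing an auxiliary complete Riemannian metric $h$ on $M$, parameterizing causal curves by $h$-arclength makes any such family equi-Lipschitz, so Arzela-Ascoli yields uniformly convergent subsequences. The resulting limit curve is locally Lipschitz, and because $\C\subseteq TM$ is closed, the limit is future-directed causal almost everywhere.

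Equipped with this limit curve construction, the first two assertions are immediate. For the closedness of $J^+$, given $(x_n,y_n)\to (x,y)$ with $x_n\leq y_n$, I pick causal curves $\gamma_n$ from $x_n$ to $y_n$ (all contained in $J^+(K)\cap J^-(K')$ for suitable compact $K,K'$), parameterize by $h$-arclength (whose total length is uniformly bounded because the curves all lie in a compact set), and extract a limit $\gamma$ from $x$ to $y$ as above, witnessing $(x,y)\in J^+$. For the existence of a maximizer between $x<y$, I apply the same procedure to a sequence $\gamma_n$ with $L(\gamma_n)\to d(x,y)$; the Lorentzian length functional is upper semi-continuous under uniform convergence of $h$-arclength parameterized causal curves, so the limit $\gamma$ satisfies $L(\gamma)\geq \limsup L(\gamma_n)=d(x,y)$ and is hence maximizing. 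A standard local argument (maximizing segments are smooth and satisfy the geodesic equation in convex normal neighborhoods, since otherwise one could shortcut with a strictly longer curve) then upgrades $\gamma$, after reparameterization, to a causal geodesic.

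For the continuity of $d$, lower semi-continuity holds on any spacetime by perturbing a near-maximizer from $x$ to $y$ into curves from $x_n$ to $y_n$. Upper semi-continuity is the nontrivial half: given $x_n\to x$, $y_n\to y$, one may assume $x_n\leq y_n$ (else $d(x_n,y_n)=0$ and there is nothing to prove), and by closedness of $J^+$ we then also have $x\leq y$; applying the Avez-Seifert step to a sequence of maximizing geodesics between $x_n$ and $y_n$ produces a causal limit curve $\gamma$ from $x$ to $y$ with $L(\gamma)\geq \limsup d(x_n,y_n)$, yielding $d(x,y)\geq \limsup d(x_n,y_n)$. The main obstacle throughout is the upper semi-continuity of the Lorentzian length under uniform convergence of $h$-arclength parameterized causal curves---this is precisely where the Lorentzian signature (as opposed to the Riemannian setting, where length is only lower semi-continuous) and the closedness of $\C$ are essential. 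Since both this and the full limit curve theorem are classical in Lorentzian geometry, in the write-up I would simply refer to \cite{Minguzzi} rather than reproduce the technical details.
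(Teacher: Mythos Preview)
Your sketch is correct and outlines precisely the classical limit-curve/upper-semicontinuity argument underlying these results. The paper's own proof is simply a citation to \cite{ONeill}, Chapter 14, Proposition 19 and Lemma 22, so in substance you are doing the same thing (deferring to the literature for a standard result), only with a more detailed indication of what the cited arguments actually contain and with \cite{Minguzzi} in place of \cite{ONeill}.
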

\begin{proof}
    See \cite{ONeill}, Chapter 14, Proposition 19 and Lemma 22. Observe that the global hyperbolicity is crucial.
\end{proof}

 It is often helpful to fix an arbitrary complete Riemannian metric on $M$, which we will denote by $h$.
Recall that, according to the Whitney embedding theorem, every manifold can be embedded in some $\R^d$ as a closed submanifold. Since closed submanifolds in $\R^d$ are complete w.r.t.\ the induced metric, we can define a complete Riemannian metric on $M$ by pulling back this metric.

We will denote the norm of a vector $v$ (w.r.t.\ $h$) by $|v|_h$. We also denote $|v|_g:=\sqrt{|g(v,v)|}$.

Since $M$ is globally hyperbolic, the proof of Theorem 3 in \cite{Bernard/Suhr} and the subsequent discussion, along with Corollary 1.8, show that there exists a smooth manifold $N$ and a diffeomorphism $M\cong \R\times N$, such that the projection $\tau:\R\times N\to \R, \ x\cong(t,z)\mapsto t,$ satisfies the following inequality:
\begin{align}
d_x\tau(v)\geq \max\bigg\{2|v|_g,|v|_h\bigg\} \label{splitting}
\end{align}
for all causal vectors $v\in \C_x$. This function is called a \emph{splitting} or \emph{time function}.

Throughout this paper, when we refer to a Riemannian metric on the tangent bundle $TM$, we always mean the natural Sasaki metric. For its definition, we refer the reader to subsection \ref{sasaki} in the appendix.

For further references on Lorentzian geometry, we refer the reader to \cite{ONeill}. Observe that our notion of global hyperbolicity may seem weaker than the one presented in \cite{ONeill}. However, the definitions are actually equivalent, as shown by a theorem due to Bernal and S\'anchez \cite{Bernal}.
\bigskip

{\center {\large{\textbf{Optimal transport}}}}
\bigskip

Let $(X,d)$ be a Polish space (i.e.\ a complete, separable metric space), and let ${\cal P}$ denote the set of all Borel probability measures on $X$. Given two measures $\mu,\nu\in {\cal P}$, the Monge problem consists of finding a minimizer for 
\begin{align*}
\inf\bigg\{\int_X c(x,T(x))\, d\mu(x)\mid T:X\to X \text{ Borel},\,  T_{\#}\mu=\nu\bigg\},
\end{align*}
where $c:X\times X\to [0,\infty]$ is a Borel cost function and $T_\#\mu$ is the push-forward measure, defined by $T_\#\mu(B):=\mu(T^{-1}(B))$ for all Borel sets $B\subseteq Y$ ($T$ is also called transport map).
This problem may not always be well-defined. For instance, if $\mu$ is a Dirac measure and $\nu$ is not, there will be no transport map between $\mu$ and $\nu$. However, due to Kantorovich's formulation, one can instead search for minimizers of
\begin{align}
C(\mu,\nu):=\inf\bigg\{\int_{X\times X} c(x,y)\, d\pi(x,y)\mid \pi\in \Gamma(\mu,\nu)\bigg\}, \label{Kantorovich}
\end{align}
where $\Gamma(\mu,\nu)$ denotes the set of all \emph{couplings} between $\mu$ and $\nu$, i.e. all Borel probability measures $\pi\in {\cal P}(X\times X)$ such that the first (resp. second) marginal of $\pi$ is $\mu$ (resp. $\nu$). A coupling is said to be \emph{optimal} if it minimizes \eqref{Kantorovich}, and the value $C(\mu,\nu)$ is referred to as the \emph{total cost}.

The advantage of Kantorovich's formulation over Monge's is that the set $\Gamma(\mu,\nu)$ is never empty since it contains the product measure. Moreover, a minimizer exists under mild conditions on $c$. Kantorovich's approach also generalizes Monge's formulation in the sense that any transport map $T$ gives rise to a coupling by defining $\pi:=(\Id\times T)_\#\mu$.
\\

Let us now recall some well-known results in the theory of optimal transport regarding the existence and structure of optimal couplings.
These results can be found in the books by Ambrosio/Gigli/Savaré  \cite{Ambrosio}, or by Villani \cite{Villani}. From now on, until the rest of this chapter, let 
\begin{align*}
    c:X\times X\to [0,\infty]
\end{align*}
be a proper (i.e. $c\not \equiv \infty$) and lower semi-continuous function.

\begin{theorem} \label{1}
Let $\mu,\nu \in {\cal P}$.
\begin{enumerate}[(i)]
\item
There is duality:
\begin{align*}
C(\mu,\nu)
=
\sup\bigg\{\int_X \varphi(x)\, d\mu(x)+\int_X \psi(y)\, d\nu(y)\bigg\},
\end{align*}
where the supremum is taken over all $\varphi\in L^1(\mu),\psi\in L^1(\nu)$ such that $\varphi(x)+\psi(y)\leq c(x,y)$ for all $x,y$.

\item
There exists an optimal coupling $\pi\in \Gamma(\mu,\nu)$, i.e.
\begin{align*}
\int_{X\times X} c(x,y)\, d\pi(x,y)= C(\mu,\nu).
\end{align*}
\end{enumerate}
\end{theorem}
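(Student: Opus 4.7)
The plan is to prove (ii) first by the direct method of the calculus of variations and then derive (i) by Kantorovich duality combined with a monotone approximation of $c$, essentially following \cite{Ambrosio} or \cite{Villani}, which I would cite rather than reproduce in full.

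For (ii), I would check that $\Gamma(\mu,\nu)$ is a nonempty, tight, narrowly closed subset of the Borel probability measures on $X \times X$, and that the cost functional $\pi \mapsto \int c\, d\pi$ is narrowly lower semi-continuous; the direct method then yields a minimizer. Non-emptiness is obvious from $\mu \otimes \nu \in \Gamma(\mu,\nu)$. Tightness follows from tightness of $\mu$ and $\nu$ individually (a standard consequence of inner regularity on a Polish space): given $\varepsilon > 0$, pick compacts $K_1, K_2 \subseteq X$ with $\mu(X\setminus K_1) + \nu(X\setminus K_2) < \varepsilon$, and note that every $\pi \in \Gamma(\mu,\nu)$ satisfies $\pi((X \times X) \setminus (K_1 \times K_2)) \leq \varepsilon$ via the marginal bounds. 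Narrow closedness is immediate from the continuity of the two projection maps. For lower semi-continuity, write the nonnegative lsc function $c$ as a pointwise monotone supremum of bounded continuous $c_n \geq 0$; each $\pi \mapsto \int c_n\, d\pi$ is narrowly continuous, so the supremum is narrowly lsc. Prokhorov's theorem yields narrow compactness of $\Gamma(\mu,\nu)$, and the direct method finishes the argument.

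For (i), the inequality $\sup \leq \inf$ is trivial by integrating $\varphi(x) + \psi(y) \leq c(x,y)$ against any admissible $\pi$. The opposite inequality is the content of the Kantorovich duality theorem. I would first establish it for bounded continuous $c$ by Fenchel--Rockafellar duality on $C_b(X)^2$ (or equivalently by a Hahn--Banach separation argument in a suitable convex cone of $C_b(X \times X)$), then extend to bounded lsc $c$ by approximation from below by bounded continuous functions, using the narrow compactness of $\Gamma(\mu,\nu)$ to select a limiting optimal coupling, and finally extend to unbounded lsc $c$ by truncation $c_k := \min(c,k)$ together with monotone convergence along an optimal coupling supplied by (ii).

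The main obstacle will be the last limiting step: the approximating dual pairs $(\varphi_k, \psi_k)$ must remain in $L^1(\mu) \times L^1(\nu)$ and their dual values must converge to the total cost $C(\mu,\nu)$, which is delicate because $c$ may take the value $+\infty$. One handles this by reducing to $c_k$-conjugate pairs, so that $\psi_k = \varphi_k^{c_k}$ is determined by $\varphi_k$, and by normalizing at a fixed base point to avoid unbounded additive constants; uniform one-sided bounds then transfer to the limit. For the actual write-up I would simply invoke \cite{Ambrosio}, Theorem 6.1.1, or \cite{Villani}, Theorem 5.10, rather than carry out these technicalities.
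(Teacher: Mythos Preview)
Your proposal is correct and ultimately lands exactly where the paper does: the paper's own proof is a one-line citation to \cite{Ambrosio}, Theorem 6.1.1 (and the first page of Chapter 6), which is precisely what you say you would do in your final paragraph. The sketch you give of the direct method and the duality-by-approximation argument is accurate and matches the standard route taken in \cite{Ambrosio} and \cite{Villani}, so there is nothing to add.
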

\begin{proof}
\cite{Ambrosio}, Theorem 6.1.1 and the first page of Chapter 6.
\end{proof}

Closely related to optimal couplings is the concept of so called Kantorovich potentials. To introduce these, we need to define the $c$-transform of a function.

\begin{definition}\rm
\begin{enumerate}[(i)]
\item
A function $\varphi:X\to \overline \R:=\R\cup \{\pm \infty\}$ is said to be \emph{$c$-convex} if there exists a function $\Psi:X\to \overline \R$ such that
\begin{align*}
\varphi(x)=\sup_{y\in X}\( \Psi(y)-c(x,y)\).
\end{align*}
Here, the convention $\infty-\infty=-\infty$ is used.

The \emph{$c$-transform} of $\varphi$ is then defined as
\begin{align}
\varphi^c(y):=\inf_{x\in X}\(c(x,y)+\varphi(x)\), \label{sjahaudhsadmm}
\end{align}
where the convention $\infty-\infty=\infty$ is used.

A function $\psi:X\to \overline \R$ is said to be \emph{$c$-concave} if $\psi=\varphi^c$ for some $c$-convex function $\varphi$.
Observe that, in this case, 
\begin{align}
    \varphi(x)=\sup_{y\in X} (\psi(y)-c(x,y)), \label{sjahaudhsadmm1}
\end{align}
where we again use the convention $\infty-\infty=-\infty.$
\item A subset $\Gamma\subseteq M\times M$ is said to be \emph{$c$-monotone} if:
\begin{align*}
 k\in \N,\ (x_i,y_i)_{1\leq i\leq k}\subseteq \Gamma\Rightarrow \sum_{i=1}^k c(x_i,y_i)\leq \sum_{i=1}^k c(x_{i+1},y_i),
\end{align*}
where $x_{k+1}:=x_1$.
\end{enumerate}
\end{definition}

\begin{convention}\rm
    Since $c$ may take the value $\infty$, we must be careful about the the conventions for sums. This issue does not arise when $c$ is real-valued (see Remark \ref{remark}). Therefore, in this paper, we will use the following conventions for sums (which are consistent with the conventions in the above definition): Given that $\varphi$ is $c$-convex:
        \begin{enumerate}[(i)]
       \item  If $\varphi^c(y)=\pm \infty$ and $\varphi(x)=\pm \infty$ then $\varphi^c(y)-\varphi(x)=-\infty$
       \item If $\varphi^c(y)=\infty$ and $c(x,y)=\infty$ then $\varphi^c(y)-c(x,y)=-\infty$
       \item If $\varphi(x)=\infty$ and $c(x,y)=\infty$ then $c(x,y)-\varphi(x)=\infty$
       \end{enumerate}
       One easily checks that 
       \begin{align*}
\varphi^c(y)-\varphi(x)\leq c(x,y) \text{ for all } x,y.
\end{align*}
However, note that usual operations do not hold when $\pm \infty$ occurs: For example,
\begin{align*}
   \varphi^c- c(x,y)=\varphi(x) \not \Rightarrow \varphi^c(y)-\varphi(x)=c(x,y).
\end{align*}
\end{convention}

\begin{definition}\rm
Let $\mu,\nu \in {\cal P}$. We say that a $c$-convex function $\varphi \in L^1(\mu)$ is a \emph{Kantorovich potential}
if $\varphi^c\in L^1(\nu)$ and 
\begin{align*}
\int_X \varphi^c(y)\, d\nu(y)-\int_X \varphi(x)\, d\mu(x)
=
\int_{X\times X} c(x,y)\, d\pi(x,y)
\end{align*}
for one (or all) optimal couplings $\pi$.

The \emph{$c$-subdifferential} of a $c$-convex function $\varphi$ at $x$ is defined as
\begin{align*}
\partial_c \varphi(x):=\{y\in X\mid \varphi^c(y)-\varphi(x)=c(x,y)\}.
\end{align*}
We also set
\begin{align*}
\partial_c \varphi=\bigcup_{x\in X} \{x\}\times\partial_{c}\varphi(x).
\end{align*}
\end{definition}

\begin{lemma}\label{wewewewwewew}
Let $\mu,\nu\in {\cal P}$ and $\pi$ be any optimal coupling. If $\varphi$ is any Kantorovich potential, then it holds that $\varphi^{c}(y)-\varphi(x)=c(x,y)$ $\pi$-a.e. 
\end{lemma}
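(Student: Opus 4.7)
The proof should be a short, essentially computational argument: integrate the pointwise inequality coming from the definition of $c$-convexity against an optimal coupling, and observe that the Kantorovich potential condition forces equality in the integral.

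First, I would record the pointwise inequality $\varphi^c(y) - \varphi(x) \leq c(x,y)$, which holds for all $x,y \in X$ by the definition of the $c$-transform combined with the sign conventions listed in the paper. Next I would use that $\varphi \in L^1(\mu)$ and $\varphi^c \in L^1(\nu)$ implies both functions are real-valued almost everywhere against their respective marginals, so for any $\pi \in \Gamma(\mu,\nu)$ the functions $(x,y) \mapsto \varphi(x)$ and $(x,y) \mapsto \varphi^c(y)$ are $\pi$-integrable, and by the marginal property
\begin{equation*}
\int_{X \times X} \bigl(\varphi^c(y) - \varphi(x)\bigr)\, d\pi(x,y) = \int_X \varphi^c\, d\nu - \int_X \varphi\, d\mu.
\end{equation*}

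Next I would unpack the Kantorovich potential assumption: it provides one optimal coupling $\pi^\ast$ with $\int_X \varphi^c\, d\nu - \int_X \varphi\, d\mu = \int c\, d\pi^\ast = C(\mu,\nu)$. Since the right-hand side $C(\mu,\nu)$ is by definition common to all optimal couplings and the left-hand side does not depend on $\pi$ at all, the same equality holds with $\pi^\ast$ replaced by any optimal coupling $\pi$. Combined with the previous display, this yields
\begin{equation*}
\int_{X \times X} \bigl(c(x,y) - (\varphi^c(y) - \varphi(x))\bigr)\, d\pi(x,y) = 0
\end{equation*}
for every optimal $\pi$. The integrand is a nonnegative, $[0,\infty]$-valued Borel function by the pointwise inequality above, so it must vanish $\pi$-a.e., which is exactly the claim.

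The only delicate point, and probably where a referee would complain, is the handling of the conventions when $c(x,y) = \infty$ or when $\varphi, \varphi^c$ take infinite values on a null set. This is not really an obstacle: on the $\pi$-full measure set where both $\varphi(x)$ and $\varphi^c(y)$ are finite the difference $\varphi^c(y) - \varphi(x)$ is an honest real number, and $c(x,y) - (\varphi^c(y) - \varphi(x))$ is well-defined as an element of $[0,\infty]$, which is all that is needed to run the monotone/integration argument. I would state this care explicitly in one sentence before invoking the "nonnegative integral equals zero implies a.e.\ zero" conclusion.
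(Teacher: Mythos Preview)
Your proof is correct and follows essentially the same approach as the paper's own proof: integrate the pointwise inequality $\varphi^c(y)-\varphi(x)\leq c(x,y)$ against an optimal coupling, use the Kantorovich potential identity to see that the integral of the nonnegative function $c(x,y)+\varphi(x)-\varphi^c(y)$ vanishes, and conclude that it is zero $\pi$-a.e. Your version is slightly more explicit about why the identity passes from one optimal coupling to all of them and about the $\infty$-conventions, but the argument is the same.
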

\begin{proof}
Let $\pi\in \Gamma(\mu,\nu)$ be an optimal coupling. By the definition of Kantorovich potentials, $c\in L^1(\pi)$. Hence, we have
\begin{align*}
0=\int_{X\times X} c(x,y)\, d\pi(x,y)+\int_X \varphi(x)\, d\mu(x)-\int_X \varphi^c(y)\, d\nu(y)=
\int_{X\times X} c(x,y)+\varphi(x)-\varphi^c(y)\, d\pi(x,y).
\end{align*}
By the integrability assumptions, $\varphi(x),\varphi^c(y),c(x,y)\in \R$ $\pi$-almost surely. Since $c+\varphi-\varphi^c\geq 0$ $\pi$-a.e., it follows that, $\pi$.a.e., $c(x,y)=\varphi^c(y)-\varphi(x)$. 
\end{proof}

\begin{remark}\rm \label{remark}
\begin{enumerate}[(a)]
\item
If $c$ is real valued and $\varphi$ is a $c$-convex function that attains the value $-\infty$, then one can easily verify using the definition that $\varphi\equiv -\infty$. In particular, $\varphi \notin L^1(\mu)$. This illustrates that when $c$ is finite, any Kantorovich potential maps to $\R\cup\{\infty\}$. Hence, the definition of a $c$-convex function typically requires that $\varphi$ maps to $\R\cup\{\infty\}$.
\item In the case where $c$ is real-valued, there is an explicit construction for a $c$-convex function, namely the Rockafellar construction, and it can be shown that this function is a Kantorovich potential (under very mild conditions). For a precise statement, see \cite{Ambrosio}, Theorem 6.1.4.

However, this proof does not extend to the case where $c$ attains the value $\infty$. Therefore, we will work with a weaker version of the Kantorovich potential, defined by the properties of Lemma \ref{wewewewwewew}. See also Definition \ref{daui9dhoafzhiofzhaio}. Let us note that our definition closely resembles the definition of a $(c,\pi)$-calibrated pair of functions as described \cite{Fathi/Figalli}, Definition 2.2.
\item We conclude this chapter with a well-known result regarding the structure of an optimal coupling.
\end{enumerate}
\end{remark}

\begin{theorem}\label{2}
 Let $\mu,\nu \in {\cal P}$. Let $\pi$ be an optimal coupling, and assume that $\int_{X\times X} c(x,y)\, d\pi(x,y)<\infty$. Then $\pi$ has to be concentrated on a $c$-monotone Borel set, i.e.\ there exists a $c$-monotone Borel set $\Gamma\subseteq X\times X$ such that $\pi(\Gamma)=1$.
\end{theorem}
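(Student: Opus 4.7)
The plan is to argue by contradiction: assuming that no Borel $c$-monotone set carries full $\pi$-mass, I would construct a competitor coupling $\tilde\pi\in\Gamma(\mu,\nu)$ with strictly smaller cost, contradicting the optimality of $\pi$. Since $\int c\,d\pi<\infty$ is given, the set $\{c<\infty\}$ has full $\pi$-measure, so throughout the argument I may silently restrict to the $\sigma$-finite subspace where $c$ is finite; this dissolves the only complication caused by $c$ possibly taking the value $\infty$.

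The first step is to extract a "bad cycle" of positive $\pi$-mass. I would fix an integer $k\geq 2$ and look for open product sets $U_1\times V_1,\dots,U_k\times V_k$ with $\pi(U_i\times V_i)>0$ such that for every choice of representatives $(x_i,y_i)\in U_i\times V_i$ one has a strict cyclic inequality
\begin{align*}
\sum_{i=1}^{k} c(x_{i+1},y_i) < \sum_{i=1}^{k} c(x_i,y_i) - \delta
\end{align*}
for some $\delta>0$ (with $x_{k+1}:=x_1$). That such $k$ and neighborhoods exist is the measure-theoretic heart of the argument: one shows that the failure of $c$-monotoneness $\pi$-a.e.\ translates (via a Lusin-type regularity argument on $\pi$, together with lower semi-continuity of $c$ and finiteness of the total cost, so that $c$ is continuous on compact subsets of full $\pi$-measure up to arbitrarily small error) into the existence of a finite cycle of points in a set of positive $\pi$-measure that violates cyclic monotoneness by a definite amount, and then local lower semi-continuity promotes this to a robust inequality on small product neighborhoods.

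The second step is the rewiring. Let $\alpha:=\min_i \pi(U_i\times V_i)>0$ and denote by $\pi_i$ the restriction of $\pi$ to $U_i\times V_i$, with first and second marginals $\mu_i,\nu_i$. Then $\pi_i/\alpha_i$ is a probability on $U_i\times V_i$, and I would form the candidate
\begin{align*}
\tilde\pi := \pi + \frac{\alpha}{k}\sum_{i=1}^{k}\left(\frac{\mu_i\otimes\nu_{i-1}}{\alpha_i\,\alpha_{i-1}}\cdot\alpha_i\alpha_{i-1} - \pi_i\right),
\end{align*}
or, more transparently, subtract a small multiple of each $\pi_i$ and add back the product $\mu_i\otimes\nu_{i-1}$ with matching total mass. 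The choice of the coefficient guarantees positivity, and a direct computation of marginals confirms $\tilde\pi\in\Gamma(\mu,\nu)$ because the $\mu$-marginal contribution cancels cycle by cycle ($\mu_i$ is added back from the $i$-th product and removed from $\pi_i$), and analogously for $\nu$. Computing $\int c\,d(\tilde\pi-\pi)$ reduces, by Fubini and the definition of the marginals $\mu_i,\nu_{i-1}$, to a finite sum of integrals over the product neighborhoods which by the cycle inequality is bounded above by a negative multiple of $\delta$.

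The main obstacle is the first step, i.e.\ the passage from \emph{$\pi$-a.e.\ failure of cyclic monotoneness} to a \emph{positive-measure quantitative cycle}. The difficulty is that $c$ is only lower semi-continuous and possibly infinite, so one cannot directly appeal to continuity on $\supp(\pi)$; this is where one must invoke Lusin's theorem on a suitable truncation $c\wedge N$ together with Kuratowski--Ulam or a Fubini argument applied to the $k$-fold product of $\pi$. Once this step is done, the competitor construction and the marginal/cost computations are routine.
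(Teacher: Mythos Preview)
The paper does not give its own proof of this result; it simply cites Theorem~6.1.4 of \cite{Ambrosio}. Your sketch is the standard rewiring (cyclic exchange) argument that underlies that reference, so at the level of strategy there is nothing to compare.

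Two remarks on the execution. First, the cleanest logical setup is to fix a concrete candidate, say a $\sigma$-compact $\Gamma\subseteq\{c<\infty\}$ of full $\pi$-measure, and show that if $\Gamma$ is not $c$-monotone one can build a cheaper competitor; this avoids the vague phrase ``failure of $c$-monotoneness $\pi$-a.e.''. Second, you are right that the delicate step for merely lower semi-continuous $c$ is promoting the bad cycle to something carrying positive mass, but note that the difficulty sits specifically with the \emph{cross} terms: lower semi-continuity bounds $c(x_i',y_i')$ from below on neighborhoods (the helpful direction for the diagonal), yet gives no upper bound on $c(x_{i+1}',y_i')$, and Lusin with respect to $\pi$ does not help because $U_{i+1}\times V_i$ may have $\pi$-measure zero. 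The $k$-fold product route you allude to is what actually works in the literature: one shows directly that $B_k:=\{\sum_i c(x_i,y_i)>\sum_i c(x_{i+1},y_i)\}$ has $\pi^{\otimes k}$-measure zero by rewiring $\pi^{\otimes k}|_{B_k}$ via the cyclic shift, bypassing open neighborhoods entirely. Your displayed competitor formula also needs tidying (the $\alpha_i$ appear without definition and the masses do not match), though the verbal description that follows it is correct.
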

\begin{proof}
    \cite{Ambrosio}, Theorem 6.1.4.
\end{proof}

\section{Lagrangian action and exponential map}\label{sec3}

In this section, we define our cost function which arises as the minimal action of a particular Lagrangian on the tangent bundle $TM$. We will also investigate the existence and uniqueness of minimizing curves, along with their properties.This chapter concludes with the proof of the existence of an Euler-Lagrange flow for our Lagrangian (observe that the Lagrangian is not differentiable).

\textbf{From this point until the appendix,} we assume that our spacetime $M$ is globally hyperbolic and has dimension $\dim(M)=n+1$. It is equipped with a Lorentzian metric $g$ and the corresponding Levi-Civita connection. 
We use coordinates on $M$ indexed by $0,...,n$, and we fix some complete Riemannian metric $h$. Recall that, for $v\in T_xM$, we denote its $h$-norm by $|v|_h$, and we write $|v|_g:=\sqrt{|g(v,v)|}$. We also fix an arbitrary time function $\tau$ satisfying \eqref{splitting}.
When we refer to a future pointing causal curve being maximizing, we are always referring to the Lorentzian length functional.
\newline

The cost function we will study is defined as the minimal action of the Lagrangian $L_2:TM\to \R\cup\{\infty\}$,  given by
\begin{align*}
 L_2(x,v):=
\begin{cases}
\(d_x\tau(v)-|v|_g\)^2, \ &v\in \C_x\cup\{0\},
\\
\infty,\ &\text{ otherwise}.
\end{cases}
\end{align*}
With some abuse of notation, we also write $L_2(v)=L_2(x,v)$.
The corresponding Lagrangian action of an absolutely continuous curve $\gamma:[a,b]\to M$ is then given by
\begin{align*}
{\cal A}_2(\gamma):=\int_a^b L_2(\gamma(t),\dot \gamma(t))\, dt\in [0,\infty],
\end{align*}
and the cost function (or minimal action) we seek to investigate is
\begin{align*}
c_2:M\times M\to [0,\infty],\ c_2(x,y):=\inf\{{\cal A}_2(\gamma)\mid \gamma\in AC([0,1],M), \gamma(0)=x,\gamma(1)=y\}.
\end{align*}
Lemma \ref{daidaoidasl} establishes that this definition is consistent with \eqref{haihduaisoafafdfsfdfsfdfdsfsfdsfdsfs}.  To study the minimizers of $L_2$ and the cost function $c_2$, it is also useful to consider a different Lagrangian, namely 
\begin{align*}
    L_1:TM\to \R\cup\{\infty\},\ L_1(x,v):=
\begin{cases}
d_x\tau(v)-|v|_g, \ &v\in \C_x\cup\{0\},
\\
\infty,\ &\text{ otherwise}.
\end{cases}
\end{align*}
The corresponding Lagrangian action of an absolutely continuous curve $\gamma:[a,b]\to M$ is given by
\begin{align*}
{\cal A}_1(\gamma):=\int_a^b L_1(\gamma(t),\dot \gamma(t))\, dt\in [0,\infty]
\end{align*}
and the corresponding minimal action is 
\begin{align*}
c_1:M\times M\to [0,\infty],\ c_1(x,y):=\inf\{{\cal A}_1(\gamma)\mid \gamma\in AC([0,1],M), \gamma(0)=x,\gamma(1)=y\}.
\end{align*}

\begin{definition}\rm
An absolutely continuous curve $\gamma:[a,b]\to M$ is said to \emph{minimize the action} ${\cal A}_p$ ($p=1,2$) if, for any other curve $\bar \gamma:[a,b]\to M$ with the same start and end points as $\gamma$, we have
\begin{align*}
    {\cal A}_p(\gamma)\leq {\cal A}_p(\bar \gamma).
\end{align*}
\end{definition}

\begin{definition}\rm
 We say that an absolutely continuous curve $\gamma:[c,d]\to M$ is a \emph{reparametrization} of an absolutely continuous curve $\bar \gamma:[a,b]\to M$ if there exists an absolutely continuous
   and non-decreasing bijection $\psi:[c,d]\to [a,b]$ such that $\gamma(t)=\bar \gamma(\psi(t))$.
\end{definition}

\begin{remark}\rm
The action functional ${\cal A}_1$ is similar to the Lorentzian length functional. Indeed, for any a.c. future pointing causal curve $\gamma:[a,b]\to M$, we have
\begin{align*}
    {\cal A}_1(\gamma)=\tau(\gamma(b))-\tau(\gamma(a))-L(\gamma).
\end{align*}
However, while the Lorentzian length functional is only defined for future pointing causal curves, i.e.\  $\dot \gamma(t)\neq 0$ for almost all $t$, the action ${\cal A}_1(\gamma)$ is defined and finite for every a.c.\ curve such that $\dot \gamma(t)\in \overline \C_{\gamma(t)}=\C_{\gamma(t)}\cup \{0\}$, which allows for the possibility that $\dot \gamma(t)=0$ at certain points.

Obviously, we could also define $L(\gamma)$ using the same formula for this class of curves. Therefore, let us consider $\gamma$ as above and assume that $\dot \gamma(t)=0$ is possible (on a set of positive measure). By standard methods, one can prove that $\gamma(t)\in J^+(\gamma(s))$ whenever $s\leq t$ (see \cite{Minguzzi}, beginning of the proof of Theorem 2.9). It is a well-known result about curves in metric spaces that $\gamma$ is a reparametrization of a Lipschitz curve $\bar \gamma$  parametrized by $h$-arc length.

Using the fact that, easily verified, $\bar \gamma(t)\in J^+(\bar \gamma(s))$ for $s\leq t$, and the fact that $\bar \gamma$ is parametrized by $h$-arc length, it follows that $\dot {\bar \gamma}(t)\in \C_{\bar \gamma(t)}$ at each differentiability point $t$. Thus, $\bar \gamma$ is future pointing causal. Moreover, since $\gamma$ is a reparametrization of $\bar \gamma$ it is immediate that
\begin{align*}
    L(\gamma)=L(\bar \gamma).
\end{align*}
This shows that $L(\gamma)\leq d(\gamma(a),\gamma(b))$. Furthermore, if $\gamma$ is maximizing, i.e. $L(\gamma)= d(\gamma(a),\gamma(b))$, then so is $\bar \gamma$. By a well-known result for future pointing causal a.c.\ curves, it follows that $\bar \gamma$ is a reparametrization of a maximizing geodesic (see \cite{Minguzzi}, Theorem 2.9 or 2.20). In particular, $\gamma$ is a reparametrization of a maximizing geodesic (here we use the fact that the composition of two non-decreasing absolutely continuous curves is again absolutely continuous).

Let us summarize these results in the following proposition.
\end{remark}

\begin{proposition}
Let $x<y$ and let $\gamma:[a,b]\to M$ be an absolutely continuous curve with $\dot \gamma(t)\in \overline  \C_{\gamma(t)}$ connecting $x$ with $y$. Then, we have $L(\gamma)\leq d(x,y)$ and $L(\gamma)=d(x,y)$ if and only if $\gamma$ is a reparametrization of a maximizing geodesic.
\end{proposition}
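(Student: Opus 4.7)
The proof follows almost verbatim from the discussion in the remark preceding the proposition, so my plan is to formalize those steps carefully and in the right order.

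First, I would verify that $\gamma$ is ``causal'' in the relational sense, namely that $\gamma(t)\in J^+(\gamma(s))$ for every $s\leq t$ in $[a,b]$. Since $\dot\gamma(t)\in\overline\C_{\gamma(t)}$ almost everywhere, this is a standard approximation argument using the fact that $\dot\gamma$ is an $L^1$-limit of its mollifications; alternatively, I would simply cite the beginning of the proof of Theorem~2.9 in Minguzzi, as the remark already does.

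Second, since $\gamma$ might be constant on subintervals (as $\dot\gamma$ is allowed to vanish on a set of positive measure), I cannot immediately apply the results about future pointing causal curves. The natural fix is to reparametrize $\gamma$ by $h$-arc length: set $s(t):=\int_a^t |\dot\gamma(r)|_h\,dr$, let $L_h:=s(b)$, and let $\psi:[0,L_h]\to[a,b]$ be a non-decreasing absolutely continuous inverse of $s$, obtained from the general fact that a continuous curve in a metric space admits a Lipschitz reparametrization by arc length. Setting $\bar\gamma:=\gamma\circ\psi$, one checks that $\bar\gamma$ is $1$-Lipschitz with respect to $h$ and that $\gamma=\bar\gamma\circ s$. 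Then at every differentiability point $t$ of $\bar\gamma$ one has $|\dot{\bar\gamma}(t)|_h=1$, so $\dot{\bar\gamma}(t)\neq 0$; combined with the causal-relatedness of $\bar\gamma(t')$ and $\bar\gamma(t)$ for $t'\leq t$ (inherited from $\gamma$), this forces $\dot{\bar\gamma}(t)\in\C_{\bar\gamma(t)}$. Hence $\bar\gamma$ is a genuine future pointing causal curve.

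Third, by the change-of-variables formula for absolutely continuous monotone maps (applied to $\psi$),
\begin{align*}
L(\gamma)=\int_a^b|\dot\gamma(t)|_g\,dt=\int_0^{L_h}|\dot{\bar\gamma}(s)|_g\,ds=L(\bar\gamma),
\end{align*}
where the middle equality uses $\dot\gamma(t)=\dot{\bar\gamma}(s(t))\,\dot s(t)$ and non-negativity of $\dot s$. Now I invoke the classical proposition for future pointing causal curves (Minguzzi, Theorem 2.9 or 2.20): $L(\bar\gamma)\leq d(x,y)$, with equality if and only if $\bar\gamma$ is a reparametrization of a maximizing geodesic. The first half gives $L(\gamma)\leq d(x,y)$.

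For the characterization of equality: if $L(\gamma)=d(x,y)$ then $L(\bar\gamma)=d(x,y)$, so $\bar\gamma=\sigma\circ\varphi$ for some maximizing geodesic $\sigma$ and some non-decreasing absolutely continuous bijection $\varphi$; then $\gamma=\sigma\circ(\varphi\circ s)$, and $\varphi\circ s$ is again a non-decreasing absolutely continuous surjection (composition of a non-decreasing Lipschitz map with a non-decreasing absolutely continuous map is absolutely continuous). Conversely, if $\gamma$ is such a reparametrization of a maximizing geodesic $\sigma$, then $L(\gamma)=L(\sigma)=d(x,y)$ by invariance of $L$ under non-decreasing reparametrizations.

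The only technical obstacle I anticipate is the step producing $\bar\gamma$: one needs to be a bit careful when $\dot\gamma$ vanishes on a set of positive measure, because $s$ is then not strictly increasing and $\psi$ is only a right-inverse. The cleanest way is to pass to the quotient identifying intervals on which $\gamma$ is constant; this is a standard construction for curves in metric spaces but deserves an explicit sentence in the final write-up.
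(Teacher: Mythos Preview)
Your proposal is correct and follows essentially the same approach as the paper: the paper's ``proof'' of this proposition is precisely the preceding remark, which carries out exactly the steps you outline (causal ordering via Minguzzi, $h$-arc-length reparametrization to obtain a genuine future pointing causal curve $\bar\gamma$, $L(\gamma)=L(\bar\gamma)$, and then invoking Minguzzi's Theorem~2.9/2.20). Your final paragraph about the bijection issue when $\dot\gamma$ vanishes on a set of positive measure is a fair point, but note that the paper's own argument has the same looseness, so you are not missing anything relative to the original.
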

With this proposition in hand, let us now investigate the existence and regularity of minimizers for our action functionals ${\cal A}_1$ and ${\cal A}_2$.

\begin{corollary}\label{havdza}
Let $x<y$. Then an absolutely continuous curve $\gamma:[a,b]\to M$ connecting $x$ with $y$ minimizes ${\cal A}_1$ if and only if $\gamma$ is a reparametrization of a maximizing geodesic.
\end{corollary}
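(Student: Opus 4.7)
The plan is to reduce the problem to the previous proposition by rewriting ${\cal A}_1$ in terms of the Lorentzian length. The key observation is that if $\gamma:[a,b]\to M$ is an absolutely continuous curve from $x$ to $y$ with ${\cal A}_1(\gamma)<\infty$, then by the definition of $L_1$ one must have $\dot\gamma(t)\in \overline{\C}_{\gamma(t)}$ for almost every $t$, since otherwise $L_1(\gamma(t),\dot\gamma(t))=\infty$ on a set of positive measure. Once we know this, the chain rule gives $\int_a^b d_{\gamma(t)}\tau(\dot\gamma(t))\, dt=\tau(y)-\tau(x)$, while by definition $\int_a^b |\dot\gamma(t)|_g\, dt=L(\gamma)$, where $L$ is the Lorentzian length (meaningful for curves in the broader causal class as discussed in the remark preceding the proposition). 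Hence, as noted in that remark,
\[
{\cal A}_1(\gamma)=\tau(y)-\tau(x)-L(\gamma).
\]

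With this formula in hand, minimizing ${\cal A}_1$ is equivalent to maximizing $L(\gamma)$ among absolutely continuous curves from $x$ to $y$ satisfying $\dot\gamma\in\overline{\C}$ a.e. Next I would use that $x<y$ together with the proposition from Chapter 2 (O'Neill) to produce a length-maximizing geodesic $\gamma_0$ from $x$ to $y$, which automatically satisfies $L(\gamma_0)=d(x,y)$, and in particular gives
\[
{\cal A}_1(\gamma_0)=\tau(y)-\tau(x)-d(x,y)<\infty.
\]
Thus the infimum of ${\cal A}_1$ is finite, and any minimizer $\gamma$ must itself have finite action, so the representation above applies to it.

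Now both directions follow from the preceding proposition. For the forward direction, if $\gamma$ minimizes ${\cal A}_1$ then ${\cal A}_1(\gamma)\leq {\cal A}_1(\gamma_0)$, which rewrites as $L(\gamma)\geq d(x,y)$; combined with the inequality $L(\gamma)\leq d(x,y)$ from the preceding proposition this forces $L(\gamma)=d(x,y)$, and then the proposition identifies $\gamma$ as a reparametrization of a maximizing geodesic. Conversely, if $\gamma$ is a reparametrization of a maximizing geodesic, then $L(\gamma)=d(x,y)$, so ${\cal A}_1(\gamma)=\tau(y)-\tau(x)-d(x,y)$, which equals the value attained by $\gamma_0$ and is therefore minimal (any competitor has $L\leq d(x,y)$, hence larger or equal action).

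The only delicate point — and the one I would write out carefully — is the claim that finite action implies $\dot\gamma(t)\in\overline{\C}_{\gamma(t)}$ a.e., so that the preceding proposition on $L(\gamma)$ is actually applicable; this is immediate from the case distinction in the definition of $L_1$, but it is the step that requires the convention $L_1=\infty$ off the causal cone to be used. Apart from this, the proof is just a reformulation via ${\cal A}_1=\tau(y)-\tau(x)-L$.
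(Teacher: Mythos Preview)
Your proof is correct and follows exactly the approach the paper intends: the paper's proof is the single line ``This follows from our discussion above,'' and what you wrote is precisely that discussion spelled out --- using the identity ${\cal A}_1(\gamma)=\tau(y)-\tau(x)-L(\gamma)$ for curves with $\dot\gamma\in\overline{\C}$ a.e., reducing the minimization of ${\cal A}_1$ to the maximization of $L$, and then invoking the preceding proposition.
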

\begin{proof}
This follows from the discussion above.
\end{proof}

\begin{proposition}\label{minimizer}
 Let $x<y$. An absolutely continuous curve $\gamma:[a,b]\to M$ connecting $x$ with $y$ minimizes ${\cal A}_{2}$ if any only if $\gamma$ is a reparametrization of a maximizing geodesic such that $L_1(\dot \gamma(t))$ is constant.

In particular, for any $x<y$ and $a<b$, there always exists an ${\cal A}_2$-minimizer $\gamma:[a,b]\to M$ connecting $x$ with $y$, and every minimizer is smooth.
\end{proposition}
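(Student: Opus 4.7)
The plan is to reduce ${\cal A}_2$ to ${\cal A}_1$ via the Cauchy--Schwarz inequality and then invoke Corollary \ref{havdza}. The key identity is $L_2 = L_1^2$ on the effective domain $\overline{\C}_x = \C_x \cup \{0\}$, so any absolutely continuous $\gamma$ with ${\cal A}_2(\gamma) < \infty$ automatically has $\dot\gamma \in \overline{\C}$ almost everywhere and
\[
{\cal A}_2(\gamma) = \int_a^b L_1(\dot\gamma(t))^2\,dt.
\]
Cauchy--Schwarz then gives $(b-a)\,{\cal A}_2(\gamma) \geq {\cal A}_1(\gamma)^2$, with equality if and only if $L_1(\dot\gamma)$ is constant a.e. Combined with ${\cal A}_1(\gamma) \geq c_1(x,y) = \tau(y) - \tau(x) - d(x,y)$ from Corollary \ref{havdza}, this yields the universal lower bound ${\cal A}_2(\gamma) \geq c_1(x,y)^2/(b-a)$ with equality precisely when $\gamma$ is a reparametrization of a maximizing geodesic \emph{and} $L_1(\dot\gamma)$ is constant. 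This gives both directions of the characterization simultaneously.

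For existence, note that the splitting inequality \eqref{splitting} forces $L_1(w) > 0$ on every nonzero future-directed causal vector $w$ (since $d\tau(w) \geq 2|w|_g$ yields $L_1(w) \geq |w|_g > 0$ for timelike $w$, and $d\tau(w) \geq |w|_h > 0$ for lightlike $w$); in particular $c_1(x,y) > 0$. Pick a maximizing geodesic $\sigma:[0,1]\to M$ from $x$ to $y$, whose existence was quoted just before Corollary \ref{havdza}; since $\sigma$ is an affinely parametrized non-constant geodesic, $\dot\sigma$ is nowhere zero, hence $L_1(\dot\sigma(\cdot))$ is smooth and strictly positive on $[0,1]$. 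Therefore $F(s):=\int_0^s L_1(\dot\sigma(u))\,du$ is a smooth diffeomorphism of $[0,1]$ onto $[0,c_1(x,y)]$, and
\[
\gamma(t) := \sigma\!\left( F^{-1}\!\left( \tfrac{c_1(x,y)\,(t-a)}{b-a} \right)\right)
\]
is a smooth reparametrization of $\sigma$ with $L_1(\dot\gamma) \equiv c_1(x,y)/(b-a)$ constant, so $\gamma$ attains the lower bound and is a minimizer.

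For the smoothness of an arbitrary minimizer, write $\gamma = \sigma\circ\psi$ with $\sigma$ a smooth maximizing geodesic and $\psi:[a,b]\to[0,1]$ absolutely continuous and non-decreasing. The constancy of $L_1(\dot\gamma) = \psi'(t)\,L_1(\dot\sigma(\psi(t)))$ translates to $\psi'(t) = c/L_1(\dot\sigma(\psi(t)))$ almost everywhere, for some constant $c > 0$ (otherwise $\gamma$ would be constant, contradicting $x<y$). Since the right-hand side is continuous in $t$, the absolutely continuous function $\psi$ is automatically $C^1$; a standard ODE bootstrapping argument, using that $L_1 \circ \dot\sigma$ is smooth and bounded away from zero, then upgrades $\psi$ to $C^\infty$, so $\gamma = \sigma\circ\psi$ is smooth. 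The main subtlety I anticipate is precisely this bootstrap from absolute continuity to smoothness of $\psi$, which depends crucially on the strict positivity $L_1(\dot\sigma) > 0$ guaranteed by the splitting condition \eqref{splitting}; without it, one could have $\psi'=0$ on sets of positive measure and lose the regularity.
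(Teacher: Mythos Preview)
Your proof is correct and follows essentially the same approach as the paper's: the Cauchy--Schwarz/H\"older inequality $(b-a)\,{\cal A}_2(\gamma)\geq {\cal A}_1(\gamma)^2$ with equality iff $L_1(\dot\gamma)$ is constant, combined with Corollary \ref{havdza}, gives the characterization; existence comes from reparametrizing a maximizing geodesic so that $L_1(\dot\gamma)$ is constant; and smoothness follows from the ODE $\psi'=c/L_1(\dot\sigma(\psi))$ together with smoothness and strict positivity of $L_1(\dot\sigma)$. One small remark: the reason $t\mapsto L_1(\dot\sigma(t))$ is smooth is not merely that $\dot\sigma$ is nowhere zero (the function $v\mapsto |v|_g$ is not smooth at lightlike vectors), but rather that $|\dot\sigma(t)|_g$ is constant along the geodesic, so $L_1(\dot\sigma(t))=d\tau(\dot\sigma(t))-|\dot\sigma(0)|_g$ is smooth; the paper makes this point explicitly.
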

\begin{proof}
Let us start with the following observation: There exists at least one maximizing geodesic $c_0:[a,b]\to M$ connecting $x$ with $y$.
Since $t\mapsto L_1(\dot c_0(t))$ is smooth (because $|\dot c(t)|_g$ is constant) and strictly positive, we can construct a smooth reparametrization $\gamma_0:[a,b]\to M$ of $c_0$ such that $L_1(\dot \gamma_0(t))=cons.$ Then $\gamma_0$ is minimizes ${\cal A}_1$. 

Now let $\gamma:[a,b]\to M$ be any future pointing causal curve with $\gamma(a)=x$ and $\gamma(b)=y$.
By H\"older's inequality, we have
\begin{align}
{\cal A}_1(\gamma)^2=\(\int_a^b L_1(\dot \gamma(t))\, dt\)^2\leq \int_a^b L_1^2(\dot \gamma(t))\, dt \cdot (b-a)= 
{\cal A}_2(\gamma)(b-a) \label{havdza1}
\end{align}
with equality if and only if $L_1(\dot \gamma(t))=cons.$ almost surely.
Since this holds true for any curve we conclude that $\gamma$ minimizes ${\cal A}_2$ if $\gamma$ minimizes ${\cal A}_1$ and $L_1(\dot \gamma(t))=cons.$ almost surely. In particular, $\gamma_0$ above minimizes ${\cal A}_2$. Conversely, let $\gamma$ minimize ${\cal A}_2$. Then, we obtain
\begin{align*}
    {\cal A}_1(\gamma)^2\leq {\cal A}_2(\gamma)(b-a)={\cal A}_2(\gamma_0)(b-a)={\cal A}_1(\gamma_0)^2\leq {\cal A}_1(\gamma)^2.
\end{align*}
Thus, $\gamma$ minimizes ${\cal A}_1$ as well, and since equality must hold in each of the above steps, it follows that $L_1(\dot \gamma(t))=cons.$ almost surely. This proves the equivalence.

We have already established that $\gamma_0$ minimizes ${\cal A}_2$, so we have proven the existence of a minimizer.

Now, let $\gamma$ be any minimizer. In particular, since $\gamma$ minimizes ${\cal A}_1$, it must be a reparametrization of a maximizing geodesic $c:[0,d]\to M$. Thus, there exists an absolutely continuous, non-decreasing bijection $\psi:[a,b]\to [0,d]$ with $\gamma=c\circ \psi$.
Since $L_1(\dot \gamma(t))=cons.$ almost surely, we have 
\begin{align*}
    \dot \psi=\frac{cons.}{L_1(\dot c(\psi))}.
\end{align*}
Since $t\mapsto L_1(\dot c(t))$ is smooth, it is immediate to see that $\psi$ is smooth. In particular, $\gamma$ must be smooth. Thus, all minimizers must be smooth.
\end{proof}

\begin{definition}\rm
We denote the set of all future pointing causal ${\cal A}_2$-minimizers $\gamma:[0,1]\to M$ by $\Gamma$, i.e.
\begin{align*}
\Gamma:=\{\gamma\in AC([0,1],M)\mid \gamma \text{ is future pointing causal and minimizes the action } {\cal A}_2\} .
\end{align*}
Moreover, for $x<y$, we will denote the subset of all minimizers connecting $x$ with $y$ by $\Gamma_{x,y}$, i.e.
\begin{align*}
\Gamma_{x,y}:=\{\gamma\in \Gamma\mid \gamma(0)=x,\ \gamma(1)=y\}.
\end{align*}
\end{definition}

\begin{lemma}\label{daidaoidasl}
The cost function $c_2$ satisfies 
\begin{align*}
c_2(x,y)=
\begin{cases}
(\tau(y)-\tau(x)-d(x,y))^2,\ &(x,y)\in J^+,
\\
\infty,\ &\text{otherwise}.
\end{cases}
\end{align*}
Furthermore, $c_2$ is lower semi-continuous.
\end{lemma}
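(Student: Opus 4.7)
The plan is to separate the formula for $c_2$ into the three cases $(x,y)\notin J^+$, $x=y$, and $x<y$, then deduce lower semi-continuity from the closedness of $J^+$ together with the continuity of $\tau$ and $d$.

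First, if $(x,y)\notin J^+$, I would argue that every absolutely continuous curve $\gamma:[0,1]\to M$ from $x$ to $y$ satisfies ${\cal A}_2(\gamma)=\infty$. Indeed, if the integral were finite then $L_2(\gamma(t),\dot\gamma(t))<\infty$ for almost every $t$, forcing $\dot\gamma(t)\in\overline{\C}_{\gamma(t)}$ a.e. By the reparametrization discussion in the remark preceding Corollary~\ref{havdza}, $\gamma$ is then a reparametrization of a future pointing causal curve, which would give $y\in J^+(x)$, a contradiction. The case $x=y$ is immediate: the constant curve yields ${\cal A}_2=0$, and the right hand side of the formula also vanishes.

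For the main case $x<y$, I would apply Proposition~\ref{minimizer} to produce a minimizer $\gamma:[0,1]\to M$ that is a reparametrization of a maximizing geodesic with $L_1(\dot\gamma(t))$ constant almost everywhere. The equality case of Hölder's inequality in \eqref{havdza1} with $[a,b]=[0,1]$ then gives ${\cal A}_2(\gamma)={\cal A}_1(\gamma)^2$. Because $\gamma$ is future pointing causal and maximizing, the remark identifying ${\cal A}_1$ with the time-shift minus the Lorentzian length yields ${\cal A}_1(\gamma)=\tau(y)-\tau(x)-L(\gamma)=\tau(y)-\tau(x)-d(x,y)$, which is nonnegative by \eqref{splitting} since $L_1\geq 0$ on causal vectors. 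Combining these produces the asserted formula $c_2(x,y)=(\tau(y)-\tau(x)-d(x,y))^2$.

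For lower semi-continuity, consider $(x_k,y_k)\to(x,y)$. If $(x,y)\notin J^+$, then closedness of $J^+$ implies $(x_k,y_k)\notin J^+$ for all sufficiently large $k$, so $c_2(x_k,y_k)=\infty$ and $\liminf_k c_2(x_k,y_k)=\infty=c_2(x,y)$. If $(x,y)\in J^+$, then for every $k$ with $(x_k,y_k)\in J^+$ the formula applies, and by continuity of $\tau$ and $d$ (the latter stated in the proposition preceding the definition of $h$) we have $c_2(x_k,y_k)\to c_2(x,y)$; the remaining indices contribute the value $\infty$, so again $\liminf_k c_2(x_k,y_k)\geq c_2(x,y)$.

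The only genuinely delicate step is the first one, namely ruling out finite-action curves between causally unrelated points; the rest is a matter of assembling Proposition~\ref{minimizer}, the Hölder equality, and the continuity/closedness properties already recorded in the preliminaries.
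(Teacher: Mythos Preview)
Your proof is correct and follows essentially the same approach as the paper's: both split into the cases $(x,y)\notin J^+$, $x=y$, and $x<y$, use Proposition~\ref{minimizer} together with the H\"older equality \eqref{havdza1} to reduce $c_2$ to $c_1^2$ in the causal case, and deduce lower semi-continuity from the closedness of $J^+$ and the continuity of $\tau$ and $d$. Your treatment is somewhat more explicit (you spell out the sequential argument for lower semi-continuity and the contradiction argument for $(x,y)\notin J^+$), but the underlying ideas coincide.
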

\begin{proof}
From Lemma \ref{minimizer} and from \eqref{havdza1}, we deduce that $c_2(x,y)=c_1(x,y)^2$
 if $x<y$.
But using Lemma \ref{havdza} one easily checks that $c_1(x,y)=\tau(y)-\tau(x)-d(x,y)$ for $x<y$. Moreover, using again the fact that, for each absolutely continuous curve $\gamma$ with $\dot \gamma(t)\in \overline \C_{\gamma(t)}$, it holds $\gamma(s)\leq \gamma(t)$ whenever $s\leq t$, we conclude that $\gamma$ cannot be closed unless it is constant. Thus, $c_2(x,x)=0$ and $c_2(x,y)=\infty$ for $(x,y)\notin J^+$. This proves the first part of the lemma. 

The second part follows immediately from the first part and from the fact that the Lorentzian distance function is continuous (see \cite{ONeill}, Chapter 14, Lemma 21), that $J^+$ is closed (\cite{ONeill}, Chapter 14, Lemma 22), and that $\tau$ is smooth.
\end{proof}

\begin{proposition}\label{flow}
 There exists a relatively open set ${\cal D}_L\subseteq \overline \C\times \R$ and a continuous (local) flow
 \begin{align*}
     \phi:{\cal D}_L\to \overline \C\subseteq TM,\ (x,v,t)\mapsto \phi_t(x,v),
 \end{align*}
 such that the following properties hold:
 \begin{enumerate}[(i)]
 \item For any $(x,v)\in \overline \C$ the map $\{t\in \R\mid (x,v,t)\in {\cal D}_L\}\to TM,\ t\mapsto \phi_t(x,v),$ is smooth and has the form $(\gamma(t),\dot \gamma(t))$. 
 \item 
 If $x\leq y$ and $\gamma:[a,b]\to M$ is a minimizing curve for ${\cal A}_2$ connecting $x$ with $y$, then $\gamma$ is part of an orbit of this flow, i.e. if $t,s\in [a,b]$ we have $(\gamma(t),\dot \gamma(t))=\phi_{t-s}(\gamma(s),\dot \gamma(s))$.

 \item If $\phi_{\cdot}(x,v)$ is defined on the interval $J$ and $t>0$, then $\phi_{\cdot}(x,tv)$
 is defined on the interval $\frac{1}{t}J$ and it holds
 \begin{align*}
    \pi\circ  \phi_s(x,tv)=\pi\circ \phi_{ts}(x,v) \text{ for all } s\in \frac{1}{t}J,
 \end{align*}
 where $\pi:TM\to M$ denotes the projection onto $M$.
 \end{enumerate}
 \end{proposition}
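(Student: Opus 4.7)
The plan is to define $\phi_t(x,v)$ as the tangent lift $(\gamma(t),\dot\gamma(t))$ of the unique ${\cal A}_2$-minimizer starting at $(x,v)$. By Proposition \ref{minimizer} such a minimizer, if it exists, is a smooth reparametrization of an affine maximizing geodesic with $L_1(\dot\gamma)$ constant, and this almost determines the flow; the remaining task is to show that this prescription is well posed and has the stated regularity.

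For the construction, first set $\phi_t(x,0):=(x,0)$: the only minimizer with $\dot\gamma(0)=0$ is the constant curve, since $L_1(\dot\gamma)\equiv 0$ combined with the splitting condition \eqref{splitting} forces $\dot\gamma\equiv 0$. For $v\in\overline\C_x\setminus\{0\}$, let $c:J\to M$ be the affine geodesic with $c(0)=x$ and $\dot c(0)=v$. Since $\dot c$ is parallel, $\dot c(s)$ remains nonzero causal and $|\dot c(s)|_g=|v|_g$ is constant, so $L_1(\dot c(s))=d\tau(\dot c(s))-|v|_g$ is smooth in $s$ and strictly positive by \eqref{splitting}. Let $s(t)$ solve the ODE
\begin{align*}
\dot s(t)=\frac{L_1(v)}{L_1(\dot c(s(t)))},\qquad s(0)=0,
\end{align*}
on its maximal interval $J(x,v)$, and put $\phi_t(x,v):=(c(s(t)),\dot s(t)\,\dot c(s(t)))$. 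A direct calculation gives $\dot\gamma(0)=v$ and $L_1(\dot\gamma)\equiv L_1(v)$, so by Proposition \ref{minimizer} the projection $\gamma=c\circ s$ is an ${\cal A}_2$-minimizer on every subinterval of $J(x,v)$ on which $c$ is still maximizing. Any other minimizer with the same initial data is forced to be a reparametrization of the same $c$ solving the same ODE, and so coincides with $\gamma$.

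The openness of ${\cal D}_L:=\{(x,v,t)\mid t\in J(x,v)\}$ and the continuity of $\phi$ on it then follow from the smoothness of the geodesic spray together with continuous dependence on parameters for the ODE for $s$: its right-hand side is jointly continuous in $(x,v,s)$, and the uniqueness established above yields continuous dependence on the initial data. Continuity at points with $v=0$ is handled separately, using $\dot s(0)=1$ and the fact that the $\phi$-orbits degenerate to the geodesic flow as $v\to 0$. Property (i) is then immediate from the construction and the smoothness of $c$ and $s$ in $t$. Property (ii) follows from uniqueness: if $\gamma:[a,b]\to M$ is any minimizer, its restriction to $[s,b]$ is again a minimizer starting at $(\gamma(s),\dot\gamma(s))$, and hence agrees with the corresponding $\phi$-orbit. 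For property (iii), substituting $v\mapsto tv$ gives $\tilde c(\sigma)=c(t\sigma)$ and $\dot{\tilde c}(\sigma)=t\,\dot c(t\sigma)$; one verifies that $\tilde s(r)=s(tr)/t$ solves the corresponding ODE, whence $\pi\circ\phi_r(x,tv)=\tilde c(\tilde s(r))=c(s(tr))=\pi\circ\phi_{tr}(x,v)$, with matching domain of definition $J/t$.

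The main technical obstacle is the non-smoothness of $L_1$ at lightlike vectors, which precludes a direct appeal to standard Euler--Lagrange theory or Picard--Lindel\"of on the full tangent bundle. The key observation circumventing this is that along a single affine geodesic $c$ the map $s\mapsto L_1(\dot c(s))$ is smooth in $s$ (because $|\dot c(s)|_g$ is constant along $c$), so the reparametrization ODE is smooth in $s$ for each fixed $v$; joint continuity across the lightlike boundary is then recovered via the uniqueness of minimizers supplied by Proposition \ref{minimizer}, which upgrades the continuous dependence of the continuous ODE on its parameters to a well-defined continuous flow on $\overline\C$.
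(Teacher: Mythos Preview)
Your construction is essentially the paper's: your reparametrization function $s(t)$ is precisely the inverse of the paper's $\psi_{x,v}(t)=\frac{1}{L_1(v)}\int_0^t L_1(\dot c(\sigma))\,d\sigma$, so the flow you write down coincides with the one built in the appendix, and your verifications of (i)--(iii) follow the same lines.

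Two places deserve more care. First, continuity at $v=0$: the right-hand side of your ODE is of the form $L_1(v)/L_1(\dot c(s))$, which is a $0/0$ limit as $v\to 0$, so ``handled separately using $\dot s(0)=1$'' is not yet an argument. The paper resolves this by exploiting the positive $1$-homogeneity of $L_1$ to rescale $v$ to the unit sphere in $T_xM$, where $L_1$ is uniformly continuous and bounded away from zero; this makes the quotient jointly continuous and gives $s(t)\to t$ uniformly on compacta as $v\to 0$. Second, you never verify the group law $\phi_t\circ\phi_s=\phi_{t+s}$, which is part of ``local flow''. Your uniqueness argument for (ii) actually supplies this once you notice it does not require the orbit to be an ${\cal A}_2$-minimizer: the affine geodesic through $(\gamma(s),\dot\gamma(s))$ is an affine reparametrization of $c$, and the reparametrization ODE started there is the $s$-shift of the original one, so uniqueness of ODE solutions gives the semigroup property on all of $J(x,v)$, not just on the maximizing part. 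The paper carries this out explicitly.
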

 \begin{proof}
     The proof is not particularly complicated but lengthy and technical. Therefore we refer the interested reader to subsection 7.2 in the appendix. Of course, the idea for the proof is to take the geodesic flow on $M$ and reparametrize every orbit according to Proposition \ref{minimizer}.
 \end{proof}

\begin{remark}\rm
\begin{enumerate}[(a)]
    \item At this point, one might ask why we don't simply define the flow $\phi$ as the Euler-Lagrange flow of $L_2$. Indeed, $L_2$ is smooth on the set $\op{int}(\C)$, and the second fiber derivative of $L_2$, $\frac{\partial^2 L}{\partial v^2}(x,v)$, is positive definite at any $(x,v)\in \op{int}(\C)$, see Corollary \ref{pokjnbvcxdertzujik} in the appendix. Thus, the Euler-Lagrange flow of $L_2$ is well-defined and smooth. 
 However, this Euler-Lagrange flow is not defined for $(x,v)\in \partial \C$ and, thus, not defined on the the entire set $\C$ (or $\overline \C$). But in our cases, we will also need to consider these situations. Therefore, we state the proposition as above, giving up the smoothness of the flow and settling for continuity.
 
 From our construction in the proof it is easily verified that the sets $\op{int}(\C)$ and $\partial \C$ are invariant for the flow. Moreover, the Euler-Lagrange flow of $L_2$ on the set $\op{int}(\C)$ agrees with our flow map restricted to this invariant set. In particular, $\phi$ is smooth in the interior of its domain. Since we omit a proof here, let us remark that we will not make use of this fact in the paper.

\item 
We will apply proposition in situations where we consider minimizing curves $\gamma_n:[0,1]\to M$ with bounded initial velocities. The proposition then implies that these curves converge, along a subsequence, to another curve (also minimizing) in the topology of uniform convergence. This result could also be derived by employing the well-known Limit curve lemma (and its applications, Lemma 9.14 and Lemma 9.25 in \cite{Ehrlich}) to the maximal geodesics $c_n:[0,1]\to M$, of which the minimizing curves are reparametrizations. Then the initial velocities $\dot c_n(0)$ converge to a causal vector $v$, and the corresponding maximal geodesic $c(t):=\exp_x(tv)$ can be reparametrized to yield the minimizing limit curve. Thus, the problem can be reduced to translating between maximal geodesics and minimizing curves for ${\cal A}_2$. However, to avoid the precise argument whenever such a result is required, it is useful to have this general proposition, which immediately provides limit curve. Moreover, it is worth mentioning that part of the proof of the proposition precisely establishes the above mentioned translation.

    \item Note that we do not need all the properties from the proposition above. In fact, we even do not need the fact that $\phi$ is actually a flow. But since this fact is interesting in its own right we stated the proposition as above.

    \item The above proposition shows the concept of an exponential map makes sense in our case. Thus, we make the following definition.
\end{enumerate}
\end{remark}

\begin{definition}\rm
The \emph{exponential map of $L_2$} is the map
\begin{align*}
    \exp_L:\{(x,v)\in \overline \C\mid (x,v,1)\in {\cal D}_L\}\to M,\ \exp_L(x,v):=\pi\circ \phi_1(x,v).
\end{align*}
\end{definition}

\begin{corollary}\label{l}
Let $K\subseteq \overline \C$ be a compact set. Then there exists $l>0$ such that $\exp_L(x,tv)$ is defined for all $t\in [0,l]$ and all $(x,v)\in K$.
\end{corollary}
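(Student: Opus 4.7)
The plan is to combine the openness of ${\cal D}_L$ and the compactness of $K$ to extract a uniform time of existence for the flow starting on $K$, and then convert this into the desired statement about $\exp_L$ via the scaling property (iii) in Proposition \ref{flow}. The main obstacle is essentially bookkeeping; there is no serious analytic difficulty once the scaling property is in hand.

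More concretely, the first step is the following compactness argument. For every $(x,v)\in \overline{\cal C}$ we have $(x,v,0)\in{\cal D}_L$, since $\phi_0=\mathrm{id}$ on $\overline{\cal C}$. As ${\cal D}_L$ is relatively open in $\overline{\cal C}\times\R$, we can pick for each $(x,v)\in K$ a relatively open neighborhood $U_{x,v}\subseteq\overline{\cal C}$ of $(x,v)$ and some $\delta_{x,v}>0$ with
\[
U_{x,v}\times[0,\delta_{x,v}]\subseteq {\cal D}_L.
\]
Extracting a finite subcover $U_{x_1,v_1},\dots,U_{x_N,v_N}$ of $K$ and setting $\varepsilon:=\min_i\delta_{x_i,v_i}>0$ yields $K\times[0,\varepsilon]\subseteq{\cal D}_L$. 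In other words, $\phi_s(x,v)$ is defined for every $(x,v)\in K$ and every $s\in[0,\varepsilon]$.

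The second step is to apply property (iii) of Proposition \ref{flow}. For fixed $(x,v)\in K$ and $t>0$, property (iii) tells us that $\phi_{\cdot}(x,tv)$ is defined on $\frac{1}{t}[0,\varepsilon]=[0,\varepsilon/t]$. In particular $1\in[0,\varepsilon/t]$ whenever $t\leq\varepsilon$, so $\phi_1(x,tv)$ exists and hence $\exp_L(x,tv)=\pi\circ\phi_1(x,tv)$ is defined. Setting $l:=\varepsilon$ then gives the claim for all $t\in(0,l]$.

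Finally, the case $t=0$ is handled separately: the constant curve $\gamma(s)\equiv x$ is a trivial ${\cal A}_2$-minimizer, so by property (ii) of Proposition \ref{flow} (applied on an arbitrarily large interval $[a,b]$) one has $(x,0,s)\in{\cal D}_L$ and $\phi_s(x,0)=(x,0)$ for all $s\in\R$. Consequently $\exp_L(x,0)=x$ is defined for every $x\in\pi(K)$, which, together with the previous step, completes the argument for $t\in[0,l]$.
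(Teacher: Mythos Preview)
Your proof is correct and follows exactly the approach the paper has in mind: the paper's one-line proof (``follows easily from the fact that ${\cal D}_{L}$ is open in $\overline \C\times \R$ and that $K\times \{0\}\subseteq {\cal D}_{L}$'') is precisely your Step~1 together with the implicit use of property~(iii) of Proposition~\ref{flow} to pass from $(x,v,t)\in{\cal D}_L$ to $(x,tv,1)\in{\cal D}_L$. You have simply made the argument explicit (including the harmless boundary case $t=0$).
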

\begin{proof}
This follows easily from the fact that ${\cal D}_{L}$ is open in $\overline \C\times \R$ and that $K\times \{0\}\subseteq {\cal D}_{L}$.
\end{proof}

\begin{lemma}\label{lii}
   If $\gamma:[0,b]\to M$ is any future pointing causal minimizer for ${\cal A}_2$, we have the representation
   \begin{align*}
       \gamma(t)=\exp_L(\gamma(0),t\dot \gamma(0)),\ t\in [0,b].
   \end{align*}
\end{lemma}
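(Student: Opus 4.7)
The plan is to read off the identity $\gamma(t)=\exp_L(\gamma(0),t\dot\gamma(0))$ as a direct consequence of parts (ii) and (iii) of Proposition \ref{flow}, without having to analyse the flow further.

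First, since $\gamma:[0,b]\to M$ is a future pointing causal minimizer of ${\cal A}_2$ connecting $\gamma(0)$ and $\gamma(b)$, part (ii) of Proposition \ref{flow} (applied with $s=0$) yields
\[
(\gamma(t),\dot\gamma(t))=\phi_{t}(\gamma(0),\dot\gamma(0)) \quad \text{for all } t\in[0,b].
\]
Projecting with $\pi:TM\to M$, this already gives $\gamma(t)=\pi\circ\phi_t(\gamma(0),\dot\gamma(0))$ on the whole interval $[0,b]$. In particular, $\phi_\cdot(\gamma(0),\dot\gamma(0))$ is defined on $[0,b]$.

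Now fix $t\in(0,b]$ and set $x:=\gamma(0)$, $v:=\dot\gamma(0)$. Since $\phi_\cdot(x,v)$ is defined on the interval $J=[0,b]$, part (iii) of Proposition \ref{flow} (applied with scaling factor $t>0$) guarantees that $\phi_\cdot(x,tv)$ is defined on $\frac1t J\supseteq[0,1]$ and that
\[
\pi\circ\phi_s(x,tv)=\pi\circ\phi_{ts}(x,v) \quad \text{for all } s\in\tfrac1t J.
\]
Specialising to $s=1$, we obtain $\pi\circ\phi_1(x,tv)=\pi\circ\phi_t(x,v)=\gamma(t)$. By definition of $\exp_L$, this is precisely
\[
\exp_L(\gamma(0),t\dot\gamma(0))=\gamma(t),
\]
which is the claimed formula for $t\in(0,b]$.

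It remains to check $t=0$, i.e.\ that $\exp_L(\gamma(0),0)=\gamma(0)$. This is either a matter of continuity (letting $t\downarrow 0$ in the already established formula and using the continuity of $\exp_L$), or it follows directly from the observation that $t\mapsto(\gamma(0),0)$ is the trivial orbit of the flow passing through $(\gamma(0),0)\in\overline{\cal C}$, so that $\phi_1(\gamma(0),0)=(\gamma(0),0)$. I do not expect any real obstacle here: the only point one has to verify is that the scaling relation in (iii) can be applied on the full interval $[0,b]$, which is automatic because (ii) already gives us that $\phi_\cdot(\gamma(0),\dot\gamma(0))$ is defined there. Consequently the proof is essentially just a bookkeeping exercise combining (ii) and (iii) of Proposition \ref{flow}.
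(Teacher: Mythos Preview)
Your proof is correct and follows essentially the same approach as the paper: use part (ii) of Proposition \ref{flow} to write $\gamma(t)=\pi\circ\phi_t(\gamma(0),\dot\gamma(0))$, then apply the scaling relation (iii) with $s=1$ to rewrite this as $\pi\circ\phi_1(\gamma(0),t\dot\gamma(0))=\exp_L(\gamma(0),t\dot\gamma(0))$. The paper's proof is just the two-line version of what you wrote, omitting the explicit treatment of $t=0$ and the domain check (which, as you note, are routine).
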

\begin{proof}
    From Proposition \ref{flow}, we know that $\gamma(t)=\pi(\phi_t(\gamma(0),\dot \gamma(0)))$. Now part (iii) of the same proposition yields $\pi(\phi_t(\gamma(0),\dot \gamma(0)))=\pi(\phi_1(\gamma(0),t\dot \gamma(0)))=\exp_L(\gamma(0),t\dot \gamma(0)).$
\end{proof}

\section{Optimal transport for the cost function $c_2$}
In this section, we begin investigating the optimal transportation problem for the cost function $c_2$. To this end, let us denote the set of all Borel probability measures on $M$ by ${\cal P}$.
We will start with a few simple conditions on the measures to ensure finiteness of the total cost, and will then define $\pi$-solutions (which correspond to Kantorovich potentials). The main part of this chapter is dedicated to the proof of Proposition \ref{abc}.

\begin{definition}\rm \label{x11}
\begin{enumerate}[(i)]
\item
We say that two Borel probability measures are \emph{causally related} if there exists a coupling $\pi$ that is concentrated on $J^+$. In this case, the coupling $\pi$ is called \emph{causal}.
\item
We define the set
\begin{align*}
{\cal P}_\tau^+:=\{(\mu,\nu) \in {\cal P}\times {\cal P}\mid \tau \in L^2(\mu)\cap L^2(\nu),\ \mu \text{ and } \nu \text{ are causally related}\}.
\end{align*}
\item 
We say that a pair of Borel probability measures is \emph{strictly timelike} if there exists a coupling $\pi$ whose support is contained in $I^+$. In this case, the coupling $\pi$ is called \emph{timelike}.
\end{enumerate}
\end{definition}

\begin{remark}\rm
We adopted the notion of being \q{causally related} from \cite{Kell}. Also the notion of \q{strict timelikeness} first appeared (to my knowledge) in the paper of Suhr and Kell \cite{Kell} (though with a different definition) and is similar to the notion of \q{$q$-seperatedness} in \cite{McCann2}. Note, however, that the notation of strict timelikeness does not require $\pi$ to be optimal, which is a difference compared to $q$-seperatedness in \cite{McCann2}. It is worth mentioning that, in \cite{McCann2}, McCann was still able to prove (without the assumption of $q$-seperatedness) the existence and uniqueness of an optimal coupling among all couplings concentrated on $I^+$, and that this coupling is induced by a transport map (see Theorem 7.1).
\end{remark}

\begin{lemma}\rm \label{abcde}
Let $\mu,\nu\in {\cal P}$. Then the following properties hold:
\begin{enumerate}[(i)]
\item
There always exists an optimal coupling $\pi\in \Gamma(\mu,\nu)$ (for the cost function $c_2$).
\item
If $(\mu,\nu)\in {\cal P}_\tau^+$, then the total cost $C(\mu,\nu)$ is finite, and any optimal coupling must be concentrated on a $c_2$-monotone Borel set $\Gamma\subseteq M\times M$.
\end{enumerate}
\end{lemma}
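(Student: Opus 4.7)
The plan is to handle the two parts separately, with (i) an immediate consequence of the general Kantorovich existence theorem and (ii) a bookkeeping argument using the splitting condition \eqref{splitting} combined with the structural result of Theorem \ref{2}.

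For part (i), I would invoke Theorem \ref{1}(ii) directly. The cost function $c_2$ is non-negative by construction and was shown in Lemma \ref{daidaoidasl} to be lower semi-continuous on $M\times M$. Moreover, it is not identically $\infty$ (for any $x\in M$, the pair $(x,x)$ lies in $J^+$ and $c_2(x,x)=0$), so $c_2$ is a proper lower semi-continuous cost in the sense of Theorem \ref{1}. Thus an optimal coupling exists (possibly with total cost equal to $+\infty$ a priori).

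For part (ii), I would first show that the total cost is finite by estimating $c_2$ against $\tau^2$ along any causal coupling. Let $\pi\in\Gamma(\mu,\nu)$ be a causal coupling, which exists by the assumption $(\mu,\nu)\in \mathcal{P}_\tau^+$. Because $\pi$ is concentrated on $J^+$, it suffices to bound $c_2$ there. Fix $(x,y)\in J^+$ and let $\gamma$ be any future-directed causal curve from $x$ to $y$. Integrating the splitting inequality \eqref{splitting} along $\gamma$ gives
\begin{align*}
\tau(y)-\tau(x)=\int d\tau(\dot\gamma)\,dt\ \geq\ 2\int |\dot\gamma|_g\,dt\ =\ 2L(\gamma),
\end{align*}
and taking the supremum over all such $\gamma$ yields $\tau(y)-\tau(x)\geq 2d(x,y)\geq 0$. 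In particular $0\leq \tau(y)-\tau(x)-d(x,y)\leq \tau(y)-\tau(x)$, so on $J^+$
\begin{align*}
c_2(x,y)=(\tau(y)-\tau(x)-d(x,y))^2\ \leq\ (\tau(y)-\tau(x))^2\ \leq\ 2\tau(x)^2+2\tau(y)^2.
\end{align*}
Integrating this against $\pi$ and using that its marginals are $\mu,\nu$,
\begin{align*}
\int_{M\times M} c_2\,d\pi\ \leq\ 2\int_M \tau^2\,d\mu+2\int_M \tau^2\,d\nu\ <\ \infty,
\end{align*}
since $\tau\in L^2(\mu)\cap L^2(\nu)$. This shows $C(\mu,\nu)<\infty$, and combined with part (i) we obtain an optimal coupling with finite total cost. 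The $c_2$-monotonicity of the support of any optimal coupling is then immediate from Theorem \ref{2}.

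The only minor subtlety is making sure the splitting inequality, which is stated pointwise in \eqref{splitting} for causal vectors, integrates correctly along an absolutely continuous causal curve; this is routine since $\dot\gamma(t)\in\mathcal{C}_{\gamma(t)}$ for almost every $t$ and the causal length $L(\gamma)$ is exactly $\int |\dot\gamma|_g\,dt$. No serious obstacle is expected.
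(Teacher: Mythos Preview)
Your proof is correct and follows essentially the same approach as the paper: part (i) is Theorem \ref{1}(ii) applied to the lower semi-continuous cost $c_2$, and part (ii) uses the splitting inequality \eqref{splitting} to bound $c_2$ on $J^+$ in terms of $\tau$, then integrates against a causal coupling using $\tau\in L^2(\mu)\cap L^2(\nu)$, with Theorem \ref{2} giving the $c_2$-monotonicity. The paper merely says ``one easily checks'' the finiteness; you have filled in exactly the details that this phrase hides.
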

\begin{proof}
Part (i) follows from Theorem \ref{1}. For part (ii), using the representation for $c_2$ of Lemma \ref{daidaoidasl}, one easily checks that the cost of the causal coupling $\pi$ is finite. Indeed, we have $\tau(y)-\tau(x)-d(x,y)\geq 0$ on $J^+$, thanks to \eqref{splitting} and also $\tau \in L^2(\mu)\cap L^2(\nu)$. These two facts easily imply that the cost of $\pi$ is finite.
 Hence, the total cost must also be finite. The second part follows from Theorem \ref{2}.
\end{proof}

\begin{definition}\rm \label{daui9dhoafzhiofzhaio}
Let $(\mu,\nu)\in {\cal P}_\tau^+$ and $\pi \in \Gamma_o(\mu,\nu)$ (the set of optimal couplings). We say that a $c_2$-convex function $\varphi:M\to \overline \R$ is a \emph{$\pi$-solution} if $\pi$ is concentrated on the set $\partial_{c_2}\varphi$, or equivalently,
\begin{align*}
    \varphi^{c_2}(y)-\varphi(x)=c_2(x,y) \ \pi\text{-a.e.}
\end{align*}
\end{definition}

\begin{remark}\rm
\begin{enumerate}[(i)]
\item 
In the above definition, we place no measurability or integrability assumptions on $\varphi$ (or on $\varphi^{c_2}$). The integrability assumptions $\varphi\in L^1(\mu)$ and $\varphi^{c_2}\in L^1(\nu)$ are the difference between our definition and the definition of Kantorovich potentials.
\item Let us note that we will frequently use the following fact: If $\mu,\nu\in {\cal P}$, and $\pi\in \Gamma(\mu,\nu)$ is concentrated on the set $A\subseteq M\times M$, then $\mu$ (resp.\ $\nu$) is concentrated on $p_1(A)$ (resp.\ $p_2(A)$), where $p_i$ denotes the projection onto the $i$-th component. Indeed, by the inner regularity of $\pi$ there is a $\sigma$-compact set $B\subseteq A$ such that $\pi(B)=1$. Then, $p_1(B)$ is a Borel set and is of full $\mu$-measure since $\mu(p_1(B))=\pi(p_1^{-1}(p_1(B)))\geq \pi(B)=1$. Thus, $p_1(A)\supseteq p_1(B)$ is of full $\mu$-measure. The argument for $\nu$ is analogous.
\end{enumerate}
\end{remark}

\begin{lemma}\label{gbaiw}
    Let $(\mu,\nu)\in {\cal P}_\tau^+$, $\pi \in \Gamma_o(\mu,\nu)$, and let $\varphi$ be a $\pi$-solution.
    Then $\varphi$ (resp. $\varphi^{c_2}$) is real-valued $\mu$-a.e. (resp. $\nu$-a.e.).
\end{lemma}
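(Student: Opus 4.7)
The plan is to argue directly from the definitions, exploiting the finiteness of the total cost together with the conventions for $\pm\infty$ that appear in the definition of $c_2$-convexity. First I would note that since $(\mu,\nu) \in {\cal P}_\tau^+$, the coupling $\pi$ has finite total cost, so $c_2 \in L^1(\pi)$; in particular $c_2(x,y) < \infty$ for $\pi$-a.e.\ $(x,y)$. Combined with the $\pi$-solution property, this yields a set $A \subseteq M \times M$ with $\pi(A) = 1$ on which simultaneously $c_2(x,y) \in [0,\infty)$ and $\varphi^{c_2}(y) - \varphi(x) = c_2(x,y)$ hold.

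The heart of the argument is then a short case analysis on $A$ using the conventions of the \emph{Convention} block. Fix $(x,y) \in A$. If $\varphi(x) = +\infty$, convention (i) forces $\varphi^{c_2}(y) - \varphi(x) = -\infty$ whenever $\varphi^{c_2}(y) \in \{\pm\infty\}$, while $\varphi^{c_2}(y) - \varphi(x) = -\infty$ is also the only reasonable interpretation if $\varphi^{c_2}(y) \in \R$; in either case one contradicts $c_2(x,y) \geq 0$. If $\varphi(x) = -\infty$, the difference is $+\infty$ when $\varphi^{c_2}(y) \in \R$, and $-\infty$ by convention (i) when $\varphi^{c_2}(y) \in \{\pm\infty\}$; again neither value agrees with the finite $c_2(x,y)$. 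Hence $\varphi(x) \in \R$, and knowing this, the equality $\varphi^{c_2}(y) = \varphi(x) + c_2(x,y)$ forces $\varphi^{c_2}(y) \in \R$ as well.

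Finally I would invoke the projection principle from the preceding remark: after replacing $A$ by a $\sigma$-compact subset of full $\pi$-measure (by inner regularity), its projections $p_1(A)$ and $p_2(A)$ are Borel sets of full $\mu$- and $\nu$-measure, respectively. Since $\varphi$ is real-valued on $p_1(A)$ and $\varphi^{c_2}$ is real-valued on $p_2(A)$, the conclusion follows. There is no serious obstacle here; the statement is really an unpacking of the definition of $\pi$-solution against the fact that the optimal cost is finite, with the only mildly delicate point being the bookkeeping of the $\infty - \infty$ conventions.
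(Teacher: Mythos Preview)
Your proof is correct and follows essentially the same approach as the paper: both argue that finiteness of the total cost gives $c_2(x,y)<\infty$ $\pi$-a.e., take a full-measure set $A$ on which both this and the $\pi$-solution equality hold, use the $\pm\infty$ conventions to conclude that $\varphi$ and $\varphi^{c_2}$ are real-valued on $A$, and then project via the remark preceding the lemma. The paper simply says ``by the convention for the sum of $\varphi^{c_2}-\varphi$ we immediately deduce\ldots'' where you spell out the case analysis, but there is no substantive difference.
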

\begin{proof}
    Since $(\mu,\nu)\in {\cal P}_\tau^+$ and $\pi \in \Gamma_o(\mu,\nu)$, the cost of $\pi$ must be finite. In particular, $\pi$ is causal. Thus, $c_2(x,y)<\infty$ $\pi$-a.e. Let $A\subseteq M\times M$ be a set of full $\pi$-measure such that $c_2(x,y)<\infty$ and $\varphi^{c_2}(y)-\varphi(x)=c_2(x,y)$ for all $(x,y)\in A$. Then (see the remark) $p_1(A)$ (resp. $p_2(A)$) has full measure w.r.t.\ $\mu$ (resp $\nu$). 
    By the convention for the sum $\varphi^{c_2}-\varphi$, we immediately deduce that $\varphi$ (resp. $\varphi^{c_2}$) is real-valued on $p_1(A)$ (resp. $p_2(A)$). This proves the lemma.
\end{proof}

\begin{proposition}[see \cite{Kell}, Theorem 2.12]
\label{hiuajdoaildkao}
Let $(\mu,\nu)\in {\cal P}_\tau^+$ be a couple of probability measures such that $\mu$ is absolutely continuous w.r.t.\ the Lebesgue measure and $\supp(\mu)\cap \supp(\nu)=\emptyset$. Let $\pi\in \Gamma_o(\mu,\nu)$ and assume that $\varphi$ is a $\pi$-solution.
\begin{enumerate}[(i)]
\item  $\pi$ is concentrated on $I^+$.
\item
For $\mu$-a.e., $x$ it holds that $\partial_{c_2}\varphi(x)\cap I^+(x)\neq \emptyset$.
\end{enumerate}
\end{proposition}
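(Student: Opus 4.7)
My plan is to establish~(i), from which~(ii) follows immediately, by adapting \cite{Kell}, Theorem~2.12, from the cost $-\sqrt{d}$ to our cost $c_2$. Both proofs hinge on the same core mechanism: a ``square-root'' blow-up of the Lorentzian distance transverse to the null cone, pitted against the absolute continuity of~$\mu$.

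For~(i), I argue by contradiction: assume $\pi(J^+\setminus I^+)>0$. Since $\supp\mu\cap\supp\nu=\emptyset$, the diagonal is $\pi$-null, so one can pick $(x_0,y_0)\in\supp\pi$ with $y_0\in\partial J^+(x_0)\setminus\{x_0\}$. Then $d(x_0,y_0)=0$ and $c_2(x_0,y_0)=\alpha^2$ with $\alpha:=\tau(y_0)-\tau(x_0)>0$ by~\eqref{splitting}. Let $V$ be a timelike vector field (e.g.\ the one coming from the splitting) and write $y_s:=\sigma_s(y_0)$ for its flow. The key geometric input, verified by comparison with a Minkowski chart around $y_0$ (where one sees $d((0,0),(1+s,1))=\sqrt{2s+s^2}$), is
\[
d(x_0,y_s)\;\geq\;C\sqrt{s}+o(\sqrt{s})\qquad\text{for some }C>0.
\]
Combined with the identity $c_2(x_0,y_s)=(\tau(y_s)-\tau(x_0)-d(x_0,y_s))^2$ and the fact that the quantity inside the square remains positive by~\eqref{splitting}, this yields
\[
c_2(x_0,y_s)\;\leq\;\alpha^2-2\alpha C\sqrt{s}+O(s),
\]
so the cost drops at rate $\sqrt{s}$ when $y_0$ is pushed in a timelike direction---strictly faster than the rate $s$ at which ``generic'' perturbations inside $I^+$ change the cost.

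The main obstacle will be turning this pointwise decrease into a global contradiction. To do so, I would pair $(x_0,y_0)$ with a second point $(x_1,y_1)\in\supp\pi$ chosen close to $(x_0,y_0)$ via a Lebesgue-density argument in a chart; the absolute continuity of~$\mu$ is essential here, to guarantee that $x_1$ can be placed transverse to the null cone $\partial J^-(y_0)$ so that $c_2(x_1,y_0)$ also exhibits a $\sqrt{\,\cdot\,}$-type drop relative to $c_2(x_0,y_0)$ in the chosen direction. The $c_2$-cyclical monotonicity inequality
\[
c_2(x_0,y_0)+c_2(x_1,y_1)\;\leq\;c_2(x_0,y_1)+c_2(x_1,y_0)
\]
(from Theorem~\ref{2}) will then be violated for a positive-measure collection of such pairs, contradicting $\pi(J^+\setminus I^+)>0$. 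The matching-up details are routine modifications of \cite{Kell} once the $\sqrt{s}$-estimate is in place.

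Finally,~(ii) is immediate from~(i): by~(i) combined with the $\pi$-solution property, $\pi$-a.e.\ $(x,y)$ satisfies $y\in\partial_{c_2}\varphi(x)\cap I^+(x)$, so projecting onto the first factor gives $\partial_{c_2}\varphi(x)\cap I^+(x)\neq\emptyset$ for $\mu$-a.e.\ $x$.
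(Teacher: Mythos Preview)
Your derivation of~(ii) from~(i) is correct and matches the paper. The gap is in~(i): a two-point cyclical monotonicity argument cannot close, and this is not a matter of ``routine modifications''.

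Concretely, suppose you pick $(x_0,y_0),(x_1,y_1)\in\supp\pi$ both lying on $J^+\setminus I^+$, with $x_1$ displaced from $x_0$ transverse to the null cone $\partial J^-(y_0)$ so that $x_1\in I^-(y_0)$ and $c_2(x_1,y_0)$ exhibits the $\sqrt{\,\cdot\,}$-drop you describe. Then, by the same geometry, $x_0$ lies on the \emph{wrong side} of $\partial J^-(y_1)$: typically $x_0\notin J^-(y_1)$, so $c_2(x_0,y_1)=\infty$ and the $2$-cycle inequality is vacuous. More generally, any closed $m$-cycle built this way has its closing term $c_2(x_1,y_m)=\infty$, for the same reason. (Your $y_s$-flow computation is a correct heuristic for the $\sqrt{s}$ behaviour, but since $y_s\notin\supp\nu$ it cannot enter a monotonicity inequality.)

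The paper's proof avoids this by using the $\pi$-solution $\varphi$ rather than bare cyclical monotonicity: from $(x_i,y_i)\in\partial_{c_2}\varphi$ one gets the \emph{open-chain} inequality
\[
\varphi(x_1)\;\leq\;\varphi(x_m)+\sum_{i=1}^{m-1}\bigl(c_2(x_{i+1},y_i)-c_2(x_i,y_i)\bigr),
\]
with no closing term. One then places $x_1,\dots,x_m$ along a short segment in the spatial direction $\pi(v)$ (the projection of the null direction $v$), all drawn from the bad set $B$ via a Lebesgue-density argument; each summand is $\leq -C/\sqrt{m}$ by the square-root mechanism, so the sum is $\leq -C\sqrt{m}\to-\infty$, while $\varphi(x_1),\varphi(x_m)$ stay bounded. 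This diverging chain is the heart of the argument in \cite{Kell} as well, and it genuinely requires the $\pi$-solution (or an equivalent potential), which your sketch does not invoke for part~(i).
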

\begin{proof}
\begin{enumerate}[(i)]
\item
This can be proven either as the more general Theorem \ref{gafuiahfauhfaohfoafjaoii} or as Theorem 2.12 in \cite{Kell} (where this result first appeared for the cost function \eqref{ioaufhdoahfoaidai}). We provide only a sketch of the proof for the case where $(M,g)=(\R^{1+n},\langle \langle\cdot,\cdot\rangle\rangle)$ is the Minkowski space and $\tau(t,x)=2t$.

Suppose $\pi$ is not concentrated on $I^+$. Then, necessarily, $\pi((J^+\backslash I^+)\cap \partial_{c_2}\varphi)>0$. Define the set
\begin{align*}
    B:=\{x\in \supp(\mu)\mid \exists y\in J^+(x)\backslash I^+(x),\ (x,y)\in \supp(\pi)\cap \partial_{c_2}\varphi\}.
\end{align*}
Then $B$ cannot be a null set w.r.t.\ $\mu$. Let $x\in B$ and choose a corresponding $y$ as in the definition of $B$. Since we assume $\supp(\mu)\cap \supp(\nu)=\emptyset$, we have $x<y$. Thus, there exists a maximizing geodesic $\gamma:[0,1]\to M$ connecting $x$ with $y$, and we define $v_x:=\dot \gamma(0)/|\dot \gamma(0)|$. 

Denote by $\pi:\R^{1+n}\to \{0\}\times \R^n$ the projection onto the last $n$ coordinates.

Given $\delta>0$, we find a finite set $F$ of unit vectors $v\in S^{n}\subseteq \R^{1+n}$ such that
\begin{align*}
    B\subseteq \bigcup_{v \in F} \{x\in B\mid |v_x-v|\leq \delta\}.
\end{align*}
Since $B$ is not a null set (w.r.t.\ $\mu$ and, hence, w.r.t.\ the Lebesgue measure), at least one of the sets above is not a null set. Thus, assuming measurability of these sets, we can apply Lebesgue's differentiation theorem and obtain a Lebesgue point $x_0$ of one of these sets. Theefore, there exists $v \in F$ and $r>0$ such that, for every $m\in \N$, we find a sequence of points $x_1^m,...,x_{m}^m$ (dropping the superscript $m$ for clarity) with $v_{x_i}\approx v$ and $B\ni x_i\approx x_0+r\frac{i}{m}\pi(v)$ ($i=1,...,m$). Assume that $\varphi(x_1^m),\varphi(x_m^m)$ are bounded independently of $m\in \N$. 
Since $(x_i,y_i)\in \partial_{c_2}\varphi$ and $\varphi$ is $c_2$-convex, we have
\begin{align*}
    \varphi(x_1)\leq \varphi(x_m)+\sum_{i=1}^{m-1} c_2(x_{i+1},y_i)-c_2(x_i,y_i) 
\end{align*}
where $y_i$ is as in the definition of $v_{x_i}$. Since $(x_i,y_i)\in J^+\backslash I^+$ it is easy to see (in $\R^{1+n}$) that $c_2(x_{i+1},y_i)-c_2(x_i,y_i)\leq -\frac{C}{\sqrt{m}}$ for some uniform constant $C$ and large $m$ (here we make use of the fact that $\op{dist}(x_0,\supp(\nu))>0$). Inserting this into the inequality, we arrive at the contradiction
\begin{align*}
 \inf_{m\in \N}   \varphi(x^m_1)\leq \sup_{m\in \N} \varphi(x^m_m)-\lim_{m\to \infty} C \sqrt{m}=-\infty.
\end{align*}
This contradicts the boundedness $\inf_{m\in \N}   \varphi(x^m_1)$ and $\sup_{m\in \N}\varphi(x_m^m)$.

\item Now, we prove (ii).
From (i) and Definition \ref{daui9dhoafzhiofzhaio}, we deduce that $\pi$ is concentrated on the set $I^+\cap \partial_{c_2}\varphi$.
It follows that, for $\mu$-a.e.\ $x$, we have $I^+(x)\cap \partial_{c_2}\varphi(x)\neq \emptyset$. 
\end{enumerate}
\end{proof}

\begin{remark}\rm
Note that the fact that $\pi$ is concentrated on $I^+$ does not imply $\supp(\pi)\subseteq I^+$.
\end{remark}

Next, we aim to provide a simple criterion for the existence of $\pi$-solution, i.e.\ we prove Proposition \ref{abc}, which we restate below. First, we define what we mean by a causally compact set.

\begin{definition}\rm \label{causally}
A subset $A\subseteq M$ is called \emph{causally compact} if for any compact set $K\subseteq M$ the sets $J^+(K)\cap A$ and $J^-(K)\cap A$ are compact.
\end{definition}

\begin{proposition}\label{qayswsxdcre}
Let $(\mu,\nu) \in {\cal P}_\tau^+$ be a strictly timelike couple of probability measures. Assume that $\supp(\mu)$ is connected and that $\supp(\mu)$ and $\supp(\nu)$ are causally compact.

Then, for any $\pi\in \Gamma_o(\mu,\nu)$, there exists a $\pi$-solution.
\end{proposition}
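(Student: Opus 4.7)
The plan is to adapt the Rockafellar-type construction used in \cite{Kell} (following \cite{Suhr}) for the cost \eqref{ioaufhdoahfoaidai} to our cost function $c_2$. The main additional difficulty, compared to a finite cost, is that $c_2$ takes the value $\infty$ outside $J^+$, so the chain manipulations must avoid the $\infty-\infty$ ambiguity.

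By Theorem \ref{2} the optimal coupling $\pi$ is concentrated on a $c_2$-monotone Borel set $\Gamma$, and since $\pi$ is causal and $c_2\in L^1(\pi)$ we may shrink $\Gamma$ to assume $\Gamma\subseteq J^+$ and $c_2<\infty$ on $\Gamma$. Fix a base point $(\bar x,\bar y)\in\Gamma$ and set
\begin{align*}
\varphi(x):=\sup\left\{\sum_{i=0}^{k}\bigl[c_2(x_i,y_i)-c_2(x_{i+1},y_i)\bigr] : k\geq 0,\ (x_i,y_i)_{i=0}^{k}\subseteq\Gamma,\ x_0=\bar x,\ x_{k+1}=x\right\},
\end{align*}
with the convention that any chain producing an $\infty-\infty$ term is excluded. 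By construction, $\varphi$ is a supremum of functions of the form $\Psi(y)-c_2(\cdot,y)$ and hence $c_2$-convex, and the $c_2$-monotonicity of $\Gamma$ forces $\varphi(\bar x)\leq 0$, with equality attained by the trivial chain.

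Next I verify the identity $\varphi^{c_2}(y)-\varphi(x)=c_2(x,y)$ for every $(x,y)\in\Gamma$. The inequality $\leq$ is immediate from $c_2$-convexity. For $\geq$, given any admissible chain ending at $x$ with sum $s$, prolonging it by one further link through $(x,y)\in\Gamma$ yields $\varphi(x')\geq s+c_2(x,y)-c_2(x',y)$ for every $x'\in M$; rearranging and taking the infimum over $x'$ gives $\varphi^{c_2}(y)\geq s+c_2(x,y)$, and the supremum over chains then yields the claim. This shows that $\varphi$ is a $\pi$-solution in the sense of Definition \ref{daui9dhoafzhiofzhaio} as soon as it is real-valued on a set of full $\mu$-measure within $p_1(\Gamma)$.

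The main obstacle is precisely this finiteness/non-degeneracy issue: ruling out $\varphi\equiv+\infty$ on $p_1(\Gamma)$ and, more subtly, avoiding $\varphi^{c_2}(y)-\varphi(x)=\infty-\infty$ on a set of positive $\pi$-measure. Here all three hypotheses enter. The strict timelikeness of $(\mu,\nu)$ supplies a timelike coupling $\tilde\pi$ supported in $I^+$, providing a reservoir of auxiliary pairs on which $c_2$ varies continuously and through which chains can be extended without hitting the $c_2=\infty$ locus. Connectedness of $\supp(\mu)$ lets one propagate the real-valuedness of $\varphi$ from $\bar x$ to $\mu$-a.e.\ point by an overlapping-chains argument: any two nearby points in $\supp(\mu)$ can be linked by short chains routed through the timelike pairs. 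Finally, causal compactness of $\supp(\mu)$ and $\supp(\nu)$ confines the relevant chains to a fixed compact region, producing uniform bounds on the cost differences along chains and hence on the supremum. Assembling these ingredients — technically the heaviest step, and the direct analogue of the Suhr--Kell construction — one concludes that $\varphi$ is real-valued on the appropriate set, so that the identity of the previous paragraph holds $\pi$-a.e., proving $\varphi$ to be the desired $\pi$-solution.
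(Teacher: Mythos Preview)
Your framework is the same as the paper's: Rockafellar construction on a $c_2$-monotone set $\Gamma\subseteq J^+$, leading to a $c_2$-convex $\varphi$ with $\varphi(\bar x)=0$ and the chain inequality yielding $\varphi^{c_2}(y)=c_2(x,y)+\varphi(x)$ on $\Gamma$. Up to this point your argument is correct and essentially identical to the paper's.

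The gap is in the finiteness step, which you acknowledge is ``technically the heaviest'' but do not actually carry out. Your sketch of how the three hypotheses enter is also inaccurate. The paper does \emph{not} argue via ``short chains between nearby points'' or ``uniform bounds on cost differences along chains.'' Instead, for a fixed $(x_*,y_*)\in\Gamma$ it builds iteratively the sets
\[
A_0=\{y_*\},\qquad A_{k+1}=p_2\bigl(p_1^{-1}(J^-(A_k))\cap\Gamma\bigr),\qquad A=\bigcup_k A_k,
\]
shows $\sigma$-compactness of each $A_k$ and $J^-(A_k)$ (global hyperbolicity), and proves the mass-balance identity $\nu(A)=\mu(J^-(A))$. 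This forces every causal coupling to restrict to a coupling of $\mu\mres J^-(A)$ with $\nu\mres A$. Strict timelikeness is then used through a fixed timelike coupling $\bar\pi$ to show that $\supp(\mu)\cap I^-(A)$ is nonempty and \emph{closed} in $\supp(\mu)$; causal compactness of $\supp(\mu)$ and $\supp(\nu)$ is used precisely here, to extract convergent subsequences of the $y_k$'s lying in $\supp(\nu\mres A)$. Connectedness of $\supp(\mu)$ then gives $\supp(\mu)\subseteq I^-(A)$ outright. Applying this with $(x_*,y_*)=(x_0,y_0)$ produces, for every $x\in\supp(\mu)$, a finite chain witnessing $\varphi(x)>-\infty$; applying it with $(x_*,y_*)=(x,y)\in\Gamma$ produces a finite chain back to $x_0$ witnessing $\varphi(x)<\infty$.

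So what is missing from your proposal is exactly this $A_k$-construction and the open-and-closed argument; without it, the appeal to the hypotheses remains heuristic. In particular, causal compactness is not giving you uniform cost bounds---it is giving you sequential compactness in the closedness step---and connectedness is used topologically, not through local chain-linking.
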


\begin{proof}
The idea behind the proof is inspired by \cite{Kell}, Theorem 2.8. The construction of the $c_2$-convex function relies on the standard Rockafellar method, see for example \cite{Ambrosio}, Theorem 6.1.4. However, since $c_2$ is not real-valued, additional care is required to ensure the validity of the argument in our case.

Let $\pi\in \Gamma_o(\mu,\nu)$ be an optimal coupling. In particular, $\pi$ is causal. By Lemma \ref{abcde}, we find a $c_2$-monotone Borel set $\Gamma\subseteq \supp(\pi)\subseteq J^+$ on which $\pi$ is concentrated. By the inner regularity of $\pi$, we may assume that $\Gamma$ is $\sigma$-compact. Fix $(x_0,y_0)\in \Gamma$. We define the function
\begin{align*}
\varphi:M\to \overline \R,\ \varphi(x):=\sup\bigg\{\sum_{i=0}^k c_2(x_i,y_i)-c_2(x_{i+1},y_i)\bigg\},
\end{align*}
where the supremum is taken over all $k\in \N$ and all chains $(x_i,y_i)_{1\leq i\leq k}\subseteq \Gamma$. Here, $x_{k+1}:=x$.
We claim that this function is a $\pi$-solution.
\medskip

Observe that $\varphi$ is well-defined since $c_2(x_i,y_i)\in \R$. By construction, $\varphi$ is $c_2$-convex, and because $\Gamma$ is $c_2$-monotone, it satisfies $\varphi(x_0)=0$. Moreover (using the convention $\infty-\infty=-\infty$), we have
\begin{align*}
\varphi(x')\geq \varphi(x)+c_2(x,y)-c_2(x',y) \text{ for all } (x,y)\in \Gamma \text{ and } x'\in M.
\end{align*}
By the definition of the $c_2$-transform (and the conventions of the sum!), this implies 
\begin{align*}
\varphi^{c_2}(y)=c_2(x,y)+\varphi(x) \text{ for all } (x,y)\in \Gamma.
\end{align*}
 Thus, if we can show that $\varphi$ is finite on $p_1(\Gamma)$, it follows that 
\begin{align*}
    \varphi^{c_2}(y)-\varphi(x)=c_2(x,y) \text{ on } \Gamma, \text{ hence } \pi \text{-a.e.},
\end{align*}
which would coplete the proof. Thus, in the rest of the proof we will show that $\varphi$ is finite on $p_1(\Gamma)$.
\\

Let $(x_*,y_*)\in \Gamma$ be arbitrary but fixed. 
For $k\in \N$, we consider chains $(x_i,y_i)_{1\leq i\leq k}\subseteq \Gamma$ satisfying $x_{i+1}\in J^-(y_i)$ ($i\geq 1$) and $x_1\in J^-(y_*)$.
Define the following sets:
\begin{align*}
A_0:=\{y_*\},\ A_k:=\{y\mid \exists (x_i,y_i)_{1\leq i\leq k} \text{ as above with } y_k=y\} \text{ and } A:=\bigcup_{k=1}^\infty A_k.
\end{align*}
From this construction, we obtain
\begin{align}
p_2(p_1^{-1}(J^-(A_k))\cap \Gamma)=A_{k+1} \text{ for all } k\in \N_0, \label{ksaiajfsioaiv}
\end{align}
where $p_1$ and $p_2$ denote the projection on the first and second component.

We first claim that, for each $k\in \N_0$, $A_k$ and $J^-(A_k)$ are $\sigma$-compact sets.  
Indeed, obviously $A_0$ is compact and thus $J^-(A_0)$ is also $\sigma$-compact, since it is closed. Indeed, since $M$ is globally hyperbolic, the set $J^+=J^-$ is closed (see \cite{ONeill}, Chapter 14, Lemma 22), so that $J^-(K)$ is closed whenever $K$ is compact. Next assume that, for some $k$, the sets $A_{k}$ and $J^-(A_{k})$ are $\sigma$-compact. It follows that the set $(J^-(A_{k})\times M)\cap \Gamma$ is also $\sigma$-compact. Hence, $A_{k+1}=p_2(p_1^{-1}(J^-(A_k))\cap \Gamma)$ is also $\sigma$-compact as the continuous image of a $\sigma$-compact set. Thus, $J^-(A_{k+1})$ is also $\sigma$-compact, again by the global hyperbolicity of $M$. 

From the claim it follows that $A$ and, hence, also $J^-(A)$, are $\sigma$-compact. In particular, both $A$ and $J^-(A)$ are Borel sets.

Next we claim that $\nu(A)=\mu(J^-(A))$. Taking the union over all $k$ we get from \eqref{ksaiajfsioaiv} 
\begin{align*}
p_2(p_1^{-1}(J^-(A))\cap \Gamma)=A.
\end{align*}
We then use the fact that the first (resp. second) marginal of $\pi$ is $\mu$ (resp. $\nu)$ to deduce
\begin{align*}
\nu(A)=\pi\Big(p_2^{-1}\big(p_2(p_1^{-1}(J^-(A))\cap \Gamma)\big)\Big)
\geq \pi(p_1^{-1}(J^-(A))\cap \Gamma)
=\mu(J^-(A)).
\end{align*}
On the other hand, since $\pi$ is concentrated on $J^+$, we have
\begin{align*}
\nu(A)=\pi((M\times A)\cap J^+)=\pi(J^-(A)\times A)\leq \mu(J^-(A)).
\end{align*}
Both inequalities combined give the claim and the claim tells us that every causal coupling $\bar \pi\in \Gamma(\mu,\nu)$ has to couple $\mu\mres J^-(A)$ with $\nu\mres A$ in the sense that 
\begin{align*}
\bar \pi\mres (J^-(A)\times A)\in \Gamma(\mu\mres J^-(A),\nu\mres A).
\end{align*}
Here, $\mres$ means a measure restricted to a set.
Indeed, let $\bar \pi$ be such a causal coupling and $B\subseteq M$ be any Borel set. Denote $\tilde \pi:=\bar \pi\mres (J^-(A)\times A)$. Then
\begin{align*}
(p_1)_\#\tilde \pi(B)=\bar \pi((B\cap J^-(A))\times A)\leq \bar \pi((B\cap J^-(A))\times M)= (\mu\mres J^-(A))(B).
\end{align*}
On the other hand, we have by the causality of $\bar \pi$ that $(p_1)_\#\tilde \pi(M)=\tilde \pi(M\times M)=\bar \pi(J^-(A)\times A)=\bar \pi(M\times A)=\nu(A)=\mu(J^-(A))=(\mu\mres J^-(A))(M)$, and therefore must have equality for all $B$.
A similar argument shows that $(p_2)_\#\tilde \pi=\nu\mres A$.
\\

\noindent \textbf{Claim:} It holds $\supp(\mu)\subseteq I^-(A)$.

\noindent \textbf{Proof of claim:}
 By the connectedness of $\supp(\mu)$, it suffices to prove that $I^-(A)\cap \supp(\mu)$ is non-empty, closed and also open in $\supp(\mu)$. However, it is trivial that the set is  open in $\supp(\mu)$ (since $I^-(A)$ is open).
We pick a causal coupling $\bar \pi\in \Gamma(\mu,\nu)$ with $\supp(\bar \pi)\subseteq I^+$.

To prove that the set is non-empty, we choose $\bar x_*\in \supp(\mu)$ with $(\bar x_*,y_*)\in \supp(\bar \pi)\subseteq I^+$. This is possible since $y_*\in \supp(\nu)$ and $\supp(\mu)$ is causally compact, as a simple compactness argument shows. Thus $\bar x_*\in \supp(\mu)\cap I^-(A)\neq \emptyset$.

To prove that $\supp(\mu)\cap I^-(A)$ is closed in $\supp(\mu)$, let $(x_k)_k\subseteq \supp(\mu)\cap I^-(A)$ with $x_k\to x$. In particular, $x_k\in \supp(\mu\mres J^-(A))$. By the causal compactness of $\supp(\nu\mres A)$, we can choose, for each $k$, some $y_k\in \supp(\nu\mres A)$ with $(x_k,y_k)\in \supp(\bar \pi\mres (J^-(A)\times A))$ (here we use the fact that $\bar \pi\mres (J^-(A)\times A)\in \Gamma(\mu\mres J^-(A),\nu\mres A)$). We can assume that $(x_k,y_k)\in J^-(A)\times A$. Again by the causal compactness of $\supp(\nu)$ we have that, along a subsequence, $(x_k,y_k)\to (x,y)\in \supp(\bar \pi\mres (J^-(A)\times A))\subseteq I^+$ for some $y\in \supp(\nu)$. Thus, $x\in I^-(y)$ and, therefore, $x\in I^-(y_k)\subseteq I^-(A)$ for big $k$.
\hfill \checkmark
\bigskip

\noindent \textbf{Claim:}
We have $\varphi(x)\in \R$ for all $x\in p_1(\Gamma)$.

\noindent \textbf{Proof of claim:}
Let $x\in \supp(\mu)$ be arbitrary. Define the set $A$ with $(x_*,y_*):=(x_0,y_0)$ and choose a chain $(x_i,y_i)_{1\leq i\leq k}$ as in the definition of $A$ with $x\in J^-(y_k)$. The existence of such a chain follows from the preceding claim. By definition of $\varphi$, we obtain
\begin{align*}
\varphi(x)
\geq
\sum_{i=0}^k c_2(x_i,y_i)-c_2(x_{i+1},y_i)
>-\infty,\ \text{ where }x_{k+1}:=x.
\end{align*}
On the other hand, if $x\in p_1(\Gamma)$, we can define $A$ with $(x_*,y_*):=(x,y)\in \Gamma$ for some $y$. We obtain a chain $(x_i,y_i)_{1\leq i\leq k}$ as in the definition of $A$ with $x_{k+1}:=x_0\in J^-(y_k)$. Applying the definition of $\varphi$ we then find
\begin{align*}
0=\varphi(x_0)\geq \varphi(x)+\(c_2(x,y)-c_2(x_1,y)+\sum_{i=1}^k \(c_2(x_i,y_i)-c_2(x_{i+1},y_i)\)\).
\end{align*}
Since the expression in the brackets is a real number due to the properties of our chain, it follows that $\varphi(x)<\infty$.
\hfill \checkmark

This claim finally concludes the proof of the proposition.
\end{proof}

\section{First properties of $c_2$-convex functions - Proof of Theorem \ref{mfdghufsa} }
In this chapter, we will prove Theorem \ref{mfdghufsa}, which we restate here for convenience:

\begin{theorem} \label{Figalli}
Let $\varphi:M\to \overline \R$ be a $c_2$-convex function, and define
\begin{align*}
D:=\{x\in M\mid \varphi(x)\in \R\} \text{ and }
\Omega:=\op{int}(D).
\end{align*}
Then the following assertions hold:
\begin{enumerate}[(i)]
\item
$\varphi_{|\Omega}$ is locally bounded.
\item
$D\backslash \Omega$ is countably $n$-rectifiable.
\item
For each compact $K \subseteq \Omega$, the set of all $y\in M$ such that
\begin{align*}
\psi(y)-c_2(x,y)\geq \varphi(x)-1 \text{ for some } x\in K
\end{align*}
is relatively compact. Here, $\psi:=\varphi^{c_2}$.
\end{enumerate}
\end{theorem}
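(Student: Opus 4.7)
The plan is to adapt the Riemannian strategy of \cite{Figalli/Gigli} to our Lorentzian setting, with the essential new ingredient being the $L_2$-minimizer of Proposition \ref{minimizer}.

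First I would prove (ii) directly from the structure of $c_2$-convex functions. The sum conventions yield the identities $\{\varphi > -\infty\} = J^-(A_-)$ with $A_- := \{\psi > -\infty\}$, and $\{\varphi = +\infty\} = J^-(A_+)$ with $A_+ := \{\psi = +\infty\}$, since $\psi(y) - c_2(x,y) > -\infty$ (resp.\ $= +\infty$) forces $c_2(x, y) < \infty$, i.e.\ $x \in J^-(y)$, together with $\psi(y) > -\infty$ (resp.\ $\psi(y) = +\infty$). In a globally hyperbolic spacetime, the topological boundary $\partial J^-(B) = \partial I^-(B)$ of any $B \subseteq M$ is an achronal closed topological $n$-hypersurface (Penrose, see \cite{ONeill}, Ch.~14). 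Since $D \setminus \Omega \subseteq \partial D \subseteq \partial J^-(A_-) \cup \partial J^-(A_+)$, (ii) follows.

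For the lower bound in (i), I would fix $x_0 \in \Omega$ and pick $x_1 \in \Omega$ with $x_0 \ll x_1$ (possible since $\Omega$ is open). By $c_2$-convexity at $x_1$, there exists $y_1 \in J^+(x_1) \subseteq I^+(x_0)$ with $\psi(y_1) - c_2(x_1, y_1) > \varphi(x_1) - 1$ and $\psi(y_1) \in \R$. On a neighborhood $V$ of $x_0$ contained in $I^-(y_1)$, the function $c_2(\cdot, y_1)$ is continuous and $\psi(y_1) - c_2(\cdot, y_1) \leq \varphi$, giving a uniform lower bound. For the upper bound in (i) together with (iii), I would use an $L_2$-minimizer argument: for $K \subseteq \Omega$ compact, pick $r > 0$ with $K' := \overline{B_r^h(K)} \subseteq \Omega$ compact. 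Given $y$ with $\psi(y) - c_2(x, y) \geq \varphi(x) - 1$ for some $x \in K$ and $\tau(y) - \tau(x) > r/2$ (the other case being trivial), let $\gamma \colon [0, 1] \to M$ be the $L_2$-minimizer from $x$ to $y$ and set $\delta := r/(2(\tau(y) - \tau(x)))$, $z := \gamma(\delta)$. The splitting condition \eqref{splitting} gives $|\dot\gamma|_h \leq d\tau(\dot\gamma) = \tau(y) - \tau(x)$, hence $d_h(x, z) \leq r/2$ and $z \in K'$. Combining $\psi(y) \leq c_2(z, y) + \varphi(z)$ with the hypothesis on $y$, and using the $L_1$-additivity along $L_2$-minimizers which gives $c_2(x, y) - c_2(z, y) = c_1(x, y)^2(2\delta - \delta^2) \geq \delta\, c_1(x, y)^2$, one obtains the key estimate
\begin{align*}
\delta\, c_1(x, y)^2 \leq \varphi(z) - \varphi(x) + 1.
\end{align*}
Combined with $c_1(x, y) \geq (\tau(y) - \tau(x))/2$, this yields $\tau(y) - \tau(x) \leq 8(\varphi(z) - \varphi(x) + 1)/r$, which bounds $\tau(y)$ once $\varphi$ is bounded above on $K'$; together with $y \in J^+(K)$ and causal compactness of $J^+(K) \cap \tau^{-1}([a, b])$, (iii) follows.

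The main obstacle is establishing the upper bound of $\varphi$ on $K'$, needed to close the loop. Unlike the Riemannian case where the continuity of $c$ enables local semicontinuity or semiconvexity shortcuts, in our setting $c_2$ jumps to $\infty$ across $\partial J^+$, and $\varphi$ is neither USC nor LSC in general. I expect to establish the bound by bootstrapping from an initial open subset of $\Omega$ on which $\varphi$ is bounded (produced by a Baire-category argument applied to the filtration $\Omega = \bigcup_n (\Omega \cap \{\varphi \leq n\})$) and propagating the bound throughout $\Omega$ via the key estimate along chains of $L_2$-minimizers, exploiting the openness of $\Omega$.
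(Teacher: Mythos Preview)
Your argument for (ii) contains a genuine gap. The identity $\{\varphi>-\infty\}=J^-(A_-)$ is correct, but $\{\varphi=+\infty\}=J^-(A_+)$ is not: the supremum $\varphi(x)=\sup_y(\psi(y)-c_2(x,y))$ can equal $+\infty$ via a sequence $y_k\in J^+(x)$ with $\psi(y_k)\to+\infty$ while $\psi(y_k)<\infty$ for every $k$, so $x$ need not lie in $J^-(\{\psi=+\infty\})$. Consequently $\{\varphi=+\infty\}$ is \emph{not} a past set in general, and you cannot reduce $\partial D$ to a union of two achronal boundaries. This is precisely the difficulty the paper isolates: it splits the boundary piece coming from $\{\varphi=+\infty\}$ into the easy part $A_1^+=\{x:\ I^-(x)\subseteq D^c\}$ (handled by the cone/achronality argument you give) and the hard part $A_2^+=\{x:\ I^-(x)\cap D\neq\emptyset\}$, and for the latter it \emph{reuses} the minimizer construction from (i) to produce, at each $x\in A_2^+$, an open ``parabolic'' region $C_\infty(x)$ (points $\bar x$ with $c_1(\bar x,\exp_L(x,tv))<t/2$ for some $t\in(0,l]$) on which $\varphi\equiv+\infty$; this yields the tangent-cone condition needed for rectifiability. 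Your achronal-boundary shortcut misses $A_2^+$ entirely.

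The second problem is your plan for the upper bound in (i). The Baire filtration $\Omega=\bigcup_n\{\varphi\le n\}$ would require the sublevel sets to be closed, i.e.\ $\varphi$ to be lower semicontinuous on $\Omega$; but $\varphi$ is a supremum of the \emph{upper} semicontinuous functions $x\mapsto\psi(y)-c_2(x,y)$ (since $c_2$ is only LSC), so there is no reason for $\varphi$ to be LSC, and the Baire step is unjustified. Moreover, even granting an initial open set with $\varphi$ bounded, your ``propagation along $L_2$-minimizer chains'' is underspecified: the point $z=\gamma(\delta)$ lies in the causal \emph{future} of $x$, so the key estimate lets you bound $\varphi$ at $x$ only if you already control it at a point causally ahead of $x$; it is unclear how to steer these chains so as to reach every point of a given compact $K'\subseteq\Omega$ starting from your initial set. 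The paper avoids this circularity by a direct contradiction: assuming $\varphi(x_k)\to+\infty$ with $x_k\to x_0\in\Omega$, the key estimate forces $\varphi\equiv+\infty$ on the nonempty open set $C_\infty$ meeting every neighbourhood of $x_0$, contradicting $x_0\in\op{int}(D)$. This same $C_\infty$ construction is then recycled to treat $A_2^+$ in (ii) --- so once you have the correct argument for the upper bound, (ii) comes along almost for free. Your treatment of the lower bound in (i) and the derivation of (iii) from (i) are essentially the paper's arguments (modulo the harmless slip $d\tau(\dot\gamma)=\tau(y)-\tau(x)$, which is false pointwise; use $|\dot\gamma|_h\le d\tau(\dot\gamma)\le 2L_1(\dot\gamma)=2c_1(x,y)$ instead).
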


\begin{remark}\rm
For the definition of countable $n$-rectifiability, see subsection 7.4 in the appendix. As mentioned in the introduction, this theorem is the first step towards the regularity result stated in Theorem \ref{unghtrdsxcfghjukil}. 

The proof requires the concept of a cone. Therefore, we begin with the following definition.
\end{remark}

\begin{definition}\rm
Let $(V,\langle \cdot,\cdot \rangle)$ be a Hilbert space with induced norm $\norm{\cdot}$. The \emph{(open) Cone} in the direction $v\in V\backslash \{0\}$ with angle $\alpha\in [-1,1)$ is defined as
\begin{align*}
\op{Cone}(v,\alpha):=\{w\in V\backslash \{0\}\mid \langle {v}/{\norm{v}},w/{\norm{w}}\rangle> \alpha\}.
\end{align*}
\end{definition}

\begin{proof}
We proceed as in \cite{Figalli/Gigli}. 
\begin{enumerate}[(i)]
\item
\begin{enumerate}[(1)]
    \item 
First we prove that $\varphi_{|\Omega}$ is locally bounded from below. 

Let $x_0\in \Omega$. We need to show that there exists a neighborhood $U$ of $x_0$ such that $\varphi$ is bounded below on $U$. We argue by contradiction and assume that there exists a sequence 
\begin{align*}
(x_k)\subseteq \Omega \text{ such that } x_k\to x_0  \text{ and } \varphi(x_k)\to -\infty \ (k\to \infty).
\end{align*} 
We can pick $\bar x_0\in I^+(x_0)\cap \Omega$. Since $I^-(\bar x_0)$ is open and contains $x_0$, we have $x_k\in I^-(\bar x_0)$ for sufficiently large $k$. Since $\bar x_0\in \Omega$, we have $\varphi(\bar x_0)\in \R$, so we can find some $\bar y_0\in M$ such that
\begin{align*}
\psi(\bar y_0)-c_2(\bar x_0 ,\bar y_0)
\leq
\varphi(\bar x_0)
\leq
\psi(\bar y_0)-c_2(\bar x_0,\bar y_0)+1.
\end{align*}
We have set $\psi:=\varphi^{c_2}$. In particular, since $\varphi(\bar x_0)>-\infty$, we must have $\psi(\bar y_0)>-\infty$ and $\bar y_0\in J^+(\bar x_0)$. By definition of the $c_2$-transform, we obtain
\begin{align}
\varphi(x_k)\geq \psi(\bar y_0)-c_2(x_k,\bar y_0).\label{ouhaduz8hida}
\end{align}
Since $\bar y_0\in J^+(\bar x_0)\subseteq I^+(x_k)$ for large $k$, and using the continuity of $c_2$ on $J^+$, we conclude that $c_2(x_k,\bar y_0)$ is uniformly bounded for large $k$. Combining this with \eqref{ouhaduz8hida}, we obtain a contradiction to the assumption $\varphi(x_k)\to -\infty$.
Thus, we conclude that $\varphi_{|\Omega}$ is indeed locally bounded from below.

\item Now, following \cite{Figalli/Gigli}, we prove that $\varphi_{|\Omega}$ is also locally bounded from above. Pick $x_0\in \Omega$ and assume, for contradiction, that there exists a sequence
\begin{align*}
(x_k)\subseteq \Omega \text{ such that } x_k\to x_0  \text{ and } \varphi(x_k)\to \infty \ (k\to \infty).
\end{align*} 
 For each $k\in \N$, we find some $y_k\in J^+(x_k)$ satisfying
\begin{align*}
\psi(y_k)-c_2(x_k,y_k)
\leq
\varphi(x_k)
\leq
\psi(y_k)-c_2(x_k,y_k)+1.
\end{align*}
Since $\varphi(x_k)\to \infty$ and $c_2\geq 0$, it follows that $\psi(y_k)\to \infty$. We claim that $c_2(x_k,y_k)\to \infty$ as well. To see this, let $\bar x_0\in I^-(x_0)\cap \Omega$. Clearly,
\begin{align*}
\psi(y_k)-c_2(\bar x_0,y_k)\leq \varphi(\bar x_0)<\infty.
\end{align*}
Since $\psi(y_k)\to\infty$, we obtain $c_2(\bar x_0,y_k)\to \infty$. However, for sufficiently large $k$, we also have $x_k \in J^+(\bar x_0)$, which implies that $c_2(\bar x_0,x_k)$ is bounded due to the continuity of $c_2$ on $J^+$. But then we must have $c_2(x_k,y_k)\to \infty$, as claimed.

The claim, in turn, implies
\begin{align*}
c_1(x_k,y_k)=\sqrt{c_2(x_k,y_k)}\to \infty.
\end{align*}

According to Lemma \ref{minimizer}, we can pick for each $k$ a future pointing causal curve $\gamma_k:[0,c_1(x_k,y_k)]\to M$ such that 
\begin{align}
L_1(\dot \gamma_k(t))=d\tau(\dot \gamma_k(t))-|\dot \gamma_k(t)|_g=1, \label{hifuaojiso}
\end{align}
 which connects $x_k$ with $y_k$ while minimizing the action functionals ${\cal A}_1$ and ${\cal A}_2$.

Now, let $l>0$ such that, for any $x$ sufficiently close to $x_0$ and any $v\in \overline \C_x$ with $|v|_h\leq 2$, the curve $\exp_L(x,tv)$ is defined for all $t\in [0,l]$. Such an $l$ exists by Corollary \ref{l}. 
 By considering only large $k$, we may also assume that $l\leq c_1(x_k,y_k)$. We claim that 
\begin{align}
\inf_{C_k} \varphi\to \infty, \text{ where } C_k:=\{x\in M\mid \exists t\in [0,l], c_1(x,\gamma_k(t))\leq t/2\}. \label{c-k}
\end{align}

To prove this, let $x\in C_k$ and choose $t$ as in the definition of $C_k$. Using the fact that $c_1(\gamma_k(t),y_k)=c_1(x_k,y_k)-t$ (here we use $l\leq c_1(x_k,y_k)$), we obtain
\begin{align*}
\varphi(x)&\geq \psi(y_k)-c_1(x,y_k)^2
\geq
\psi(y_k)-(c_1(x,\gamma_k(t))+c_1(\gamma_k(t),y_k))^2
\\[10pt]
&\geq
\psi(y_k)-(c_1(x_k,y_k)-t/2)^2
\\[10pt]
&\geq
\psi(y_k)-c_1(x_k,y_k)^2+tc_1(x_k,y_k)-t^2/4
\\[10pt]
&\geq \varphi(x_k)-1+tc_1(x_k,y_k)-t^2/4.
\end{align*}
By assumption, this expression diverges to $\infty$, proving the claim. From the claim, we will easily deduce a contradiction. 

Indeed, without loss of generality, assume that there exists a limit $v$ of $\dot \gamma_k(0)$ (in the topology of the tangent bundle). This is possible since $|\dot \gamma_k(0)|_h$ is bounded by $2$, thanks to \eqref{splitting} and \eqref{hifuaojiso}. Now, define
\begin{align*}
C_\infty:=\{x\in M\mid \exists t\in[0,l],\ x\in I^-(\exp_L(x_0,tv)),\ c_1(x,\exp_L(x_0,tv))<t/2\}.
\end{align*}
 Since $\gamma_k(t)=\exp_L(x_k,t\dot \gamma_k(0))$ (by Lemma \ref{lii}), and $\exp_L$ is continuous, it follows that for large $k$, any $x\in C_\infty$ also belongs to $C_k$. Thus, from \eqref{c-k}, we conclude that $\varphi_{|C_\infty}\equiv \infty$.

However, since any neighborhood of $x_0$ intersects $C_\infty$ and $\varphi\equiv \infty$ on $C_\infty$, this finally gives a contradiction.
\end{enumerate}

\item Now we prove the second part of the theorem. Denote by $A^\pm$ the set of all $x\in D\backslash \Omega$ such that there exists a sequence $x_k\to x$ with $\varphi(x_k)=\pm\infty$ for all $k\in \N$.
Then, we have $D\backslash \Omega=A^+\cup A^-$. Hence, it suffices to prove that both $A^+$ and $A^-$ are countably $n$-rectifiable.

In both cases, we will use a result from geometric measure theory, see Lemma \ref{rect} in the appendix.

\begin{enumerate}[(1)]
\item

We begin with the simpler set $A^-$. Let $x_0\in A^-$ and let $(U,\phi)$ be any chart around $x_0$ such that the pullback metric $((\phi^{-1})^*g)_{\phi(x_0)}$ is the standard Minkowski metric on $\R^{1+n}$. By continuity of $g$, there exists a neighborhood $V\subseteq U$ of $x_0$ such that, for all $x\in V$,
\begin{align}
((\phi^{-1})^* g)_{\phi(x)}\leq g_0:=-\frac{1}{2} (dx_0)^2+\sum_{i=1}^n 2 (dx_i)^2 \text{ as quadratic forms.} \label{daoldjail}
\end{align}
By possibly shrinking $U$, we may assume $U=V$.
Now, take any $x\in U\cap A^-$ and choose a sequence $x_k\to x$ with $\varphi(x_k)=-\infty$ for all $k$. Let $\bar x\in I^+(x)$ be arbitrary. Then $\bar x\in I^+(x_k)$ for sufficiently large $k$. In particular, $c_2(\bar x,y)=\infty$ if $c_2(x_k,y)=\infty$. From this, it easily follows that $\varphi(\bar x)=-\infty$. Hence,
\begin{align*}
I^+(x)\cap A^-=\emptyset.
\end{align*}

Using $g_0$ and the relation \eqref{daoldjail}, it is starightforward to verify that there exists a constant $1\approx \alpha<1$, independent of $x$, such that
\begin{align*}
\(\phi(x)+\op{Cone}(e_0,\alpha)\)\cap \phi(U)\subseteq \phi((A^-)^c\cap U).
\end{align*}
This yields
\begin{align*}
T_{\phi(x)}(\phi(A^-\cap U))\subseteq \R^{1+n}\backslash \op{Cone}(e_0,\alpha),
\end{align*}
where $T_{\phi(x)}(\phi(A^-\cap U))$ denotes the tangent cone of $\phi(A^-\cap U)$ at $\phi(x)$, see Definition \ref{tangentcone}. Now, applying the result from geometric measure theory (see Lemma \ref{rect}), we conclude that $\phi(A^-\cap U)$ is countably $n$-rectifiable. Hence, $A^-$ is countably $n$-rectifiable by Lemma \ref{mnhas}.

\item Now we turn to the set $A^+$. We devide the set again into
\begin{align*}
    A^+=A^+_1\cup A^+_2:=\{x\in A^+\mid I^-(x)\subseteq D^c\} \cup \{x\in A^+\mid I^-(x)\cap D\neq \emptyset\}.
\end{align*}
Using the same arguments as at the end of (1), we deduce that the set $A^+_1$ is countably $n$-rectifiable. Next, we consider $A^+_2$ and follow the strategy from \cite{Figalli/Gigli}. Let $x_0\in A^+$ and let $l>0$ and $(U,\phi)$ be any chart around $x_0$ such that
the exponential map $\exp_{L}(x,tv)$ is defined for all $x\in U$, $v\in \C_x$ with $|v|_h\leq 2$ and $t\leq l$.
We can repeat the proof of (i) to obtain that, for any $x\in U\cap A^+_2$,
\begin{align}
\varphi(\bar x)=\infty \text{ for all } \bar x\in C_\infty(x), \label{fdafafafwefwfew}
\end{align}
where
\begin{align}
C_\infty(x)=\{\bar x\in M\mid \exists t\in[0,l],\ \bar x\in I^-(\exp_L(x,tv)),\ c_1(\bar x,\exp_L(x,tv))<t/2\} \label{fdafafafwefwfew1}
\end{align}
for some $v\in \C_{x}$ with $|v|_h\leq 2$
(here we use the fact that we can find $\bar x\in I^-(x)$ with $\varphi(\bar x)\in \R$, which plays the role of $\bar x_0$ in the proof of (i)).

Without loss of generality, we can assume that \eqref{daoldjail} holds and that $\tau\circ \phi^{-1}$ is Lipschitz with constant $L$.
Using Lemma \ref{mnhas}, we need to prove that $\phi(U\cap A_2^+)\subseteq \R^{1+n}$ is countably $n$-rectifiable.

We choose $a>0$ with $-\frac{1}{2}(1-2a)^2+8na^2<0$ and $\ep>0$ with $La\ep+L\ep<\frac{1}{2}$.
Using \eqref{fdafafafwefwfew}, \eqref{fdafafafwefwfew1} and Corollary \ref{fdafafafwefwfew2}, it suffices to prove that for each $x\in A_2^+\cap U$ and $v\in \C_x$ as above, there exists $w\in \R^{1+n}$ such that
\begin{align}
 \phi(x)+B_{a\ep t}(tw) \subseteq \phi(C_\infty(x)\cap U) \text{ for small } t. \label{sasaasoaoaoaoa}
\end{align}

 Now, let $x\in A_2^+\cap U$ be given and $v\in \C_x$ as above. Let $\R^{1+n}\ni u:=d_x\phi(v)$, and set $w:=(u_0-\ep,u_1,...,u_n)$. We first claim that for small $t$, the following holds:
\begin{align}
\phi(x)+B_{at\ep}(tw)\subseteq \phi(I^-(\exp_L(x,tv))\cap U). \label{djiaoda}
\end{align}
Indeed, let $c(t):=\phi(\exp_L(x,tv))$ and $y\in \R^{1+n}$ with
\begin{align}
    |y-(\phi(x)+tw)|<at\ep, \label{ofisifaztdgaui}
\end{align}
and observe that for small $t$, the line $\gamma(s)$, defined by $\gamma:[0,1]\to \R^{1+n},\  s\mapsto y+s(c(t)-y)$, lies in $\phi(U)$. It suffices to prove that this curve is future pointing causal w.r.t.\ $g_0$. The derivative is $\gamma'(s)=\gamma'(0)=c(t)-y=\phi(x)+t\dot c(0)+o(t)-y=(tu-tw)+(\phi(x)+tw-y)+o(t)$. By \eqref{ofisifaztdgaui} and the choice of $a$, we obtain for small values of $t$
\begin{align*}
g_0(\gamma'(s),\gamma'(s))
&=-\frac{1}{2}(t\ep+(\phi(x)+tw-y)_0+o(t))^2+\sum_{i=1}^n 2 ((\phi(x)+tw-y)_i+o(t))^2
\\[10pt]
&\leq -\frac{1}{2} (t\ep(1-2a))^2+2n(2at\ep)^2
\\[10pt]
&=(t\ep)^2(-\frac{1}{2}(1-2a)^2+8na^2)<0.
\end{align*}
This proves \eqref{djiaoda}. To prove \eqref{sasaasoaoaoaoa} we observe that since $(\phi^{-1}(y),\exp_{L}(x,tv))\in I^+$ and $\tau \circ \phi^{-1}$ is Lipschitz with constant $L$, we have
\begin{align*}
c_1(\phi^{-1}(y),\exp_{L}(x,tv))&\leq \tau(\exp_{L}(x,tv))-\tau(\phi^{-1}(y))
\\[10pt]
&\leq L |y-c(t)|
\\[10pt]
&\leq L|y-(\phi(x)+tw)|+L|\phi(x)+tw-c(t)|
\\[10pt]
&\leq Lat\ep +Lt|w-u|+Lo(t)
\\[10pt]
&\leq Lat\ep +Lt\ep+ Lo(t)
< t/2
\end{align*}
for small $t$, where we have used \eqref{ofisifaztdgaui} in the fourth step and the definition of $w$ and $\ep$ in the fifth and last step. This finally proves \eqref{sasaasoaoaoaoa}.

\end{enumerate}
\item
Let $K\subseteq \subseteq \Omega$ be compact, and choose an open set $U\subseteq M$ such that $K\subseteq U\subseteq \subseteq \Omega$. We aim to prove that $d_h(x,y)$ is uniformly bounded for $(x,y)\in K\times M$ satisfying the condition
\begin{align}
\varphi(x)\leq \psi(y)-c_2(x,y)+1. \label{sasahdaoudiaj}
\end{align}
Since $K$ is compact and the exponential map $\exp_L$ is continuous, we can choose $l>0$ such that, for all $x\in K$, all $v\in \C_x$ with $|v|_h\leq 2$, and all $t\leq l$, $\exp_L(x,tv)$ is defined and lies within the open set $U$.

Next, assume by contradiction that we can find a sequence $(x_k,y_k)\subseteq K\times M$ satisfying condition \eqref{sasahdaoudiaj} such that $d_h(x_k,y_k)\to \infty$. We argue as in part (i) of the proof. Since $y_k\in J^+(x_k)$, we have $c_1(x_k,y_k)\geq d_h(x_k,y_k)/2\geq l$ for large $k$, thanks to \eqref{splitting}. We then find a future pointing causal curve $\gamma_k:[0,c_1(x_k,y_k)]\to M$, connecting $x_k$ to $y_k$, such that $d\tau(\dot \gamma_k(t))-|\dot \gamma_k(t)|_g=1$, and which minimizes the action functionals ${\cal A}_1$ and ${\cal A}_2$. Moreover, $\gamma_k(t)=\exp_L(x_k,tv_k)$ for some $v_k\in \C_{x_k}$ with $|v_k|_h\leq 2$. As in part (i), we can deduce that
\begin{align*}
\varphi(\gamma_k(l))
\geq \varphi(x_k)-1+lc_1(x_k,y_k)-l^2/4.
\end{align*}
Since $\gamma_k(l)\in U$ and $\varphi$ is bounded on $U$ by (i), it follows that $c_1(x_k,y_k)$ is bounded. 
Therefore, using \eqref{splitting}, we deduce that $d_h(x_k,y_k)\leq 2c_1(x_k,y_k)$ is also bounded. This is a contradiction.

Thus, the set of all $y\in M$ satisfying \eqref{sasahdaoudiaj} for some $x\in K$ is bounded w.r.t.\ $h$. By the Hopf-Rinow theorem, this means that this set is relatively compact due to the completeness of $h$.
\end{enumerate}
\end{proof}

\section{Semiconvexity and existence/uniqueness of an optimal transport map}
In this chapter, we aim to prove  Theorem \ref{unghtrdsxcfghjukil} and Corollary \ref{unghtrdsxcfghjukil3}. For this purpose, we will always assume that the conditions of both the theorem and the corollary are satisfied. That is, we assume throughout this chapter the following:

\begin{setting} 
Let $(\mu,\nu)\in {\cal P}_\tau^+$ be a causal couple of probability measures such that $\supp(\mu)\cap \supp(\nu)=\emptyset$. Furthermore, assume that $\mu$ is absolutely continuous w.r.t.\ the Lebesgue measure on $M$. Let $\pi\in \Gamma_o(\mu,\nu)$, and assume that $\varphi:M\to \R \cup \{\pm\infty\}$ is a $\pi$-solution. We also denote $\psi:=\varphi^{c_2}$.
\end{setting}

We now restate Theorem \ref{unghtrdsxcfghjukil} and Corollary \ref{unghtrdsxcfghjukil3}:

\begin{theorem} \label{hudaodiaoiii}
There exists an open set $\Omega_1\subseteq \Omega$ of full $\mu$-measure such that $\varphi$ is locally semiconvex on $\Omega_1$.
\end{theorem}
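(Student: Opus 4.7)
My plan is to pivot on the fact, supplied by Proposition \ref{hiuajdoaildkao}(ii), that the set
\[
E := \{x \in \Omega : \partial_{c_2}\varphi(x) \cap I^+(x) \neq \emptyset\}
\]
has full $\mu$-measure in $\Omega$, and to define
\[
\Omega_1 := \{x \in \Omega : \varphi \text{ is locally semiconvex on some neighborhood of } x\},
\]
which is manifestly open. Full $\mu$-measure will then follow by showing that $\varphi$ is locally semiconvex around $\mu$-a.e.\ point of $E$; the remaining ``bad'' points can be absorbed into $\Omega \setminus \Omega_1$.

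The analytic input I rely on is that $c_2$ is smooth on the open set $I^+$. Indeed, on $I^+$ the Lorentzian distance $d(x,y)$ is realized by a non-degenerate timelike geodesic (e.g.\ via the $L_2$-exponential map from Proposition \ref{flow}), hence smooth, and $\tau$ is smooth; so for any compact $K' \subseteq I^+$ the family $\{c_2(\cdot,y)\}_{y \in p_2(K')}$ is uniformly $C^2$, in particular uniformly semiconcave, on $p_1(K')$. The strategy is then the classical one: if on an open neighborhood $U$ of $x_0$ one can write $\varphi(x) = \sup_{y \in K}(\psi(y) - c_2(x,y))$ with $K$ compact and $U \times K \subseteq I^+$, then $\varphi_{|U}$ is a supremum of uniformly semiconvex functions, hence itself semiconvex.

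The main step is therefore: for each $x_0 \in E$, pick $y_0 \in \partial_{c_2}\varphi(x_0) \cap I^+(x_0)$ and choose open neighborhoods $U_0 \ni x_0$, $V_0 \ni y_0$ with $\overline{U_0} \times \overline{V_0} \subseteq I^+$ and $\overline{U_0}$ compact in $\Omega$. Theorem \ref{mfdghufsa}(iii) then supplies a relatively compact $Y_0 \subseteq M$ such that $\varphi(x) = \sup_{y \in \overline{Y_0}}(\psi(y) - c_2(x,y))$ for all $x \in \overline{U_0}$. What I want is to find a smaller neighborhood $U_1 \subseteq U_0$ of $x_0$ on which the supremum is effectively attained on $\overline{Y_0} \cap \overline{V_0} \subseteq I^+$ relative to $U_1$; the preceding paragraph then yields local semiconvexity of $\varphi$ on $U_1$.

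The main obstacle is precisely this last localization: \emph{a priori} $\overline{Y_0}$ may contain ``light-cone'' points $y \in J^+(x) \setminus I^+(x)$ for some $x \in \overline{U_0}$, and at such pairs $c_2(\cdot,y)$ is only Lipschitz, so the uniform semiconcavity argument fails. To handle this I would define the bad set
\[
B := \{x \in \Omega : \text{no such localization of the sup to a compact subset of } I^+(x) \text{ holds near } x\},
\]
set $\Omega_1 := \Omega \setminus \overline{B}$, and reduce the theorem to showing $\mu(\overline{B}) = 0$. I would attack this by combining: (i) the absolute continuity of $\mu$, which kills countably $n$-rectifiable sets such as $D \setminus \Omega$ from Theorem \ref{mfdghufsa}(ii); (ii) a Borel selection $x \mapsto y_x \in \partial_{c_2}\varphi(x) \cap I^+(x)$ on $E$, legitimate since $I^+$ is open and $\partial_{c_2}\varphi$ has closed graph; and (iii) a Lusin-type argument showing that $y_x$ is continuous off a set of arbitrarily small $\mu$-measure, so that for $\mu$-a.e.\ $x_0 \in E$ one can shrink $U_0$ so much that any competitor $y$ outside a small neighborhood of $y_{x_0}$ strictly loses to $\psi(y_{x_0}) - c_2(\cdot, y_{x_0})$ on $U_0$. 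This selection/continuity control, which rigorously excludes light-cone contributions on an open set of full measure, is what I expect to be the delicate step; once in place, the uniform-semiconvexity argument from the second paragraph delivers the theorem.
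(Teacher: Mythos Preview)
Your overall architecture is right: one must show that on an open set of full $\mu$-measure the supremum defining $\varphi$ is effectively taken over a compact set $K$ with $U\times K\subseteq I^+$, after which uniform semiconcavity of $c_2$ on $I^+$ finishes the job. But the step you flag as ``delicate'' is in fact a genuine gap, and the mechanism you propose does not close it.

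The issue is this. Knowing that $y_{x_0}\in\partial_{c_2}\varphi(x_0)\cap I^+(x_0)$ exists, and even that a Borel selection $x\mapsto y_x$ is Lusin-continuous near $x_0$, tells you only where \emph{maximizers} sit; it says nothing about \emph{near-maximizers}. There is no reason why continuity of the selection should force ``any competitor $y$ outside a small neighborhood of $y_{x_0}$ strictly loses on $U_0$'': at $x_0$ itself there may already be other $y\in\partial_{c_2}\varphi(x_0)$ on or arbitrarily close to $\partial J^+(x_0)$, and for nearby $x$ the supremum could be approached (even if not attained) along the light cone. What semiconvexity needs is the \emph{strict} gap
\[
\sup\{\psi(y)-c_2(x,y)\ :\ d(x,y)\le\delta\}<\varphi(x)
\]
for some $\delta>0$, locally uniformly in $x$; Lusin continuity of one selection does not yield this. (A smaller side point: $c_2$ is not smooth on all of $I^+$---the Lorentzian distance fails to be smooth across the cut locus---so you should appeal to local semiconcavity of $c_2$ on $I^+$, Proposition~\ref{injiuhgb}, rather than smoothness.)

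The paper obtains precisely this strict gap by an entirely different route, working directly with $c_2$-monotonicity rather than with a selection. First (Theorem~\ref{gafuiahfauhfaohfoafjaoi}, via Proposition~\ref{dokhmbjuf}) one shows that on an open set $\Omega_0$ of full $\mu$-measure the competitors with small Riemannian distance $d_h(x,y)$ strictly lose; this is done by comparing, through the superdifferential formula \eqref{super}, the gain from moving $x$ slightly in the direction of the timelike minimizer to $y_0$ against the gain from any competitor close to $x$. Second, and this is the heart of the matter (Theorem~\ref{gafuiahfauhfaohfoafjaoii}), one shows that for $\mathcal{L}$-a.e.\ $x\in\Omega_0$ competitors with small \emph{Lorentzian} distance $d(x,y)\le\delta$ also strictly lose. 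This uses a chain argument in the spirit of \cite{Kell}: assuming a set $B$ of positive measure where light-cone points approach the supremum, one builds for each $m$ a sequence $(x_i,y_i)_{1\le i\le m}$ along an approximate line segment with $x_{i+1}$ sliding slightly in the ``spatial'' direction of $y_i$, so that $d(x_{i+1},y_i)^2-d(x_i,y_i)^2\ge C/m$ while $d(x_i,y_i)\le 1/m$; telescoping in \eqref{dzuadszuaczasu11} then forces $\varphi(x_1)\to-\infty$, contradicting local boundedness from Theorem~\ref{Figalli}(i). Only after these two strict inequalities are in hand does a Lebesgue-density argument produce the open $\Omega_1$ and allow the uniform-semiconvexity conclusion. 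Your proposal bypasses exactly this monotonicity-chain analysis, and without it the exclusion of light-cone contributions does not go through.
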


\begin{corollary}\label{cd}
The following properties hold:
\begin{enumerate}[(i)]
\item  $\pi$ is induced by a transport map $T$.
More precisely, $\mu$-a.e., $T(x)$ is uniquely defined by the equation
\begin{align*}
    \frac{\partial c_2}{\partial x}(x,T(x))=-d_x\varphi.
\end{align*}
\item 
If there exists an optimal coupling different from $\pi$, then there also exists an optimal coupling $\pi'$ that does not admit a $\pi'$-solution.
\end{enumerate}
\end{corollary}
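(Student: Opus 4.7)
My plan is to combine the local semiconvexity of $\varphi$ provided by Theorem \ref{hudaodiaoiii} with the Lagrangian description of $c_2$ from Chapter 3. Since $\varphi$ is locally semiconvex on the $\mu$-full-measure open set $\Omega_1$, standard semiconvex-function theory yields that $\varphi$ is differentiable $\L^{n+1}$-a.e.\ on $\Omega_1$, and hence $\mu$-a.e.\ by the absolute continuity of $\mu$. Combining this with Proposition \ref{hiuajdoaildkao}(i), for $\pi$-a.e.\ $(x,y)$ we have $(x,y)\in I^+$, $\varphi$ is differentiable at $x$, and the $c_2$-convexity inequality $\varphi(z)\ge\psi(y)-c_2(z,y)$ becomes an equality at $z=x$. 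Since $c_2$ is smooth on the open set $I^+$, the first-order condition at the interior minimum $z=x$ gives $\tfrac{\partial c_2}{\partial x}(x,y)=-d_x\varphi$.

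To extract $y$ uniquely from this identity I would pass to the Lagrangian picture. By Proposition \ref{minimizer} and Lemma \ref{lii}, $y=\exp_L(x,v_0)$ where $v_0\in\op{int}(\C_x)$ is the initial velocity of a smooth minimizing curve from $x$ to $y$. A standard first variation computation for the Lagrangian action ${\cal A}_2$, using the Euler-Lagrange equation on $\op{int}(\C)$ and the vanishing of variations at the fixed endpoint $y$, yields the envelope identity $\tfrac{\partial c_2}{\partial x}(x,y)=-\tfrac{\partial L_2}{\partial v}(x,v_0)$. By Corollary \ref{pokjnbvcxdertzujik}, $\partial_v^2 L_2(x,\cdot)$ is positive definite on $\op{int}(\C_x)$, so the Legendre map $v\mapsto\partial_v L_2(x,v)$ is injective there; hence $v_0$ is uniquely determined by $-d_x\varphi$, and consequently so is $y=\exp_L(x,v_0)$. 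Setting $T(x):=\exp_L(x,v_0)$ on the Borel set of good points (measurability of $T$ follows from the fact that it is built from Borel ingredients: differentiability of $\varphi$, smoothness of $\partial_v L_2$, and continuity of $\exp_L$), we obtain $y=T(x)$ for $\pi$-a.e.\ $(x,y)$, i.e.\ $\pi=(\op{Id}\times T)_\#\mu$, proving (i).

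For (ii) I would use a soft convexity argument. If $\pi'\in\Gamma_o(\mu,\nu)$ is different from $\pi$, then $\pi'':=\tfrac12(\pi+\pi')$ is again an optimal coupling in $\Gamma(\mu,\nu)$. Assuming toward a contradiction that $\pi''$ were induced by some Borel map $T'$, the measure inequality $\pi\le 2\pi''$ would force $\pi$ to be concentrated on $\op{graph}(T')$; comparison with $\pi=(\op{Id}\times T)_\#\mu$ from (i) then yields $T'=T$ $\mu$-a.e., whence $\pi''=\pi$ and therefore $\pi'=\pi$, contradicting the choice of $\pi'$. Hence $\pi''$ is an optimal coupling which is not induced by a transport map.

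I expect the main technical obstacle to be a careful justification of the envelope identity $\tfrac{\partial c_2}{\partial x}(x,y)=-\tfrac{\partial L_2}{\partial v}(x,v_0)$ at a point where only differentiability (rather than $C^1$-regularity) of $\varphi$ is available. The key is to exploit that $(x,y)\in I^+$ places us in the smooth regime of $c_2$ and that the minimizer is smooth and interior, which lets one perturb the initial point of the minimizer and control the variation via the uniform strict convexity of $L_2$ in the fiber direction provided by Corollary \ref{pokjnbvcxdertzujik}. Once this envelope identity is in hand, the remaining measurability and uniqueness steps assemble routinely.
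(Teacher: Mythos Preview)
Your argument for (i) follows essentially the paper's route, with one genuine imprecision: the claim that ``$c_2$ is smooth on the open set $I^+$'' is not true in general, since the Lorentzian distance $d$ fails to be smooth at cut points. What the paper actually uses is that $c_2$ is locally \emph{semiconcave} on $I^+$ (Proposition \ref{injiuhgb}). The first-order condition then follows from a semiconvex/semiconcave touching argument: since $\psi(y)-c_2(\cdot,y)$ touches $\varphi$ from below at $x$ and $\varphi$ is differentiable there, every element of the superdifferential of $c_2(\cdot,y)$ at $x$ must equal $-d_x\varphi$; semiconcavity guarantees at least one such element, so there is exactly one, i.e.\ $c_2(\cdot,y)$ is differentiable at $x$ with $\tfrac{\partial c_2}{\partial x}(x,y)=-d_x\varphi$. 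With this fix your proof of (i) is correct. Your envelope/Legendre argument for injectivity is precisely the content of Corollary \ref{twist} (the twist condition), which the paper simply cites. One further minor difference: the paper invokes Corollary \ref{mainnn} to ensure that \emph{every} $y\in\partial_{c_2}\varphi(x)\cap J^+$ lies in $I^+(x)$ for $\mu$-a.e.\ $x$, whereas you only use Proposition \ref{hiuajdoaildkao}(i) to get $(x,y)\in I^+$ $\pi$-a.e.; your weaker version already suffices for concluding that $\pi$ is concentrated on a graph.

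For (ii) your argument is correct and in fact cleaner than the paper's. The paper splits into cases according to whether $\pi_1$ and $\tfrac12(\pi+\pi_1)$ admit $\pi$-solutions, whereas your measure-theoretic observation $\pi\le 2\pi''$ directly forces $\pi$ onto $\mathrm{graph}(T')$ without needing to know anything about $\pi'$ being a map or admitting a solution. This avoids the somewhat opaque case analysis in the paper's version.
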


\begin{remark}\rm
\begin{enumerate}[(a)]
\item Our definition of local semiconvexity differs from the one used by many other authors and is often referred to as locally subdifferentiable. See Definition \ref{opaudowzuafdddf} in the appendix for our notion.
\item
Let us explain the strategy behind the proof of Theorem \ref{hudaodiaoiii}. 
As the result is motivated by \cite{Figalli/Gigli} let us first quickly explain the strategy behind the proof in \cite{Figalli/Gigli}, which deals with the cost function $d_R(x,y)^2$, where $d_R$ is the Riemannian distance on a complete and connected Riemannian manifold.

First one proves, as here, Theorem \ref{Figalli} for the Riemannian case. Let us use the same notation as for our case but with an indexed $R$.
By (iii) of this theorem, if $x_0\in \Omega_R$ and $U\subseteq \subseteq \Omega_R$ is some compact neighborhood of $x_0$ the set $A$ of all $y\in M$ such that
\begin{align*}
\varphi_R(x)\leq \psi_R(y)-d_R^2(x,y)+1 \text{ for some } x\in U
\end{align*}
is relatively compact. Here, $\psi_R:=\varphi_R^{d_R^2}$.
Hence, in $U$, $\varphi_R$ is given as the supremum of the family of functions $(\psi_R(y)-d_R^2(\cdot,y))_{y\in A}$.
Since $A$ is relatively compact and $d_R^2$ is locally semiconcave on $M\times M$, this family is actually uniformly locally semiconvex (see \cite{Fathi/Figalli}, proposition A.17) so that $\varphi_R$ is locally semiconvex on $U$ as the finite supremum of a uniformly locally semiconvex family of functions (see \cite{Fathi/Figalli}, proposition A.16).

Let us now return to the Lorentzian case and the cost function $c_2$.
Unfortunately, we cannot follow this approach since $c_2$ is not locally semiconcave on $M\times M$. However, we will be able to prove that there exists an open set $\Omega_1\subseteq \Omega$ of full $\mu$-measure, such that, if $x_0\in \Omega_1$ there is an open neighborhood $x_0\in U\subseteq \Omega_1$ and some $\delta>0$ such that, for all $x\in U$:
\begin{align*}
\varphi(x)=\sup\{\psi(y)-c(x,y)\mid d(x,y)\geq \delta\}.
\end{align*}
Roughly speaking, this shows that, locally in $\Omega_1$, the $c_2$-subdifferential of $\varphi$ is bounded away from $\partial J^+$. 
Since $c_2$ is locally semiconcave on $I^+$ (see Proposition \ref{injiuhgb}) this will allow us to prove the local semiconvexity of $\varphi$ on $U$ and, hence, on $\Omega_1$. The following two theorems (Theorem \ref{gafuiahfauhfaohfoafjaoi} and Theorem \ref{gafuiahfauhfaohfoafjaoii}) will be the main steps in the proof of Theorem \ref{hudaodiaoiii}. Let us mention that the idea (for the theorem and for the proof) for Theorem \ref{gafuiahfauhfaohfoafjaoii} comes from \cite{Kell}.
\end{enumerate}
\end{remark}

\begin{definition}\rm\label{deff}
We define the open set $\Omega_0$ as the set of all $x_0\in \Omega$ such that the following holds: There exists $\delta>0$ and a neighborhood $x_0\in U\subseteq \Omega$ such that, for all $x\in U$,
\begin{align*}
\sup\{\psi(y)-c_2(x,y)\mid d_h(x,y)\leq \delta\}<\varphi(x).
\end{align*}
\end{definition}

\begin{theorem} \label{gafuiahfauhfaohfoafjaoi}
It holds $\mu(\Omega_0)=1$.
\end{theorem}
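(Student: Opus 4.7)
The plan is a proof by contradiction, modeled on the Lebesgue-density technique sketched for Proposition~\ref{hiuajdoaildkao}.

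First, I would observe that $\Omega_0$ is open directly from its definition, since the witness $(\delta,U)$ for any $x_0\in\Omega_0$ also works for every $x_0'\in U$. Combining Lemma~\ref{gbaiw}, Theorem~\ref{mfdghufsa}(ii), and the hypothesis that $\mu$ is absolutely continuous w.r.t.\ Lebesgue measure, the measure $\mu$ is concentrated on $\Omega$, so the problem reduces to showing $\mu(\Omega\setminus\Omega_0)=0$.

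Next, I would use Proposition~\ref{hiuajdoaildkao}(ii) together with $\pi\in\Gamma_o(\mu,\nu)$ to produce, for $\mu$-a.e.\ $x$, a \emph{far} partner $T(x)\in\partial_{c_2}\varphi(x)\cap I^+(x)\cap\supp(\nu)$. Since $\supp(\mu)\cap\supp(\nu)=\emptyset$, the distance $d_h(x,T(x))$ is bounded away from zero on any relatively compact piece of $\supp(\mu)$. Assuming for contradiction $\mu(\Omega\setminus\Omega_0)>0$, I pick $x_0$ a $\mu$-Lebesgue density point of $\Omega\setminus\Omega_0$ for which such a $T(x_0)$ exists. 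The failure of $x_0\in\Omega_0$, together with a diagonal extraction across $\delta\downarrow 0$ and shrinking neighborhoods, then yields sequences $x^n\to x_0$ and $y^n\in J^+(x^n)$ with $d_h(x^n,y^n)\to 0$, $c_2(x^n,y^n)\to 0$, and $\psi(y^n)-c_2(x^n,y^n)-\varphi(x^n)\to 0$.

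The crucial step is a chain argument in the spirit of Proposition~\ref{hiuajdoaildkao}. Using positive $\mu$-measure and Lebesgue density of $\Omega\setminus\Omega_0$ near $x_0$, I would select $m$ points $x_1,\dots,x_m$ in $\Omega\setminus\Omega_0$ clustered near $x_0$, each equipped with its own \emph{close} near-optimal pair $y_i$ (as produced above) and with a \emph{far} partner $T(x_i)$. Applying $c_2$-monotonicity of $\partial_{c_2}\varphi$ to the pairs in the chain and telescoping, a suitable arrangement of the $x_i$ should force an estimate of the form $\varphi(x_1)\leq \varphi(x_m)-F(m)$ with $F(m)\to\infty$, contradicting the local boundedness of $\varphi$ on $\Omega$ established in Theorem~\ref{mfdghufsa}(i).

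The main obstacle is the quantitative part of the chain inequality. In Proposition~\ref{hiuajdoaildkao} the gain $-C/\sqrt{m}$ per link is extracted from the degeneracy of $c_2$ on the lightlike boundary $\partial J^+$. In the present setting, the near-optimal pairs $(x^n,y^n)$ lie in the interior $I^+$ and approach the diagonal, so the required gain must instead be extracted from the local structure of $c_2$ near the diagonal, together with the rigidity provided by the far partner $T(x_i)$ via $c_2$-monotonicity. Combining the close and far partners in a configuration that makes the cumulative telescoping gain diverge is the delicate computational core of the proof.
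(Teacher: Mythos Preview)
Your proposal has a genuine gap, and the paper's actual argument is structurally different and simpler than the chain/telescoping scheme you outline.

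The gap is precisely the one you flag as the ``main obstacle'': you never explain how to arrange the $m$ points $x_1,\dots,x_m$ and mix the close and far partners so that the telescoped sum $\sum_i\bigl(c_2(x_{i+1},y_i)-c_2(x_i,y_i)\bigr)$ diverges to $-\infty$. With close partners $y_i$ satisfying $d_h(x_i,y_i)\to 0$, you do not even know that $y_i\in J^+(x_{i+1})$, so individual links of the chain may have $c_2(x_{i+1},y_i)=\infty$. More importantly, near the diagonal $c_2$ is small and there is no obvious per-link gain of a fixed sign; the square-root gain mechanism from Proposition~\ref{hiuajdoaildkao} exploits the light-cone degeneracy and does not transfer.

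The paper avoids the chain entirely and uses a \emph{single $2$-cycle} together with a first-order (superdifferential) estimate. For $\mu$-a.e.\ $x_0$ one fixes the far partner $y_0\in\partial_{c_2}\varphi(x_0)\cap I^+(x_0)$ and lets $v_0:=\dot\gamma(0)/|\dot\gamma(0)|_h$ be the initial direction of an ${\cal A}_2$-minimizer $\gamma$ from $x_0$ to $y_0$. One then proves (Proposition~\ref{dokhmbjuf}) that there exist $r,\delta>0$ and $\alpha<1$ such that for every $x=\exp_{x_0}(w)$ with $w\in\op{Cone}(v_0,\alpha)\cap B_r(0)$ one has $\sup\{\psi(y)-c_2(x,y)\mid d_h(x,y)\le\delta\}<\varphi(x)$. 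The contradiction comes from the $c_2$-monotonicity inequality
\[
c_2(x_k,y_0)-c_2(x_0,y_0)+\varepsilon\ \ge\ c_2(x_k,y)-c_2(x_0,y)
\]
for a single near-optimal close $y$. The left side is controlled by the superdifferential of $c_2(\cdot,y_0)$ from Proposition~\ref{injiuhgb}: because $w_k$ is nearly parallel to $v_0$, one gets a definite negative first-order term $-\tfrac{2|w_k|_h}{|\dot\gamma(0)|_h}c_2(x_0,y_0)+o(|w_k|_h)$. The right side is harmless: since $d_h(x_k,y)\to 0$ and $x_k\in I^+(x_0)$, one has $c_2(x_0,y)-c_2(x_k,y)=o(|w_k|_h)$ by an elementary estimate using $d(x_0,y)\ge d(x_k,y)$ and the Lipschitz property of $\tau$. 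This yields the contradiction for large $k$ with no chain needed.

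The specific choice of direction $v_0$ toward the far partner is the missing idea in your proposal: it is what converts the ``rigidity provided by the far partner'' into a first-order gain, rather than something that has to be accumulated over many links. After Proposition~\ref{dokhmbjuf}, the passage to full $\mu$-measure is a Lebesgue-density argument on cones (Corollary~\ref{iuadhoiapaopao}): the union $\tilde\Omega_0$ of all such cones is open and contained in $\Omega_0$, and any Lebesgue point of a putative positive-measure piece of $A\setminus\tilde\Omega_0$ is immediately contradicted by its own cone. Your Lebesgue-density step is in the right spirit, but applied to the wrong object (you apply it to $\Omega\setminus\Omega_0$ before establishing the cone property, whereas the paper first proves the cone property pointwise and then densifies).
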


\begin{theorem}\label{gafuiahfauhfaohfoafjaoii}
For ${\cal L}$-a.e. $x\in \Omega_0$ there exists $\delta=\delta(x)$ such that:
\begin{align*}
\sup\{\psi(y)-c_2(x,y)\mid  d(x,y)\leq \delta\}< \varphi(x).
\end{align*}
Here, $\L$ denotes the Lebesgue measure on $M$.
\end{theorem}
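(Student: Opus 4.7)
The plan is to argue by contradiction, in the same spirit as the sketch already given for Proposition \ref{hiuajdoaildkao}(i) (originally due to \cite{Kell}). Suppose for contradiction that the bad set
\[
B := \Bigl\{ x \in \Omega_0 \,\Big|\, \forall \delta>0,\ \sup\{\psi(y)-c_2(x,y) : d(x,y)\leq \delta\} = \varphi(x)\Bigr\}
\]
has positive Lebesgue measure; the goal is to contradict the local boundedness of $\varphi$ on $\Omega$ from Theorem \ref{Figalli}(i).

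First I would unpack the defining condition: for each $x\in B$ there are approximating sequences $(y_n)=(y_n(x))$ with $d(x,y_n)\to 0$ and $\psi(y_n)-c_2(x,y_n)\to \varphi(x)$. The $\Omega_0$-property forces $d_h(x,y_n)\geq \delta_1$ uniformly in $n$, after restricting $B$ to a positive-measure piece on which the thresholds $\delta(x)$ from Definition \ref{deff} and the gaps to $\varphi(x)$ are controlled; integrating \eqref{splitting} along any future-directed causal curve from $x$ to $y_n$ then yields the uniform lower bound $\tau(y_n)-\tau(x)\geq \delta_1$. Parametrizing the connecting maximizer by an $\mathcal{A}_2$-geodesic (Proposition \ref{minimizer}), I extract a Borel-measurable almost-null direction field $x\mapsto v(x)\in \overline\C_x$ together with witnesses $y_n(x)$ whose initial tangents converge to $v(x)$.

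Next I would localize in a chart $(U,\phi)$ around some $x_0\in B$ in which $g$ is $C^0$-close to the Minkowski form and $\tau\circ\phi^{-1}$ is Lipschitz; cover the image of $B\cap U$ by countably many measurable sets on each of which $v(x)$ is uniformly close to one fixed null vector $v_0 \in \R^{1+n}$, and pick a Lebesgue density point of a positive-measure piece $B_{v_0}$. A standard Lebesgue differentiation argument produces, for every $m\in\N$, a chain $x_1,\dots,x_m\in B_{v_0}$ whose chart images cluster near $\phi(x_0)+\frac{r i}{m}\pi(v_0)$, with $\pi$ denoting the spatial projection. For each $x_i$ I choose $y_i=y_{n_i}(x_i)$ with $\psi(y_i)-c_2(x_i,y_i)\geq \varphi(x_i)-1/m^2$. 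The key estimate is then a perturbed Minkowski computation using the uniform bound $\tau(y_i)-\tau(x_i)\geq \delta_1$: the shift $x_{i+1}-x_i\approx \frac{r}{m}\pi(v_0)$ places $x_{i+1}$ inside $I^-(y_i)$ and yields
\[
c_2(x_{i+1},y_i)-c_2(x_i,y_i)\leq -\frac{C}{\sqrt m}
\]
for a constant $C=C(\delta_1,r)>0$ independent of $i$ and large $m$. Combining the $c_2$-convexity inequality $\varphi(x_{i+1})\geq \psi(y_i)-c_2(x_{i+1},y_i)$ with the choice of $y_i$ and telescoping over $i=1,\dots,m-1$ gives
\[
\varphi(x_1)-\varphi(x_m)\leq -C\sqrt m + \frac{m-1}{m^2},
\]
which tends to $-\infty$ as $m\to\infty$. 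Since the $x_i$ stay in a fixed compact subset of $\Omega$ where $\varphi$ is bounded (Theorem \ref{Figalli}(i)), this is the desired contradiction.

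The hardest part is really the measurable-selection step, namely constructing the Borel assignments $x\mapsto v(x)$ and $x\mapsto y_n(x)$ in a way compatible with Lebesgue differentiation, and in particular securing the uniform lower bound $\tau(y_i)-\tau(x_i)\geq \delta_1$ which makes the key geometric estimate hold. This is precisely where the hypothesis $x\in \Omega_0$ is used: it prevents the approximating $y_n$ from collapsing to $x$ in the $h$-metric, and via \eqref{splitting} this transfers to a lower bound on the $\tau$-separation. Once those ingredients are in place, the geometric heart of the argument is just the explicit Minkowski computation already carried out for Proposition \ref{hiuajdoaildkao}(i), and the non-Minkowski corrections to $g$ inside the chart contribute only $o(1/\sqrt m)$ and are absorbed into $C$.
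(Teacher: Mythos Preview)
Your global architecture --- contradiction via the $c_2$-convexity telescoping inequality along a chain $(x_i,y_i)_{1\le i\le m}$, aiming at $\sum_{i}(c_2(x_{i+1},y_i)-c_2(x_i,y_i))\to -\infty$ with rate $-C\sqrt m$ --- is exactly the paper's approach, and the target rate is correct.

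You have, however, misidentified where the difficulty lies. The paper uses \emph{no} Borel measurable selection at all: it covers the relatively compact unit tangent bundle $T^1U$ by finitely many sets $U_i$ of small diameter, picks one $D=D_{i_0}$ on which the approximate-null-direction condition $(x,v_{x,k})\in U_{i_0}$ holds for infinitely many $k$ and which carries positive measure of $B$, and then works with a Lebesgue point of $\overline D$. So what you flag as the ``hardest part'' is in fact routine.

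The real gap in your outline is the ``perturbed Minkowski computation'' for the key estimate. The points $y_i$ are \emph{not} close to $x_i$ in the $h$-metric: by the very definition of $\Omega_0$ together with Theorem~\ref{Figalli}(iii) one has $C_1^{-1}\le d_h(x_i,y_i)\le C_1$, so there is no single chart containing both $x_i$ and $y_i$ in which $g$ is uniformly close to Minkowski. Your claim that the curvature corrections are $o(1/\sqrt m)$ is therefore unjustified: the relevant quantities depend on $y_i$, which ranges over a set of fixed positive $h$-diameter as $m$ varies. The paper circumvents this by introducing an \emph{intermediate point} $\bar y_i$ on the maximizing geodesic from $x_i$ to $y_i$ at fixed $h$-distance $d_h(x_i,\bar y_i)=C_1^{-1}$, so that $x_i,x_{i+1},\bar y_i$ all lie in one convex neighbourhood $V$. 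The crucial step --- that $x_{i+1}\le \bar y_i$ and $d(x_{i+1},\bar y_i)^2\ge d(x_i,\bar y_i)^2+C_2(r_{i+1}-r_i)$ --- is then obtained from a first-order Taylor expansion of $f(x,y)=-g_y(\exp_y^{-1}(x),\exp_y^{-1}(x))$, controlled via a parallel-transported orthonormal frame (Lemma~\ref{frame}, Corollary~\ref{cor}) and a uniform modulus of continuity for the auxiliary map $k$ on a compact subset of $T^2M$ (Lemma~\ref{lemma}(v)). Only afterwards is this estimate transferred from $\bar y_i$ back to $y_i$ by the reverse triangle inequality, and combined with a separate careful expansion of $c_2(x_{i+1},y_i)-c_2(x_i,y_i)$ in terms of $\tau$ and $d$. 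This intermediate-point and parallel-transport apparatus is the technical heart of the proof and is entirely missing from your sketch; without it, neither $x_{i+1}\in J^-(y_i)$ nor the $-C/\sqrt m$ increment can be established on a general spacetime.
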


{\center {\large{\textbf{Proof of Theorem \ref{gafuiahfauhfaohfoafjaoi}}}}}
\bigskip

We have to prepare a little bit for the proof and start with a simple geometric lemma.

\begin{lemma}\label{cone}
Let $(V,\langle \cdot,\cdot \rangle)$ be a Hilbert space with induced norm $\norm{\cdot}$. Let $v\in V$ with $\norm{v}=1$. If $w\in \op{Cone}(v,\alpha)$ then
\begin{align*}
\norm{w-\norm{w}v}\leq 2\sqrt{1-\alpha}\norm{w}.
\end{align*}
\end{lemma}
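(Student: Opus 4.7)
The plan is to expand the squared norm $\|w-\|w\|v\|^2$ using the inner product and exploit the cone condition directly. This is a routine Hilbert space calculation, so there is no real obstacle; the only thing to be careful about is correctly normalizing $w$ before applying the cone inequality.

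First I would compute
\begin{align*}
\|w-\|w\|v\|^2 = \|w\|^2 - 2\|w\|\langle w,v\rangle + \|w\|^2\|v\|^2 = 2\|w\|^2\bigl(1-\langle w/\|w\|,v\rangle\bigr),
\end{align*}
where I used $\|v\|=1$ in the last step.

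Next, the hypothesis $w \in \operatorname{Cone}(v,\alpha)$ combined with $\|v\|=1$ gives $\langle w/\|w\|,v\rangle > \alpha$, so
\begin{align*}
\|w-\|w\|v\|^2 < 2\|w\|^2(1-\alpha) \leq 4\|w\|^2(1-\alpha).
\end{align*}
Taking the square root yields the claim $\|w-\|w\|v\| \leq 2\sqrt{1-\alpha}\,\|w\|$.

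The slack between $\sqrt{2}$ and $2$ in the constant is harmless; presumably the factor $2$ is chosen for later convenience when this lemma is applied in the proof of Theorem \ref{gafuiahfauhfaohfoafjaoi}. The entire argument is one display and does not require any structural maneuvering, so no step is genuinely difficult.
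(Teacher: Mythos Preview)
Your proof is correct and essentially identical to the paper's: the paper first treats the case $\|w\|=1$ by the same expansion $\|w-v\|^2=2-2\langle v,w\rangle\leq 2-2\alpha\leq 4(1-\alpha)$ and then invokes positive homogeneity, whereas you carry the factor $\|w\|$ through the computation directly. The observation that the sharp constant is $\sqrt{2}$ rather than $2$ is also implicit in the paper's chain of inequalities.
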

\begin{proof}
If $\norm{w}=1$ we have
\begin{align*}
\norm{w-v}^2=\norm{w}^2+\norm{v}^2-2\langle v,w\rangle
\leq 2-2\alpha\leq 4(1-\alpha).
\end{align*}
This yields $\norm{w-v}\leq 2\sqrt{1-\alpha}$ and proves the lemma in the first case.
The general case follows from this special case and the positive homogeinity of $\norm{w-\norm{w}v}$ in $w$.
\end{proof}

\begin{notation}\rm
    Recall that $\exp$ always denotes the exponential function w.r.t.\ the Lorentzian metric $g$.
\end{notation}

\begin{proposition} \label{dokhmbjuf}
For $\mu$-a.e.\ $x_0\in \Omega$ there exist $r,\delta>0$, $\alpha\in [-1,1)$ and $v_0\in T_{x_0}M$ with $|v_0|_h=1$ such that for any $w\in \op{Cone}(v_0,\alpha)\cap B_r(0)$ we have for $x:=\exp_{x_0}(w)$:
\begin{align*}
\sup\{\psi(y)-c_2(x,y)\mid d_h(x,y)\leq \delta\}<\varphi(x).
\end{align*}
Here, $B_r(0)$ denotes the open ball centered at $0$ of radius $r$ (w.r.t. $|\cdot|_h$).
\end{proposition}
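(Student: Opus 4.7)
The plan is to use the timelike subdifferential point provided by Proposition \ref{hiuajdoaildkao}(ii) together with the $c_2$-monotonicity of $\partial_{c_2}\varphi$ to obstruct any $y$ close to $x$ (in $h$-distance) from almost attaining the supremum defining $\varphi(x)$, for $x$ obtained from $x_0$ by exponentiating into a narrow cone around the direction of the minimizing geodesic to a chosen $y_0 \in \partial_{c_2}\varphi(x_0) \cap I^+(x_0)$.

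Concretely, for $\mu$-a.e.\ $x_0 \in \Omega$ fix $y_0 \in \partial_{c_2}\varphi(x_0) \cap I^+(x_0)$ by Proposition \ref{hiuajdoaildkao}(ii), and let $\gamma:[0,1]\to M$ be the $\mathcal{A}_2$-minimizer from $x_0$ to $y_0$ (Proposition \ref{minimizer}). Define $v_0 := \dot\gamma(0)/|\dot\gamma(0)|_h$; since $\dot\gamma(0)$ is timelike future-pointing and both the interior of $\C_{x_0}$ and the set $I^+$ are open, for $\alpha\in[-1,1)$ sufficiently close to $1$ and $r>0$ sufficiently small every $w\in\op{Cone}(v_0,\alpha)\cap B_r(0)$ is timelike future-pointing, and the point $x:=\exp_{x_0}(w)$ satisfies $x_0\ll x\ll y_0$. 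Note that \eqref{splitting} along $\gamma$ gives $c_1(x_0,y_0)\geq d(x_0,y_0)>0$, which will be crucial for the quantitative estimates below.

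I would then argue by contradiction: if the property failed at some such $x$, there would be a sequence $y_k$ with $d_h(x,y_k)\leq\delta$ and $\eta_k:=\varphi(x)-(\psi(y_k)-c_2(x,y_k))\searrow 0$. Finiteness of $\psi(y_k)-c_2(x,y_k)$ forces $y_k\in J^+(x)\subseteq I^+(x_0)$, so in particular $c_2(x_0,y_k)<\infty$. Combining the equality $\psi(y_0)=\varphi(x_0)+c_2(x_0,y_0)$ (since $y_0\in\partial_{c_2}\varphi(x_0)$) with $\psi(y_k)\leq\varphi(x_0)+c_2(x_0,y_k)$ (from $\psi=\varphi^{c_2}$) and with $\varphi(x)\geq\psi(y_0)-c_2(x,y_0)$ ($c_2$-convexity of $\varphi$) yields the approximate $c_2$-monotonicity
\begin{align*}
c_2(x_0,y_0)-c_2(x,y_0)\leq c_2(x_0,y_k)-c_2(x,y_k)+\eta_k.
\end{align*}

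The main task is to show this inequality is impossible for $r,\delta$ small. Writing $c_2=c_1^2$: the linear decrease $c_1(\gamma(s),y_0)=(1-s)c_1(x_0,y_0)$ along $\gamma$, combined with Lemma \ref{cone} to control the deviation of $w$ from $|w|v_0$, the first-order agreement of $\exp_{x_0}$ with $\gamma$ at $x_0$, and the local Lipschitz continuity of $c_1(\cdot,y_0)$ on $I^-(y_0)$, produces a uniform lower bound $c_1(x_0,y_0)-c_1(x,y_0)\geq\rho t$ with $\rho>0$ independent of the direction in the cone and $t:=|w|_h$; hence the left-hand side is bounded below by an expression of order $t\rho c_1(x_0,y_0)$. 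On the right, the $c_1$-triangle inequality $c_1(x_0,y_k)\leq c_1(x_0,x)+c_1(x,y_k)$ combined with the Lipschitz bounds $c_1(x_0,x)\leq Lt$ and $c_1(x,y_k)\leq L\delta$ coming from the smoothness of $\tau$ gives an upper bound of order $t(t+\delta)$. Taking $r,\delta$ sufficiently small in terms of $\rho c_1(x_0,y_0)/L$ forces a strictly positive lower bound on $\eta_k$ independent of $k$, contradicting $\eta_k\searrow 0$. The main obstacle is the uniformity of the first-order decrease of $c_1(\cdot,y_0)$ across the cone, where most of the technical care is required.
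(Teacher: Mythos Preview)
Your proposal is correct and follows essentially the same route as the paper: fix $y_0\in\partial_{c_2}\varphi(x_0)\cap I^+(x_0)$, take $v_0=\dot\gamma(0)/|\dot\gamma(0)|_h$ for an $\mathcal{A}_2$-minimizer $\gamma$ from $x_0$ to $y_0$, derive the approximate $c_2$-monotonicity inequality, and compare a first-order decrease of the cost along $v_0$ (controlled across the cone via Lemma~\ref{cone}) against a term that is $O(t(t+\delta))$ on the right. The only cosmetic differences are that the paper phrases the left-hand estimate through the superdifferential formula \eqref{super} for $c_2$ (from Proposition~\ref{injiuhgb}) rather than through the linear decrease of $c_1$ along $\gamma$ together with local Lipschitz continuity of $c_1(\cdot,y_0)$, and that the paper organises the contradiction as a diagonal sequence $w_k\in\op{Cone}(v_0,1-\tfrac1k)\cap B_{1/k}(0)$ instead of fixing $r,\delta,\alpha$ in advance; both packagings yield the same quantitative comparison.
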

\begin{proof}
From Lemma \ref{gbaiw}, Proposition \ref{hiuajdoaildkao} and Theorem \ref{Figalli}(ii) we know that the set of all $x_0\in \Omega$ such that there is $y_0\in I^+(x_0)$ with $y_0\in\partial_{c_2}\varphi(x_0)$ is of full $\mu$-measure. Pick such an $x_0$ and a corresponding $y_0$.
\\

\noindent \textbf{Step 1:} Let $\ep>0$. Assume that $x\in \Omega\cap I^+(x_0)$ is close enough to $x_0$ so that $y_0\in I^+(x)$ and let $y\in J^+(x)$ be such that
\begin{align}
    \psi(y)-c_2(x,y)\geq \varphi(x)-\ep\geq \psi(y_0)-c_2(x,y_0)-\ep, \label{doadaodjuaopidadioaadpao}
\end{align}
where the second inequality holds true by the $c_2$-convexity of $\varphi$.
On the other hand, exchanging the roles of $x_0$ and $x$ and using $y_0\in \partial_{c_2}\varphi(x_0)$, we also have
\begin{align}
    \psi(y_0)-c_2(x_0,y_0)=\varphi(x_0)\geq \psi(y)-c_2(x_0,y). \label{doadaodjuaopidadioaadpao1}
\end{align}
Since $y_0\in I^+(x)$ and $y\in J^+(x)\subseteq I^+(x_0)$ and $x_0,x\in \Omega$ all the numbers appearing in the two (in)equalities above are finite. Thus, substracting \eqref{doadaodjuaopidadioaadpao} from \eqref{doadaodjuaopidadioaadpao1} gives
\begin{align}
    c_2(x,y_0)-c_2(x_0,y_0)+\ep \geq c_2(x,y)-c_2(x_0,y). \label{doadaodjuaopidadioaadpao2}
\end{align}
This condition (which basically corresponds to the $c_2$-monotonicity of $\partial_{c_2}\varphi$) is the starting point for our argument and in step 2 we want to derive a contradiction from it.
\\

\noindent \textbf{Step 2:}
Let $\gamma:[0,1]\to M$ be a minimizing curve for ${\cal A}_2$ between $x_0$ and $y_0$ and set 
\begin{align*}
v_0:=\frac{\dot \gamma(0)}{|\dot \gamma(0)|_h}.
\end{align*}
We assume that, for any $k\in \N$, there is $w_k\in \op{Cone}\big(v_0,1-\frac{1}{k}\big)\cap B_{\frac1k}(0)$ such that, for $x_k=\exp_{x_0}(w_k)\in M$ we find a sequence $y_{k,j}\in J^+(x_k)$ with
\begin{align*}
   d_h(x_k,y_{k,j})\leq \frac{1}{k},\ \psi(y_{k,j})-c_2(x_k,y_{k,j})\geq \varphi(x_k)-\frac{1}{j}.
\end{align*}
Now observe that, since $v_0$ is future-directed timelike by Proposition \ref{minimizer}, also $w_k$ is future-directed timelike for big $k$. Hence, $x_k\in I^+(x_0)\cap \Omega$ and $y_0\in I^+(x_k)$ for big $k$ and step 1 is applicable.
Since $y_0\in I^+(x_0)$ we know a superdifferential for the mapping ${c_2}_{|I^+}(\cdot,y_0)$ at $x=x_0$ is given by \eqref{super}. In particular, 
\begin{align}
c_2(x_k,y_0)-c_2(x_0,y_0)
\leq
-\frac{\partial L_2}{\partial v}(x_0,\dot \gamma(0))(w_k)+o(|w_k|_h), \label{polkj}
\end{align}
as $k\to \infty$.
A straight forward computation shows that
\begin{align*}
\frac{\partial L_2}{\partial v}(x_0,\dot \gamma(0))=2L_1(x_0,\dot \gamma(0))\cdot \(d_{x_0}\tau(\cdot)-\frac{-g_{x_0}(\dot \gamma(0),\cdot)}{|\dot \gamma(0)|_g}\).
\end{align*}
Clearly, $\frac{\partial L_2}{\partial v}(x_0,\dot \gamma(0))(|w_k|_h\dot \gamma(0))=2|w_k|_hL_2(x_0,\dot \gamma(0))$ and the latter is equal to $2|w_k|_h c_2(x_0,y_0)$ because $L_2(\gamma(t),\dot \gamma(t))$ is constant due to Proposition \ref{minimizer}. Therefore, using the definition of $v_0$,
\begin{align*}
\frac{\partial L_2}{\partial v}(x_0,\dot \gamma(0))(|w_k|_hv_0)
=
\frac{2|w_k|_h}{|\dot \gamma(0)|_h} c_2(x_0,y_0).
\end{align*}
 Since $|w_k-|w_k|_hv_0|_h\leq 2\sqrt{1/k} |w_k|_h$ thanks to Lemma \ref{cone} we have that
\begin{align*}
\bigg|\frac{\partial L_2}{\partial v}(x_0,\dot \gamma(0))(w_k)-\frac{2|w_k|_h}{|\dot \gamma(0)|_h}c_2(x_0,y_0)\bigg|_h\leq 2C \sqrt{1/k} |w_k|_h=o(|w_k|_h),
\end{align*}
as $k\to \infty$, where $C$ stands for the supremums norm of the linear map $\frac{\partial L_2}{\partial v}(x_0,\dot \gamma(0))$.
Therefore, using \eqref{polkj},
\begin{align}
c_2(x_k,y_0)-c_2(x_0,y_0)\leq -\frac{2|w_k|_h}{|\dot \gamma(0)|_h}c_2(x_0,y_0) +o(|w_k|_h)
\label{huiapfppfood}
\end{align}
as $k\to \infty$.
On the other hand, to obtain a contradiction from \eqref{doadaodjuaopidadioaadpao2} we still have to estimate $c_2(x_k,y_{k,j})-c_2(x_0,y_{k,j})$. With the definition of $c_2$ and the fact that $d(x_0,y_{k,j})\geq d(x_k,y_{k,j})$ we observe that
\begin{align*}
c_2(x_0,y_{k,j})=\(\tau(y_{k,j})-\tau(x_0)-d(x_0,y_{k,j})\)^2\leq\(\tau(y_{k,j})-\tau(x_0)-d(x_k,y_{k,j})\)^2.
\end{align*}
Then, 
\begin{align}
\nonumber
c_2(x_0,y_{k,j})-c_2(x_k,y_{k,j})&\leq
\(\tau(y_{k,j})-\tau(x_0)-d(x_k,y_{k,j})\)^2-\(\tau(y_{k,j})-\tau(x_k)-d(x_k,y_{k,j})\)^2 
\\[10pt]
&\nonumber
\leq
2(\tau(y_{k,j})-\tau(x_0)-d(x_k,y_{k,j}))(\tau(x_k)-\tau(x_0))
\\[10pt]
&\leq 2(\tau(y_{k,j})-\tau(x_0))(\tau(x_k)-\tau(x_0))
= 
o(|w_k|_h) \label{aaaaaaaaaaaa}
\end{align}
as $k\to \infty$,
where we made use of the fact that $\tau\circ \exp_{x_0}$ is a Lipschitz map on a small neighborhood of $0\in T_{x_0}M$.
Now we obtain from \eqref{huiapfppfood} and \eqref{aaaaaaaaaaaa} that, if $k$ is big enough,
\begin{align*}
c_2(x_k,y_0)-c_2(x_0,y_0) 
\leq c_2(x_k,y_{k,j})-c_2(x_0,y_{k,j})-\frac{|w_k|_h}{|\dot \gamma(0)|_h}c_2(x_0,y_0).
\end{align*}
Choosing $k$ big such that the above inequality holds and choosing $j$ such that $1/j<\frac{|w_k|_h}{|\dot \gamma(0)|_h}c_2(x_0,y_0)$, this gives a contradiction in view of in \eqref{doadaodjuaopidadioaadpao2} (with $\ep:=1/j$).  Hence, we proved the proposition.
\end{proof}

\begin{definition}\rm
Denote by $A$ the set of all $x_0\in  \Omega$ for which the above proposition holds true. We define the open set
\begin{align*}
\tilde \Omega_{0}:=\bigcup_{x_0\in A} \exp_{x_0}(\op{Cone}(v_0,\alpha)\cap B_r(0))\cap \Omega\subseteq \Omega
\end{align*}
Observe that, of course, $v_0=v_0(x_0)$, $\alpha=\alpha(x_0)$ and $r=r(x_0)$.
\end{definition}

\begin{corollary}\label{iuadhoiapaopao}
We have $\mu(\tilde \Omega_0)=1$.
\end{corollary}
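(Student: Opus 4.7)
The plan is to combine Proposition \ref{dokhmbjuf} with the Lebesgue density theorem and the absolute continuity of $\mu$. First, observe that $\mu(\Omega)=1$: by Lemma \ref{gbaiw} we have $\mu(D)=1$, and by Theorem \ref{Figalli}(ii) the set $D\setminus\Omega$ is countably $n$-rectifiable, hence of Lebesgue measure zero in the $(n{+}1)$-dimensional manifold $M$ and consequently $\mu$-null by the absolute continuity of $\mu$. Proposition \ref{dokhmbjuf} then says $\mu(A)=\mu(\Omega)=1$, so it suffices to show that $\mu(A\setminus\tilde\Omega_0)=0$, and for this it is enough, again by absolute continuity of $\mu$, to show that $A\setminus\tilde\Omega_0$ is Lebesgue-null.

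The key observation is that $\tilde\Omega_0$ has strictly positive Lebesgue density at every point of $A$. Indeed, fix $x_0\in A$ with associated $v_0,\alpha,r$ as in Proposition \ref{dokhmbjuf}. Since $\Omega$ is open, after shrinking $r$ we may assume $\exp_{x_0}(B_r(0))\subseteq\Omega$; then $\exp_{x_0}(\op{Cone}(v_0,\alpha)\cap B_r(0))\subseteq\tilde\Omega_0$ by definition. In normal coordinates centered at $x_0$, this set is a genuine Euclidean cone intersected with a ball, which has strictly positive Lebesgue density at $0$, bounded below by the solid-angle ratio determined by $\alpha\in[-1,1)$. Since the change of coordinates between any chart and normal coordinates is a local diffeomorphism with continuous, nonzero Jacobian at $x_0$, this positive density persists in any chart around $x_0$.

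To conclude, let $B:=A\setminus\tilde\Omega_0$. Working in countably many charts covering $M$, the Lebesgue density theorem tells us that at Lebesgue-almost every $x_0\in B$ (viewed as a subset of $\tilde\Omega_0^c$), the Lebesgue density of $\tilde\Omega_0$ at $x_0$ equals $0$. Combined with the previous paragraph, this forces $\mathcal{L}(B)=0$, hence $\mu(B)=0$ by absolute continuity. Therefore $\mu(\tilde\Omega_0)=\mu(\Omega)-\mu(\Omega\setminus\tilde\Omega_0)=1-\mu(A\setminus\tilde\Omega_0)=1$.

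There is no really serious obstacle in this argument; the only point requiring a little care is the passage from the intrinsic cone in $T_{x_0}M$ to a cone-shaped set in a local chart of $M$, which is harmless since $\exp_{x_0}$ is a local diffeomorphism and the quantitative lower bound for the density depends only on the (fixed) angle $\alpha$ and not on $x_0$.
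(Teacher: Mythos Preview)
Your argument is correct and follows essentially the same approach as the paper: both proofs rest on the observation that every $x_0\in A$ is the vertex of an open cone contained in $\tilde\Omega_0$, so $\tilde\Omega_0$ has strictly positive lower Lebesgue density there, which combined with the Lebesgue density theorem forces $A\setminus\tilde\Omega_0$ to be Lebesgue-null. The paper packages this as a contradiction via a compact subset and a Lebesgue point, whereas you argue directly; the only cosmetic point to clean up is that $\alpha=\alpha(x_0)$ does vary with $x_0$, but since you only need pointwise (not uniform) positivity of the density this is harmless.
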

\begin{proof}
We assume that $\mu(\tilde \Omega_0)<1$. Then the set $B:=A\backslash \tilde \Omega_0$ has positive $\mu$-measure. By the inner regularity of $\mu$ there exists a compact set $K\subseteq B$ with positive $\mu$-measure. Since $\mu$ is absolutely continuous w.r.t.\ the Lebesgue measure it follows that $K$ has positive Lebesgue measure.
Since $K$ is clearly a measurable set, we can find a Lebesgue point $x_0$ for $K$.
Since $x_0\in A$ there exist $r,\alpha>0$ and $v_0\in T_{x_0}M$ with $|v_0|_h=1$ such that
\begin{align*}
\exp_{x_0}(\op{Cone}(v_,\alpha)\cap B_r(0))\cap \Omega\subseteq \tilde \Omega_0.
\end{align*}
Obviously, this contradicts the fact that $x_0$ is a Lebesgue point for $K\subseteq \tilde \Omega_0^c$.
\end{proof}

\begin{remark}\rm
It is interesting to note that the above proof makes strong use of the fact that $\mu$ is absolutely continuous w.r.t.\ the Lebesgue measure on $M$ and that it does not work if for example $\mu$ is only absolutely continuous w.r.t.\ the Lebesgue measure on a smooth hypersurface.
\end{remark}

\begin{corollary}
We have $\tilde \Omega_0\subseteq \Omega_0$. 
\end{corollary}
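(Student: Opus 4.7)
The plan is essentially to unwind the definitions: given $\bar x \in \tilde \Omega_0$ we will produce an open neighborhood and a uniform $\delta$ witnessing $\bar x \in \Omega_0$, where both ingredients are already handed to us by Proposition~\ref{dokhmbjuf} and by the openness of the cone image under $\exp_{x_0}$.

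More concretely, let $\bar x \in \tilde \Omega_0$. By definition of $\tilde \Omega_0$, there is an $x_0 \in A$ such that $\bar x = \exp_{x_0}(\bar w)$ for some $\bar w \in \op{Cone}(v_0,\alpha)\cap B_r(0)$ and $\bar x \in \Omega$. Since $x_0 \in A$, Proposition~\ref{dokhmbjuf} supplies $r, \delta > 0$, $\alpha \in [-1,1)$, and $v_0 \in T_{x_0}M$ with $|v_0|_h = 1$ such that \emph{for every} $w \in \op{Cone}(v_0,\alpha)\cap B_r(0)$, setting $x := \exp_{x_0}(w)$, one has
\begin{align*}
\sup\{\psi(y) - c_2(x,y)\mid d_h(x,y) \leq \delta\} < \varphi(x).
\end{align*}
The crucial point is that the same $\delta$ works uniformly over all $w$ in the cone, so no additional uniformization is needed.

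Next I set $U := \exp_{x_0}\big(\op{Cone}(v_0,\alpha)\cap B_r(0)\big)\cap \Omega$, which I claim is an open neighborhood of $\bar x$ contained in $\Omega$. Indeed, shrinking $r$ if necessary (which does no harm, since the displayed inequality only becomes easier on a smaller cone), we may assume that $\exp_{x_0}$ is a diffeomorphism on $B_r(0)$; the set $\op{Cone}(v_0,\alpha)\cap B_r(0)\subseteq T_{x_0}M$ is open, so its image is open in $M$, and intersecting with the open set $\Omega$ preserves openness. By construction $\bar x \in U$.

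Combining these two observations, for every $x \in U$ we may write $x = \exp_{x_0}(w)$ with $w \in \op{Cone}(v_0,\alpha)\cap B_r(0)$, and the proposition yields the supremum bound with the uniform $\delta$. This is exactly Definition~\ref{deff} for the point $\bar x$, so $\bar x \in \Omega_0$. Since $\bar x \in \tilde \Omega_0$ was arbitrary, the inclusion $\tilde \Omega_0 \subseteq \Omega_0$ follows. The \emph{a priori} obstacle here would have been that the $\delta$ in Proposition~\ref{dokhmbjuf} could depend on the individual $x = \exp_{x_0}(w)$ rather than on $x_0$ alone, but a reading of the statement shows the proposition was phrased precisely so that $\delta$ is uniform in $w$; this is what makes the inclusion essentially a definition-chase.
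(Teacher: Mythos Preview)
Your proof is correct and follows exactly the approach the paper has in mind: the paper's own proof is the single sentence ``This follows immediately from the definition of $\Omega_0$ and from Proposition \ref{dokhmbjuf}'', and you have simply unpacked this definition-chase in detail, correctly identifying that the uniform $\delta$ in Proposition \ref{dokhmbjuf} is the key point. One small remark: your ``shrinking $r$ if necessary'' aside is unnecessary, since the paper already declares $\tilde\Omega_0$ to be an \emph{open} set in its definition, which presupposes that $r=r(x_0)$ was chosen small enough for $\exp_{x_0}$ to be a diffeomorphism on $B_r(0)$; so your set $U$ is open without any further adjustment.
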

\begin{proof}
This follows immediately from the definition of $\Omega_0$ and from Proposition \ref{dokhmbjuf}.
\end{proof}

\begin{proof}[Proof of Theorem \ref{gafuiahfauhfaohfoafjaoi}]
Obvious from the above corollary.
\end{proof}
\bigskip
\bigskip

{\center {\large{\textbf{Proof of Theorem \ref{gafuiahfauhfaohfoafjaoii}}}}}
\bigskip

We again start with a few lemmas. The first lemma is taken from \cite{Kell} and will be needed in one step in the proof of Theorem \ref{gafuiahfauhfaohfoafjaoii}.

\begin{lemma} \label{pkspadkpa}
Let $a,b\in \R$, $\ep>0$ and a Borel measurable set $B\subseteq [a,b]$ be given with ${\cal L}^1(B)\geq \ep (b-a)$. Then for all $k\in \N$ there exists $\{t_i\}_{1\leq i\leq k}\subseteq B$ with $t_1<... <t_k$ and $t_{i+1}-t_i\geq \frac{\ep}{2k}(b-a)$.
\end{lemma}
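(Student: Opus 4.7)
The plan is to discretize $[a,b]$ into intervals of length exactly $\delta := \frac{\ep}{2k}(b-a)$ and then combine a simple measure/pigeonhole count with an \textit{every other index} selection.

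First, set $N := \lceil (b-a)/\delta \rceil$ and consider the half-open intervals $I_j := [a+(j-1)\delta,\, a+j\delta)$ for $j=1,\dots,N$, whose union contains $[a,b]$. Let $S := \{j \in \{1,\dots,N\} : B \cap I_j \neq \emptyset\}$. Since $B \subseteq \bigcup_{j\in S} I_j$ and $\L^1(I_j) = \delta$ for each $j$, we obtain
\begin{align*}
\ep(b-a) \;\leq\; \L^1(B) \;\leq\; |S|\,\delta \;=\; |S|\cdot \frac{\ep(b-a)}{2k},
\end{align*}
so $|S| \geq 2k$.

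Second, enumerate $S$ in increasing order as $j_1 < j_2 < \dots < j_{|S|}$ and keep only the odd-indexed ones $j_1 < j_3 < \dots < j_{2k-1}$ (which we may do, since $|S| \geq 2k$). For each $l \in \{1,\dots,k\}$ choose an arbitrary $t_l \in B \cap I_{j_{2l-1}}$. Combining the bounds $t_l < a + j_{2l-1}\delta$ and $t_{l+1} \geq a + (j_{2l+1}-1)\delta$ with $j_{2l+1} \geq j_{2l-1}+2$ (because the integer $j_{2l}$ lies strictly between $j_{2l-1}$ and $j_{2l+1}$) yields
\begin{align*}
t_{l+1}-t_l \;>\; (j_{2l+1}-j_{2l-1}-1)\,\delta \;\geq\; \delta \;=\; \frac{\ep}{2k}(b-a),
\end{align*}
which is exactly the required separation.

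The whole argument is elementary and I do not expect a real obstacle. A naive greedy procedure (repeatedly take the smallest element of $B$ above $t_i + \delta$) would force one to deal with the fact that the infimum of a Borel set need not belong to the set; the partition-plus-alternating-index approach sidesteps this issue entirely while using only the trivial subadditivity bound $\L^1(B) \leq |S|\delta$.
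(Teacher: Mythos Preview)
Your argument is correct. The partition into intervals of length $\delta=\frac{\ep}{2k}(b-a)$, the pigeonhole bound $|S|\geq 2k$, and the ``every other index'' selection cleanly yield the strict inequality $t_{l+1}-t_l>\delta$, which is even slightly more than what is asked. The only cosmetic point is that when $(b-a)/\delta$ happens to be an integer the union $\bigcup_j I_j=[a,b)$ misses the single point $b$; this does not affect the measure estimate, so the argument goes through unchanged.

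As for comparison: the paper does not give its own proof of this lemma but simply cites it from \cite{Kell}, so there is nothing to compare against. Your self-contained elementary argument is a perfectly good replacement.
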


The next lemma introduces a smooth family of orthonormal frames on a convex set (see Definition \ref{convex} and Definition \ref{ONF}) which allows us to compare different tangent spaces. It is not surprising that the orthonormal frames will be constructed as the evaluation of parallel vector fields along geodesics:

\begin{lemma} \label{frame}
Let $U$ be a convex set.

Then there exist smooth maps $e_i:U\times U\to TU$ ($i=0,...,n)$ with the following properties:
\begin{enumerate}[(i)]
\item
For all $x,y\in U$, the set $\{e_0(x,y),...,e_n(x,y)\}$ is an orthonormal basis of the tangent space $T_yM$ such that $e_0(x,y)$ is timelike.
\item
For all $i=0,...,n$ and all $x,y\in U$ the tangent vector $e_i(x,y)$ arises as the parallel transport $V$ along the unique (up to reparametrization) geodesic inside $U$ between $x$ and $y$ with $V(0)=e_i(x,x)$.
\end{enumerate}
\end{lemma}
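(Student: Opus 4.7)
The plan is to first build a smooth orthonormal frame on the diagonal, i.e.\ a smooth assignment $x\mapsto (e_0(x,x),\dots,e_n(x,x))$ of an orthonormal basis of $T_xM$ with $e_0(x,x)$ timelike, and then extend off the diagonal by parallel transport along the unique geodesics supplied by convexity.

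For the diagonal frame, I would start from the global smooth timelike vector field $X$ coming from time-orientability and set $e_0(x,x):=X(x)/|X(x)|_g$, which is globally smooth. Since $U$ is convex (in particular contractible, as it is the diffeomorphic image of a star-shaped domain in $T_xM$ under some $\exp_x$ in the usual convention), the orthogonal complement $e_0(x,x)^\perp$ forms a smooth rank-$n$ vector bundle over $U$ which is trivial. I would trivialize it once and for all (e.g.\ by picking any smooth frame of $e_0(\cdot,\cdot)^\perp$ over $U$ and applying Gram--Schmidt with respect to $g$, which is positive definite on that complement). This gives smooth $e_1(x,x),\dots,e_n(x,x)$ completing $e_0(x,x)$ to a $g$-orthonormal basis.

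For the off-diagonal extension, I would use convexity of $U$: for every $(x,y)\in U\times U$ there is a unique geodesic $\gamma_{x,y}:[0,1]\to U$ with $\gamma_{x,y}(0)=x$, $\gamma_{x,y}(1)=y$, and by the standard smooth dependence on endpoints (via the exponential map and the inverse function theorem in a convex neighborhood) the map $(x,y,t)\mapsto \gamma_{x,y}(t)$ is smooth. Define $e_i(x,y)$ as the parallel transport (w.r.t.\ the Levi-Civita connection) of $e_i(x,x)$ along $\gamma_{x,y}$ evaluated at $t=1$. Since parallel transport is the solution of a linear ODE with smoothly varying coefficients and smoothly varying initial data, $e_i$ is smooth in $(x,y)$. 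Metric compatibility of the Levi-Civita connection guarantees that $g(e_i(x,y),e_j(x,y))=g(e_i(x,x),e_j(x,x))$, so the orthonormality is preserved, and the causal character of $e_0$ (timelike) is likewise preserved because $g(e_0(x,y),e_0(x,y))=-1$ everywhere.

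The only slightly delicate point is the construction of the initial smooth diagonal frame: one needs that the orthogonal bundle $e_0^\perp\to U$ admits a smooth global orthonormal frame. This is where I would exploit convexity together with contractibility of $U$; absent that, one would have to work on an even smaller neighborhood. Everything else is a routine combination of Lorentzian linear algebra and the smooth dependence of geodesics and parallel transport on parameters, both of which are standard consequences of $U$ being a convex (normal) neighborhood.
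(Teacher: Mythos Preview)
Your argument is correct, but the construction of the diagonal frame $x\mapsto (e_0(x,x),\dots,e_n(x,x))$ differs from the paper's. You build $e_0(x,x)$ from the global timelike vector field $X$ and then invoke contractibility of $U$ to trivialize the orthogonal bundle $e_0^\perp$ and run Gram--Schmidt. The paper instead fixes a single point $x_0\in U$ together with a single orthonormal basis of $T_{x_0}M$ and obtains $e_i(x,x)$ by parallel transporting that basis along the unique geodesic in $U$ from $x_0$ to $x$; the extension to $e_i(x,y)$ is then a second application of the same parallel-transport mechanism. Both approaches yield smooth frames for the same reason (smooth dependence of ODE solutions on parameters and initial data), and the off-diagonal step is identical in the two proofs. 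The paper's route is slightly more economical in that it avoids the detour through bundle triviality and reuses one idea twice; your route has the minor conceptual advantage that $e_0(x,x)$ is canonically determined by the time orientation rather than by an auxiliary choice of base point, though this plays no role in the subsequent application.
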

\begin{proof}
    Easy, see Lemma \ref{dghaidghadhaodfjuapoda}.
\end{proof}

\begin{corollary}\label{cor}
Let $U$ be as above. Let $x,y\in U$ and denote by $\gamma:[0,1]\to U$ the unique geodesic connecting $x$ and $y$.
Let $\exp_{x}^{-1}(y)=\dot \gamma(0)=:\sum_{i=0}^n \lambda_i e_i(x,x)$. Then it holds
\begin{align*}
\exp_y^{-1}(x)=-\dot \gamma(1)\overset{!}{=}\sum_{i=0}^n -\lambda_i e_i(x,y).
\end{align*}
\end{corollary}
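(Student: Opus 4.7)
The strategy is to exploit the defining property of the frame $\{e_i(x,\cdot)\}$ from Lemma \ref{frame}(ii), namely that it is parallel along $\gamma$, together with the fact that $\dot\gamma$ is itself parallel along $\gamma$ since $\gamma$ is a geodesic. These two facts together force the coefficients of $\dot\gamma$ in the frame to be constant along $\gamma$.

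First I would verify the equality $\exp_y^{-1}(x) = -\dot\gamma(1)$. Consider the reversed curve $\tilde\gamma(t) := \gamma(1-t)$ on $[0,1]$. Since $\gamma$ is a geodesic contained in $U$ (which is convex), so is $\tilde\gamma$, and it satisfies $\tilde\gamma(0) = y$, $\tilde\gamma(1) = x$, $\dot{\tilde\gamma}(0) = -\dot\gamma(1)$. By definition of the exponential map, $\exp_y(-\dot\gamma(1)) = \tilde\gamma(1) = x$, and by the uniqueness clause built into the convexity of $U$ we indeed have $\exp_y^{-1}(x) = -\dot\gamma(1)$.

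Next I would compute the coefficients of $\dot\gamma(1)$ in the frame $\{e_0(x,y),\dots,e_n(x,y)\}$. Define vector fields $V_i(t) := e_i(x,\gamma(t))$ along $\gamma$. By Lemma \ref{frame}(ii), each $V_i$ is the parallel transport of $e_i(x,x)$ along $\gamma$, so $\nabla_{\dot\gamma} V_i = 0$ and $V_i(0) = e_i(x,x)$, $V_i(1) = e_i(x,y)$. Since $\gamma$ is a geodesic, $\dot\gamma$ is also parallel along $\gamma$. Writing $\dot\gamma(t) = \sum_i \mu_i(t) V_i(t)$, we differentiate: $0 = \nabla_{\dot\gamma}\dot\gamma = \sum_i \mu_i'(t) V_i(t)$, so each $\mu_i$ is constant. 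Evaluating at $t=0$ gives $\mu_i \equiv \lambda_i$, hence $\dot\gamma(1) = \sum_i \lambda_i e_i(x,y)$.

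Combining the two steps yields $\exp_y^{-1}(x) = -\dot\gamma(1) = \sum_{i=0}^n -\lambda_i e_i(x,y)$, which is the claim. There is no real obstacle here; the argument is routine once Lemma \ref{frame} is in place, and the only care required is to keep the base point $x$ of the frame fixed while moving the second argument along $\gamma$.
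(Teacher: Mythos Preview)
Your proof is correct and follows essentially the same approach as the paper: both arguments use that the frame $V_i(t)=e_i(x,\gamma(t))$ and the velocity $\dot\gamma$ are parallel along $\gamma$ to conclude that the coefficients of $\dot\gamma$ in this frame are constant. The only cosmetic difference is that the paper deduces constancy of the coefficients via metric compatibility (showing $t\mapsto g_{\gamma(t)}(-\dot\gamma(t),V_i(t))$ is constant), whereas you differentiate the expansion directly and use linear independence of the $V_i(t)$; your route is in fact slightly cleaner since it avoids the sign bookkeeping for the timelike index.
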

\begin{proof}
The first claimed equality follows from the definition of the exponential map. For the second, observe that we can write
\begin{align*}
-\dot \gamma(1)=\sum_{i=0}^n \mu_i e_i(x,y) \text{ for } 
\begin{cases}
\mu_0=-g_y(-\dot \gamma(1),e_0(x,y)),\ &\text{and }
\\\\
\mu_i=g_y(-\dot \gamma(1),e_i(x,y)),\ &i\geq 1.
\end{cases}
\end{align*}
Let $i\geq 1$. By construction of $e_i$, $V(t):=e_i(x,\gamma(t))$ is a parallel vector field along $\gamma$. Hence, since the Levi-Civita connection is compatible with the metric,
\begin{align*}
\frac{d}{dt} g_{\gamma(t)}(-\dot \gamma(t),V(t))
=
g_{\gamma(t)}\(-\frac{\nabla \dot \gamma}{dt}(t),V(t)\)+g_{\gamma(t)}\(-\dot \gamma(t),\frac{\nabla V}{dt}(t)\)=0,
\end{align*}
since $V$ and $\dot \gamma$ are parallel.
This shows that
\begin{align*}
\mu_i=g_y(-\dot \gamma(1),e_i(x,y))=g_x(-\dot \gamma(0),e_i(x,x))=-\lambda_i.
\end{align*}
Analogously, $\mu_0=-\lambda_0$ and we conclude the proof of the corollary.
\end{proof}

\begin{definition}\rm\label{lkjh}
Let $U$ be as above.
We define the smooth projection
\begin{align*}
\pi:TU\to TU,\ \(x,\sum_{i=0}^n \lambda_i e_i(x,x)\)\mapsto \(x,\sum_{i=1}^n \lambda_i e_i(x,x)\).
\end{align*}
With some abuse of notation we will also write $\pi$ for the corresponding mapping on the tangent spaces $\pi:T_xM\to T_xM$, $x\in U$.
\end{definition}

The following technical lemma provides us with some uniform esimates which we will need in the proof of Theorem \ref{gafuiahfauhfaohfoafjaoii}. It makes sense to skip this lemma at first reading and return to it when needed in the proof of \ref{gafuiahfauhfaohfoafjaoii}. We prove the lemma here before the proof to obtain uniform constants which do not depend on the construction in the proof of \ref{gafuiahfauhfaohfoafjaoii}.

\begin{lemma} \label{lemma}
Let $x_0\in \Omega_0$, where $\Omega_0$ in as in Definition \ref{deff}. Then we can find an open neighborhood $U\subseteq \Omega_0$ with the following properties:
\begin{enumerate}[(i)]
\item
$U$ is convex and there exists an open convex set $V$ with $U\subseteq \subseteq V\subseteq \Omega_0$. 
\item
The future pointing causal geodesics which lie in $V$ are the unique (up to reparametrization) length maximizing curves.
\item
There exists a constant $C>0$ such that:
\begin{enumerate}[(1)]
\item
For all $x\in U$ we have
\begin{align}
\sup\{\psi(y)-c_2(x,y)\mid d_h(x,y)\leq C^{-1} \text{ or } d_h(x,y)\geq C\}<\varphi(x). \label{C_1}
\end{align}
\item
$d_h(U,\partial V)> C^{-1}$
\item 
For all $x\in U$, all $y\in B_{C^{-1}}(x)$ and all $j,l=0,...,n$ (denoting $\bar e_j:=e_j(x,y)\in T_yM$ and $e_j:=e_j(x,x)\in T_xM$)
\begin{align*}
|g(\bar e_j,\bar e_l)-
g(d_{x}\exp_{y}^{-1}(e_j), \bar e_l)|
\leq
\frac{1}{2(5n+1)}.
\end{align*}
Here, $B_{C^{-1}}(x)$ denotes the open ball of radius $C^{-1}$ and center $x$ w.r.t. the metric $h$.
\end{enumerate}
\item 
There exist constants $\bar C,\tilde C>0$ such that, for all $x\in U$ and $y\in V$ with $d_h(x,y)=C^{-1}$, we have
\begin{align*}
    |\exp_x^{-1}(y)|_h\leq \tilde C \text{ and } \sum_{j=0}^n \mu_j^2\geq \bar C,
\end{align*}
where
\begin{align*}
    \exp_x^{-1}(y)=\sum_{j=0}^n \mu_j e_j(x,x).
\end{align*}
\item Let
\begin{align*}
&k:dom(k)=V\times \{(x,v,w)\in T^2V\mid \exp_x(v)\in V\}\to \R,\ 
\\[10pt]
&(y,(x,v,w))\mapsto 
g_y(d_v(\exp_y^{-1}\circ \exp_x)(w),\exp_y^{-1}( x)).
\end{align*}
Then there is a small $\ep>0$ such that
\begin{align*}
\overline B_{C^{-1}}(U)\times \overline B_\ep\(\{(x,0,w)\in T^2V\mid (x,w)=\pi(x,\bar w) \text{ with } x\in \overline U \text{ and } |\bar w|_h=1\}\)
\subseteq dom(k)
\end{align*}
and the set is compact (here, $B_\ep$ denotes the $\ep$-ball in $T^2M$ w.r.t.\ $d_{T^2M}$, see subsection \ref{sasaki}).
In particular there is a modulus of continuity $\omega$ w.r.t.\ $d_h\times d_{T^2M}$ for $k$ restricted to this set.
\end{enumerate}
\end{lemma}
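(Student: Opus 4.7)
The plan is to construct $U$ by successively shrinking around $x_0$ so that all five properties hold simultaneously, each property contributing an additional constraint that still leaves us with an open neighborhood.

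First I would invoke the definition of $\Omega_0$ at $x_0$ to obtain some $\delta_0>0$ and an open set $W\ni x_0$, $W\subseteq\Omega_0$, such that $\sup\{\psi(y)-c_2(x,y)\mid d_h(x,y)\leq \delta_0\}<\varphi(x)$ for every $x\in W$. Inside $W$ I would then choose nested convex normal neighborhoods $x_0\in U\subseteq\subseteq V\subseteq W$ of $x_0$; their existence is a standard fact in Lorentzian geometry (see e.g.\ O'Neill), and by shrinking $V$ further one may also arrange (ii): in a sufficiently small convex normal neighborhood, the future pointing causal geodesics are the unique (up to reparametrization) length maximizing curves. This settles (i) and (ii).

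For (iii), property (2) is automatic once $C$ is chosen large enough, since $U\subseteq\subseteq V$ means $d_h(U,\partial V)>0$. For the \emph{large-distance} part of (iii)(1) I would apply Theorem \ref{mfdghufsa}(iii) to the compact set $K:=\overline U\subseteq\Omega$: the set $A$ of all $y\in M$ with $\psi(y)-c_2(x,y)\geq \varphi(x)-1$ for some $x\in\overline U$ is relatively compact, so $\overline A\subseteq B_{R}^{h}(\overline U)$ for some $R>0$; taking $C\geq R$ ensures that any $y$ with $d_h(x,y)\geq C$ satisfies $\psi(y)-c_2(x,y)<\varphi(x)-1<\varphi(x)$. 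The \emph{small-distance} part comes from the definition of $\Omega_0$ by requiring $C^{-1}\leq\delta_0$. Finally, (iii)(3) is a continuity statement: as $y\to x$, the maps $y\mapsto e_j(x,y)$ are smooth with $e_j(x,x)=e_j$, and $d_x\exp_y^{-1}\to\mathrm{id}$, so $g(\bar e_j,\bar e_l)$ and $g(d_x\exp_y^{-1}(e_j),\bar e_l)$ both converge to $g(e_j,e_l)$; by smoothness of all data and compactness of $\overline U$, the estimate holds uniformly for $d_h(x,y)\leq C^{-1}$ provided $C$ is chosen large enough.

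For (iv), since $\overline U\subseteq V$ is contained in the domain of the exponential map and $d(\exp_x)_0=\mathrm{id}$, the map $(x,y)\mapsto |\exp_x^{-1}(y)|_h$ is continuous on the compact set $\{(x,y)\in\overline U\times\overline V\mid d_h(x,y)\leq C^{-1}\}$ (making $C$ larger if necessary so $B_{C^{-1}}(\overline U)\subseteq V$), yielding the uniform upper bound $\tilde C$. For the lower bound on $\sum_j\mu_j^2$, note that the basis change matrix $(e_j(x,x))_{j=0}^{n}\mapsto$ coordinates w.r.t.\ the metric $h$ varies smoothly in $x\in\overline U$, hence has a uniform operator-norm bound; combining this with the fact that $|\exp_x^{-1}(y)|_h$ is uniformly bounded below on the compact set $\{d_h(x,y)=C^{-1},\,x\in\overline U\}$ (indeed it is $\geq C^{-1}/2$ if $C$ is large enough, since $\exp_x^{-1}$ is close to an isometry near $0$) yields the constant $\bar C$.

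Finally, for (v), continuity of $\exp$ together with the fact that its domain is open in $TM$ gives that $\mathrm{dom}(k)$ is open. The specified set is contained in $V\times T^2 V$, is a product of a closed ball around $\overline U$ with a closed tube around a compact set in $T^2 V$, and is therefore compact provided $\ep$ is chosen small enough; by shrinking $\ep$ we ensure that the whole set lies inside $\mathrm{dom}(k)$, which uses only continuity of $\exp_y^{-1}\circ\exp_x$ and openness of its domain. The existence of a modulus of continuity $\omega$ is then immediate from uniform continuity of the smooth function $k$ on the compact set. The main obstacle in this program is bookkeeping — each property reduces the admissible range of $C$ and the size of $U$, so one must verify that these competing requirements are mutually compatible, which they are because each constraint is satisfied on an open condition around $(x_0,x_0)$.
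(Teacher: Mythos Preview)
Your proposal is correct and follows essentially the same route as the paper: choose nested convex neighborhoods for (i)--(ii) via the standard Lorentzian result, obtain (iii)(1) from the definition of $\Omega_0$ together with Theorem~\ref{mfdghufsa}(iii), and derive (iii)(3), (iv), (v) from smoothness/continuity on compact sets. The paper's own proof is terser (e.g.\ for (iv) it simply invokes continuity of $(x,y)\mapsto \exp_x^{-1}(y)$ on the relatively compact set $B_{C^{-1}}(U)$), but the underlying arguments coincide.
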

\begin{proof}
(i) and (ii) are clear (see also Proposition \ref{hiaudhgaiudhadadjaio}). Part (iii)(1) follows from the definition of $\Omega_0$ and Theorem \ref{Figalli}(iii). By enlarging $C$ if necessary we can also assume that (2) and (3) hold. Indeed, observe that the map
\begin{align*}
    f:V\times V\to \R,\ (x,y)\mapsto g_y(e_j(x,y),e_l(x,y))-g_y(d_x\exp_y^{-1}(e_j(x,x)),e_l(x,y)),
\end{align*}
is smooth and that $f(x,x)=0$. Thus, the claim follows from the uniform continuity of $f$ on compact subsets. Part (iv) follows immediately from the continuity of the map $V\times V\ni (x,y)\mapsto (x,\exp_x^{-1}(y))$ and from the fact that $B_{C^{-1}}(U)$ is relatively compact in $V$. For the proof of part (v) observe that the compactness (for small $\ep$) follows from the continuity of $\pi$ together with the compactness of the unit tangent bundle over $\overline U$ and from the fact that manifolds are locally compact.
\end{proof}

\begin{proof}[Proof of Theorem \ref{gafuiahfauhfaohfoafjaoii}]
Clearly, it suffices to prove the following: If $x_0\in \Omega_0$ then there exists a neighborhood $U\subseteq \Omega_0$ of $x_0$ such that the statement of the theorem holds for ${\cal L}$-a.e.\ $x\in U$. Thus, it suffices to prove the stated property for ${\cal L}$-a.e.\ $x\in U$ where $U\subseteq \Omega_0$ is as in the above lemma. For the proof, we fix an orthonormal frame $e:V\times V\to TV$ as in Lemma \ref{frame}, where $V$ is as in (i) of the above lemma. Let $\pi:TV\to TV$ be as in Definition \ref{lkjh}.

We argue by contradiction and assume that the set
\begin{align*}
B:=\{x\in U\mid \exists (y_k)_k\subseteq J^+(x), d(x,y_k)\to 0, \psi(y_k)-c_2(x,y_k)\to \varphi(x)\}
\end{align*}
is not a null set. 

\noindent \textbf{Step 1: The idea:} To obtain a contradiction we will use the $c_2$-convexity of $\varphi$. More precisely, we will prove that, for any $m\in \N$, there is a finite sequence of points $(x_i^m,y_i^m)_{1\leq i\leq m}\subseteq U\times M$ (denote by abuse of notation $(x_i,y_i)=(x_i^m,y_i^m)$) with $(x_i,y_i),(x_{i+1},y_i)\in J^+$ such that
\begin{align*}
 \psi(y_i)-c_2(x_i,y_i)\geq\varphi(x_i)- \frac{1}{m},\ \sum_{i=1}^{m-1} c_2(x_{i+1},y_i)-c_2(x_i,y_i)\xrightarrow{m\to \infty} -\infty.   
\end{align*}
On the other hand, as in \cite{Villani} (see page 74), the first inequality above and the $c_2$-convexity of $\varphi$ imply
\begin{align}
\varphi(x_1)\leq 1+\varphi(x_m)+\sum_{i=1}^{m-1} c_2(x_{i+1},y_i)-c_2(x_i,y_i) \label{dzuadszuaczasu11}
\end{align}
and since $\varphi$ is bounded on $U\subseteq \subseteq \Omega_0$ thanks to Theorem \ref{Figalli} we arrive at a contradiction.
\\

\noindent \textbf{Step 2: Construction of the sequences:} If $x\in B$ we can pick a sequence $(y_{x,k})_k$ as in the definition of $B$. From \eqref{C_1} we deduce that $y_{x,k}\neq x$ for $k$ big. For these $k\in \N$ pick a maximizing geodesic $\gamma_{x,k}:[0,1]\to M$ between $x$ and $y_k$. Set $v_{x,k}:=\frac{\dot \gamma_{x,k}(0)}{|\dot \gamma_{x,k}(0)|_h}.$

Now, since $U$ is relatively compact in $V$ by part (i) of the above lemma, the unit tangent bundle over $U$, $T^1U=\{(x,v)\in TU\mid |v|_h=1\}$, is relatively compact in $TV$ as well. Thus, $\op{Lip}(\pi_{|T^1U})<\infty$ and we can choose  $\delta>0$ with
\begin{align}
      \delta \cdot(\op{Lip}(\pi_{|T^1U})+3) \leq \ep \text{ and } \omega\big(\delta \cdot(\op{Lip}(\pi_{|T^1U})+3)\big)\leq \frac{\bar C}{4\tilde C}, \label{lkkjhgfdasrzu}
\end{align}
(where $\ep,\bar C,\tilde C$ and $\omega$ are as in the above lemma). Again by the precompactness of $T^1U$ we can cover this set with a finite number of open sets 
\begin{align*}
    U_i\subseteq TM, \ \op{diam}(U_i)\leq \delta,\ i=1,...,N.
\end{align*} 
Then it follows that
\begin{align*}
B\subseteq \bigcup_{i=1}^N D_i,\ \text{ where } D_i:=\{x\in B\mid (x,v_{x,k})\in U_i \text{ for infinitely many $k$} \}.
\end{align*}
Since $B$ is not a null set, there is some $i_0=1,....,N$ such that $D_{i_0}$ is not a null set. Consider the closed set $\overline D_{i_0}$. Since it has positive measure we can find a Lebesgue point $x_*\in \overline D_{i_0}\subseteq \overline U$ for $\overline D_{i_0}$. We will denote $D:=D_{i_0}$, so that $\overline D=\overline D_{i_0}$.

Choose $v_*\in T_{x_*}M$ with $|v_*|_h=1$ such that $(x_*,v_*)\in \overline U_{i_0}$ and define $w_*:=\pi(v_*)$. Now, since $x_*$ is a Lebesgue point, we 
make use of Fubini's theorem to find $0\neq u_*\in T_{x_*}M$ with $|u_*-w_*|_h\leq \delta$ and $\delta_*>0$ with $\delta_* |u_*|_h\leq \delta$ such that
\begin{align}
    \exp_{x_*}:B_{2\delta_*|u_*|_h}(0)\to V  \label{unasat}
\end{align}
is a diffeomorphism onto its image and such that
\begin{align*}
{\cal L}^1(r\in (0,\delta_*)\mid \exp_{x_*}(ru_*)\in \overline D\}\geq \frac{\delta_*}{2}
\end{align*}

 Now let $m\in \N$ and let us construct our $x_i$. Lemma \ref{pkspadkpa} tells us that there exist
\begin{align}
0\leq r_1\leq ...\leq r_m< \delta_* \text{ with }
r_{i+1}-r_i\geq \frac{\delta_*}{4m} \text{ and } \exp_{x_*}(r_iu_*)\in \overline D. \label{hjs}
\end{align}
Next, since the exponential map in \eqref{unasat} is a diffeomorphism, we can find, for each $1\leq i\leq m$, 
\begin{align}
u_i\in B_{\delta_*|u_*|_h}(0)\subseteq T_{x_*}M \text{ with } |u_i-(r_iu_*)|_h\leq \frac{\delta_* \delta}{8m} \text{ and } \exp_{x_*}(u_i)\in D. \label{qywdefetnhg}
\end{align}
 For each $i$ we define $x_i:=\exp_{x_*}(u_i)\in D$. 

We denote by $C_1$ the constant of part (iii) of Lemma \ref{lemma}. By definition of $D$ and by \eqref{C_1} there is $y_i\in M$ with
\begin{align}
C_1^{-1}\leq d_h(x_i,y_i)\leq C_1 \text{ such that } d(x_i,y_i)\leq \frac{1}{m} \text{ and } \psi(y_i)-c_2(x_i,y_i)\geq \varphi(x_i)-\frac{1}{m} \label{jidoadpoa}
\end{align}
and such that there exists a maximizing geodesic $\gamma_i:[0,1]\to M$ connecting $x_i$ with $y_i$ with
\begin{align}
\(x_i,\frac{\dot \gamma_i(0)}{|\dot \gamma_i(0)|_h}\)\in U_{i_0}. \label{ujnuzhbgtfc}
\end{align}
Now we have constructed our sequences $(x_i,y_i)_{1\leq i\leq m}$. Observe that the sequences depend on $m$.

\noindent \textbf{Step 3: Estimating the distances:}
We claim: \emph{If $m$ is large, then $x_{i+1}$ and $y_i$ are always causally related and it holds
\begin{align*}
d(x_{i+1},y_i)^2\geq d(x_i,y_i)^2+C_2(r_{i+1}-r_i)
\end{align*}
for some constant $C_2$ which is independent of $m$.}

We postpone the proof to the end of the whole proof since it is by far the most technical one. 
\\

\noindent \textbf{Step 4: Estimating the cost function:}
We claim: \emph{If $m$ is large, then
\begin{align*}
\sum_{i=1}^{m-1} &c_2(x_{i+1},y_i)-c_2(x_i,y_i)
\leq
C_3- \sum_{i=1}^{m-1} \frac{C_1^{-1}}{2} (d(x_{i+1},y_i)-d(x_i,y_i)), 
\end{align*}
where $C_3$ is independent of $m$.}
\\

\noindent \textbf{Proof of claim:}
First let $m$ be as large as needed for step 3. We observe that
\begin{align*}
    c_2(x_{i+1},y_i)&=((\tau(y_i)-\tau(x_i)-d(x_i,y_i))+(\tau(x_i)-\tau(x_{i+1}))-(d(x_{i+1},y_i)-d(x_i,y_i)))^2
    \\[10pt]
    &=:(a_i+b_i-c_i)^2.
\end{align*}
Thus,
\begin{align}
    c_2(x_{i+1},y_i)-c_2(x_i,y_i)=(a_i+b_i-c_i)^2-a_i^2=c_i(-2a_i+c_i)+b_i(2a_i+b_i-2c_i). \label{diauh1uzdgbauio}
\end{align}
Using the fact that $x_{i+1}\leq y_i$ it follows from \eqref{splitting} that
\begin{align*}
    c_i\leq \tau(y_i)-\tau(x_{i+1})-d(x_i,y_i)\leq (\tau(y_i)-\tau(x_i)-d(x_i,y_i))+(\tau(x_i)-\tau(x_{i+1}))=a_i+b_i.
\end{align*}
Inserting this inequality into the first term on the right side of \eqref{diauh1uzdgbauio} and using that, thanks to step 3, $c_i\geq 0$, this gives
\begin{align}
    c_2(x_{i+1},y_i)-c_2(x_i,y_i)\leq
    -a_ic_i +b_i(2a_i+b_i-c_i). \label{mndgsiospo2}
\end{align}
Using the fact that $d_h(x_i,y_i)\geq C_1^{-1}$, \eqref{splitting} gives $a_i\geq \frac{C_1^{-1}}{2}$ and
\begin{align}
    -a_ic_i\leq -\frac{C_1^{-1}}{2} (d(x_{i+1},y_i)-d(x_i,y_i)). \label{mndgsiospo}
\end{align}
Now we need to estimate the second part in \eqref{mndgsiospo2}. By the uniform continuity of $\tau$ (resp. $d$) on the relatively compact set $B_{C_1}(U)$ (resp. $B_{C_1}(U)\times B_{C_1}(U)$) we deduce that there exists a constant $C_{3,1}>0$ (which does not depend on $m$) such that
\begin{align}
    a_i,|b_i|,c_i\leq C_{3,1}. \label{dadgadaddaa}
\end{align}

Moreover, using the fact that the map $\tau\circ \exp_{x_*}:B_{\delta_*|u_*|_h}(0)\to \R$ is Lipschitz thanks to \eqref{unasat}, we get from the definition of $x_i$ and $u_i$ together with \eqref{hjs} and \eqref{qywdefetnhg} that
\begin{align}
    |b_i|\leq C_{3,2}(r_{i+1}-r_i), \label{mndgsiospo1}
\end{align}
where $C_{3,2}$ only depends on the Lipschitz constant, on $|u_*|_h$ and on $\delta$.
We now insert \eqref{mndgsiospo}, \eqref{dadgadaddaa} and \eqref{mndgsiospo1} into \eqref{mndgsiospo2} and we sum over $i=1,...,m-1$ to obtain 
\begin{align*}
    \sum_{i=1}^{m-1} c_2(x_{i+1},y_i)-c_2(x_i,y_i)
    \leq 
    C_3  -\sum_{i=1}^{m-1} \frac{C_1^{-1}}{2} (d(x_{i+1},y_i)-d(x_i,y_i))
\end{align*}
for $C_3:=4C_{3,1}C_{3,2}\delta_*$. In particular, $C_3$ is independent of $m$. 
\hfill \checkmark
\\

\noindent \textbf{Step 5: Conclusion:} 
We assume that $m$ is as large as needed in step 3 and 4.
 Since $r_{i+1}-r_i\geq \frac{\delta_*}{4m}$ it follows from step 3 that
\begin{align*}
d(x_{i+1}, y_i)^2
\geq
d(x_i,y_i)^2 + \frac{C_2 \delta_*}{4m}.
\end{align*}
Then we use \eqref{dzuadszuaczasu11}, step 4 and the above inequality to obtain
\begin{align*}
\varphi(x_1)&\leq 1+\varphi(x_m)+\sum_{i=1}^{m-1} c_2(x_{i+1},y_i)-c_2(x_i,y_i)
\\[10pt]
&\leq
1+\varphi(x_m)+C_3 -\frac{C_1^{-1}}{2} \sum_{i=1}^{m-1} (d(x_{i+1},y_i)-d(x_i, y_i))
\\[10pt]
&\leq
1+\varphi(x_m)+C_3-\frac{C_1^{-1}}{2} \sum_{i=1}^{m-1}\sqrt{d(x_i,y_i)^2+\frac{C_2 \delta_*}{4m}}-\sqrt{d(x_i,y_i)^2}.
\end{align*}
Since $d(x_i,y_i)\leq \frac{1}{m}$ by \eqref{jidoadpoa} one can easily check that
\begin{align*}
\varphi(x_1)
&\leq 1+\varphi(x_m)+ C_3-\frac{C_1^{-1}}{2} (m-1)\(\sqrt{\frac{1}{m^2}+\frac{C_2 \delta_*}{4m}}-\sqrt{\frac1{m^2}}\)
\\[10pt]
&\leq  1+\varphi(x_m)+ C_3-\frac{C_1^{-1}}4 \(\sqrt{1+\frac{C_2 \delta_*m}{4}}-1\).
\end{align*}
Since $\varphi$ is bounded on $U\subseteq\subseteq \Omega$ and $x_1,x_m\in U$, we see that the left hand side is bounded and the right hand side converges to $-\infty$ as $m\to \infty$. This gives the contradiction and, hence, proves the theorem. It remains to prove the claim of step 3 above.
\bigskip

\noindent \textbf{Proof of claim of step 3:}
Pick for any $i=1,...,m$ a maximizing geodesic $\gamma_i:[0,1]\to M$ such that \eqref{ujnuzhbgtfc} holds. As $d_h(x_i,y_i)\geq C_1^{-1}$, we can choose the first $t_i\in [0,1]$ with $d_h(x_i,\gamma_i(t_i))=C_1^{-1}$. We set $\bar y_i:=\gamma_i(t_i)\in V$.

Using the triangle inequality and the fact that $\gamma_i$ is maximizing, we obtain
\begin{align*}
d(x_i,y_i)=d(x_i,\bar y_i)+d(\bar y_i,y_i) \text{ and } d(x_{i+1},y_i)\geq d(x_{i+1},\bar y_i)+d(\bar y_i,y_i).
\end{align*}
Thus, if we can show that
\begin{align}
    d(x_{i+1},\bar y_i)^2\geq d(x_i,\bar y_i)^2+C_2(r_{i+1}-r_i) \label{frsbvajqihsj}
\end{align}
then we automatically have $d(x_{i+1},\bar y_i)\geq d(x_i,\bar y_i)$ and it follows
\begin{align*}
    d(x_{i+1},y_i)^2\geq d(x_{i+1},\bar y_i)^2+2d(x_{i+1},\bar y_i)d(\bar y_i,y_i)+d(\bar y_i,y_i)^2\geq d(x_i,\bar y_i)^2+C_2(r_{i+1}-r_i).
\end{align*}
In particular, $x_{i+1}$ and $y_i$ are causally related and the claim is proven. Thus, it remains to prove \eqref{frsbvajqihsj} for some constant $C_2$ that does not depend on $m$.
\\

Fix an arbitrary $i=1,...,m-1$.
First, observe that, as $x_i,\bar y_i,x_{i+1}\in V$ and $V$ is a convex set, all the expressions $\exp_{\bar y_i}^{-1}(x_{i+1})$ etc. are well-defined.

From Lemma \ref{lemma}(ii) we infer that if $\exp_{\bar y_i}^{-1}(x_{i+1})\in -\C_{\bar y_i}$ then $x_{i+1}$ and $\bar y_i$ are causally related and the Lorentzian distance between these points equals the Minkowski norm of the vector $\exp_{\bar y_i}^{-1}(x_{i+1})$.
Thus, the object we need to study is
\begin{align*}
f(x_{i+1},\bar y_i) \text{ for }
f:V\times V\to \R,\ f(x,y):=-g(\exp_y^{-1}(x),\exp_y^{-1}(x)).
\end{align*} 
Clearly, $f$ is a smooth map thanks to the convexity of $V$.
Using first order Taylor-expansion of the map $h:=f(\cdot,\bar y_i)\circ \exp_{x_*}$ at the point $u_i\in T_{x_*}M$ we obtain for some $u=(1-t)u_i+tu_{i+1}$ $(t\in (0,1))$\footnote{Observe that $((1-s)u_i+su_{i+1})\in \op{dom}(h)\  \forall s\in [0,1]$ thanks to \eqref{unasat} and \eqref{qywdefetnhg}, so that the mean value theorem used in the first line is applicable. Moreover, for the same reason, $(\bar y_i,(x_*,u,u_*+\ep_i))\in dom(k)$ in the last line.}
\begin{align}
&f(x_{i+1},\bar y_i) =h(u_{i+1})=h(u_i)+d_uh(u_{i+1}-u_i)\nonumber
\\[10pt]
&= \nonumber
f(x_i,\bar y_i)-2g\(d_{u}\(\exp_{\bar y_i}^{-1}\circ \exp_{x_*}\)\((r_{i+1}-r_i)u_*+R_{i+1}-R_i\),\exp_{\bar y_i}^{-1}(x_i)\) \nonumber
\\[10pt]
&=f(x_i,\bar y_i)-2(r_{i+1}-r_i) g\(d_{u}\(\exp_{\bar y_i}^{-1}\circ \exp_{x_*}\)\(u_*+\ep_i\),\exp_{\bar y_i}^{-1}(x_i)\),\nonumber
\\[10pt]
&=f(x_i,\bar y_i)-2(r_{i+1}-r_i)k(\bar y_i,(x_*,u,u_*+\ep_i)).\label{ikmjuzhgt}
\end{align}
Here, we have set $u_j=:r_ju_*+R_j$, $j=1,...,m$, $\ep_i:=\frac{R_{i+1}-R_i}{r_{i+1}-r_i}$ and $k$ denotes the map from Lemma \ref{lemma}(v).

To estimate the latter term recall that $\gamma_i:[0,1]\to M$ is a maximizing geodesic between $x_i$ and $y_i$. Let $\bar \gamma_i:[0,1]\to V$ be the geodesic reparametrization of the first part of $\gamma_i$ such that $\bar \gamma_i(1)=\bar y_i$. Then $\bar \gamma_i$ is the unique maximizing geodesic (up to reparametrization) which connects $x_i$ with $\bar y_i$. Denote $e_j:=e_j(x_i,x_i)$ so that
\begin{align*}
\bar v_i:=\dot {\bar \gamma}_i(0)=\exp_{x_i}^{-1}(\bar y_i)=\sum_{j=0}^n \mu_j e_j \text{ for some }\mu_j\in \R \text{ and }\ \bar w_i:=\pi(\bar v_i)=\sum_{j=1}^n \mu_j e_j.
\end{align*}
Writing $\bar e_j:=e_j(x_i,\bar y_i)$, we know from Corollary \ref{cor} that
\begin{align}
\exp_{\bar y_i}^{-1}(x_i)=\sum_{j=0}^n -\mu_j \bar e_j.\label{hiaudhaoidhadhsaiudhsiofosi}
\end{align}
We observe that
\begin{align}
k(\bar y_i,(x_i,0,|\bar v_i|_h^{-1}\bar w_i))=|\bar v_i|_h^{-1}g(d_{x_i}\exp_{\bar y_i}^{-1}(\bar w_i),\exp_{\bar y_i}^{-1}(x_i)) \label{oiuztr}
\end{align}
and, denoting by $\bar d$ the distance on $M\times T^2M$ induced by $d_h$ and $d_{T^2M}$, we also see
\begin{align*}
&\bar d\(\big(\bar y_i,(x_i,0,|\bar v_i|_h^{-1}\bar w_i)\big),\big(\bar y_i,(x_*,u,u_*+\ep_i)\big)\)
\\[8pt]
&\leq
d_{T^2M}\(\(x_i,0,|\bar v_i|_h^{-1}\bar w_i\),\(x_*,u,u_*+\ep_i\)\)
\\[8pt]
&\leq |u|_h +d_{TM}\(\(x_i,|\bar v_i|_h^{-1}\bar w_i\),\(x_*,u_*+\ep_i\)\) 
\\[10pt]
&\leq |u|_h+ d_{TM}\(\(x_i,\pi(|\dot {\bar \gamma}_i(0)|_h^{-1}\dot {\bar \gamma}_i(0))\),(x_*,w_*)\)+|\ep_i|_h+|u_*-w_*|_h 
\\[10pt]
&=|u|_h+|\ep_i|_h+d_{TM}\(\pi(x_i,|\dot \gamma_i(0)|_h^{-1}\dot \gamma_i(0)),\pi((x_*,v_*))\)+|u_*-w_*|_h. 
\end{align*}
From \eqref{qywdefetnhg} we get that $|u|_h\leq (1-t)|u_i|_h+t|u_{i+1}|_h<\delta_*|u_*|_h\leq \delta$ and from \eqref{hjs} and \eqref{qywdefetnhg} that $|\ep_i|_h\leq \delta$. Also, by definition of $u_*$, we have $|u_*-w_*|_h\leq \delta$. Moreover, using that $(x_i,|\dot \gamma_i(0)|_h^{-1}\dot \gamma_i(0)),(x_*,v_*)\in \overline U_{i_0}\subseteq \overline U$ by \eqref{ujnuzhbgtfc} and $\op{diam}(\overline U_{i_0})\leq  \delta$ we get that the third term is less or equal $\op{Lip}(\pi_{|T^1U})\delta$.
Thus,
\begin{align}
\bar d\(\big(\bar y_i,(x_i,0,|\bar v_i|_h^{-1}\bar w_i)\big),\big(\bar y_i,(x_*,u,u_*+\ep_i)\big)\) \label{oiuztr1}
\leq (\op{Lip}(\pi_{|T^1U})+3)\delta=:C_{2,1}\delta.
\end{align}
Since by definition a modulus of continuity is non-decreasing, we can use \eqref{oiuztr} and \eqref{oiuztr1} and $\omega$, the modulus of continuity introduced in Lemma \ref{lemma}\footnote{Note that the lemma is applicable since the computation shows $d_{T^2M}((x_i,0,|\bar v_i|_h^{-1}\bar w_i),(x_*,u,u_*+\ep_i))\leq C_{2,1}\delta \leq \ep$ thanks to the definition of $\delta$ in \eqref{lkkjhgfdasrzu}.}, to estimate \eqref{ikmjuzhgt} and to get
\begin{align*}
f(x_{i+1},\bar y_i)\geq \ &f(x_i,\bar y_i)-2(r_{i+1}-r_i)|\bar v_i|_h^{-1}
g(d_{x_i}\exp_{\bar y_i}^{-1}(\bar w_i),\exp_{\bar y_i}^{-1}(x_i))-2(r_{i+1}-r_i)\omega(C_{2,1}\delta)
\\[10pt]
\overset{\eqref{hiaudhaoidhadhsaiudhsiofosi}}{=}&f(x_i,\bar y_i)-2(r_{i+1}-r_i)|\bar v_i|_h^{-1}\sum_{\substack{j=1,\\l=0}}^n \mu_j (-\mu_l) g(d_{x_i}\exp_{\bar y_i}^{-1}(e_j),\bar e_l)-2(r_{i+1}-r_i)\omega(C_{2,1}\delta).
\end{align*}
Next we use (iii) of Lemma \ref{lemma} to deduce that 
\begin{align*}
    |g(d_{x_i}\exp_{\bar y_i}^{-1}(e_j),\bar e_l)-g(\bar e_j,\bar e_l)|\leq \frac1{2(5n+1)}.
\end{align*} 
Thus,
\begin{align*}
f(x_{i+1},\bar y_i)&\geq f(x_i,\bar y_i)
-2(r_{i+1}-r_i)|\bar v_i|_h^{-1} \(\sum_{\substack{j=1,\\l=0}}^n \mu_j (-\mu_l) g(\bar e_j,\bar e_l)\)
\\[10pt]
&-\frac{(r_{i+1}-r_i)|\bar v_i|_h^{-1}}{5n+1}\(\sum_{\substack{j=1,\\l=0}}^n |\mu_j||\mu_l|\)-2(r_{i+1}-r_i) \omega(C_{2,1}\delta),
\end{align*}
and using that $(\bar e_j)$ is an orthonormal basis in the tangent space $T_{\bar y_i}M$ we obtain
\begin{align*}
f(x_{i+1},\bar y_i)&\geq f(x_i,\bar y_i)
+2(r_{i+1}-r_i) |\bar v_i|_h^{-1}\(\sum_{j=1}^n \mu_j^2 \)
\\[10pt]
&-\frac{(r_{i+1}-r_i)|\bar v_i|_h^{-1}}{5n+1} \(\sum_{\substack{j=1,\\l=0}}^n |\mu_j||\mu_l|\)-2(r_{i+1}-r_i) \omega(C_{2,1}\delta).
\end{align*}
Now we denote by $C_{2,2}$ and $C_{2,3}$ the constants $\bar C$ and $1/\tilde C$ from Lemma \ref{lemma}(iv) which then gives 
\begin{align*}
    \mu_0^2-\sum_{j=1}^n \mu_j^2=|\exp_{x_i}^{-1}(\bar y_i)|_g^2=d(x_i,\bar y_i)^2\leq 1/m^2\leq \frac{C_{2,2}}{2}\leq \frac{1}{2}\sum_{j=0}^n \mu_j^2
\end{align*}
for $m$ big. This yields
\begin{align*}
    \sum_{\substack{j=1,\\l=0}}^n |\mu_j||\mu_l|
    \leq
    n\mu_0^2+(2n+1)\sum_{j=1}^n \mu_j^2 
    \leq
    (5n+1)\sum_{j=1}^n \mu_j^2.
\end{align*}
We use \ref{lemma}(iv) and the fact that $\bar v_i=\exp_{x_i}^{-1}(\bar y_i)$ to deduce
\begin{align*}
f(x_{i+1},\bar y_i)&\geq f(x_i,\bar y_i)+(r_{i+1}-r_i)|\bar v_i|_h^{-1}\( \sum_{j=1}^n \mu_j^2\) -2(r_{i+1}-r_i) \omega(C_{2,1}\delta)
\\[10pt]
&\geq f(x_i,\bar y_i)+(r_{i+1}-r_i) C_{2,2}C_{2,3}-2(r_{i+1}-r_i) \omega(C_{2,1}\delta)
\end{align*}
    By definition of $\delta$ we have $\omega(C_{2,1}\delta)\leq \frac{C_{2,2}C_{2,3}}{4}$. Then, with $C_2:=\frac{C_{2,2}C_{2,3}}2$ we have
\begin{align*}
f(x_{i+1},\bar y_i)\geq f(x_i,\bar y_i)
+C_2(r_{i+1}-r_i)> 0.
\end{align*}
Since $V$ is a convex set it follows from this that $x_{i+1}< \bar y_i$ or that $x_{i+1}> \bar y_i$. This almost proves the claim. However, we still need to argue why $x_{i+1}<\bar y_i$ and not $\bar y_i< x_{i+1}$. 

One can repeat the exact same argument for $x_{i+1}(t):=\exp_{x_*}((1-t)u_i+tu_{i+1})\in V$, $t\in (0,1]$, and prove that
\begin{align*}
-g_{\bar y_i}(\exp_{\bar y_i}^{-1}(x_{i+1}(t)),\exp_{\bar y_i}^{-1}(x_{i+1}(t)))> 0
\end{align*}
for all $t$. By reasons of continuity this shows that either $x_{i+1}(t)<\bar y_i$ for all $t\in (0,1]$ or that $\bar y_i< x_{i+1}(t)$ for all $t\in (0,1]$. Since $x_{i+1}(t)\to x_i< \bar y_i$ as $t\to 0$ we deduce that $x_{i+1}(t)>\bar y_i$ cannot be possible for small $t$. Thus we have proved that $x_{i+1}$ and $\bar y_i$ are causally related and that
\begin{align*}
d(x_{i+1},\bar y_i)^2\geq d(x_i,\bar y_i)^2+C_2(r_{i+1}-r_i).
\end{align*}
This finally proves the claim.
\hfill \checkmark
\end{proof}

{\center {\large{\textbf{Proof of Theorem \ref{hudaodiaoiii}}}}}
\bigskip

From Theorem \ref{gafuiahfauhfaohfoafjaoii} we deduce the following corollary.

\begin{corollary}\label{mainnn}
For $\mu$-a.e. $x_0\in \Omega_0$ there exists a neighborhood $U\subseteq \Omega_0$ of $x_0$ and some $\delta>0$ such that, for all $x\in U\cap I^+(x_0)$ it holds
\begin{align*}
\sup\{\psi(y)-c_2(x,y)\mid d(x,y)\leq \delta\}<\varphi(x).
\end{align*}
\end{corollary}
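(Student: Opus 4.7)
The plan is to transfer the pointwise gap at $x_0$ provided by Theorem \ref{gafuiahfauhfaohfoafjaoii} to a uniform gap on a small neighborhood, by combining the compactness statement in Theorem \ref{Figalli}(iii) with standard uniform continuity arguments. First, I would restrict attention to the full $\mu$-measure subset $\Omega'\subseteq \Omega_0$ consisting of those $x_0$ satisfying both (i) the conclusion of Theorem \ref{gafuiahfauhfaohfoafjaoii} (a Lebesgue-negligible exceptional set, hence $\mu$-negligible by absolute continuity of $\mu$), and (ii) $\partial_{c_2}\varphi(x_0)\cap I^+(x_0)\neq \emptyset$ (Proposition \ref{hiuajdoaildkao}(ii)). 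For $x_0\in \Omega'$ I would then pick $\delta_1,\eta>0$ with
\begin{align*}
\sup\{\psi(y)-c_2(x_0,y)\mid d(x_0,y)\leq \delta_1\}\leq \varphi(x_0)-2\eta,
\end{align*}
fix a compact neighborhood $K\subseteq \Omega_0$ of $x_0$ and apply Theorem \ref{Figalli}(iii) to $K$ to obtain a compact $\overline{K'}\subseteq M$ containing every $y$ with $\psi(y)-c_2(x,y)\geq \varphi(x)-1$ for some $x\in K$, and choose $y_0\in \partial_{c_2}\varphi(x_0)\cap I^+(x_0)$.

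The key observation is the push-up property: if $x\in I^+(x_0)$ and $y\in J^+(x)$, then $x_0\ll x\leq y$ forces $y\in I^+(x_0)\subseteq J^+(x_0)$, so both $(x_0,y)$ and $(x,y)$ lie in the closed set $J^+$ where $c_2$ is continuous. Consequently $d$ is uniformly continuous on the compact set $K\times \overline{K'}$ and $c_2$ on $J^+\cap (K\times \overline{K'})$. Shrinking $K$ to a smaller neighborhood $U\subseteq K$ of $x_0$ and choosing $\delta\leq \delta_1/2$ sufficiently small, I would arrange that for every $x\in U\cap I^+(x_0)$ and every $y\in \overline{K'}\cap J^+(x)$ with $d(x,y)\leq \delta$:
\begin{align*}
d(x_0,y)\leq d(x,y)+|d(x_0,y)-d(x,y)|\leq \delta_1 \qquad \text{and} \qquad |c_2(x_0,y)-c_2(x,y)|\leq \eta/2.
\end{align*}
The first inequality is what allows the pointwise gap at $x_0$ to be applied, and combined with the second it gives $\psi(y)-c_2(x,y)\leq \varphi(x_0)-3\eta/2$.

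To match this with a lower bound on $\varphi(x)$, I would use $y_0$: since $I^-(y_0)$ is open and contains $x_0$, shrinking $U$ further so that $U\subseteq I^-(y_0)$ makes $c_2(\cdot,y_0)$ finite and continuous on $U$, and after another shrinking $|c_2(x,y_0)-c_2(x_0,y_0)|\leq \eta/2$. The equality $\psi(y_0)-c_2(x_0,y_0)=\varphi(x_0)$ and the $c_2$-convexity of $\varphi$ then yield
\begin{align*}
\varphi(x)\geq \psi(y_0)-c_2(x,y_0)=\varphi(x_0)+c_2(x_0,y_0)-c_2(x,y_0)\geq \varphi(x_0)-\eta/2.
\end{align*}
Any $y$ with $\psi(y)-c_2(x,y)>\varphi(x)-1$ automatically lies in $\overline{K'}\cap J^+(x)$ by definition of $K'$, so for such $y$ the previous estimates combine to $\psi(y)-c_2(x,y)\leq \varphi(x_0)-3\eta/2\leq \varphi(x)-\eta$, while the remaining $y$'s are trivially bounded by $\varphi(x)-1$; the supremum is therefore at most $\varphi(x)-\min(1,\eta)<\varphi(x)$, which is the desired strict inequality.

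The main technical obstacle is that the constraint $d(x,y)\leq \delta$ does not by itself localize $y$ in the $h$-metric, so naive uniform continuity on small $h$-balls would be useless; Theorem \ref{Figalli}(iii) is exactly the tool needed to confine the $y$'s contributing non-trivially to the supremum to a compact set. The hypothesis $x\in I^+(x_0)$ is then used crucially to ensure that $y\in J^+(x)$ implies $(x_0,y)\in J^+$, so that $c_2(x_0,y)$ is finite and may meaningfully be compared with $c_2(x,y)$.
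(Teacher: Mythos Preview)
Your proposal is correct and follows essentially the same strategy as the paper: restrict to the full-measure set where both Theorem \ref{gafuiahfauhfaohfoafjaoii} and Proposition \ref{hiuajdoaildkao}(ii) apply, use Theorem \ref{Figalli}(iii) to confine the relevant $y$'s to a compact set, invoke the push-up property $x_0\ll x\leq y\Rightarrow y\in I^+(x_0)$ together with uniform continuity of $d$ and $c_2$ on compact pieces of $J^+$, and use $y_0\in\partial_{c_2}\varphi(x_0)\cap I^+(x_0)$ to bound $\varphi(x)$ from below. The only cosmetic difference is that the paper argues by contradiction with sequences $x_k\to x_0$, $y_k$ with $d(x_k,y_k)\leq 1/k$, whereas you carry out the same estimates directly as a quantitative $\varepsilon$--$\delta$ argument; the content is identical.
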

\begin{proof}
We consider the set of all $x_0\in \Omega_0$ for which Theorem \ref{gafuiahfauhfaohfoafjaoii} holds and such that $\partial_{c_2}\varphi(x_0)\cap I^+(x_0)\neq \emptyset$. Using that $\mu$ is absolutely continuous w.r.t. ${\cal L}$ we see that this set if of full $\mu$-measure by Theorem \ref{gafuiahfauhfaohfoafjaoi}, Theorem \ref{gafuiahfauhfaohfoafjaoii} and by Lemma \ref{hiuajdoaildkao}.

Let $x_0\in\Omega_0$ be in this set. 
Then we find $\delta>0$ and $r>0$ such that
\begin{align*}
\sup\{\psi(y)-c_2(x_0,y)\mid d(x,y)\leq \delta\}\leq\varphi(x_0)-r.
\end{align*}
Also let $y_0\in \partial_{c_2}\varphi(x_0)\cap I^+(x_0)$.

Let us assume that there is a sequence $(x_k)\subseteq I^+(x)$ with $x_k\to x_0$ and a sequence $(y_k)\subseteq J^+(x_k)$ such that $d(x_k,y_k)\leq \frac{1}{k}$ and $\psi(y_k)-c_2(x_k,y_k)\geq \varphi(x_k)-\frac{1}{k}$.
The sequence $(y_k)$ must be precompact by Theorem \ref{Figalli}. Then $d(x_0,y_k)\to 0$ by the uniform continuity of $d$ on compact sets.
Thus, for big $k$ we have $d(x_0,y_k)\leq \delta$ and, hence, 
\begin{align*}
\psi(y_k)-c_2(x_0,y_k)\leq \varphi(x_0)-r= \psi(y_0)-c_2(x_0,y_0)-r.
\end{align*}
Thus, if $x_k$ is close to $x_0$, it follows from the uniform continuity of $c_2$ on compact subsets of $J^+$ (observe that $y_k\in J^+(x_k)\subseteq J^+(x_0)$ and that $y_0\in J^+(x_k)$ for big $k$) that
\begin{align*}
\psi(y_k)-c_2(x_k,y_k)\leq \psi(y_0)-c_2(x_k,y_0)-\frac{r}{2}\leq \varphi(x_k)-\frac{r}{2}.
\end{align*}
This is a contradiction.
\end{proof}

\begin{definition}\rm
Denote by $A$ the set of all $x_0\in  \Omega_0$ for which the above corollary holds true with $U=U_{x_0}\subseteq \Omega_0$. We define the open set
\begin{align*}
\Omega_{1}:=\bigcup_{x_0\in A} U_{x_0}\cap I^+(x_0)\subseteq \Omega_0.
\end{align*}
\end{definition}

\begin{lemma} \label{hudoaidapjsoapd}
We have $\mu(\Omega_1)=1$.
\end{lemma}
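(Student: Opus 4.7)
The plan is to show $\mu(\Omega_1)=1$ by a stratification argument combined with a Lebesgue density step. The only potentially delicate point is that, given $x$ sitting in the full-$\mu$-measure set $A$, one cannot just take $x_0=x$ (since $x\notin I^+(x)$ by global hyperbolicity), so one must produce an $x_0\in A$ strictly in the past of $x$ with $x\in U_{x_0}$. The natural way to control this is to quantify the size of the neighborhoods $U_{x_0}$.

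First, for each $k\in\N$ I would set
\begin{align*}
A_k:=\{x_0\in A\mid B_h(x_0,1/k)\subseteq U_{x_0}\}.
\end{align*}
Since each $U_{x_0}$ is an open neighborhood of $x_0$, we have $A=\bigcup_{k\geq 1}A_k$, hence $\mu(A_k)\to \mu(A)=1$. Working inside a fixed $A_k$, the absolute continuity $\mu\ll {\cal L}$ together with Lebesgue's density theorem yields that $\mu$-a.e.\ $x\in A_k$ is a point of Lebesgue density~$1$ for $A_k$. (If needed, replace $A_k$ by an inner-regular Borel subset of the same $\mu$-measure to avoid measurability scruples.)

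Next I would exploit the local Lorentzian structure: in normal coordinates centered at any $x\in M$ the metric is close to Minkowski, so the open cone $I^-(x)$ satisfies the uniform lower density bound
\begin{align*}
\liminf_{r\to 0^+}\frac{{\cal L}(I^-(x)\cap B_h(x,r))}{{\cal L}(B_h(x,r))}\geq c_0>0
\end{align*}
with $c_0$ depending only on the local geometry near $x$. Combined with the Lebesgue density of $A_k$ at $x$, this forces
\begin{align*}
{\cal L}\(A_k\cap I^-(x)\cap B_h(x,r)\)>0
\end{align*}
for all sufficiently small $r<1/k$; in particular this set is nonempty. Pick any $x_0$ in it: then $x\in B_h(x_0,1/k)\subseteq U_{x_0}$ and $x\in I^+(x_0)$, so $x\in U_{x_0}\cap I^+(x_0)\subseteq \Omega_1$.

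Thus $\mu$-a.e.\ point of $A_k$ lies in $\Omega_1$, and summing over $k$ together with $\mu(A)=1$ gives $\mu(\Omega_1)=1$. The main (only mildly) technical obstacle is the Borel measurability of $A_k$; this can be handled either by passing to a Borel subset of full $\mu$-measure inside each $A_k$, or by a direct check using the lower semi-continuity of $c_2$ and the fact that $\psi-\varphi$ is controlled on the compact sets appearing via Theorem~\ref{Figalli}(iii).
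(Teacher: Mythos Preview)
Your argument is correct, but it is more elaborate than necessary and differs from the paper's route. The paper proceeds by contradiction exactly as in Corollary~\ref{iuadhoiapaopao}: assume $\mu(\Omega_1)<1$, pick a compact $K\subseteq A\setminus\Omega_1$ of positive $\mu$- (hence $\mathcal L$-) measure, and let $x_0\in K$ be a Lebesgue point of $K$. Since $x_0\in A$, the open set $U_{x_0}\cap I^+(x_0)\subseteq \Omega_1$ is a cone-like region with apex at $x_0$ and therefore has strictly positive lower Lebesgue density at $x_0$; this contradicts $K\subseteq\Omega_1^c$ having density~$1$ there.

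The difference is one of perspective: the paper takes the density point to be the \emph{apex} $x_0$ itself and uses that $U_{x_0}\cap I^+(x_0)$ already lies in $\Omega_1$, so no stratification is needed. You instead take the density point to be the \emph{target} $x$ and look backwards for a suitable $x_0$; this forces you to control the size of $U_{x_0}$ uniformly, which is what your sets $A_k$ achieve. Both rely on the same geometric input (the light cone has positive lower density at its tip), but the paper's version avoids the stratification and the accompanying measurability bookkeeping.
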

\begin{proof}
The proof is easy and completely analogous to the proof of Corollary \ref{iuadhoiapaopao}.
\end{proof}

\begin{proof}[Proof of Theorem \ref{hudaodiaoiii}]
Let $x_0\in \Omega_1$. 
 By definition of $\Omega_1$ and by Corollary \ref{mainnn} and Theorem \ref{Figalli} we know that there exists an open neighborhood $U\subseteq \Omega_1$ of $x_0$ and constants $C,\delta>0$ such that, for any $x\in U$,
\begin{align*}
\varphi(x)=\sup\{\psi(y)-c_2(x,y)\mid d_h(x,y)\leq C,\ d(x,y)\geq \delta\}.
\end{align*}

Consider the open sets
\begin{align*}
&V_1:=\{y\in J^+(x_0)\mid d_h(x_0,y)< 2C,\ d(x_0,y)>\frac{\delta}{2}\} \text{ and } 
\\[10pt]
&V_2:=\{y\in J^+(x_0)\mid  d_h(x_0,y)< 4C,\ d(x_0,y)>\frac{\delta}{4}\}.
\end{align*}
By the continuity of $d$ and the completeness of $h$ we find an open neighborhood $W\subseteq U$ of $x_0$ such that $W\times V_2\subseteq I^+$. Using the continuity of $d$ and $d_h$ again, we can assume, by shrinking $W$ if necessary, that for all $x\in W$:
\begin{align*}
\varphi(x)=\sup\{\psi(y)-c(x,y)\mid y\in V_1\}.
\end{align*}
From Proposition \ref{injiuhgb} we know that $c_2$ is locally semiconcave on $W\times V_2$. Thus, the family of functions $(c(\cdot,y))_{y\in V_2}$ is locally uniformly locally semiconcave (\cite{Fathi/Figalli}, A15). Since $\overline V_1\subseteq V_2$ is compact it follows that $\varphi=\sup\{\psi(y)-c(\cdot,y)\mid y\in V_1,\ \psi(y)\in \R\}$ is locally semiconvex on $W$ as the finite supremum of a uniformly locally semiconvex family of functions (\cite{Fathi/Figalli}, A16).
\end{proof}

\begin{proof}[Proof of Corollary \ref{cd}]
With all the tools we have at hand by now, the proof follows in a standard manner.
We define the set
\begin{align*}
A:=\{x\in \Omega_1\mid  \partial_{c_2}\varphi(x)\neq \emptyset,\ \varphi \text{ is differentiable at } x\}.
\end{align*}
Then $A$ is of full $\mu$-measure by Lemma \ref{hiuajdoaildkao}, by Lemma \ref{hudoaidapjsoapd} and by the fact that locally semiconvex functions are differentiable ${\cal L}$-a.e.\ (see Theorem 10.8 in \cite{Villani}) and $\mu$ is absolutely continuous w.r.t.\ the Lebesgue measure.

Now, consider some $x\in A$. Let $y$ with $(x,y)\in \partial_{c_2}\varphi\cap J^+$. From Corollary \ref{mainnn} we deduce that $y\in I^+(x)$. Then, as $\varphi$ is differentiable at $x$ and $c_2(\cdot,y)$ is locally semiconcave in a neighborhood of $x$ it follows from $y\in \partial_{c_2}\varphi(x)$ that $c_2(\cdot,y)$ is differentiable at $x$ and that
\begin{align*}
-d_x\varphi=\frac{\partial c_2}{\partial x}c_2(x,y)
\end{align*}
(see \cite{Fathi/Figalli}, page 13).
Since $c_2$ satisfies the twist condition on $I^+$ (see Corollary \ref{twist}) it follows that $y$ is uniquely determined and given by
\begin{align}
y=\(\frac{\partial c_2}{\partial x}(x,\cdot)\)^{-1}(-d_x\varphi). \label{adjuha9ogf7sa}
\end{align}
Hence, for $\mu$-a.e. $x$ there exists only one $y\in M$ with $(x,y)\in\partial_{c_2}\varphi\cap J^+$. But since $\pi$ is concentrated on $\partial_{c_2}\varphi\cap J^+$ this means that $\pi$ is induced by a Borel map $T$ (see Lemma 2.20 in \cite{Ambrosio/Gigli}) and that $T$ is $\mu$-a.e. given by \eqref{adjuha9ogf7sa}. This proves the first part of the theorem.

Now let us assume that there exists another optimal coupling $\pi_1$ which is different from $\pi$. If this coupling does not admit a $\pi_1$-solution we are done. If it does, we have $\pi_1=(Id\times T_1)_\#\mu$ for a Borel map $T_1:M\to M$ by the preceding results. But $\frac{1}{2}(\pi+\pi_1)$ is, clearly, also an optimal coupling. If this coupling would also admit a $\frac{1}{2}(\pi+\pi_1)$-solution then it also has to be induced by a transport map. But this is only possible if $T=T_1$ $\mu$-a.e., hence if $\pi=\pi_1$. This is a contradiction and this concludes the proof of the corollary.
\end{proof}

\section*{Acknowledgements}
I would like to thank Stefan Suhr for proposing this interesting topic to me and for the helpful discussions on this subject. I would also like to thank Markus Kunze for his support and advices he gave me while writing this paper.

\section*{Statements and Declarations}

 \noindent \textbf{Conflict of interest:} None

\noindent \textbf{Data availability:} Apart from the references, this article does not use any external data.

\section{\Large{Appendix}}

\subsection{Lorentzian length functional and convex neighborhoods}\label{length}

In this subsection we will prove a lemma about convex neighborhoods which we needed in this paper.
By $(M,g)$ we will always denote a globally hyperbolic spacetime and $n+1:=\dim(M)$.

\begin{definition}\rm \label{convex}
We call an open set $U\subseteq M$ \emph{convex} if there exists an open set $\Omega\subseteq TM$ such that, for all $x\in U$ the set $\Omega_x:=\{v\in T_xM\mid (x,v)\in \Omega\}$ is star-shaped and $\exp_x:\Omega_x\to U$ is a diffeomorphism.
\end{definition}

\begin{remark}\rm
In a convex set there exists between each two points a unique geodesic (up to reparametrization) which lies in the convex set. It is well-known that every point admits an arbitrarily small convex neighborhood.
\end{remark}

\begin{proposition}\label{hiaudhgaiudhadadjaio}
Let $x_0\in M$. Then there exists a convex neighborhood $U$ of $x_0$ such that
all future pointing causal geodesics that lie in $U$ are length maximizing. 

Moreover, for any two points $x,y\in U$ it holds:
\begin{align*}
    g_x(\exp_x^{-1}(y),\exp_x^{-1}(y))< 0 \Rightarrow x< y \text{ or }x> y.
\end{align*}
\end{proposition}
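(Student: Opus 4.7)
The strategy is to choose $U$ to be simultaneously (a) a convex normal neighborhood of $x_0$ in the sense of Definition \ref{convex}, and (b) causally convex, meaning every future pointing causal curve with both endpoints in $U$ lies entirely in $U$. Once $U$ has both properties, the first statement follows from the classical local maximality of causal geodesics in a convex normal neighborhood, and the second statement follows directly from the causal character of radial geodesics.

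First I would fix a convex normal neighborhood $U_1$ of $x_0$ whose existence is a standard result in semi-Riemannian geometry (see e.g. \cite{ONeill}, Chapter 5, Proposition 7). Inside $U_1$ I would then shrink to a smaller causally convex neighborhood $U$; the existence of such neighborhoods is guaranteed in a globally hyperbolic spacetime (see for instance \cite{Minguzzi}, or \cite{Bernal}). A convenient way is to take $U$ of the form $U = I^+(p)\cap I^-(q)\cap U_1$ for suitable $p,q$ with $p\ll x_0 \ll q$ and $J^+(p)\cap J^-(q)\subseteq U_1$, which uses the global hyperbolicity of $M$ (compactness of causal diamonds) together with the openness of chronological futures and pasts. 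This intersection is open, contains $x_0$, is contained in $U_1$, and every future pointing causal curve with endpoints in $U$ stays inside $J^+(p)\cap J^-(q)\subseteq U_1$, so by arranging in addition that it stays within the chronological ``box'' it is causally convex.

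For the first claim, let $\gamma:[a,b]\to U$ be a future pointing causal geodesic with $x=\gamma(a)$ and $y=\gamma(b)$, and let $\bar\gamma:[a,b]\to M$ be any other future pointing causal absolutely continuous curve from $x$ to $y$. By causal convexity of $U$, $\bar\gamma$ remains in $U$. In the convex normal neighborhood $U$, the standard local maximality statement for causal geodesics (proved via the Gauss lemma applied in $U$, see \cite{ONeill}, Chapter 5, Proposition 34) then yields $L(\bar\gamma)\leq L(\gamma)$, which proves that $\gamma$ is length maximizing, so $L(\gamma)=d(x,y)$. (For absolutely continuous curves one reduces to the piecewise smooth case by the reparametrization argument already used in Chapter 3 of this paper.)

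For the second claim, suppose $x,y\in U$ satisfy $g_x(v,v)<0$ where $v:=\exp_x^{-1}(y)$; in particular $v\neq 0$ and $v$ is timelike. The radial geodesic $\gamma(t):=\exp_x(tv)$, $t\in[0,1]$, lies in $U$ by convexity, has $\gamma(0)=x$, $\gamma(1)=y$, and $\dot\gamma(0)=v$ is timelike. Since the causal character of a geodesic is preserved along the curve, every $\dot\gamma(t)$ is timelike, and since the time orientation of a timelike geodesic does not jump, $\gamma$ is either everywhere future pointing or everywhere past pointing. In the first case $x\ll y$, hence $x<y$; in the second case $y\ll x$, hence $x>y$. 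The main technical point throughout is the initial construction of $U$ with both convexity properties; once this is done, everything else reduces to classical facts about convex normal neighborhoods.
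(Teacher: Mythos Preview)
Your argument for the second claim matches the paper's exactly. For the first claim, your overall strategy is also the same as the paper's: trap competitor curves inside a convex normal neighborhood and invoke \cite{ONeill}, Chapter~5, Proposition~34. The difference is in how the neighborhood is constructed, and here your proposal has a gap.

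You want $U$ to be simultaneously convex (in the sense of Definition~\ref{convex}) and causally convex, and you propose $U=I^+(p)\cap I^-(q)\cap U_1$. This set is indeed causally convex once $J^+(p)\cap J^-(q)\subseteq U_1$, but there is no reason for it to be convex normal: intersecting a convex normal neighborhood with a chronological diamond need not preserve the property that $\exp_x^{-1}(U)$ is star-shaped for every $x\in U$ (a spacelike geodesic in $U_1$ joining two points of the diamond may leave the diamond). Since the proposition explicitly requires $U$ to be convex, and since your application of Proposition~34 ``in $U$'' needs $U$ to be convex normal, the argument does not close as written.

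The paper sidesteps this by using \emph{two} nested sets: a convex normal $V$, and a smaller convex normal $U\subseteq V$ chosen (via strong causality, which follows from global hyperbolicity) so that every causal curve with endpoints in $U$ stays in $V$. Then competitors are compared inside $V$, where Proposition~34 applies, while $U$ itself is convex simply because arbitrarily small convex neighborhoods exist. Your construction can be repaired in exactly this way: keep $U_1$ as the outer convex set, pick $p,q$ with $J^+(p)\cap J^-(q)\subseteq U_1$, and then choose any convex normal $U\subseteq I^+(p)\cap I^-(q)\cap U_1$. Causal curves with endpoints in $U$ then stay in $U_1$ (not necessarily in $U$), which is all that Proposition~34 needs.
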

\begin{proof}
    The second part is easily verified, just take the geodesic $\exp_x(t\exp_x^{-1}(y))$ in $U$ and observe that it is either future directed or past directed. For the first part, let $V$ be any convex neighborhood of $x_0$. By the global hyperbolicity of $M$ we can find a smaller convex neighborhood $U$ of $x_0$ such that any future pointing causal curve that starts and ends in $U$ lies entirely in $V$. Then, any future pointing causal geodesic that starts and ends in $U$ has to lie entirely in $V$ and thus, by  \cite{ONeill}, Proposition 34 in Chapter 5, has to be maximal.
\end{proof}

\begin{definition}\rm \label{ONF}
A \emph{smooth orthonormal frame} on an open set $U\subseteq M$ is a family of smooth vector fields $e_i\in \Gamma(TU)$ ($i=0,...,n$) such that
\begin{align*}
g(e_0,e_0)=-1,\ g(e_i,e_i)=1,\, i\geq 1,\, g(e_i,e_j)=0,\, i\neq j.
\end{align*}
\end{definition}

\begin{remark}\rm
\begin{enumerate}[(i)]
\item
Recall that, given a smooth curve $c:I\to M$ and a vector field $V:I\to TM$ along $c$, the covariant derivative of $V$ is given by
\begin{align}
\frac{\nabla V}{dt}(t)=\sum_{k=0}^n\(\dot v_k(t)+\sum_{i,j=0}^nv_i(t)\dot c_j(t) \Gamma_{ij}^k(c(t))\) \partial_k|_{c(t)}. \label{parallel}
\end{align}
Here, $v_i$ and $c_j$ are the coordinates of $V$ and $c$, $\Gamma_{ij}^k$ are the Christoffel symbols and $\partial_k$ is the $k$-th coordinate basis vector.
Thus, setting $\frac{\nabla V}{dt}(t)=0$, this is a linear ordinary differential equation which is uniquely determined by $V(0)\in T_{c(0)}M$.
\item Let $U$ be a convex set and $x_0\in U$ be fixed. Let $e_0,...,e_n$ be an orthonormal basis of $T_{x_0}M$. If $x\in U$ denote by $e_0(x),...,e_n(x)$ the orthonormal basis of $T_xM$ obtained by parallel transporting $e_0,...,e_n$ along the unique (up to reparametrization)  geodesic between $x_0$ and $x$ in $U$ (observe that this definition is independent of the parametrization of the geodesic, compare \eqref{parallel}).
Now, for $y\in U$ choose the unique (up to reparametrization) geodesic between $x$ and $y$ and denote the parallel transport of $e_i(x)$ by $e_i(x,y)\in T_yM$.
\end{enumerate}
\end{remark}

\begin{lemma} \label{dghaidghadhaodfjuapoda}
$e:=(e_0,...,e_n)$ is a smooth orthonormal frame.
\end{lemma}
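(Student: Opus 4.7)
The plan is to verify smoothness, orthonormality, and the correct causal type of $e_0$ by chaining standard smooth-dependence results for geodesics and parallel transport. I do not expect any substantive obstacle; the lemma reduces to careful bookkeeping of smooth dependence on parameters.

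First I would exploit convexity to extract a smooth inverse exponential. The smooth, bijective map $F \colon \Omega \to U \times U$, $(x, v) \mapsto (x, \exp_x(v))$, has, in any local trivialization of $TM$, a lower-block-triangular Jacobian with the identity on the base factor and the invertible $(d\exp_x)_v$ on the fiber factor, so the inverse function theorem produces a smooth $\Psi \colon U \times U \to TM$ with $\Psi(x, y) \in T_xM$ and $\exp_x(\Psi(x, y)) = y$. Consequently $\gamma_{x, y}(t) := \exp_x(t\Psi(x, y))$ is the unique (up to reparametrization) geodesic in $U$ from $x$ to $y$ and depends smoothly on $(x, y, t) \in U \times U \times [0, 1]$.

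Next I would appeal to smooth dependence on parameters for ODEs. Written in local coordinates via \eqref{parallel}, the parallel transport equation along $\gamma_{x, y}$ is linear in $V$ with coefficients smoothly depending on $(x, y, t)$, so its flow defines a smooth family of linear isomorphisms $P_{x \to y} \colon T_xM \to T_yM$. Setting $e_i(x) := P_{x_0 \to x}(e_i)$ and $e_i(x, y) := P_{x \to y}(e_i(x))$ then yields the desired smooth map $(x, y) \mapsto e_i(x, y) \in T_yM$ as a composition of smooth maps. Metric compatibility of the Levi-Civita connection gives $\tfrac{d}{dt} g(V, W) = 0$ whenever $V, W$ are parallel, so each $P_{x \to y}$ is a linear isometry; iterating this transports the orthonormality of $\{e_i\}$ from $T_{x_0}M$ to that of $\{e_i(x, y)\}$ in $T_yM$.

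Finally, for the causal type of $e_0(x, y)$, orthonormality forces $g(e_0(x, y), e_0(x, y)) = -1$, so $e_0(x, y)$ stays away from both the null cone and the origin, i.e.\ takes values in the open set of timelike vectors. Since $U$ is convex, hence connected, so is $U \times U$, and the continuous map $(x, y) \mapsto e_0(x, y)$ therefore lies entirely in the connected component containing $e_0 = e_0(x_0, x_0)$, which is chosen future-directed. This completes the sketch.
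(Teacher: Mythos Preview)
Your proof is correct and follows essentially the same route as the paper: both arguments reduce smoothness to smooth dependence of ODE solutions on parameters (for the parallel transport equation \eqref{parallel} along the geodesics $\gamma_{x,y}$), and both dispatch orthonormality via the fact that parallel transport is an isometry. Your additional connectedness argument for the future-directedness of $e_0(x,y)$ is harmless but not required by Definition~\ref{ONF} or Lemma~\ref{frame}, which only ask that $e_0$ be timelike.
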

\begin{proof}
Of course we only have to prove the smoothness, since parallel transport is an isometry as a map between the tangent spaces. Let $i=0,...,n$.
We first claim that the map
\begin{align*}
f:U\to TU,\ f(x):=(x,e_i(x)),
\end{align*}
is smooth. Indeed, let $c_x(t):=\exp_{x_0}(t\exp_{x_0}^{-1}(x))$ be the unique geodesic in $U$ between $x_0$ and $x$. Observe that $c_x(t)$ depends smoothly on $x$ and $t$.
In local coordinates we need to solve the differential equation
\begin{align*}
\dot v_k(t)+\sum_{i,j=0}^nv_i(t)\dot c_{x,j}(t) \Gamma_{ij}^k(c_x(t))=0,\ k=0,...,n.
\end{align*}
This is an ordinary differential equation which depends smoothly on its parameter $x$. Hence, also the solution depends smoothly on $x$ and on its initial value.
Hence, $f$ is smooth. Now we need to show that also $e_i:U\times U\to TU$ is smooth.

For, we define the map $g:TU\times U\to TU$ which parallel transports a vector $v\in T_{x}U$ along the unique geodesic fom $x$ to $y$ such that $g((x,v),y)=(y,w)\in TU$. 
Denote by $c_{x,y}:=\exp_{x}(t\exp_{x}^{-1}(y))$ the unique geodesic between $x$ and $y$. 
Then we need to solve the differential equation
\begin{align*}
\dot v_k(t)+\sum_{i,j=0}^nv_i(t)\dot c_{x,y,j}(t) \Gamma_{ij}^k(c_{x,y}(t))=0,\ k=0,...,n.
\end{align*}
This is a differential equation with parameters $x$ and $y$ and the equation depends smoothly on those. Hence, the solution also depends smoothly on $x$ and $y$ and on the initial value. This proves that $g$ is also smooth.

Finally observe that
\begin{align*}
e_i:U\times U\to TU,\ e_i(x,y)=g((x,e_i(x)),y)=g(f(x),y)
\end{align*}
is smooth as the composition of two smooth functions.
\end{proof}

\begin{example}\rm
In the simple case where $M=\R^{1+n}$ is the Minkowski space, the orthonormal basis $(e_0(x,y),...,e_n(x,y))$ is always equal to the standard basis in $\R^{1+n}$.
\end{example}

\subsection{Proof of Proposition \ref{flow}}
 \begin{proof}
 Let $\exp$ denote the exponential function w.r.t.\ the Lorentzian metric $g$. We define the set ${\cal D}:=\{(x,v,t)\in \overline \C\times \R\mid \exp_x(tv) \text{ is defined}\}$, which is (relatively) open in $\overline \C\times \R$. Next we consider the map
 \begin{align*}
     \psi:{\cal D}\to \R,\ (x,v,t)\mapsto \psi_{x,v}(t):=
     \begin{cases}
         \frac{1}{L_1(v)}\int_0^t L_1(\frac{d}{ds}\exp_x(sv))\, ds,\ &\text{ if } v\neq 0,
         \\\\
         t,\ &\text{ if } v=0.
     \end{cases}
 \end{align*}
\textbf{Claim 1: $\psi$ is continuous:}
Clearly, $\psi$ is continuous at any point $(x_0,v_0,t_0)\in {\cal D}$ with $v_0\neq 0$. To prove continuity at some point $(x_0,0,t_0)$, consider some $r>0$ such that $\exp_{x_0}(t_0v)$ is defined for all $|v|_h\leq r$. 
We find a (relatively) open neighborhood $U\subseteq \overline \C$ of the compact set
    $\{(x_0,v)\in \overline \C\mid |v|_h=r\}$ such that $L_1$ is uniformly continuous on $U$ and $\inf_U |L_1|=:a>0$. Now let $(x_k,v_k,t_k)\in {\cal D}$ with $v_k\neq 0$ and $(x_k,v_k,t_k)\to (x_0,0,t_0)$. Write $v_k=:\frac{|v_k|_h}{r} w_k=:\ep_k w_k$. Then by the positive $1$-homogeneity of $L_1$ as a function from the tangent space we have
    \begin{align}
        \psi_{x_k,v_k}(t_k)= \int_0^{t_k} \frac{ L_1(\frac{d}{ds}\exp_{x_k}(s v_k))}{L_1(v_k)}\, ds=
         \int_0^{t_k} \frac{ L_1(\frac{d}{d\tau}|_{\tau=s\ep_k}\exp_{x_k}(\tau w_k))}{L_1(w_k)}\, ds \label{ujsndftaz1}
    \end{align}
   Using the fact that $L_1$ is uniformly continuous on $U$ one easily checks that, by the smoothness of the exponential map, we have
    \begin{align}
        \sup_{s\in[0,t_k]} \bigg|L_1\(\frac{d}{d\tau}|_{\tau=s\ep_k}\exp_{x_k}(\tau w_k)\)-L_1\(w_k\)\bigg| \xrightarrow{k\to \infty}  0. \label{ujsndftaz}
    \end{align}
    Moreover, using the fact that $|L_1(w_k)|\geq a$ for $k$ big, we finally deduce from \eqref{ujsndftaz1} and \eqref{ujsndftaz} that
    \begin{align*}
        \psi_{x_k,v_k}(t_k)\to t_0.
    \end{align*}
This proves the claim.
\hfill \checkmark

 We define the set ${\cal D}_L$ as the image of the map
 \begin{align*}
     {\cal D}\to \overline \C\times \R,\ (x,v,t)\mapsto (x,v,\psi_{x,v}(t)).
 \end{align*}
Clearly, each slice $\{t\in \R\mid (x,v,t)\in {\cal D}_L\}$ is an open interval containing $0$. Using that $\psi_{x,v}$ is strictly increasing and that $\psi$ is continuous one readily checks that ${\cal D}_L$ is open as a subset of $\overline \C\times \R$. 
 \\
 
 \noindent \textbf{Claim 2: The map
 \begin{align*}
     {\cal D}_L\to \R,\ (x,v,t)\mapsto \psi_{x,v}^{-1}(t),
 \end{align*}
 is continuous and continuously differentiable w.r.t.\ $t$.}
 \\

Let $(x_0,v_0,t_0)\in {\cal D}_L$ and $(x_k,v_k,t_k)\in {\cal D}_L$ be a sequence converging to $(x_0,v_0,t_0)$.
Let $s_0\in \R$ with $\psi_{x_0,v_0}(s_0)=t_0$. Let $\ep>0$ such that $\psi_{x_0,v_0}$ is defined on $[s_0-\ep,s_0+\ep]$ and denote 
\begin{align*}
\delta:=\min\{|\psi_{x_0,v_0}(s_0\pm \ep)-t_0|\}.
\end{align*}
Since $\psi$ is continuous it follows that, if $k$ is large enough, we have
\begin{align*}
\min\{t_k-\psi_{x_k,v_k}(s_0-\ep),\psi_{x_k,v_k}(s_0+ \ep)-t_k\}\geq \frac{\delta}{2}. 
\end{align*}
Using the fact that $\psi$ is strictly increasing as a function of $s$ we deduce that, as $k$ is large, we must have
\begin{align*}
    |\psi_{x_k,v_k}(s)-t_k|\geq \frac{\delta}{2} \text{ for } s\leq s_0-\ep \text{ or } s\geq s_0+\ep.
\end{align*}
Thus,
\begin{align*}
|\psi_{x_k,v_k}^{-1}(t_k)-s_0| \leq \ep.
\end{align*}
This proves the continuity and the continuous differentiability w.r.t.\ $t$ follows from the fact that, easily verified,
\begin{align*}
   \dot \psi_{x,v}^{-1}(t)=\frac{L_1(v)}{L_1(\frac{d}{ds}|_{s=\psi_{x,v}^{-1}(t)}\exp_x(sv))},\ v \neq 0,\  \dot \psi_{x,0}^{-1}(t)=1,
\end{align*}
together with the same proof as in the first claim.
This proves the second claim. 
\hfill \checkmark
\bigskip
 
   We now define our map $\phi$ by
 \begin{align*}
     \phi:{\cal D}_L\to \overline \C,\ \phi_t(x,v):=(\exp_x(\psi_{x,v}^{-1}(t)v),\frac{d}{dt}(\exp_x(\psi_{x,v}^{-1}(t)v))).
 \end{align*}
 Observe that $\phi$ actually maps to $\overline \C$.
We are left to check all the claimed properties. We have already proved that, for any $(x,v)\in \overline \C$, the $t$-slice in an open interval containing $0$ and that ${\cal D}_L$ is relatively open. Moreover, from the above claim it follows that $\phi$ is continuous. Part (i) is trivial since geodesics are smooth and $\psi_{x,v}$ is smooth (for fixed $(x,v)$) and $\dot \psi_{x,v}(t)>0$, so that $\psi_{x,v}^{-1}$ is also smooth. Thus, it remains to prove the remaining properties of a local flow (see claim 3) and (ii) and (iii).

In the following, for a better overview, given $(x,v)\in \overline  \C$ we denote the maximal existence interval of $\exp_x(tv)$ by $I_{x,v}$ or by $I$ if it is clear about which $(x,v)$ we are speaking. We also denote $\psi:=\psi_{x,v}:I\to J:=\psi(I)$, so that $\psi^{-1}:J\to I$.
\\

\noindent \textbf{Claim 3:
 $\phi$ is a local flow}

Recall that we already proved that ${\cal D}_L\subseteq \overline \C\times \R$ is open, that each $t$-slice is an open interval containing $0$ and that $\phi$ is continuous. Thus, it suffices to show that, given $(x,v)\in \overline \C$ and $s\in J$ we have $J_{\phi_s(x,v)}\supseteq J-s$ and
\begin{align*}
    \phi_t(\phi_s(x,v))=\phi_{t+s}(x,v) \text{ for all } t\in J-s.
\end{align*}
Indeed, if we choose $t=-s$ it follows $\phi_{-s}(\phi_s(x,v))=(x,v)$ and by the arbitrariness of $(x,v)$ and $s$ we obtain that $J=J_{\phi_{-s}(\phi_s(x,v))}\supseteq J_{\phi_s(x,v))}-(-s)$. Hence, also $J_{\phi_s(x,v)}\subseteq J-s$, therefore $J_{\phi_s(x,v)}= J-s$.
\\

Since the case $v=0$ is trivial, we consider the case $v\neq 0$. Let $t\in J-s$ be arbitrary. We consider the geodesic $c(r):=\exp_x(rv)$, $r\in I$. We have by definition of $\phi$
\begin{align*}
    \phi_s(x,v)=(c(\psi^{-1}(s),(c\circ \psi^{-1})'(s))=:(y,w)\in TM,
\end{align*}
where $'$ also denotes the derivative.
Observe that, by definition of $\psi$ and the chain rule, 
\begin{align}
    w=\dot c(\psi^{-1}(s))\cdot \frac{L_1(v)}{L_1(\dot c(\psi^{-1}(s)))}\neq 0. \label{tdfaudai9o8u}
\end{align}
Thus, denoting $\bar c(r):=\exp_y(rw)$ and its maximal existence interval by $\bar I$, we have
\begin{align*}
    \bar I=\frac{L_1(\dot c(\psi^{-1}(s)))}{L_1(v)}(I-\psi^{-1}(s))\ni \frac{L_1(\dot c(\psi^{-1}(s)))}{L_1(v)}(\psi^{-1}(t+s)-\psi^{-1}(s))=:\bar t.
\end{align*}
Thus, $\psi_{\phi_s(x,v)}(\bar t)$ is well-defined and with the substitution $\tau=\frac{L_1(v)}{L_1(\dot c(\psi^{-1}(s)))}r+\psi^{-1}(s)$ in the following integral we get
\begin{align*}
\psi_{\phi_s(x,v)}(\bar t)&=\frac{1}{L_1(w)}\int_0^{\bar t} L_1(\dot {\bar c}(r))\, dr
\overset{\eqref{tdfaudai9o8u}}{=}\frac{1}{L_1(v)}\int_0^{\bar t} L_1(\dot {\bar c}(r))\, dr
\\[10pt]
&=\frac{1}{L_1(v)}\int_{\psi^{-1}(s)}^{\psi^{-1}(t+s)} L_1\(\dot {\bar c}\(\frac{L_1(\dot c(\psi^{-1}(s)))}{L_1(v)}(\tau-\psi^{-1}(s))\)\)\frac{L_1(\dot c(\psi^{-1}(s)))}{L_1(v)}\, d\tau 
\\[10pt]
&=\frac{1}{L_1(v)}\int_{\psi^{-1}(s)}^{\psi^{-1}(t+s)} L_1\(\frac{d}{d\tau} \({\bar c}\Big(\frac{L_1(\dot c(\psi^{-1}(s)))}{L_1(v)}(\tau-\psi^{-1}(s))\Big)\)\)\, d\tau
\\[10pt]
&\overset{\eqref{tdfaudai9o8u}}{=}\frac{1}{L_1(v)}\int_{\psi^{-1}(s)}^{\psi^{-1}(t+s)} L_1\(\frac{d}{d\tau} \exp_y\Big((\tau-\psi^{-1}(s)) \dot c(\psi^{-1}(s))\Big)\)\, d\tau
\\[10pt]
&=\frac{1}{L_1(v)}\int_{\psi^{-1}(s)}^{\psi^{-1}(t+s)} L_1\(\frac{d}{d\tau} \exp_x(\tau v)\)\, d\tau
\\[10pt]
&=\psi(\psi^{-1}(t+s))-\psi(\psi^{-1}(s))=t+s-s=t.
\end{align*}
By the arbitraryness of $t$ this proves $J_{\phi_s(x,v)}\supseteq J-s$ and it also shows that
\begin{align*}
   \bar c(\psi^{-1}_{\phi_s(x,v)}(t))=\exp_y(\bar tw)=\exp_y((\psi^{-1}(t+s)-\psi^{-1}(s))\cdot\dot c(\psi^{-1}(s)))
   =
   c\circ \psi^{-1}(t+s).
\end{align*}
Hence, 
\begin{align*}
    \phi_t(\phi_s(x,v))=(\bar c(\psi^{-1}_{\phi_s(x,v)}(t)),({\bar c}\circ \psi^{-1}_{\phi_s(x,v)})'(t))=(c\circ \psi^{-1}(t+s),(c\circ \psi^{-1})'(t+s))=\phi_{t+s}(x,v).
\end{align*}
This proves that $\phi$ is indeed a local flow.
\hfill \checkmark 
\bigskip

It remains to prove (ii) and (iii).

We start with (iii). Let $(x,v)\in \overline \C$ and $t>0$. Denote as usual by $J$ the domain of $\phi_\cdot(x,v)$. We need to prove that $\phi_{\cdot}(x,tv)$ is defined on $\frac{1}{t}J$ and that
\begin{align*}
    \pi\circ \phi_s(x,tv)=\pi \circ \phi_{ts}(x,v) \text{ for all } s\in \frac{1}{t}J.
\end{align*}
This is clear if $v=0$, so let us assume that $v\neq 0$.
Consider the geodesic $c(s)=\exp_x(sv)$ with its maximal existence interval $I$. The geodesic $\bar c(s)=\exp_x(s(tv))$ is defined on $\frac{1}{t}I$. Thus, $\psi_{x,tv}$ maps from $\frac{1}{t}I$ to $\R$ and we have for $s\in \frac{1}{t}I$, using the positive $1$-homogeneity of $L_1$ as a function from the tangent space, that
\begin{align*}
    \psi_{x,tv}(s)=\frac{1}{L_1(tv)}\int_0^s L_1(\dot {\bar c}(\tau))\, d\tau
    = \frac{1}{L_1(v)}\int_0^s  L_1(\dot c(t\tau))\, d\tau
    = \frac{1}{tL_1(v)}\int_0^{ts}  L_1(\dot c(\tau))\, d\tau
    =\frac{\psi_{x,v}(ts)}{t}.
\end{align*}
Thus $\psi_{x,tv}(\frac{1}{t}I)=\frac1t \psi_{x,v}(I)$ and, hence, $\phi_{\cdot}(x,tv)$ is defined on $\frac{1}{t}J$, as claimed. This computation shows moreover
\begin{align*}
    \psi^{-1}_{x,tv}(s)= \frac{1}{t}\psi_{x,v}^{-1}(ts) \text{ for } s\in \frac{1}{t}J.
\end{align*}
Thus,
\begin{align*}
    \pi\circ \phi_s(x,tv)=\bar c(\psi_{x,tv}^{-1}(s))=c(t \psi_{x,tv}^{-1}(s))=c(\psi_{x,v}^{-1}(ts))=\pi\circ \phi_{ts}(x,v)
\end{align*}
which proves the claim.

For (ii), assume $x<y$ because the case $x=y$ is trivial. Let $\gamma:[a,b]\to M$ be any minimizer for ${\cal A}_2$ connecting $x$ with $y$ and $s,t\in [a,b]$. 

In this case we saw in the proof of Lemma \ref{minimizer} that $\gamma(t)=\pi \circ \phi_{t-s}(\gamma(s),\dot \gamma(s))$.
 \end{proof}

\begin{corollary}\label{compact}
Let $K_0,K_1\subseteq M$ be compact and assume that $K_0\times K_1\subseteq I^+$. Then the set
\begin{align*}
\Gamma_{K_0,K_1}:=\{\gamma\in \Gamma\mid \gamma(0)\in K_0,\ \gamma(1)\in K_1\}
\end{align*}
is compact in $C^1([0,1],M)\subseteq C^0([0,1],TM)$ endowed with the topology of uniform convergence (w.r.t.\ one (hence all) metric(s) on $TM$).
\end{corollary}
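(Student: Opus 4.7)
The plan is to parametrize curves in $\Gamma_{K_0,K_1}$ by their initial data $(\gamma(0),\dot\gamma(0))\in \overline\C$ and exploit the flow $\phi$ from Proposition \ref{flow}. By Proposition \ref{flow}(ii), $\phi_t(\gamma(0),\dot\gamma(0))=(\gamma(t),\dot\gamma(t))$ for $t\in[0,1]$, so compactness of $\Gamma_{K_0,K_1}$ in $C^0([0,1],TM)$ will follow once we show (a) the set of initial data lies in a compact subset of $\overline\C$, and (b) $\phi$ is defined on the relevant time slab and uniformly continuous enough there to yield uniform convergence in $t$.

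For (a), fix $\gamma\in\Gamma_{K_0,K_1}$. Proposition \ref{minimizer} asserts that $L_1(\dot\gamma(t))$ is constant, so
\begin{align*}
L_1(\dot\gamma(0))^2={\cal A}_2(\gamma)=c_2(\gamma(0),\gamma(1)),
\end{align*}
and the right-hand side is bounded since $c_2$ is continuous on $J^+$ (Lemma \ref{daidaoidasl}) and $K_0\times K_1\subseteq I^+\subseteq J^+$ is compact. The splitting estimate \eqref{splitting} gives $L_1(v)=d\tau(v)-|v|_g\geq \tfrac12 d\tau(v)\geq \tfrac12 |v|_h$ for causal $v$, hence $|\dot\gamma(0)|_h$ is bounded by a universal constant $C$, placing all initial data in the compact set
\begin{align*}
A:=\{(x,v)\in\overline\C\mid x\in K_0,\ |v|_h\leq C\}.
\end{align*}
Given a sequence $\gamma_n\in\Gamma_{K_0,K_1}$, a subsequence satisfies $(\gamma_n(0),\dot\gamma_n(0))\to(x_0,v_0)\in A$.

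The main obstacle is to show that $\phi_t(x_0,v_0)$ is defined on the entire interval $[0,1]$, not only on a short maximal subinterval. Let $T^*:=\sup\{T\geq 0\mid \phi_t(x_0,v_0)\text{ exists on }[0,T]\}$; I claim $T^*>1$. Since the reparametrization defining $\phi$ is designed so that $L_1$ is constant along orbits, the velocity part $v(t)$ of $\phi_t(x_0,v_0)$ satisfies $|v(t)|_h\leq 2L_1(v_0)$, so velocities stay in a fixed $h$-ball. Using continuity of $\phi$ together with $\pi\phi_t(\gamma_n(0),\dot\gamma_n(0))=\gamma_n(t)\in J^+(K_0)\cap J^-(K_1)$, the positions $\pi\phi_t(x_0,v_0)$ for $t\in[0,T^*)$ lie in this latter set, which is compact by global hyperbolicity. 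Hence the orbit on $[0,T^*)$ is confined to some compact $K\subseteq\overline\C$. Corollary \ref{l} applied to $K$ yields $l>0$ such that $\phi_s(y,w)$ is defined on $[0,l]$ for every $(y,w)\in K$, and the group property $\phi_{s+t}=\phi_s\circ\phi_t$ then extends the orbit past any finite $T^*$, contradicting the definition of $T^*$. In particular $\phi_t(x_0,v_0)$ is defined on all of $[0,1]$.

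Finally, since $\{(x_0,v_0)\}\times[0,1]$ is a compact subset of the open set ${\cal D}_L$, there is a neighborhood $U$ of $(x_0,v_0)$ with $\overline U\times[0,1]\subseteq{\cal D}_L$, and $\phi$ is uniformly continuous on this compact product. Eventually $(\gamma_n(0),\dot\gamma_n(0))\in U$, and uniform continuity yields uniform convergence $\phi_\cdot(\gamma_n(0),\dot\gamma_n(0))\to\phi_\cdot(x_0,v_0)$ on $[0,1]$, i.e.\ convergence of $\gamma_n$ to $\gamma(t):=\pi\phi_t(x_0,v_0)$ in $C^0([0,1],TM)$. The limit $\gamma$ is smooth by Proposition \ref{flow}(i), has $\gamma(0)\in K_0$ and $\gamma(1)\in K_1$ by closure, is future-pointing causal because $L_1(v_0)>0$ (the continuous function $c_1$ being strictly positive on the compact set $K_0\times K_1\subseteq I^+$ provides a uniform positive lower bound on $L_1(\dot\gamma_n(0))$), and minimizes ${\cal A}_2$ since
\begin{align*}
{\cal A}_2(\gamma)=L_1(v_0)^2=\lim_n L_1(\dot\gamma_n(0))^2=\lim_n c_2(\gamma_n(0),\gamma_n(1))=c_2(\gamma(0),\gamma(1))
\end{align*}
by continuity of $c_2$ on $J^+$. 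Hence $\gamma\in\Gamma_{K_0,K_1}$.
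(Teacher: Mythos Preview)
Your proof is correct, and it shares the overall skeleton with the paper's argument: bound the initial data via the splitting estimate, extract a convergent subsequence $(\gamma_n(0),\dot\gamma_n(0))\to(x_0,v_0)$, show the limiting $\phi$-orbit is defined on all of $[0,1]$, and then verify the limit curve is an ${\cal A}_2$-minimizer by continuity of $c_2$ on $J^+$.

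The genuinely different step is how you establish that $\phi_t(x_0,v_0)$ exists on $[0,1]$. The paper first applies Arzel\`a--Ascoli to the equi-Lipschitz family $(\gamma_n)$ to obtain a $C^0$-limit $\gamma$ on $[0,1]$, and then argues that if the orbit $\exp_L(x_0,tv_0)$ stopped at some $t_0<1$ it would converge to $\gamma(t_0)\in M$ and hence be extendible---a contradiction, since $\phi$-orbits are reparametrizations of \emph{maximal} geodesics. You bypass Arzel\`a--Ascoli entirely: you trap the orbit on $[0,T^*)$ inside the compact set $\{(p,w)\in\overline\C\mid p\in J^+(K_0)\cap J^-(K_1),\ |w|_h\le 2L_1(v_0)\}$ (using global hyperbolicity for the base and $L_1$-conservation along $\phi$-orbits for the fibre), then invoke the openness of ${\cal D}_L$ (essentially Corollary~\ref{l}) together with the flow property to push past $T^*$. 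Your route is a clean ``dynamical systems'' argument (orbit in a compact set $\Rightarrow$ global existence), while the paper's route leans on the geometric fact that maximal geodesics cannot terminate inside $M$. Both are short; yours avoids Arzel\`a--Ascoli at the cost of explicitly invoking global hyperbolicity (compactness of $J^+(K_0)\cap J^-(K_1)$) and the $L_1$-conservation built into the construction of $\phi$, neither of which the paper states as a separate lemma but both of which are readily available.
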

\begin{proof}
Let $(\gamma_k)\subseteq \Gamma_{K_0,K_1}$ be any sequence. 
We need to show that there is $\gamma \in \Gamma_{K_0,K_1}$ such that, along a subsequence,
$\gamma_k\to \gamma$ as $k\to \infty$.

We already know that $\gamma_k(t)=\exp_L(x_k,tv_k)$ for some $x_k\in K_0$ and $v_k:=\dot \gamma_k(0)$. 
Set
\begin{align*}
C:=\sup\{c_1(x,y)\mid x\in K_0,\ y\in K_1\}.
\end{align*}
Then, using \eqref{splitting}, we see that, for each $k\in \N$ and each $t\in [0,1]$,
\begin{align*}
|\dot \gamma_k(t)|_h\leq d\tau(\dot \gamma_k(t))\leq 2 L_1(\dot \gamma_k(t))=2c_1(\gamma_k(0),\gamma_k(1))\leq 2C.
\end{align*}
From this we deduce two consequences:
Firstly, the family of curves $(\gamma_k)$ is equi-Lipschitz and the set $\{\gamma_k(0)\}\subseteq K_0$ is relatively compact. Thus, by the theorem of Arzelà-Ascoli the family of curves $(\gamma_k)$ is relatively compact in $C([0,1],M)$. Thus, without loss of generality, we can assume that $\gamma_k \to \gamma \in C([0,1],M)$.
Secondly, the sequence $(x_k,v_k)=(\gamma_k(0),\dot \gamma_k(0))$ is relatively compact in $TM$. Thus, for a subsequence that we do not relabel, $(x_k,v_k)\to (x,v)\in \overline \C$ as $k\to \infty$. By the continuity of the exponential map we clearly have
\begin{align*}
    \exp_L(x,tv)=\gamma(t) \text{ for all } t\in [0,1] \text{ such that both curves are defined.}
\end{align*}
Assume that $\exp_L(x,tv)$ is only defined up to some $t_0<1$. In particular, $v\neq 0$. Then $\exp_L(x,tv)\to \gamma(t_0)$ as $t\to t_0$ and, hence, $\exp_L(x,tv)$ is extendible. But this cannot be the case since $\exp_L(x,tv)$ is a reparametrization of a non-constant ($v\neq 0$) maximal geodesic as we have seen in the above proof.
Thus, $\exp_L(x,tv)$ is defined for all $t\in [0,1]$ and by the continuous differentiability of $(x,v,t)\to \exp_L(x,tv)$ w.r.t.\ $t$ (which holds since $\phi$ is continuous) we have that $\gamma_k\to \gamma$ in $C^1([0,1],M)$. Then we conclude the proof as follows:
By the continuity of $c_2$ on $J^+$,
\begin{align*}
    c_2(x,\exp_L(x,v))=\lim_{k\to \infty} c_2(x_k,\exp_L(x_k,v_k))= \lim_{k\to \infty} {\cal A}_2(\gamma_k)={\cal A}_2(\gamma).
\end{align*}
Thus, $\gamma=\exp_L(x,tv)$ minimizes ${\cal A}_2$ and, hence, belongs to $\Gamma_{K_0,K_1}$.
\end{proof}

\subsection{Local semiconvexity of $c_2$.}
In this subsection we want to prove that the cost function $c_2$ is locally semiconcave on $I^+$ (see Proposition \ref{injiuhgb}).

Let us mention that the local semiconcavity (with a linear modulus) of the negative Lorentzian distance function on the set $I^+$ was already proven by McCann in \cite{McCann2}, see also Remark \ref{hiahaufua8o}(b). From this we can deduce that the cost function $c_1$ (and, hence, also $c_2$) is locally semiconcave on $I^+$. 
However, for the sake of completeness, we will give a proof which is oriented towards the proof of Theorem B19 in \cite{Fathi/Figalli}.

In this subsection, we use the same notation as during the paper (i.e.\ $(M,g)$ will always denote a globally hyperbolic spacetime, $n+1:=\dim(M)$, $c_2$ is our cost function etc.).

We start with the definitions of superdifferentiability and local semiconcavity.

\begin{definition}\rm
    \begin{enumerate}[(a)]
\item
    A function $f:U\to \R$, where $U\subseteq \R^d$ is an open set, is \emph{superdifferentiable} at $x\in U$ with \emph{superdifferential} $p\in (\R^d)^*$ if
    \begin{align*}
    f(y)\leq f(x)+ \langle p,y-x\rangle+o(|y-x|).
    \end{align*}
    \item A function $f:U\to \R$ defined on an open subset $U\subseteq N$ of a smooth manifold $N$ is called \emph{superdifferentiable} at $x\in U$ with \emph{superdifferential} $p\in T_x^*N$ if for one (or any) chart $(\phi,V)$ of $U$ around $x$ the function $f\circ \phi^{-1}$ is superdifferentiable at $\phi(x)$ with superdifferential $p\circ d_x\phi^{-1}$.
    \end{enumerate}
\end{definition}

\begin{definition}\rm \label{opaudowzuafdddf}
\begin{enumerate}[(a)]
\item
    A function $f:U\to \R$, where $U\subseteq \R^d$ is an open set, is called \emph{locally semiconcave}, if for any $x_0\in U$ we find an open neighborhood $V\subseteq U$ around $x_0$ and a modulus of continuity $\omega$ such that:
    \begin{align*}
        \forall x\in V: \exists p\in (\R^d)^*: f(y)\leq f(x)+\langle p,y-x\rangle +|y-x|\omega(|y-x|) \ \forall y\in V.
    \end{align*}
    \item A function $f:U\to \R$ defined on an open subset $U\subseteq N$ of a smooth manifold $N$ is called \emph{locally semiconcave} if for any chart $(\phi,V)$ of $U$ the function $f\circ \phi^{-1}$ is locally semiconcave.
    \item  A function $f:U\to \R$ defined on an open subset $U\subseteq N$ of a manifold $N$  is called \emph{locally semiconvex} if the function $-f$ is locally semiconcave.
    \end{enumerate}
\end{definition}

\begin{remark}\rm\label{hiahaufua8o}
\begin{enumerate}[(a)]
\item It can be shown that the composition $f\circ \phi$ of a locally semiconcave function $f:\R^d\supseteq U\to \R$ with a $C^1$-function $\phi:V\to U$ is again locally semiconcave (see \cite{Fathi/Figalli}, Lemma A9). Thus, to prove local semiconcavity on a manifold it suffices to consider an atlas.
\item
At this point we should mention that other authors refer to condition (b) in the above definition as locally superdifferentiable and mean by locally semiconcave that the function is locally superdifferentiable with a linear modulus (that is, the modulus $\omega$ can be chosen to be linear). Being locally superdifferentiable (resp.\ with a linear modulus) is equivalent to say that the function is locally geodesically semiconcave (resp.\ with a linear modulus), see \cite{Villani}, Proposition 10.12. 
\item 
In this chapter we will use several properties of locally semiconvex (resp. semiconcave) functions. We refer the reader to \cite{Fathi/Figalli} for all the proofs.
\end{enumerate}
\end{remark}

\begin{notation}\rm
Recall the definition of $\Gamma_{x,y}$. More generally assume that we have given two sets $U,V\subseteq M$ such that $U\times V\subseteq I^+$. We define the set $\Gamma_{U,V}$ by
\begin{align*}
\Gamma_{U,V}:=\{\gamma:[0,1]\to M\mid \gamma(0)\in U,\,  \gamma(1)\in V,\, \gamma \text{ minimizes }{\cal A}_2\}.
\end{align*}
\end{notation}

\begin{proposition}\label{injiuhgb}
The function $c_2$ is locally semiconcave on $I^+$.

 Moreover, given $(x,y)\in I^+$, a superdifferential of $c_2$ at $(x,y)$ is given by
\begin{align}
T_xM\times T_yM\to \R,\ (v,w)\to \frac{\partial L_2}{\partial v}(\gamma(1),\dot \gamma(1))(w)-
\frac{\partial L_2}{\partial v}(\gamma(0),\dot \gamma(0))(v), \label{super}
\end{align}          
where $\gamma\in \Gamma_{x,y}$.
\end{proposition}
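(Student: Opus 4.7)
The plan is to follow the strategy of Theorem B19 in \cite{Fathi/Figalli}. Fixing $(x_0,y_0)\in I^+$ and a minimizer $\gamma_0\in\Gamma_{x_0,y_0}$, I would construct, for all $(x,y)$ in a neighbourhood of $(x_0,y_0)$, a smooth family of admissible competitor curves connecting $x$ to $y$; the ${\cal A}_2$-action along this family will provide a smooth upper bound for $c_2(x,y)$ whose first-order Taylor expansion at $(x_0,y_0)$ yields both the superdifferential \eqref{super} and the quadratic remainder needed for local semiconcavity.

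First I would observe that, since $(x_0,y_0)\in I^+$, the velocity $\dot\gamma_0(t)$ is future-directed timelike for every $t\in[0,1]$ by Proposition \ref{minimizer}. Hence the compact image $\{(\gamma_0(t),\dot\gamma_0(t)):t\in[0,1]\}$ lies in $\op{int}(\C)$, where $L_2$ is smooth (Corollary \ref{pokjnbvcxdertzujik}). Fix small enough convex neighbourhoods $U\ni x_0$ and $V\ni y_0$ with $U\times V\subseteq I^+$ such that $\exp_{x_0}$ and $\exp_{y_0}$ are diffeomorphisms from balls around $0$ in the corresponding tangent spaces. For $v\in T_{x_0}M$ and $w\in T_{y_0}M$ small enough that $x:=\exp_{x_0}(v)\in U$ and $y:=\exp_{y_0}(w)\in V$, pick a smooth two-parameter family of vector fields $Z_{v,w}(t)$ along $\gamma_0$ depending linearly on $(v,w)$ with $Z_{v,w}(0)=v$ and $Z_{v,w}(1)=w$ (for instance via parallel transport along $\gamma_0$ combined with linear interpolation of the coefficients), and define
\[
\sigma_{v,w}(t):=\exp_{\gamma_0(t)}\bigl(Z_{v,w}(t)\bigr).
\]
Then $\sigma_{v,w}$ is smooth jointly in $(v,w,t)$, has endpoints $x$ and $y$, and for $(v,w)$ in a small neighbourhood of $(0,0)$ its velocity graph remains in a fixed compact subset of $\op{int}(\C)$ by continuity; in particular $\sigma_{v,w}$ is a future-pointing causal curve and an admissible competitor for $c_2(x,y)$.

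The decisive computation is the Taylor expansion of $F(v,w):={\cal A}_2(\sigma_{v,w})$ at $(0,0)$. By smoothness of $L_2$ on the relevant compact set, $F$ is $C^2$ near the origin, with $F(0,0)={\cal A}_2(\gamma_0)=c_2(x_0,y_0)$ and $c_2(\exp_{x_0}(v),\exp_{y_0}(w))\leq F(v,w)$. The standard first variation formula, specialised to variations with free endpoints, gives
\[
dF(0,0)(v,w)=\tfrac{\partial L_2}{\partial v}(y_0,\dot\gamma_0(1))(w)-\tfrac{\partial L_2}{\partial v}(x_0,\dot\gamma_0(0))(v),
\]
because the interior Euler-Lagrange integrand vanishes: $\gamma_0$ is a smooth minimizer of ${\cal A}_2$ with velocities in $\op{int}(\C)$ by Proposition \ref{minimizer}, so it satisfies the classical Euler-Lagrange equation of $L_2$ on $(0,1)$. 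Combining this linear term with the $C^2$-Taylor remainder, bounded by $C(|v|^2+|w|^2)$, and pulling back along the chart $\exp_{x_0}\times\exp_{y_0}$ yields the semiconcavity inequality at $(x_0,y_0)$ with the superdifferential asserted in \eqref{super}.

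To upgrade this pointwise inequality into local semiconcavity on a whole neighbourhood, I would repeat the construction at each $(x,y)$ close to $(x_0,y_0)$, using some $\gamma_{x,y}\in\Gamma_{x,y}$, and appeal to the compactness of $\Gamma_{\overline U,\overline V}$ in $C^1$ established in Corollary \ref{compact}: shrinking $U,V$ if necessary, every such minimizer has velocities in a common compact subset of $\op{int}(\C)$, so the quadratic constant in the Taylor remainder, and hence the semiconcavity modulus $\omega$, can be chosen uniformly in $(x,y)$. The main technical obstacle is precisely this uniformity: one must verify that the variation $\sigma_{v,w}$, together with the bound on its $C^2$-remainder, can be arranged to depend continuously (or at least uniformly) on the base minimizer; this is careful bookkeeping rather than a new idea, and is where Corollary \ref{compact} is indispensable.
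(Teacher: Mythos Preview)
Your proposal is correct and follows essentially the same route as the paper: both implement the Fathi--Figalli Theorem~B19 strategy of bounding $c_2$ from above by the action of an explicit smooth family of competitor curves, then reading off the superdifferential via the first variation formula and the Euler--Lagrange equation (which holds because minimizers have velocities in $\op{int}(\C)$), and finally using the compactness of $\Gamma_{\overline U,\overline V}$ from Corollary~\ref{compact} to obtain a uniform semiconcavity modulus.

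The only noteworthy difference is in how the competitors are built. The paper fixes a single pair of coordinate charts $\phi_x,\phi_y$ near $x_0,y_0$ and perturbs a minimizer $\gamma$ \emph{only on $[0,\ep]$ and $[1-\ep,1]$} by linear interpolation in those charts (formula~\eqref{hdaiujoaiopd}), leaving $\gamma$ untouched on $[\ep,1-\ep]$. You instead perturb along the \emph{entire} curve via $\sigma_{v,w}(t)=\exp_{\gamma_0(t)}(Z_{v,w}(t))$. Both constructions work, but the paper's localized-near-the-endpoints variant has two small advantages for the uniformity step: first, the same fixed charts $\phi_x,\phi_y$ serve for every basepoint $(x,y)\in U_1\times V_1$, so the compact set $K\subseteq T\phi_x(\op{int}(\C)\cap TU)$ on which one needs a modulus of continuity for $D\tilde L_2$ is manifestly independent of $(x,y)$; second, one never needs a uniform injectivity radius for $\exp$ along the whole image of every nearby minimizer, only near the endpoints. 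In your version this bookkeeping is still doable (indeed Corollary~\ref{compact} gives a uniform $C^1$-bound on all relevant minimizers, hence a uniform tube in $\op{int}(\C)$ and a uniform lower bound on the injectivity radius along them), but it is slightly more to check.
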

\begin{proof}
The proof is oriented towards \cite{Fathi/Figalli}, Theorem B19. We use a similar strategy and notation.

Let $(x_0,y_0)\in I^+$. We have to show that there exists a chart around $(x_0,y_0)\in M\times M$ such that $c_2$ is locally semiconcave when computed in local coordinates and that a superdifferential is given by \eqref{super}.
\medskip

First, since $c_2(x_0,y_0)>0$ and $c_2$ is continuous on $J^+$ we find open and bounded neighborhoods $U_1$ and $V_1$ around $x_0$ and $y_0$ such that 
\begin{align*}
\overline{U_1}\times \overline{V_1}\subseteq I^+ \text{ and } C_0:=\sup_{U_1\times V_1} c_2(x,y)<\infty. 
\end{align*}
Without loss of generality we can choose $U_1$ and $V_1$ in a way that we find two charts $\phi_x:U\to \R^{1+n}$ and $\phi_y:V\to \R^{1+n}$ such that $U_1=\phi_x^{-1}(B_1(0))$ and $V_1=\phi_y^{-1}(B_1(0))$. 
\medskip

\noindent\textbf{1. Claim:}
The set $\{(\gamma(t),\dot \gamma(t))\mid \gamma\in \Gamma_{U_1,V_1},\ t\in[0,1]\}$ is relatively compact in $TM$.
\medskip

\noindent \textbf{Proof of claim:}
Let $(x,y)\in U_1\times V_1$ be arbitrary and let $\gamma\in \Gamma_{x,y}$.
As we have seen in Lemma \ref{minimizer}, $L_1(\dot \gamma(t))$ needs to be constant and, hence, equal to $c_1(x,y)$. 
Hence, using \eqref{splitting}, we get for all $t\in [0,1]$ that
\begin{align}
|\dot \gamma(t)|_h \leq d\tau(\dot \gamma(t))\leq 2L_1(\dot \gamma(t))\leq 2C_0.\label{dsjidoskdsps}
\end{align}
Thus, the set in the statement of the claim is contained in $\{(x,v)\in TM\mid x\in B_{2C_0}(U_1)\ |v|_h\leq 2C_0\}$. The latter set is clearly relative compact in $TM$ by the boundedness of $U_1$ and the completeness of $M$. This proves the claim.
\hfill \checkmark
\medskip

We return to the proof. From the claim (or, more precisely, from \eqref{dsjidoskdsps}) we deduce that there exists $\ep>0$ such that for all $\gamma\in \Gamma_{U_1,V_1}$ it holds
\begin{align}
 \gamma([0,\ep])\subseteq  \phi_x^{-1}(B_2(0)) \text{ and } \gamma([1-\ep,1])\subseteq \phi_y^{-1}(B_2(0)). \label{dgaidwuasasasafioas}
\end{align}

\noindent \textbf{2. Claim:}
We claim that there exists $1>\delta>0$ such that the sets
\begin{align*}
    \{(\bar \gamma(t),\dot {\bar \gamma}(t))\mid \gamma\in \Gamma_{U_1,V_1},\ t\in[0,\ep]\},\ \{(\bar \gamma(t),\dot {\bar \gamma}(t))\mid \gamma\in \Gamma_{U_1,V_1},\ t\in[1-\ep,1]\}
\end{align*}
are relatively compact in $TU\cap \op{int}(\C)$ and $TV\cap \op{int}(\C)$, where for any $\gamma \in \Gamma_{U_1,V_1}$ and $h_x,h_y\in \R^{1+n}$ with $|h_x|,|h_y|\leq \delta$ we define the curve $\bar \gamma$ by
\begin{align}
\bar \gamma:[0,1]\to M,\ \bar \gamma(t):=
\begin{cases}
 \phi_x^{-1}\(\frac{\ep-t}{\ep}h_x+\phi_x(\gamma(t))\),\ &t\leq \ep,
\\[10pt]
\gamma(t),\ &t\in [\ep,1-\ep],
\\[10pt]
\phi_y^{-1}\(\frac{t-(1-\ep)}{\ep}h_y+\phi_y(\gamma(t))\),\ &t\geq 1-\ep.
\end{cases} \label{hdaiujoaiopd}
\end{align}
\medskip

\noindent \textbf{Proof of claim:} We only check the statement for the first set since we can deal with the second analogously. Thanks to the first claim and \eqref{dgaidwuasasasafioas} the set $T\phi_x((\gamma(t),\dot \gamma(t)))_{\gamma \in \Gamma_{U_1,V_1},t\leq \ep}$ is relatively compact in $T\phi_x(TU)=\R^{1+n}\times \R^{1+n}$. Moreover, the map
\begin{align*}
e: \Gamma_{\overline U_1,\overline V_1}\times [0,1]\to \C,\ (\gamma,t) \mapsto (\gamma(t),\dot \gamma(t)),
\end{align*}
is continuous and $\Gamma_{\overline U_1,\overline V_1}\times [0,1]$ is compact as a subset of $C^1([0,1],M)\times [0,1]$. Both properties follow from Lemma \ref{compact}. 
Since every $\gamma \in\Gamma_{\overline U_1,\overline V_1}$ is a maximizer for the Lorentzian length functional and $\overline U_1\times \overline V_1\subseteq I^+$ it follows that each $\gamma$ is timelike, so that $e(\Gamma_{\overline U_1,\overline V_1}\times [0,1])$ is a compact set inside $\op{int}(\C)$. 

Thus, $T\phi_x((\gamma(t),\dot \gamma(t)))_{\gamma \in \Gamma_{U_1,V_1},t\leq \ep}$ is relatively compact in $T\phi_x(\op{int}(\C)\cap TU)$. Then the statement of the claim is obvious.
\hfill \checkmark
\\

With this claim at hand we can now prove the lemma in the same manner as in \cite{Fathi/Figalli}.

Given $(x_1,y_1),(x_2,y_2)\in B_{\frac{\delta}{2}}(0)\times B_{\frac{\delta}{2}}(0)$ set $h_x:=x_2-x_1$ and $h_y:=y_2-y_1$. Let $\gamma\in \Gamma_{\phi_x^{-1}(x_1),\phi_y^{-1}(y_1)}$ and define $\bar \gamma$ by \eqref{hdaiujoaiopd}. We can then estimate 
\begin{align*}
&c_2(\phi_x^{-1}(x_2),\phi_y^{-1}(y_2))-c_2(\phi_x^{-1}(x_1),\phi_y^{-1}(y_1))
\\[10pt]
\leq \ &\int_0^1 L_2(\dot {\bar \gamma}(t))\, dt-\int_0^1 L_2(\dot \gamma(t))\, dt
\\[10pt]
=\ &\int_0^\ep L_2(\dot {\bar \gamma}(t))-L_2(\dot \gamma(t))\, dt+\int_{1-\ep}^{1} L_2(\dot {\bar \gamma}(t))-L_2(\dot \gamma(t))\, dt.
\end{align*}
We deal with the first integral since the second can be treated analogously. As in \cite{Fathi/Figalli}, we define the new Lagrangian
\begin{align*}
\tilde L_{2,x}:\R^{1+n}\times \R^{1+n}\to \R,\ \tilde L_{2,x}(x,v):=L_2(\phi_x^{-1}(x),d_x\phi_x^{-1}(v))
\end{align*}
and the new curves
\begin{align*}
    \alpha_x:=\phi_x\circ \gamma \text{ and } \bar\alpha_x=\phi_x \circ \bar \gamma,
\end{align*}
so that
\begin{align*}
    \int_0^\ep L_2(\dot {\bar \gamma}(t))-L_2(\dot \gamma(t))\, dt=\int_0^\ep \tilde L_{2,x}(\dot {\bar \alpha}_x(t))- \tilde L_{2,x}(\dot \alpha_x(t))\, dt.
\end{align*}
 The Lagrangian $\tilde L_{2,x}$ is smooth on $T\phi_x(\op{int}(\C)\cap TU)$, where $T\phi_x$ again denotes the tangent bundle chart induced from $\phi_x$. 
Thanks to claim 2 we can find a compact set $K\subseteq T\phi_x(\op{int}(\C)\cap TU)$ (which is independent of the choice of $(x_1,y_1),(x_2,y_2)$) such that $(1-s)(\alpha(t),\dot \alpha(t))+s(\bar \alpha(t),\dot {\bar \alpha}(t))\in K$ for all $s\in [0,1]$ and $t\in [0,\ep]$. Since $\tilde L_{2,x}$ is smooth on a neighborhood of $K$, there exists a modulus of continuity $\omega_x$ for the differential $D\tilde L_{2,x}$ restricted to $K$ (observe that also this modulus is independent of the chosen points $(x_1,y_1),(x_2,y_2)$). Using the mean value theorem and then the modulus $\omega_x$ we finally deduce
\begin{align*}
\int_0^\ep \tilde L_{2,x}(\dot {\bar  \alpha}_x(t))-\tilde L_{2,x}(\dot \alpha_x(t))\, dt
=
\int_0^\ep 
D\tilde L_{2,x}(\alpha_x(t),\dot { \alpha}_x(t))\Big[\frac{\ep-t}{\ep}h_x,-\frac{h_x}{\ep}\Big]\, dt +\omega_x\Big(\frac{h_x}{\ep}\Big) |h_x|.
\end{align*}
The same computation for the second integral shows that we have
\begin{align*}
&c_2(\phi_x^{-1}(x_2),\phi_y^{-1}(y_2))-c_2(\phi_x^{-1}(x_1),\phi_y^{-1}(y_1))
\\[10pt]
\leq
&\int_0^\ep 
D\tilde L_{2,x}(\alpha_x(t),\dot { \alpha}_x(t))\Big[\frac{\ep-t}{\ep}h_x,-\frac{h_x}{\ep}\Big]\, dt +
\int_{1-\ep}^1 
D\tilde L_{2,y}(\alpha_y(t),\dot { \alpha}_y(t))\Big[\frac{t-(1-\ep)}{\ep}h_y,\frac{h_y}{\ep}\Big]\, dt
\\[10pt]
+&\omega_x\Big(\frac{h_x}{\ep}\Big) |h_x|+\omega_y\Big(\frac{h_y}{\ep}\Big) |h_y|.
\end{align*}
This proves the first part of the lemma. 
Moreover, using the Euler-Lagrange equation for timelike minimizers (recall that $L_2$ is smooth on $\op{int}(\C)$) and integrating the above integrals by parts, this proof also shows that a superdifferential of $c_2$ at some $(x,y)\in U_1\times V_1$ is given by
\begin{align*}
(w_0,w_1)\to \frac{\partial L_2}{\partial v}(\gamma(1),\dot \gamma(1))(w_1)-
\frac{\partial L_2}{\partial v}(\gamma(0),\dot \gamma(0))(w_0),
\end{align*}          
where $\gamma\in \Gamma_{x,y}$. See also Corollary B20 in \cite{Fathi/Figalli}.
\end{proof}

\begin{remark}\rm
Let us mention that the same proof also shows that $c_2$ is actually locally semiconcave with a linear modulus.
We just have to use a Lipschitz constant for $D{\tilde L}_{2,x}$ on the compact set $K$ instead of the modulus of continuity.
\end{remark}

\begin{corollary}\label{twist}
    The function $c_2$ satisfies the twist condition on $I^+$, that is,
    for each $x\in M$ the map
    \begin{align*}
        I^+(x)\ni y\mapsto \frac{\partial c_2}{\partial x}(x,y)\in {\cal L}(T_xM,\R)
    \end{align*}
    is injective on its domain of definition.
\end{corollary}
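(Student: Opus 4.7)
The plan is to reduce the twist condition to the strict fiberwise convexity of $L_2$ on $\op{int}(\C)$. Fix $x\in M$ and $y_1,y_2\in I^+(x)$; I want to show that if $\frac{\partial c_2}{\partial x}(x,y_1)=\frac{\partial c_2}{\partial x}(x,y_2)$, then $y_1=y_2$.

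First I would argue that $c_2(\cdot,y)$ is classically differentiable at $x$ whenever $(x,y)\in I^+$, and identify its differential. Since $x\ll y$, in a small neighbourhood of $x$ every point $x'$ still lies in $I^-(y)$ and, by Proposition \ref{hiaudhgaiudhadadjaio}, there is a unique (up to reparametrization) timelike maximizing geodesic from $x'$ to $y$ depending smoothly on $x'$. From the representation $c_2(x',y)=(\tau(y)-\tau(x')-d(x',y))^2$ and the smooth dependence of $d(x',y)=|\exp_{x'}^{-1}(y)|_g$ on $x'$, one sees that $c_2(\cdot,y)$ is smooth near $x$. Combining this with the superdifferential formula \eqref{super} from Proposition \ref{injiuhgb} (applied with $(v,w)=(v,0)$), the differential must be
\begin{align*}
\frac{\partial c_2}{\partial x}(x,y)=-\frac{\partial L_2}{\partial v}(x,\dot\gamma(0)),
\end{align*}
where $\gamma\in\Gamma_{x,y}$ is the ${\cal A}_2$-minimizer from $x$ to $y$ on $[0,1]$ (which is timelike, hence $\dot\gamma(0)\in\op{int}(\C_x)$).

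Next I would invoke the strict fiberwise convexity of $L_2$ on $\op{int}(\C)$, i.e.\ the fact (recorded as Corollary \ref{pokjnbvcxdertzujik} in the appendix) that $\frac{\partial^2 L_2}{\partial v^2}(x,v)$ is positive definite at every $(x,v)\in \op{int}(\C)$. This is exactly the statement that the fiber derivative
\begin{align*}
\op{int}(\C_x)\ni v\mapsto \frac{\partial L_2}{\partial v}(x,v)\in T_x^*M
\end{align*}
is injective (as the differential of a strictly convex smooth function on the open convex cone $\op{int}(\C_x)$).

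To finish, let $\gamma_i\in\Gamma_{x,y_i}$ and set $v_i:=\dot\gamma_i(0)\in\op{int}(\C_x)$. The assumption $\frac{\partial c_2}{\partial x}(x,y_1)=\frac{\partial c_2}{\partial x}(x,y_2)$ together with the first step yields $\frac{\partial L_2}{\partial v}(x,v_1)=\frac{\partial L_2}{\partial v}(x,v_2)$, and the second step forces $v_1=v_2$. But then by Lemma \ref{lii}
\begin{align*}
y_1=\exp_L(x,v_1)=\exp_L(x,v_2)=y_2,
\end{align*}
which proves the twist condition. The main (only) obstacle is verifying that $c_2(\cdot,y)$ is genuinely differentiable at $x$ so that the superdifferential from \eqref{super} is actually the gradient; everything else is a direct appeal to the strict convexity of $L_2$ in the fiber.
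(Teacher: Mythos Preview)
Your approach is essentially the paper's: reduce the twist condition to the injectivity of $v\mapsto \frac{\partial L_2}{\partial v}(x,v)$ on $\op{int}(\C_x)$, which follows from the strict fiberwise convexity in Corollary \ref{pokjnbvcxdertzujik}, and then recover $y$ via $\exp_L$. That part is fine and matches the paper exactly.

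The one genuine issue is your first step. You try to argue that $c_2(\cdot,y)$ is classically differentiable at every $x$ with $(x,y)\in I^+$, appealing to Proposition \ref{hiaudhgaiudhadadjaio} for a unique maximizing geodesic from $x'$ to $y$ depending smoothly on $x'$. That proposition only concerns small convex neighbourhoods; it does not yield uniqueness of maximizing geodesics between arbitrary chronologically related points. In general $d(\cdot,y)$ (and hence $c_2(\cdot,y)$) need not be differentiable at $x$ when $y$ lies on the timelike cut locus of $x$. So this step, as written, is not justified.

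Fortunately it is also unnecessary. The corollary only asserts injectivity \emph{on the domain of definition} of $y\mapsto \frac{\partial c_2}{\partial x}(x,y)$, i.e.\ at those $y$ for which $c_2(\cdot,y)$ is already differentiable at $x$. At such a point every superdifferential equals the differential, so \eqref{super} immediately gives $\frac{\partial c_2}{\partial x}(x,y)=-\frac{\partial L_2}{\partial v}(x,\dot\gamma(0))$ for any $\gamma\in\Gamma_{x,y}$, with no further work. This is exactly what the paper does: it writes ``if $c_2(\cdot,y)$ is differentiable at $x$ then\ldots'' and proceeds. Drop your differentiability argument and replace it by this observation, and your proof coincides with the paper's.
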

\begin{proof}
    If $x\in M$ and $y\in I^+(x)$ are given, we know that a superdifferential for the map $c_2(\cdot,y)$ at $x$ is given by
    \begin{align*}
       - \frac {\partial L_2}{\partial v}(x,v),
    \end{align*}
    where $v=\dot \gamma(0)\in \op{int}(\C_x)$ and $\gamma \in \Gamma_{x,y}$.
    Thus, if $c_2(\cdot,y)$ is differentiable at $x$ then $\frac{\partial c_2}{\partial x}(x,y)=-\frac {\partial L_2}{\partial v}(x,v)$. So, to prove the claim it suffices to prove that 
    \begin{align*}
        \op{int}(\C_x)\to \R,\ v\mapsto L_2(x,v),
    \end{align*}
    is strictly convex.
   We prove this in the two subsequent lemmas.
\end{proof}

\begin{lemma}
    Consider the Minkowski space $(\R^{1+n},\langle \langle \cdot,\cdot\rangle \rangle)$. Denote $f(v):=-\sqrt{|\langle \langle v,v\rangle \rangle|}$. Then, for $v\in \R^{1+n}$ timelike and $w\in \R^{1+n}\backslash \op{span}(v)$ we have
    \begin{align*}
       D^2f(v)[w,w]>0.
    \end{align*}
\end{lemma}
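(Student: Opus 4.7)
The plan is a direct computation of $D^2 f$ followed by a reverse Cauchy--Schwarz argument.

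First I would set $q(v) := -\langle\langle v,v\rangle\rangle$, which is strictly positive on the open set of timelike vectors, so that $f = -\sqrt{q}$ is smooth there. By the chain rule,
\begin{align*}
Df(v)[w] = -\frac{Dq(v)[w]}{2\sqrt{q(v)}} = \frac{\langle\langle v,w\rangle\rangle}{\sqrt{q(v)}}.
\end{align*}
Differentiating once more along the direction $w$ (using the quotient rule together with $Dq(v)[w] = -2\langle\langle v,w\rangle\rangle$) yields, after simplification,
\begin{align*}
D^2 f(v)[w,w] = \frac{\langle\langle v,w\rangle\rangle^2 - \langle\langle v,v\rangle\rangle\langle\langle w,w\rangle\rangle}{q(v)^{3/2}}.
\end{align*}
Since the denominator is positive, the claim reduces to showing that the numerator is strictly positive whenever $w \notin \operatorname{span}(v)$.

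The second step is the Lorentzian reverse Cauchy--Schwarz inequality. I would decompose $w$ orthogonally with respect to the Minkowski form as $w = \alpha v + w^\perp$, where $\alpha := \langle\langle v,w\rangle\rangle / \langle\langle v,v\rangle\rangle$ and $\langle\langle v, w^\perp\rangle\rangle = 0$. A direct substitution then gives
\begin{align*}
\langle\langle v,w\rangle\rangle^2 - \langle\langle v,v\rangle\rangle\langle\langle w,w\rangle\rangle = -\langle\langle v,v\rangle\rangle\,\langle\langle w^\perp,w^\perp\rangle\rangle.
\end{align*}
The key structural fact to invoke is that the orthogonal complement of a timelike vector in Minkowski space is a (positive definite) spacelike hyperplane; thus $\langle\langle w^\perp,w^\perp\rangle\rangle \geq 0$ with equality iff $w^\perp = 0$, i.e.\ iff $w \in \operatorname{span}(v)$. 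Combined with $\langle\langle v,v\rangle\rangle < 0$, this gives $-\langle\langle v,v\rangle\rangle \langle\langle w^\perp,w^\perp\rangle\rangle > 0$ exactly when $w \notin \operatorname{span}(v)$, which is the hypothesis.

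There is no real obstacle here; the only things to be careful about are the sign conventions in passing between $\langle\langle\cdot,\cdot\rangle\rangle$ and $q$, and making sure to invoke the signature of the Minkowski form correctly when analyzing $v^\perp$. Both are routine.
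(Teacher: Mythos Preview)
Your proof is correct and follows essentially the same approach as the paper: both compute $D^2f(v)[w,w]$ as $(-\langle\langle v,v\rangle\rangle\langle\langle w,w\rangle\rangle+\langle\langle v,w\rangle\rangle^2)$ divided by a positive quantity, and then verify the numerator is strictly positive. The only cosmetic difference is that the paper applies a Lorentz transformation to reduce to $v=ae_0$ and reads off the result in coordinates, whereas you phrase the same computation as a coordinate-free orthogonal decomposition $w=\alpha v+w^\perp$ and invoke that $v^\perp$ is spacelike; these are the same argument in different clothing.
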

\begin{proof}
    An easy computation shows that
    \begin{align*}
        D^2f(v)[w,w]=\frac{-1}{f(v)^3} (-\langle \langle v,v\rangle \rangle \langle \langle w,w\rangle \rangle +\langle \langle v,w\rangle \rangle^2).
    \end{align*}
    Since $(-1)/f(v)^3>0$ by the timelikeness of $v$ it suffices to prove that the second part in the above equation is strictly positive. By applying a Lorentz transformation if necessary we can assume that $v=ae_0$ for some $a\neq 0$. Denoting $w=(w_0,\bar w)$ and denoting the Euclidean norm of $\bar w$ with $|\bar w|$ it then follows
    \begin{align*}
        (-\langle \langle v,v\rangle \rangle \langle \langle w,w\rangle \rangle +\langle \langle v,w\rangle \rangle^2)
        =a^2 (-w_0^2+|\bar w|^2)+(aw_0)^2
        >0.
    \end{align*}
    In the last step we have used that $\bar w\neq 0$ which follows from our assumption $w\notin \op{span}(v)$.
\end{proof}

\begin{corollary} \label{pokjnbvcxdertzujik}
   Given $(x,v)\in \op{int}(\C)$, the bilinear form
    \begin{align*}
        \frac{\partial^2 L_2}{\partial v^2}:T_xM\times T_xM\to \R
    \end{align*}
    is positive definite. In particular, $L_2(x,\cdot)$ is strictly convex on $\op{int}(\C_x)$.
\end{corollary}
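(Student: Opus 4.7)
The plan is to exploit the factorization $L_2(x,v)=\ell(v)^2$ with $\ell(v):=d_x\tau(v)-|v|_g$. Differentiating twice in $v$ gives the product-rule identity
\begin{align*}
\frac{\partial^2 L_2}{\partial v^2}(x,v)[w,w] \;=\; 2\bigl(\partial_v\ell(v)[w]\bigr)^2 \;+\; 2\,\ell(v)\,\partial_v^2\ell(v)[w,w],
\end{align*}
so the task reduces to controlling each summand. The first is manifestly $\geq 0$, and the splitting condition \eqref{splitting} applied to the timelike vector $v\in\op{int}(\C_x)$ forces $\ell(v)\geq |v|_g>0$, so the sign of the second summand is determined by $\partial_v^2\ell(v)[w,w]$. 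Since $d_x\tau$ is linear in $v$, this equals $-\partial_v^2 |\cdot|_g(v)[w,w]$.

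To handle that Hessian, I would pick a basis of $T_xM$ in which $g_x$ is the standard Minkowski form; in such coordinates the function $v\mapsto -|v|_g$ restricted to $\op{int}(\C_x)$ is literally the function $f$ of the preceding lemma. That lemma then yields $\partial_v^2\ell(v)[w,w]>0$ for every $w\notin\op{span}(v)$. Combined with $\ell(v)>0$, this immediately gives strict positivity of the full Hessian on every direction $w$ outside $\op{span}(v)$.

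It remains to treat the degenerate direction $w=\lambda v$ with $\lambda\neq 0$, where the second summand may vanish. Here the positive $1$-homogeneity of $\ell$ in $v$ together with Euler's identity gives $\partial_v\ell(v)[v]=\ell(v)$, so $\partial_v\ell(v)[w]=\lambda\ell(v)\neq 0$, making the first (squared) summand strictly positive; differentiating the $0$-homogeneous map $v\mapsto \partial_v\ell(v)$ along the ray through $v$ shows $\partial_v^2\ell(v)[v,\cdot]=0$, so the second summand is simply zero and no cancellation can occur. This establishes positive definiteness, and strict convexity of $L_2(x,\cdot)$ on the open convex cone $\op{int}(\C_x)$ is then automatic from the standard fact that a function with positive definite Hessian on a convex open set is strictly convex. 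The only mildly delicate point is the passage to an orthonormal Minkowski basis to invoke the preceding lemma coordinate-freely; everything else is a routine bookkeeping of derivatives and homogeneity.
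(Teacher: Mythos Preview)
Your proof is correct and follows essentially the same approach as the paper: you write $L_2=\ell^2$ with $\ell(v)=d_x\tau(v)-|v|_g$ (the paper writes this as $d\tau(v)+f(v)$ with $f=-|\cdot|_g$), apply the product rule to obtain the two-term Hessian decomposition, invoke the preceding lemma in Minkowski coordinates for directions $w\notin\op{span}(v)$, and handle $w=\lambda v$ via the first squared term. Your use of Euler's identity and the $0$-homogeneity of $\partial_v\ell$ is a slightly slicker packaging of the same computation the paper does directly, but the argument is otherwise identical.
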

\begin{proof}
Since $x$ is fixed we can define the map
\begin{align*}
    f:T_xM\to \R,\ f(w):=-\sqrt{|g_x(w,w)|}.
\end{align*}
Observe that, in appropriate coordinates (namely choosing on orthonormal basis), this function is precisely the function $f$ from the above lemma. Then we have for $w\in T_xM$ a timelike vector $L_2(x,w)=(d\tau(w)+f(w))^2$. Taking derivatives we obtain
\begin{align*}
   \frac{\partial L_2}{\partial v}(w)=2 (d\tau(v)+f(v)) (d\tau(w)+Df(v)[w])  
\end{align*}
and, thus,
\begin{align*}
   \frac{\partial^2 L_2}{\partial v^2}(w)=2  (d\tau(w)+Df(v)[w])^2+2(d\tau(v)+f(v))D^2f(v)[w,w].
\end{align*}
Since $v$ is timelike we have $d\tau(v)+f(v)>0$ by \eqref{splitting}. Moreover, since $f$ is well-known to be convex on $\op{int}(\C_x)$, we also have $D^2f(v)[w,w]\geq 0$ (which also follows from the proof of the above lemma). Thus, we obtain that both terms on the right hand side of the above equation are non-negative. If $w\notin \op{span}(v)$ it follows from the preceding lemma that the second term is strictly positive. If, on the other hand, $w=\lambda v$ for some $\lambda\neq 0$ the first term is equal to $2\lambda^2 (d\tau(v)+f(v))^2$, which is strictly positive by \eqref{splitting}. This proves the lemma.
\end{proof}

\subsection{Countable rectifiability}

This section is based on section 10 of \cite{Villani}. We modify the proof of Theorem 10.48 to get a slightly stronger result which we need in our case.
We first recall the basic terminology and give the definitions of tangent cone and countable rectifiability as given in \cite{Villani}.

\begin{definition}\rm\label{tangentcone}
Let $S\subseteq \R^n$ and $x\in \bar S$. The \emph{tangent cone} to $S$ at $x$ is defined as
\begin{align*}
T_xS:=\bigg\{\lim_{k\to \infty} \frac{x_k-x}{t_k}\mid x_k\in S,\ x_k\to x,\ t_k\to 0,\ t_k>0\bigg\}.
\end{align*}
\end{definition}

\begin{definition}\rm
A subset $S\subseteq \R^n$ is said to be \emph{countably $(n-1)$-rectifiable} if there exist measurable sets $D_k\subseteq \R^{n-1}$ and Lipschitz continuous functions $f_k:D_k\to \R^n$, $k\in \N$, such that
\begin{align*}
S\subseteq \bigcup_{k=1}^\infty f_k(D_k).
\end{align*}
\end{definition}

\begin{theorem}\label{rect}
Let $S\subseteq \R^n$ be a set and assume that there exists $\alpha>0$ such that, for any $x\in S$, it holds 
$T_xS\subseteq \R^n\backslash\op{Cone}(v_x,\alpha)$ where $v_x\in S^{n-1}$. Then, $S$ is countably $(n-1)$-rectifiable.
\end{theorem}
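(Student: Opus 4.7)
The plan is to reduce the tangent cone hypothesis to a uniform local two-point condition and then cover $S$ by countably many pieces, each of which is (contained in) a Lipschitz graph over an $(n-1)$-dimensional hyperplane.

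First I would upgrade the asymptotic cone inclusion to an honest local statement. Fix $\alpha' \in (\alpha,1)$, e.g.\ $\alpha' := (1+\alpha)/2$. For every $x\in S$ there exists $r_x>0$ such that for all $y\in S\cap B_{r_x}(x)\setminus\{x\}$ one has $\langle (y-x)/|y-x|,\, v_x\rangle \leq \alpha'$. Indeed, otherwise one could pick $y_k\to x$ in $S$ with $\langle (y_k-x)/|y_k-x|,v_x\rangle > \alpha'$, extract a subsequential limit $w\in T_xS$ with $\langle w,v_x\rangle \geq \alpha' > \alpha$, and contradict $T_xS\subseteq \R^n\setminus\op{Cone}(v_x,\alpha)$.

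Next I would partition $S$. Set $S_N:=\{x\in S : r_x\geq 1/N\}$, so $S=\bigcup_N S_N$. Choose $\delta>0$ small enough (see below) and cover $S^{n-1}$ by finitely many open sets $V_1,\dots,V_M$ of diameter $<\delta$; put $S_{N,i}:=\{x\in S_N : v_x\in V_i\}$. Cover $\R^n$ by countably many balls of radius $1/(2N)$ centered at points $z_j$. Then $S$ equals the countable union of the pieces $P_{N,i,j}:=S_{N,i}\cap B_{1/(2N)}(z_j)$, so it suffices to exhibit each such $P=P_{N,i,j}$ as a subset of a Lipschitz $(n-1)$-graph.

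The crucial step is turning the cone condition into a Lipschitz graph property on each $P$. Fix any reference $v\in V_i$ and take $x,y\in P$ with $x\neq y$. Since $|y-x|<1/N\leq \min\{r_x,r_y\}$, the first step gives both $\langle (y-x)/|y-x|,v_x\rangle\leq\alpha'$ and, swapping the roles of $x$ and $y$, $\langle (y-x)/|y-x|,v_y\rangle \geq -\alpha'$. Because $|v-v_x|,|v-v_y|\leq\delta$, this yields $|\langle (y-x)/|y-x|,v\rangle|\leq \alpha'+\delta=:\beta$. Choosing $\delta$ so that $\beta<1$, we decompose $y-x = aw + bv$ with $w\perp v$ and obtain $|b|\leq (\beta/\sqrt{1-\beta^2})\,|a|$. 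Hence on $P$ the $v$-coordinate is a Lipschitz function of the $v^\perp$-coordinate, so $P$ sits inside the graph of a Lipschitz map defined on a subset of $v^\perp\cong\R^{n-1}$, which is countably $(n-1)$-rectifiable. Summing over the countably many triples $(N,i,j)$ finishes the proof. The only real obstacle is the geometric conversion from the one-sided cone condition at each point to the symmetric two-point bound used to produce the Lipschitz graph; the partitioning into $S_{N,i,j}$ is engineered precisely to make both the radius and the direction uniform so that this conversion goes through.
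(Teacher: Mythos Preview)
Your proof is correct and follows essentially the same strategy as the paper: upgrade the tangent-cone hypothesis to a local two-point inequality, partition $S$ countably according to the scale $r_x$ and a discretized direction $v_x$, and exhibit each piece as a Lipschitz graph over the hyperplane orthogonal to the reference direction. The only cosmetic difference is the order of operations---the paper first replaces $v_x$ by a nearby fixed $\nu$ from a finite net and then derives the one-sided bound $\langle y-x,\nu\rangle\le\tfrac{1+\alpha}{2}|y-x|$ (so that the two-sided bound on each piece comes directly from symmetry of membership), whereas you keep $v_x$ until the partitioning step and pay an extra $\delta$ when passing to the common $v$; both routes lead to the same Lipschitz-graph conclusion.
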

\begin{proof}
We modify the proof of Theorem 10.48(ii) of \cite{Villani}.

We have
\begin{align}
\langle \zeta, v_x\rangle \leq \alpha |\zeta| \text{ for all } \zeta\in T_xS. \label{dkdalpas}
\end{align}

Let $F$ be a finite set in $S^{n-1}$ such that the balls $(B_{\frac{1-\alpha}{4}}(\nu))_{\nu \in F}$ cover $S^{n-1}$. We claim that:
\begin{align*}
\forall x\in S\ \ \exists r>0,\nu\in F:\forall y\in  S\cap B_r(x): \langle y-x,\nu\rangle \leq \frac{1+\alpha}{2}|y-x|.
\end{align*}
If the claim was false we find some $x\in S$ and a sequence $y_k\in  S\cap B_{1/k}(x)$ such that for all $\nu\in F$
\begin{align*}
\langle y_k-x,\nu\rangle > \frac{1+\alpha}{2} |y_k-x|.
\end{align*}
Now choose $\nu\in F$ such that $v_x\in B_{(1-\alpha)/4}(\nu)$. Then
\begin{align*}                                             
\langle y_k-x,v_x\rangle
=
\langle y_k-x,\nu\rangle +
\langle y_k-x,v_x-\nu\rangle
&>
\frac{1+\alpha}{2} |y_k-x|-|y_k-x||v_x-\nu|
\\[10pt]
&\geq 
\frac{1+\alpha}{2} |y_k-x|-\frac{1-\alpha}{4}|y_k-x|
\\[10pt]
&=\frac{1+3\alpha}{4}|y_k-x|.
\end{align*}
Without loss of generality we can assume that $\frac{y_k-x}{|y_k-x|}$ converges to some $\zeta \in T_xS\cap S^{n-1}$. Taking limits in the previous computation we obtain
\begin{align*}
\langle \zeta, v_x\rangle \geq \frac{1+3\alpha}{4}.
\end{align*}
This is now a contradiction to \eqref{dkdalpas} and thus we proved the claim.

The second part of the proof consists of even more obvious modifications of the proof in \cite{Villani}: From the claim we deduce that 
\begin{align*}
S=\bigcup_{k\in \N,\ \nu\in F} \bigg\{x\in S\mid \forall y\in S\cap B_{\frac{1}{k}}(x): \langle y-x,\nu\rangle\leq \frac{1+\alpha}{2}|y-x|\bigg\}
\end{align*}
and thus it suffices to show that each set of the union is countably $(n-1)$-rectifiable. Then one proves, as in \cite{Villani}, that for each such set the orthogonal projection onto $\nu^\perp$ is locally injective and the inverse is Lipschitz continuous with Lipschitz constant $\frac2{1-\alpha}$. We leave the obvious details and modifications of the proof for the reader.
\end{proof}

\begin{corollary}\label{fdafafafwefwfew2}
    Let $S\subseteq \R^n$ and assume that there exists $r>0$ such that, for each $x\in S$ there exists $t_x>0$ and $v_x\in \R^n$ such that
    \begin{align*}
        x+B_{tr}(tv_x)\subseteq S^c \text{ for all } 0< t\leq t_x.
    \end{align*}
    Then $S$ is countably $(n-1)$-rectifiable.
\end{corollary}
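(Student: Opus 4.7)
The plan is to apply Theorem~\ref{rect} after a countable decomposition of $S$. The main geometric observation is that the hypothesis $x + B_{tr}(tv_x) \subseteq S^c$ for small $t > 0$ already forces the tangent cone $T_x S$ to avoid an explicit open cone in direction $v_x/|v_x|$, whose opening is controlled by $r/|v_x|$.

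To make this precise I would first observe that $x \in S$ together with $x + B_{tr}(tv_x) \subseteq S^c$ forces $|tv_x| \geq tr$, hence $|v_x| \geq r > 0$; in particular $v_x \neq 0$. Given $\zeta \in T_x S$, pick $x_k \in S$ and $t_k \searrow 0$ with $(x_k - x)/t_k \to \zeta$. For $k$ large enough that $t_k \leq t_x$, combining $x_k \in S$ with the hypothesis yields $|(x_k - x) - t_k v_x| \geq t_k r$; dividing by $t_k$ and letting $k \to \infty$ gives $|\zeta - v_x| \geq r$. Since $T_x S$ is stable under multiplication by positive scalars, the same inequality applies to every $\lambda \zeta$ with $\lambda > 0$, so the whole ray $\{\lambda \zeta : \lambda > 0\}$ stays outside $B_r(v_x)$. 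A short computation of the distance from $v_x$ to this ray then produces
\[
\langle \zeta/|\zeta|, v_x/|v_x| \rangle \leq \sqrt{1 - r^2/|v_x|^2} =: \alpha_x \in [0,1),
\]
which is precisely the inclusion $T_x S \subseteq \R^n \setminus \op{Cone}(v_x/|v_x|, \alpha_x)$.

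The obstacle to invoking Theorem~\ref{rect} directly is that $\alpha_x$ may be arbitrarily close to $1$, since $|v_x|$ is not assumed uniformly bounded. To get around this I would introduce the countable decomposition $S = \bigcup_{k \geq 1} S_k$ with $S_k := \{x \in S : |v_x| \leq kr\}$. On each $S_k$ the bound $\alpha_x \leq \bar\alpha_k := \max\{\sqrt{1 - 1/k^2}, 1/2\} \in [1/2, 1)$ is \emph{uniform} in $x$. Since $\alpha_x \leq \bar\alpha_k$ means $\op{Cone}(v_x/|v_x|, \bar\alpha_k) \subseteq \op{Cone}(v_x/|v_x|, \alpha_x)$, taking complements and using $T_x S_k \subseteq T_x S$ gives $T_x S_k \subseteq \R^n \setminus \op{Cone}(v_x/|v_x|, \bar\alpha_k)$ for every $x \in S_k$. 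Thus Theorem~\ref{rect} applies to $S_k$ with $\alpha = \bar\alpha_k$ and unit vectors $v_x/|v_x|$, showing that each $S_k$ is countably $(n-1)$-rectifiable. A countable union of countably $(n-1)$-rectifiable sets is again countably $(n-1)$-rectifiable, so $S$ is countably $(n-1)$-rectifiable.

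The only delicate step is the first, geometric one: converting the ball exclusion on $S$ into the cone exclusion on $T_x S$. The rest is bookkeeping aimed at promoting the $x$-dependent bound $\alpha_x < 1$ into the uniform bound required by Theorem~\ref{rect}.
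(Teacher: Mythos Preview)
Your proof is correct and follows essentially the same route as the paper: both decompose $S$ according to an upper bound on $|v_x|$, convert the ball-exclusion hypothesis into a uniform tangent-cone exclusion, and then invoke Theorem~\ref{rect}. The only cosmetic difference is that the paper works directly with sequences $y_k\to x$ and appeals to Lemma~\ref{cone}, whereas you first pass to the tangent-cone limit and read off the cone aperture from the distance between $v_x$ and the ray through $\zeta$.
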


\begin{proof}
For $x\in S$ we clearly have $v_x\neq 0$. Moreover, by setting $S_m:=\{x\in S\mid |v_x|\leq m\}$ and taking the union over $m\in \N$ we can assume that $0\neq |v_x|\leq m$ for all $x\in S$ and some $m\in \N$.

   Let $x\in S$ and let $(y_k)\subseteq S$ be any sequence converging to $x$. Then, as soon as $|y_k-x|\leq t_x$ we clearly have
   \begin{align*}
   \bigg|(y_k-x)-\frac{|y_k-x|}{|v_x|}v_x\bigg|\geq \frac{r|y_k-x|}{|v_x|}\geq  \frac{r|y_k-x|}{m}.
   \end{align*}
   But Lemma \ref{cone} gives that, for $w\in \op{Cone}(\frac{v_x}{|v_x|},\alpha),$ we have
   \begin{align*}
       \bigg|w-|w| \frac{v_x}{|v_x|}\bigg|\leq 2\sqrt{1-\alpha} |w|.
   \end{align*}
   Thus, $(y_k-x)\notin \op{Cone}(\frac{v_x}{|v_x|},\alpha)$ for $\alpha<1$ with $2\sqrt{1-\alpha}< \frac{r}{m}$. But then also
   \begin{align*}
       T_xS\subseteq \R^n\backslash \op{Cone}(\frac{v_x}{|v_x|},\alpha)
       \end{align*}
       and this holds for any $x\in S$. Therefore, using the above theorem, we conclude the corollary.
   \end{proof}

\begin{definition}\rm \label{omns}
Let $M$ be a $n$-dimensional manifold and $S\subseteq M$. We say that $S$ is \emph{countably $(n-1)$-rectifibale} if for any chart $(\phi,U)$ of $M$ the set $\phi(U\cap S)$ is countably $(n-1)$-rectifiable.
\end{definition}

\begin{lemma}\label{mnhas}
    Let $M$ be a $n$-dimensional manifold and $S\subseteq M$. If we can find, around any $x\in S$, a chart $(U,\phi)$ such that $\phi(U\cap S)$ is countably $(n-1)$-rectifiable, then $S$ is countably $(n-1)$-rectifiable.
\end{lemma}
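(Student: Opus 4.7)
The plan is to reduce the definition of countable $(n-1)$-rectifiability for manifolds (Definition \ref{omns}), which demands the property in \emph{every} chart, to the hypothesis which only provides it in \emph{some} chart around each point. The bridge is the standard fact that countable $(n-1)$-rectifiability in $\mathbb{R}^n$ is preserved under taking countable unions, subsets, and images under locally Lipschitz maps; together with the second-countability of $M$.

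First I would fix an arbitrary chart $(\phi,U)$ of $M$ and aim to show that $\phi(U\cap S)\subseteq \R^n$ is countably $(n-1)$-rectifiable. By the hypothesis, for every $x\in S$ we pick a chart $(\psi_x,V_x)$ such that $\psi_x(V_x\cap S)$ is countably $(n-1)$-rectifiable. Since $M$ is second-countable, so is $U\cap S$, and from the open cover $\{V_x\}_{x\in S}$ we may extract a countable subcover $(V_k,\psi_k)_{k\in\N}$ of $U\cap S$. Then
\begin{align*}
\phi(U\cap S) \;=\; \bigcup_{k\in\N} \phi\bigl(U\cap V_k\cap S\bigr),
\end{align*}
so it suffices to show each summand on the right is countably $(n-1)$-rectifiable.

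Fix $k$. The transition map $T_k:=\phi\circ \psi_k^{-1}$ is a smooth diffeomorphism from the open set $\psi_k(U\cap V_k)\subseteq\R^n$ onto $\phi(U\cap V_k)\subseteq\R^n$, in particular it is locally Lipschitz. Write
\begin{align*}
\psi_k(U\cap V_k\cap S) \;\subseteq\; \psi_k(V_k\cap S) \;=\; \bigcup_{j\in\N} f_{k,j}(D_{k,j}),
\end{align*}
where $D_{k,j}\subseteq\R^{n-1}$ is measurable and $f_{k,j}$ is Lipschitz. Because $\psi_k(U\cap V_k)$ is a $\sigma$-compact open subset of $\R^n$, we may exhaust it by compact sets $K_{k,m}$ on which $T_k$ is (globally) Lipschitz; setting $D_{k,j,m}:=f_{k,j}^{-1}(K_{k,m})\cap D_{k,j}$, the map $T_k\circ f_{k,j}$ restricted to $D_{k,j,m}$ is Lipschitz, $D_{k,j,m}\subseteq\R^{n-1}$ is measurable, and
\begin{align*}
\phi(U\cap V_k\cap S) \;=\; T_k\bigl(\psi_k(U\cap V_k\cap S)\bigr)\;\subseteq\; \bigcup_{j,m\in\N}(T_k\circ f_{k,j})(D_{k,j,m}).
\end{align*}
Taking the countable union over $k,j,m$ shows $\phi(U\cap S)$ is countably $(n-1)$-rectifiable, as required.

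There is no real obstacle here; the only point that requires mild care is that $T_k$ is merely locally Lipschitz rather than globally Lipschitz, which is why one has to decompose $\psi_k(U\cap V_k)$ into the pieces $K_{k,m}$ before quoting the preservation of countable rectifiability under Lipschitz images. Everything else is bookkeeping: second countability supplies the countable cover of $S$, and subsets of countably $(n-1)$-rectifiable sets are countably $(n-1)$-rectifiable, which allows the passage from $\psi_k(V_k\cap S)$ to $\psi_k(U\cap V_k\cap S)$.
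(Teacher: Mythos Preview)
Your proof is correct and follows essentially the same approach as the paper: fix an arbitrary chart, use second-countability to extract a countable family of ``good'' charts covering $S$, and push the rectifiable sets through the locally Lipschitz transition maps. The only difference is that you spell out the $\sigma$-compact exhaustion to handle local Lipschitzness explicitly, whereas the paper compresses this into ``one readily checks.''
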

\begin{proof}
    Let $(\phi,U)$ be any chart. Since we are assuming manifolds to be second-countable we find a countable family of chart $(\phi_i,U_i)$ with $S\subseteq \bigcup_{i=1}^\infty U_i$ and such that $\phi_i(U_i\cap S)$ is countably $(n-1)$-rectifiable. Then we have
    \begin{align*}
        \phi(U\cap S)=\bigcup_{i=1}^\infty \phi(U_i\cap U\cap S)
        =\bigcup_{i=1}^\infty (\phi\circ \phi_i^{-1})(\phi_i(U_i\cap U\cap S)).
    \end{align*}
    Since $\phi_i(U_i\cap U\cap S)$ is countably $(n-1)$-rectifiable, contained in the open set $\phi_i(U_i)$ and $\phi\circ \phi_i^{-1}$ is locally Lipschitz on $\phi_i(U_i)$ one readily checks that $(\phi\circ \phi_i^{-1})(\phi_i(U_i\cap U\cap S))$ is countably $(n-1)$-rectifiable too. Thus, also the union is countably $(n-1)$-rectifiable, but this is just $\phi(U\cap S)$.
\end{proof}

\subsection{Riemannian metric on $T^kM$}\label{sasaki}

\begin{definition}\rm
Let $(M,g)$ be a smooth Riemannian manifold. Then the \emph{Whitney sum} of $k$ tangent bundles $TM$ is the vector bundle
\begin{align*}
T^kM:=\bigcup_{x\in M} (T_xM)^k
\end{align*}
with its canonical differentiable structure. 

Let $(x,v):=(x,v_1,...,v_k)\in T^kM$ and let $\alpha:=(c,w_1,...,w_k),\beta:=(\gamma,u_1,...,u_k):I\to T^kM$ be smooth curves with $\alpha(0)=\beta(0)=(x,v)$.
Then we define
\begin{align*}
\langle \dot \alpha(0),\dot \beta(0)\rangle_{(x,v)} :=
\langle \dot c(0),\dot \gamma(0)\rangle_x +\sum_{i=1}^k \Big\langle \frac{Dw_i}{dt}(0),\frac{Du_i}{dt}(0)\Big\rangle_x,
\end{align*}
where $\frac{Dw_i}{dt}$ resp. $\frac{Du_i}{dt}$ denotes the covariant derivative of $w_i$ along $c$ resp. $u_i$ along $\gamma$.
\end{definition}

\begin{remark}\rm
    Using local coordinates it is easy to check that the above expression actually defines a Riemannian metric. In the case $k=1$ this metric is also known as \emph{Sasaki metric}.
\end{remark}

\bibliography{OTaR_090625}
\end{document}